\newtheorem{theorem}{Theorem}[section]
\newtheorem{lemma}[theorem]{Lemma}
\newtheorem{proposition}[theorem]{Proposition}
\newtheorem{corollary}[theorem]{Corollary}
\newtheorem*{theorem*}{Theorem}
\theoremstyle{definition}
\newtheorem{definition}[theorem]{Definition}
\theoremstyle{remark}
\newtheorem{remark}{Remark}[section]
\newcommand{\R}{\mathbb{R}}
\newcommand{\ve}{\varepsilon}
\newcommand{\be}{\begin{equation}}
\newcommand{\ee}{\end{equation}}
\newcommand{\ba}{\begin{equation*}}
\newcommand{\ea}{\begin{equation*}}
\newcommand{\bea}{\begin{eqnarray}}
\newcommand{\eea}{\end{eqnarray}}
\newcommand{\bee}{\begin{eqnarray*}}
\newcommand{\eee}{\end{eqnarray*}}
\newcommand{\ben}{\begin{enumerate}}
\newcommand{\een}{\end{enumerate}}
\newcommand{\nonu}{\nonumber}
\newcommand{\al}{\alpha}
\newcommand{\ga}{\gamma}
\DeclareMathOperator{\rmax}{r_{\max}}
\DeclareMathOperator{\sech}{sech}
\definecolor{light-gray1}{gray}{0.90}
\definecolor{light-gray2}{gray}{0.80}
\definecolor{light-gray3}{gray}{0.60}
\numberwithin{equation}{section}
\begin{document}
\title[Asymptotic Stability of solitons for the ZK equation ]{Asymptotic Stability of high-dimensional Zakharov-Kuznetsov solitons}

%\author[R. C\^ote$^*$, C. Mu\~noz, D. Pilod and G. Simpson]{Rapha\"el C\^ote, Claudio Mu\~noz, Didier Pilod$^{\dagger}$ and Gideon Simpson}

\author[R. C\^ote]{Rapha\"el C\^ote$^*$}
\thanks{$^*$Partially supported by the project ERC 291214 BLOWDISOL}
\address{Rapha\"el C\^ote, CNRS and \'Ecole polytechnique
Centre de Math\'ematiques Laurent Schwartz UMR 7640,
Route de Palaiseau, 91128 Palaiseau cedex, France}
\email{cote@math.polytechnique.fr}

\author[C. Mu\~noz]{Claudio Mu\~noz$^{**}$}
\thanks{$^{**}$ C.M. would like to thank the Laboratoire de Math\'ematiques d'{}Orsay for their kind hospitality}
\address{Claudio Mu\~noz, CNRS and Laboratoire de Math\'ematiques d'{}Orsay UMR 8628, B\^at. 425 Facult\'e des Sciences d'{}Orsay,
Universit\'e Paris-Sud F-91405 Orsay Cedex France}
\email{Claudio.Munoz@math.u-psud.fr}

\author[D. Pilod]{Didier Pilod$^{\dagger}$}
\thanks{$^{\dagger}$Partially supported by CNPq/Brazil, grants 302632/2013-1 and 481715/2012-6.}
\address{Didier Pilod, Instituto de Matem\'atica, Universidade Federal do Rio de Janeiro, Caixa Postal 68530, CEP: 21945-970, Rio de Janeiro, RJ, Brazil}
\email{didier@im.ufrj.br}

\author[G. Simpson]{Gideon Simpson}
\address{Gideon Simpson, Department of Mathematics, Drexel University, 33rd and Market Streets, Philadelphia, PA 19104, USA}
\email{simpson@math.drexel.edu}

\date{September 8th, 2015}

\begin{abstract}
We prove that solitons (or solitary waves) of the Zakharov-Kuz\-net\-sov (ZK) equation, a physically relevant high dimensional generalization of the Korteweg-de Vries (KdV) equation appearing in Plasma Physics, and having mixed KdV and nonlinear Schr\"odinger (NLS) dynamics, are \emph{strongly} asymptotically stable in the energy space. We also prove that the sum of well-arranged solitons is stable in the same space. Orbital stability of ZK solitons is well-known since the work of de Bouard \cite{deBo}.  Our proofs follow the ideas by Martel \cite{Ma} and Martel and Merle \cite{MM2}, applied for generalized KdV equations in one dimension. In particular, we extend to the high dimensional case several monotonicity properties for suitable \emph{half-portions} of mass and energy; we also prove a new Liouville type property that characterizes ZK solitons, and a key Virial identity for the linear and nonlinear part of the ZK dynamics, obtained independently of the mixed KdV-NLS dynamics. This last Virial identity relies on a simple sign condition, which is numerically tested for the two and three dimensional cases, with no additional spectral assumptions required. Possible extensions to higher dimensions and different nonlinearities could be obtained after a suitable local well-posedness theory in the energy space, and the verification of a corresponding sign condition.
\end{abstract}

\maketitle

\tableofcontents

\section{Introduction}
We are interested in the the Zakharov-Kuznetsov (ZK) equation 
\begin{equation}  \label{ZK}
\partial_tu+\partial_{x_1}\big( \Delta u+u^2\big)=0,
\end{equation}
where $u=u(x,t)$ is a real-valued function, $x=(x_1,x_2) \in \mathbb \mathbb \R \times \R^{d-1}$, $ t \in \mathbb R$, $\Delta=\sum_{j=1}^d\partial_{x_j}^2$ denotes the laplacian. The ZK equation is a particular case of the generalized Zakharov-Kuznetsov (gZK) equation 
\begin{equation}  \label{gZK}
\partial_tu+\partial_{x_1}\big( \Delta u+u^p\big)=0,
\end{equation}
where $p \in \mathbb Z_+$ is such that $2 \le p < \infty$ if $d=1,2$ and $2 \le p < 1+\frac4{d-2}$ if $d \ge 3$. We observe that when the spatial dimension $d$ is equal to $1$, equation \eqref{gZK} becomes the well-known generalized Korteweg- de Vries (gKdV) equation.

\medskip

The ZK equation was introduced by Zakharov and Kuznetsov in \cite{ZK} to describe the propagation of ionic-acoustic waves in uniformly magnetized plasma in the two dimensional and three dimensional cases.  The derivation of ZK from the Euler-Poisson system with magnetic field  in the long wave limit was carried out by Lannes, Linares and Saut in \cite{LLS}. The ZK equation was also derived by Han-Kwan \cite{HK} from the Vlasov-Poisson system in a combined cold ions and long wave limit. Moreover, the following quantities are conserved by the flow of ZK, 
\begin{equation} \label{M} 
M(u)=\int u(x,t)^2dx,
\end{equation}
and 
\begin{equation} \label{H}
H(u)=\int \big( \frac12|\nabla u(x,t)|^2-\frac1{p+1}u(x,t)^{p+1}\big)dx.
\end{equation}

The well-posedness theory for ZK and gZK has been extensively studied in the recent years. In the  two dimensional case, Faminskii proved  that the Cauchy problem associated to the ZK equation is globally well-posed in the energy space $H^1(\mathbb R^2)$ \cite{Fa}. The local well-posedness result was pushed down to $H^s(\mathbb R^2)$ for $s>\frac34$ by Linares and Pastor \cite{LP} and to $s>\frac12$ by Gr\"unrock  and Herr \cite{GH} and Molinet and the third author \cite{MP}. The best result for the ZK equation in the three dimensional case was obtained last year by Ribaud and Vento \cite{RV}. They proved local well-posedness in $H^s(\mathbb R^3)$ for $s>1$. Those solutions were extended globally in time in \cite{MP}. Note however that it is still an open problem to obtain well-posedness in $L^2(\mathbb R^2)$ and $H^1(\mathbb R^3)$ for the ZK equation. Finally, we also refer to \cite{LP,LP2,FLP,RV2,Gr} for more well-posedness results for the gZK equation with $p \ge 3$ and to \cite{Pan,BuIsMe1,BuIsMe2} for unique continuation results concerning  ZK.

\medskip

Note that if $u$ solves \eqref{gZK} with initial data $u_0$, then $u_{\lambda}(x,t)=\lambda^{2/(p-1)}u(\lambda x,\lambda^3t)$ is also a solution to \eqref{gZK} with initial data $u_{0,\lambda}(x)=\lambda^{2/(p-1)}u_0(\lambda x)$ for any $\lambda>0$. Hence, $\|u_{0,\lambda}\|_{\dot{H}^s}=\lambda^{2/(p-1)+s-d/2}\|u_{0}\|_{\dot{H}^s}$, so that the scale-invariant Sobolev space for the gZK equation is $H^{s_c(p)}(\R^d)$, where $s_c(p) = \frac{d}2-\frac2{p-1}$. In particular, the gZK equation is $L^2$-critical (or simply critical) if $p=1+\frac4d$. In the sequel, we will say that the problem is \textit{subcritical} if $p<1+\frac4d$ and \textit{supercritical} if $p>1+\frac4d$.

\subsection{The elliptic problem} For $c>0$, equation \eqref{gZK} admits special solutions of the form 
\begin{equation} \label{soliton}
u(x,t)=\mathcal{Q}_c(x_1-ct,x_2,\cdots,x_d) \quad \text{with} \quad \mathcal{Q}_c(x) \underset{|x| \to +\infty}{\longrightarrow 0},
\end{equation} 
where $\mathcal{Q}_c(x)=c^{1/(p-1)}\mathcal{Q}(c^{1/2}x)$ and $\mathcal{Q}$ satisfies 
\begin{equation} \label{E} 
-\Delta \mathcal{Q}+\mathcal{Q}-\mathcal{Q}^p=0.
\end{equation} 
Observe that $\mathcal{Q}=\mathcal{Q}_1$.

We recall the following theorem on the elliptic PDE \eqref{E}, which follows, for example, from the results of Berestycki and Lions \cite{BL} and Kwong \cite{Kw}. 
\begin{theorem*} 
%\begin{thmb}
Assume that $2 \le p < \infty$ if $d=1,2$ and $2 \le p < 1+\frac4{d-2}$ if $d \ge 3$.
Then there exists a unique positive radially symmetric solution $Q$ to \eqref{E} in $H^1(\mathbb R^d)$, which is called a \textit{ground state}. In addition, $Q \in C^{\infty}(\mathbb R^d)$, $\partial_rQ(r) <0$ for all $r>0$ and  there exists $\delta>0$ such that 
\begin{equation} \label{elliptPDE1}
|\partial^{\alpha}Q(x)| \lesssim_{\alpha} e^{-\delta|x|} \quad  \forall \, x \in \mathbb R^d, \ \forall \,\alpha \in \mathbb Z_+^d \, .
\end{equation}
%\end{thmb}
\end{theorem*}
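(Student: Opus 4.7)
The plan is to assemble the statement from three classical ingredients. For existence, I would use the constrained variational argument of Berestycki-Lions: minimize
\[
I := \inf\left\{ \int_{\R^d} |\nabla u|^2 \, dx : u \in H^1(\R^d),\ \int_{\R^d} |u|^{p+1} \, dx = 1 \right\}.
\]
The subcriticality assumption $p < 1+\tfrac{4}{d-2}$ keeps the exponent $p+1$ strictly below the Sobolev critical one. Restricting to the radial subspace, the compactness of Strauss' embedding $H^1_{\mathrm{rad}}(\R^d) \hookrightarrow L^{p+1}(\R^d)$ produces a minimizer; replacing it by its Schwarz symmetrization, which decreases the numerator while preserving the denominator, we may take the minimizer non-negative, radially symmetric and radially decreasing. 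A Lagrange multiplier computation followed by a scaling gives a solution of \eqref{E}, and the strong maximum principle applied to $-\Delta Q + Q = Q^p \ge 0$ upgrades non-negativity to strict positivity.

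Uniqueness is the most delicate point and is essentially Kwong's theorem. Since any positive decaying $H^1$ solution of \eqref{E} must be radial by the moving-plane method of Gidas-Ni-Nirenberg, the problem reduces to the ODE
\[
Q''(r)+\frac{d-1}{r}Q'(r)-Q(r)+Q(r)^p=0, \qquad Q'(0)=0, \ \lim_{r\to\infty}Q(r)=0.
\]
Kwong's Sturm/Wronskian analysis of the linearisation along the shooting family $\{Q(0)=\alpha\}$ shows this ODE admits at most one positive decaying solution, which I would invoke as a black box. The strict radial monotonicity $\partial_r Q<0$ on $(0,\infty)$ then comes from Hopf's lemma applied to the radial equation, using that $Q$ has already been shown to be radially non-increasing.

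Smoothness follows from a standard elliptic bootstrap: $Q\in H^1 \subset L^{p+1}$ gives $\Delta Q \in L^{(p+1)/p}$ so $Q\in W^{2,(p+1)/p}$, and iterating Sobolev embedding and Schauder estimates yields $Q\in C^\infty(\R^d)$. For the exponential decay, continuity and $H^1$-integrability of the radial $Q$ force $Q(x)\to 0$ as $|x|\to\infty$. Fixing $\delta\in(0,1)$, for $|x|\ge R_0$ large enough one has $Q^{p-1}(x)\le 1-\delta^2$, hence
\[
-\Delta Q + \delta^2 Q \le 0 \quad \text{on } \{|x|\ge R_0\},
\]
and comparison with the explicit supersolution $Ce^{-\delta|x|}$ via the maximum principle yields $|Q(x)|\lesssim e^{-\delta|x|}$. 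The decay of $\partial^\alpha Q$ for $|\alpha|\ge 1$ then follows by differentiating \eqref{E} and applying interior Schauder estimates on unit balls centred at $x$, after possibly shrinking $\delta$. The main obstacle is clearly uniqueness: the other parts are routine once the variational and elliptic machinery is in place, but Kwong's ODE theorem is genuinely delicate and must be cited as a black box.
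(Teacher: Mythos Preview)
The paper does not supply its own proof of this theorem; it merely records it as a classical fact, citing Berestycki--Lions \cite{BL} for existence and Kwong \cite{Kw} for uniqueness. Your outline follows exactly these references (plus the standard Gidas--Ni--Nirenberg reduction to the radial ODE and the usual bootstrap/comparison arguments for regularity and decay), so the approaches coincide.

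One small correction worth noting: the constrained problem you wrote,
\[
\inf\Big\{\int_{\R^d}|\nabla u|^2 : \int_{\R^d}|u|^{p+1}=1\Big\},
\]
has Euler--Lagrange equation $-\Delta u=\lambda|u|^{p-1}u$ and does not recover the linear term in \eqref{E}. The Berestycki--Lions constraint for $-\Delta Q+Q=Q^p$ is the Pohozaev functional $\int G(u)=1$ with $G(s)=\tfrac{1}{p+1}|s|^{p+1}-\tfrac12 s^2$; alternatively one can minimise the Weinstein Gagliardo--Nirenberg quotient. Also, Strauss' radial compactness requires $d\ge 2$; in $d=1$ one argues directly via ODE or concentration--compactness. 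Neither point affects the overall architecture of your sketch.
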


The solutions of \eqref{gZK} of the form \eqref{soliton} with $\mathcal{Q}=Q$ are called \textit{solitary waves} or \textit{solitons}. They were proved by de Bouard in \cite{deBo} to be orbitally stable in $H^1(\mathbb R^d)$ if  $p<1+\frac4d$ and unstable for $p>1+\frac4d$. In other words, the solitary waves associated to \eqref{gZK} are orbitally stable in the subcritical case and unstable in the supercritical case. 

In the following, for any $c>0$, we will denote by $\mathcal{L}_c$ the operator which linearizes \eqref{E} around $Q_c$, \textit{i.e.}, 
\begin{equation} \label{Lc}
\mathcal{L}_c=-\Delta+c-pQ_c^{p-1} \, .
\end{equation}
In the case $c=1$, we also denote $\mathcal{L}=\mathcal{L}_1$.

Next, we gather some well-known facts about the operator $\mathcal{L}$ (see Weinstein \cite{We}).
\begin{theorem*} Assume that $2 \le p < \infty$ if $d=1,2$ and $2 \le p < 1+\frac4{d-2}$ if $d \ge 3$. 
Then, the following assertions are true.  \\

\noindent (i) $\mathcal{L}$ is a self-adjoint operator and 
\begin{equation} \label{spectrum}
\sigma_{ess}(\mathcal{L}) = [\lambda_{ess},+\infty), \quad \text{for some} \ \lambda_{ess} >0.
\end{equation}

\noindent (ii) 
\begin{equation} \label{kernel}
\ker \mathcal{L} = \text{span} \big\{ \partial_{x_j}Q \ : \ j=1,\cdots,d\big\}.
\end{equation}

\noindent (iii) $\mathcal{L}$ has a unique single negative eigenvalue $-\lambda_0$ (with $\lambda_0>0$) associated to a positive radially symmetric eigenfunction $\chi_0$. Without loss of generality, we choose $\chi_0$ such that $\|\chi_0\|_{L^2}=1$. Moreover, there exists $\tilde{\delta}>0$ such that $| \chi_0(x)| \lesssim e^{-\tilde{\delta}|x|}$, for all $x \in \mathbb R^d$. \\

\noindent (iv) Let us define 
\begin{equation} \label{LambdaQ}
\Lambda Q:=\big(\frac{d}{dc}Q_{c}\big)_{c=1}=\frac1{p-1}Q+\frac12x\cdot \nabla Q \, .
\end{equation} 
Then, 
\begin{equation} \label{LambdaQeq}
\mathcal{L} \, \Lambda Q=-Q \, ,
\end{equation}
and 
\begin{equation} \label{LambdaQeq2}
\int Q \Lambda Q=c_{p,d} \, \|Q\|_{L^2}^2 \, \quad \text{where} \quad c_{p,d}=\frac1{p-1}-\frac{d}4 \, .
\end{equation}
\end{theorem*}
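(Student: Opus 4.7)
The plan is to treat the four parts sequentially, leaning on standard spectral theory for Schrödinger operators combined with the non-degeneracy results for the ground state.

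For (i), I would observe that $\mathcal{L}=-\Delta+1-pQ^{p-1}$ is a real symmetric Schrödinger operator with a bounded, smooth, exponentially decaying potential (from the previous theorem's decay bound on $Q$), so it is essentially self-adjoint on $C_c^\infty(\mathbb{R}^d)$ with form domain $H^1$. Since $pQ^{p-1}$ is a relatively compact perturbation of $-\Delta+1$, Weyl's theorem gives $\sigma_{ess}(\mathcal{L})=\sigma_{ess}(-\Delta+1)=[1,+\infty)$, so $\lambda_{ess}=1>0$.

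For (ii), the inclusion $\supset$ is immediate: differentiating $-\Delta Q+Q-Q^p=0$ in $x_j$ yields $\mathcal{L}\,\partial_{x_j}Q=0$. The converse is the main obstacle of the theorem: one needs the non-degeneracy of the ground state. I would decompose any $\phi\in\ker \mathcal{L}$ in spherical harmonics, $\phi(r,\omega)=\sum_{\ell\ge 0}\phi_\ell(r)Y_\ell(\omega)$. In each angular sector, $\mathcal{L}$ reduces to a radial Schrödinger operator with the added centrifugal term $\ell(\ell+d-2)/r^2$. On the $\ell=0$ sector, the uniqueness of the radial ground state (Kwong) combined with an ODE shooting / Wronskian argument forces $\ker\mathcal{L}^{rad}=\{0\}$. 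On the $\ell=1$ sector, each $\partial_{x_j}Q=\partial_r Q(r)\,x_j/r$ provides a kernel element, and Sturm oscillation on the radial part tells us this is the full kernel. For $\ell\ge 2$, adding the strictly larger centrifugal potential makes the radial operator strictly positive. Collecting the $d$ independent elements from $\ell=1$ gives the claim.

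For (iii), existence of a negative eigenvalue is immediate from $\langle \mathcal{L}Q,Q\rangle=-(p-1)\int Q^{p+1}<0$. I would produce the ground state of $\mathcal{L}$ by minimizing the Rayleigh quotient: a minimizing sequence converges weakly in $H^1$, and by a Schwarz-symmetrization type argument (the perturbation $-pQ^{p-1}$ is itself radially decreasing) the infimum is attained by a positive radial function $\chi_0$ with eigenvalue $-\lambda_0<0$. Uniqueness of this negative eigenvalue follows from the same spherical-harmonic decomposition used in (ii): negative eigenvalues can only occur in the $\ell=0$ sector, and by Sturm--Liouville theory the radial eigenfunction at the bottom of the spectrum has no zeros while $\partial_r Q<0$ provides a second eigenfunction with exactly one node (sitting at eigenvalue $0$), so there cannot be a second negative eigenvalue. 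Exponential decay of $\chi_0$ with some rate $\tilde{\delta}>0$ is then a standard Agmon/Combes--Thomas estimate since $-\lambda_0$ is strictly below $\sigma_{ess}(\mathcal{L})=[1,+\infty)$.

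For (iv), I differentiate the scaled elliptic equation $-\Delta Q_c+cQ_c-Q_c^p=0$ in $c$ at $c=1$, obtaining $-\Delta(\Lambda Q)+\Lambda Q+Q-pQ^{p-1}\Lambda Q=0$, that is, $\mathcal{L}\,\Lambda Q=-Q$. The scaling identity $Q_c(x)=c^{1/(p-1)}Q(c^{1/2}x)$ immediately gives the explicit formula $\Lambda Q=\tfrac{1}{p-1}Q+\tfrac{1}{2}x\cdot\nabla Q$. For the mass identity, I pair with $Q$ and integrate by parts using $\int Q(x\cdot\nabla Q)=\tfrac12\int x\cdot\nabla(Q^2)=-\tfrac{d}{2}\|Q\|_{L^2}^2$, yielding $\int Q\,\Lambda Q=\bigl(\tfrac{1}{p-1}-\tfrac{d}{4}\bigr)\|Q\|_{L^2}^2=c_{p,d}\|Q\|_{L^2}^2$. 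Part (iv) is therefore routine; the genuine difficulty of the theorem is concentrated in part (ii), where one must invoke the uniqueness of the radial ground state to rule out unexpected radial kernel elements.
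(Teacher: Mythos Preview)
The paper itself does not prove this theorem; it is stated as a collection of ``well-known facts about the operator $\mathcal{L}$'' with a citation to Weinstein \cite{We}. Your proposal therefore supplies substantially more detail than the paper does, and parts (i), (ii), and (iv) follow the standard arguments correctly.

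There is, however, a genuine slip in your argument for uniqueness of the negative eigenvalue in (iii). You write that $\partial_r Q$ ``provides a second eigenfunction with exactly one node (sitting at eigenvalue $0$)'' for the radial ($\ell=0$) operator, and then invoke Sturm oscillation. But $\partial_r Q$ is \emph{not} an eigenfunction of the $\ell=0$ operator $L_0=-\partial_r^2-\tfrac{d-1}{r}\partial_r+1-pQ^{p-1}$; it is the (nodeless on $(0,\infty)$) ground state of the $\ell=1$ operator $L_1=L_0+\tfrac{d-1}{r^2}$. The comparison $L_0\le L_1$ gives $\lambda_k(L_0)\le\lambda_k(L_1)$, which is the wrong direction to bound the second eigenvalue of $L_0$ from below. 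What actually pins down the Morse index is either (a) the variational characterization of $Q$ as a constrained minimizer of the Weinstein functional, which forces index exactly one, or (b) a direct ODE/Sturm analysis of the zero-energy radial solution of $L_0$ (as in Kwong's work), showing it changes sign exactly once. Either route closes the gap; your invocation of $\partial_r Q$ in the $\ell=0$ sector does not.
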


\subsection{Statement of the results}

As already mentioned, de Bouard proved in \cite{deBo} that the solitary waves of the gZK equation are stable in the subcritical case in the following sense. 
\begin{theorem*}[Stability]
Assume that $2 \le p <1+\frac4d$ and that the Cauchy problem associated to \eqref{gZK} is well-posed in $H^1(\mathbb R^d)$. Let $c_0>0$. Then, there exists $\epsilon_0>0$ and $K_0>0$ such that if $u_0 \in H^1(\mathbb R^d)$ satisfies $\|u_0-Q_{c_0}\|_{H^1} \le \epsilon \le \epsilon_0$,  the solution $u$ of \eqref{gZK} with $u(\cdot,0)=u_0$ satisfies 
\begin{displaymath}
\sup_{t \in \mathbb R} \inf_{\tau \in \mathbb R^d}\|u(\cdot,t)-Q_{c_0}(\cdot-\tau)\|_{H^1} \le K_0 \epsilon \, .
\end{displaymath}
\end{theorem*}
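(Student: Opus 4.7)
The natural Lyapunov functional is $E_{c_0}(u) := H(u) + \frac{c_0}{2} M(u)$, which is conserved by the flow of \eqref{gZK} and, by virtue of \eqref{E}, admits $Q_{c_0}$ as a critical point. The plan is to run the standard Weinstein / Grillakis--Shatah--Strauss scheme adapted to $\mathbb{R}^d$: by continuity in $t$ together with the implicit function theorem, as long as $u(\cdot,t)$ stays $H^1$-close to the translation orbit of $Q_{c_0}$, one selects $\tau(t) \in \mathbb{R}^d$ such that the error
\begin{equation*}
\eta(\cdot,t) := u(\cdot,t) - Q_{c_0}(\cdot-\tau(t))
\end{equation*}
is $L^2$-orthogonal to each $\partial_{x_j} Q_{c_0}(\cdot-\tau(t))$, $j=1,\dots,d$, thereby killing the full kernel of $\mathcal{L}_{c_0}$ identified in \eqref{kernel}. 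No modulation in the speed parameter $c$ is needed here since we only seek stability up to translations.

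Since $E_{c_0}'(Q_{c_0}) = 0$, a Taylor expansion of $E_{c_0}$ around $Q_{c_0}(\cdot-\tau(t))$ yields
\begin{equation*}
E_{c_0}(u(t)) - E_{c_0}(Q_{c_0}) \;=\; \tfrac{1}{2}\,\langle \mathcal{L}_{c_0} \eta(t),\, \eta(t)\rangle \;+\; O\bigl(\|\eta(t)\|_{H^1}^3\bigr),
\end{equation*}
where the cubic remainder is controlled via the Sobolev embedding $H^1(\mathbb{R}^d) \hookrightarrow L^{p+1}(\mathbb{R}^d)$, admissible in the range of $p$ considered. By the conservation of $E_{c_0}$ and $\|u_0-Q_{c_0}\|_{H^1}\le\epsilon$, the left-hand side is $O(\epsilon^2)$. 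In parallel, the conservation of $M$ gives
\begin{equation*}
2\int \eta(x,t)\,Q_{c_0}(x-\tau(t))\,dx \;+\; \|\eta(t)\|_{L^2}^2 \;=\; M(u_0) - M(Q_{c_0}) \;=\; O(\epsilon),
\end{equation*}
which will be the mechanism by which mass conservation controls the unique unstable direction $\chi_0$ of $\mathcal{L}_{c_0}$.

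The key spectral ingredient is then a Weinstein-type coercivity: there exists $\mu>0$ such that whenever $v \in H^1(\mathbb{R}^d)$ satisfies $\langle v,Q_{c_0}\rangle=0$ and $\langle v,\partial_{x_j}Q_{c_0}\rangle=0$ for all $j$, one has $\langle \mathcal{L}_{c_0}v,v\rangle \ge \mu \|v\|_{H^1}^2$. The subcriticality assumption $p<1+\frac{4}{d}$ enters precisely here: by \eqref{LambdaQeq}--\eqref{LambdaQeq2},
\begin{equation*}
\langle \mathcal{L}\,\Lambda Q,\, \Lambda Q\rangle \;=\; -\langle Q,\Lambda Q\rangle \;=\; -c_{p,d}\,\|Q\|_{L^2}^2 \;<\; 0,
\end{equation*}
so $\Lambda Q$ is an $\mathcal{L}$-negative direction; a min--max argument using the simplicity of the negative eigenvalue of $\mathcal{L}$ (item (iii) above) and the characterization of $\ker\mathcal{L}$ then yields the claim, with rescaling passing from $\mathcal{L}$ to $\mathcal{L}_{c_0}$. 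To conclude, I would decompose $\eta = a\,Q_{c_0}(\cdot-\tau) + \eta_\perp$ with $\langle \eta_\perp, Q_{c_0}(\cdot-\tau)\rangle=0$ (orthogonality to $\partial_{x_j}Q_{c_0}(\cdot-\tau)$ is already enforced by the modulation). The mass identity forces $|a|\lesssim \epsilon + \|\eta\|_{H^1}^2$, and combined with the coercivity applied to $\eta_\perp$ together with the above Taylor expansion this yields
\begin{equation*}
\|\eta(t)\|_{H^1}^2 \;\lesssim\; \epsilon^2 + \|\eta(t)\|_{H^1}^3.
\end{equation*}
A standard continuity/bootstrap argument in $t$, initialized with $\|\eta(0)\|_{H^1}\le\epsilon$, then produces $\|\eta(t)\|_{H^1} \le K_0\,\epsilon$ uniformly, which implies the stated inequality.

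The main obstacle is the spectral coercivity above: compared to the one-dimensional gKdV case, the kernel \eqref{kernel} is now $d$-dimensional, and one must check both that the negative subspace of $\mathcal{L}$ remains one-dimensional and that the sign $c_{p,d}>0$ still suffices to neutralize it through the constraint $\langle v,Q\rangle=0$ (which is delivered by mass conservation). These facts are by now classical consequences of the radial structure of the ground state and of its variational characterization, but they constitute the genuinely non-trivial piece of the proof; the well-posedness hypothesis in the statement is needed only to ensure that $t\mapsto u(\cdot,t)$ is continuous in $H^1$, so that the modulation is well-defined and the continuity/bootstrap argument can be run.
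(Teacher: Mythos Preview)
Your outline is correct and follows the standard Weinstein / Grillakis--Shatah--Strauss Lyapunov scheme. Note, however, that the paper does not supply its own proof of this statement: it is quoted as a known result of de Bouard \cite{deBo}, and the paper simply cites it. De Bouard's argument is precisely the one you sketch --- modulation in the translation parameters, Taylor expansion of the conserved functional $H+\tfrac{c_0}{2}M$ about $Q_{c_0}$, coercivity of $\mathcal{L}_{c_0}$ on $\{Q_{c_0},\partial_{x_1}Q_{c_0},\dots,\partial_{x_d}Q_{c_0}\}^\perp$ coming from the subcritical sign $\langle \mathcal{L}^{-1}Q,Q\rangle=-\langle \Lambda Q,Q\rangle<0$, and a continuity/bootstrap closure --- so there is no methodological difference to comment on.
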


The main result of this paper is the asymptotic stability of the family of
solitons of \eqref{ZK} in the case $d=2$. Then, we consider the stability of the multi-soliton case (see Theorem \ref{NSoliton} below).

\begin{theorem}[Asymptotic stability] \label{AsymptStab} 
Assume $d=2$. Let $c_0>0$. For any $\beta>0$, 
there exists $\epsilon_0>0$ such that if $0<\epsilon  \le \epsilon_0$ and $u \in C(\mathbb R : H^1(\mathbb R^2))$ is a solution of \eqref{ZK} satisfying 
\begin{equation} \label{AsymptStab.1}
\inf_{\tau \in \mathbb R^2}\|u(\cdot,t)-Q_{c_0}(\cdot-\tau)\|_{H^1} \le \epsilon, \quad \forall \, t \in \mathbb R \, ,
\end{equation}
then the following holds true. 

There exist $c_+>0$ with $|c_+-c_0| \le K_0\epsilon$, for some positive constant $K_0$ independent of $\epsilon_0$, and $\rho=(\rho_1,\rho_2) \in C^1(\mathbb R : \mathbb R^2)$  such that 
\begin{equation} \label{AsymptStab.2}
u(\cdot,t)-Q_{c_+}(\cdot-\rho(t)) \underset{t \to +\infty}{\longrightarrow} 0 \quad \text{in} \ H^1(x_1>\beta t) \, ,
\end{equation}
\begin{equation} \label{AsymptStab.200}
\rho_1'(t) \underset{t \to +\infty}{\longrightarrow} c_+ \quad \text{and} \quad \rho_2'(t) \underset{t \to +\infty}{\longrightarrow}0 \, .
\quad 
\end{equation}
\end{theorem}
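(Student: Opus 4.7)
The plan is to implement the Martel--Merle asymptotic stability program (as developed for subcritical gKdV) for the two-dimensional ZK equation, with two substantial new ingredients: anisotropic monotonicity formulas and a two-dimensional Virial-type Liouville rigidity.

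\textbf{Step 1: Modulation.} Starting from the orbital stability hypothesis \eqref{AsymptStab.1}, I would use the implicit function theorem to decompose, for every $t\in\mathbb R$,
\[
u(t,x)=Q_{c(t)}\big(x-\rho(t)\big)+\eta(t,x),
\]
with $c(t)\in C^1(\mathbb R;\mathbb R_+^*)$ and $\rho(t)\in C^1(\mathbb R;\mathbb R^2)$ chosen so that $\eta(t)$ is orthogonal to $\partial_{x_1}Q_{c(t)}(\cdot-\rho(t))$, $\partial_{x_2}Q_{c(t)}(\cdot-\rho(t))$ and $\Lambda Q_{c(t)}(\cdot-\rho(t))$. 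These orthogonality conditions exploit \eqref{kernel} and \eqref{LambdaQeq}--\eqref{LambdaQeq2}; they yield smallness $\|\eta(t)\|_{H^1}+|c(t)-c_0|\lesssim\epsilon$ and ODEs for $c'(t)$, $\rho_1'(t)-c(t)$, $\rho_2'(t)$ whose right-hand sides are quadratic in $\eta$.

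\textbf{Step 2: Monotonicity of half-mass and half-energy.} For a smooth cutoff $\psi(y)$ with $\psi\to 0$ as $y\to-\infty$ and $\psi\to 1$ as $y\to+\infty$ (of carefully chosen slope), I would introduce localized quantities
\[
I_{\pm}(t)=\int u^2(t,x)\,\psi\!\left(x_1-\rho_1(t)\pm\sigma t\right)dx,
\]
and their energy analogues, with a transversal cutoff allowing $x_2$ to vary freely. Differentiating in time and using \eqref{ZK}, the leading surface term in $\partial_{x_1}$ has a definite sign (because of the convective $\partial_{x_1}$ structure of ZK), while the Laplacian's cross-terms can be absorbed thanks to the exponential decay \eqref{elliptPDE1}. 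This yields an almost-monotonicity formula, implying $L^2$ and $H^1$ decay of $\eta$ to the right of the soliton and in moving windows $\{x_1-\rho_1(t)>a\}$ for $a$ large.

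\textbf{Step 3: Weak limit and Liouville rigidity.} From a sequence $t_n\to+\infty$, orbital stability yields weak $H^1$ convergence $u(\cdot+\rho(t_n),t_n)\rightharpoonup \tilde u_0$ along a subsequence. The monotonicity estimates transfer to the ZK flow $\tilde u$ launched from $\tilde u_0$ and give pointwise-in-time decay of the remainder $\tilde\eta = \tilde u - Q_{\tilde c(t)}(\cdot - \tilde\rho(t))$ on the right of the soliton, uniformly in time. The crucial step is to prove a Liouville-type theorem: any ZK solution that stays close to the family of solitons for all $t\in\mathbb R$ and satisfies such decay must itself be a soliton $Q_{c_+}(\cdot-x_0)$. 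The strategy is to construct a 2D Virial functional of the form
\[
\mathcal V(t)=\int \tilde\eta(t,x)\,\mathcal M \tilde\eta(t,x)\,\varphi(x)\,dx,
\]
with an anisotropic weight $\varphi$ adapted to the transport direction $x_1$, and to show, via a careful integration by parts on the ZK-linearized equation, that $\frac{d}{dt}\mathcal V(t)\le -\kappa\|\tilde\eta\|_{H^1_{\mathrm{loc}}}^2+(\text{controlled errors})$ modulo the orthogonality conditions from Step 1. The coercivity then forces $\tilde\eta\equiv 0$.

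\textbf{Step 4: Conclusion.} Identifying the weak limit as $Q_{c_+}$ from the Liouville step, strong convergence in $H^1(x_1>\beta t)$ of \eqref{AsymptStab.2} follows from combining the weak convergence with the conservation/monotonicity of the half-mass and half-energy (any loss of $H^1$ norm on the right slab would contradict the monotonicity). The conservation of mass, along with the monotonicity, also pins down $c_+$ with $|c_+-c_0|\le K_0\epsilon$. The asymptotics \eqref{AsymptStab.200} come from the modulation ODEs combined with the vanishing of $\eta$ on the half-space.

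\textbf{Main obstacle.} The hard point is Step 3, and specifically the design of the Virial-type functional for the linearized ZK operator in dimension two. Unlike the 1D gKdV situation, where Martel's monotonicity Virial has an explicit algebraic structure, the anisotropy between the KdV-like direction $x_1$ and the transverse Schr\"odinger-like direction $x_2$ forces the bilinear form to involve a two-dimensional weighted quadratic form on $\tilde\eta$, whose positivity (modulo the orthogonality directions $\partial_{x_1}Q,\partial_{x_2}Q,\Lambda Q$) is not algebraically manifest. As announced in the abstract, this reduces to an explicit one-parameter sign condition on quantities built from $Q$ and its derivatives, which we expect to verify numerically in dimensions two (and three). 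All other pieces --- modulation, monotonicity, strong convergence on $\{x_1>\beta t\}$ --- are adaptations of the 1D gKdV machinery to the 2D convective setting, and should be routine modulo the above.
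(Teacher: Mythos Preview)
Your overall architecture (modulation $\to$ monotonicity $\to$ compactness/Liouville $\to$ conclusion) matches the paper, and you correctly identify the virial coercivity under the spectral condition $(\mathcal{L}^{-1}\Lambda Q,\Lambda Q)<0$ as the crux. However, two genuinely two-dimensional ingredients are missing, and the claim that ``all other pieces \ldots\ should be routine'' is where your proposal fails.

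\textbf{Compactness in the transverse direction.} In Step~4 you assert that weak convergence plus $x_1$-monotonicity upgrades to strong $H^1$ convergence on $\{x_1>-A\}$. In one dimension this is immediate, but in $\R^2$ mass can escape to $|x_2|\to\infty$ while staying on the right of the soliton; the $x_1$-monotonicity you describe in Step~2 does not prevent this. The paper resolves this with a \emph{new} monotonicity along tilted lines $x_1+(\tan\theta)x_2=\text{const}$ for $|\theta|<\tfrac\pi3$ (Lemma~4.3); combining the tilts $\theta=\pm\tfrac\pi4$ with $x_1>-A$ traps the mass in a compact triangle, and Rellich then gives strong $L^2$ convergence (Lemma~4.4). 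The angle restriction $|\theta|<\tfrac\pi3$ comes from the quadratic form $3(\partial_{x_1}u)^2+2\tan\theta\,\partial_{x_1}u\,\partial_{x_2}u+(\partial_{x_2}u)^2$ being positive definite, and is sharp. This is not a routine adaptation.

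\textbf{Two-sided decay for the limit object.} Your Liouville hypothesis in Step~3 posits only decay of $\tilde\eta$ \emph{on the right}. The virial argument, however, requires exponential decay of $\tilde u(\cdot+\tilde\rho(t),t)$ in $|x_1|$, i.e.\ on both sides, to conclude $v\equiv0$ (one needs $\int v^2(t_n)\to0$ along sequences $t_n\to\pm\infty$). Decay on the left (Lemma~4.9) is obtained by a separate forward-in-time monotonicity (Lemma~4.8) combined with the already established strong convergence of the mass to $\int\tilde u_0^2$; this step has no analogue in your outline.

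Two smaller points. First, the paper modulates with $(\eta,Q_{c})=0$ rather than $(\eta,\Lambda Q_c)=0$; this is so that, after passing to the \emph{dual} variable $v=\mathcal{L}_c\eta-\eta^2$ (on which the virial is actually run), one obtains $(v,\Lambda Q_c)=-(\eta,Q_c)-(\eta^2,\Lambda Q_c)=O(\|\eta\|^2)$, which is what the localized coercivity needs. A direct virial on $\eta$ as you propose does not close as cleanly. Second, because $H^1(\R^2)\not\hookrightarrow L^\infty$, the pointwise exponential decay needed to control nonlinear terms in the monotonicity/virial requires an additional $H^2$-level monotonicity and a bootstrap to $H^3$ boundedness (Lemma~3.6 and Corollary~3.7); this is another non-routine 2D step.
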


\begin{remark}\label{1R}
It will be clear from the proof that the convergence in \eqref{AsymptStab.2} can also be obtained in regions of the form 
\be\label{ASt}
\mathcal{AS}(t,\theta):= \Big\{(x_1,x_2)\in \R^2 \ : \   x_1-\beta t +(\tan\theta)\, x_2 >0  \Big\}, \ \text{where} \ \theta \in (-\frac{\pi}3,\frac \pi 3) \, .
\ee
Note that the maximal angle of improvement $\theta\geq 0$ must be strictly less than $\frac \pi 3$ on each side of the vertical line $x_1=\beta t$ (see Figure \ref{fig:AS}). We also refer to Lemma \ref{AsymptMonotonicity} for more details and a relation with the nonlinear dynamics of the equation. Moreover, we expect the range of $\theta$ for which asymptotic stability occurs in $\mathcal{AS}(t,\theta)$ to be sharp. Indeed, it will be shown in Appendix \ref{CC} that linear plane waves of ZK exist if and only if the velocity group vector has a negative $x_1$-component and forms an angle $\theta$ with $x_2$ (as in Figure \ref{fig:AS}) satisfying $|\theta| \in [\frac{\pi}3,\frac{\pi}2]$.
\end{remark}

\begin{figure}
\begin{center}
\begin{tikzpicture}[scale=1]
\filldraw[thick, color=lightgray!25] (-1,5)--(-1,2) -- (3.5,-1) --(5,-1)-- (5,5) -- (-1,5);
\draw[thick, color=black] (-1,2) -- (3.5,-1);
\draw[thick,dashed] (0,0)--(3.33,5);
\draw[thick,dashed] (2,-2)--(2,5);
\draw[->] (-1,0) -- (5,0) node[below] {$x_1$};
\draw[->] (0,-2) -- (0,5) node[right] {$x_2$};
\node at (2,0){$\bullet$};
\node at (1.7,-0.35){$\beta t$};
\node at (1.85,0.3){$\theta$};
\draw (2,0.5) arc (90:145:0.5);
\node at (2.7,-1.9){$x_1=\beta t$};
\node at (4,-1.2){$x_1+(\tan \theta) \, x_2=\beta t $};
\node at (0.3,0.15){$\theta$};
\draw (0.5,0) arc (0:55:0.5);
%\node at (0.1,1.1){$\theta$};
%\draw (0,0.9) arc (270:325:0.4);
\node at (4.3,4.5){$\mathcal{AS}(t,\theta)$};
\end{tikzpicture}
\end{center}
\caption{$\mathcal{AS}(t,\theta):= \Big\{(x_1,x_2)\in \R^2 \ : \   x_1-\beta t +(\tan\theta)\, x_2 >0  \Big\}$.} \label{fig:AS}
\end{figure}
 
\begin{remark} \label{2R}
The angle $\theta=\frac{\pi}3$ is also related to the linear part of ZK. In \cite{CKZ}, Carbery, Kenig and Ziesler proved that
$$\big\||K(D)|^{\frac18}e^{-t\partial_{x_1}\Delta}\varphi\big\|_{L^4_{xyt}}\lesssim \|\varphi\|_{L^2} \, ,$$ where  $|K(D)|^{\frac18}$ is the Fourier multiplier associated to the symbol
$|K(k_1,k_2) |^{\frac18}=|3k_1^2-k_2^2 |^{\frac18}$. This Strichartz estimate was used in \cite{MP} to improve the well-posed results for ZK at low regularity. Note that the multiplier $|K(k_1,k_2) |^{\frac18}$ cancels out along the cone  $|k_2|=\tan(\frac{\pi}3) |k_1|$. We also refer to Apendix \ref{CC} for an interesting relation between the angle $\theta=\frac{\pi}3$ and the linear plane waves of ZK.
\end{remark}

\begin{remark} 
Our proof does not rely on the structure of the nonlinearity of \eqref{ZK} (\textit{i.e.} $\partial_{x_1}(u^2)$) neither on the dimension $d$. Actually, our main theorem could be extended to \eqref{ZK} in dimension $d=3$ or to the following generalization of gZK
\begin{equation} \label{ggZK}
\partial_tu+\partial_{x_1}(\Delta u+|u|^{p-1}u)=0 \, , 
\end{equation}
where $p$ is a real number $1 < p <1+\frac4d$ under the following conditions:
\begin{itemize}
\item{} The Cauchy problem associated to \eqref{ZK} with $d=3$ or to \eqref{ggZK} is well-posed in $H^1(\mathbb R^d)$.

\item{} The spectral condition $\int \mathcal{L}^{-1}\Lambda Q \Lambda Q  <0$ holds true. (Note that $\mathcal{L}^{-1}\Lambda Q$ makes sense since $\Lambda Q$ is radial and orthogonal to $\nabla Q$, and we choose $\mathcal{L}^{-1}\Lambda Q$ orthogonal to $\nabla Q$.)
\end{itemize}
This spectral condition was shown in the appendix to be true in dimension $d=2$ for $2 \le p <p_2$, where $p_2$ is a real number satisfying $2<p_2<3$. 

On the other hand, in dimension $d=3$, it is shown in the appendix that $\int \mathcal{L}^{-1}\Lambda Q, \Lambda Q  >0$. Note however that in this case, one could try to verify the more general property: the operator $\mathcal{L}$ restricted to the space $\big\{\text{ker}\, \mathcal{L}, \Lambda Q \big\}^{\perp}$ is positive definite.
\end{remark}

\begin{remark}
The case $p=3$ in dimension $d=2$ is $L^2$ critical, so that solitons should be unstable (see \cite{MM4} for example), and the validity of Theorem \ref{AsymptStab} is not clear at that level. 
\end{remark}

The proof of Theorem \ref{AsymptStab} is based on the following rigidity result for the solutions of \eqref{ZK} in spatial dimension $d=2$ around the soliton $Q_{c_0}$ which are uniformly localized in the direction $x_1$. 
\begin{theorem}[Nonlinear Liouville property around $Q_{c_0}$] \label{NonLinearLiouville} 
Assume $d=2$. Let $c_0>0$. There exists $\epsilon_0>0$ such that if $0<\epsilon  \le \epsilon_0$ and $u \in C(\mathbb R : H^1(\mathbb R^2))$ is a solution of \eqref{ZK} satisfying for some function $\rho(t)=\big(\rho_1(t),\rho_2(t)\big)$ and  some positive constant $\sigma$
\begin{equation} \label{NonLinearLiouville1}
\|u(\cdot+\rho(t))-Q_{c_0}\|_{H^1} \le \epsilon, \quad \forall \, t \in \mathbb R \, ,
\end{equation}
and
 \begin{equation} \label{NonLinearLiouville2} 
  \int_{x_2}u^2(x_1+\rho_1(t),x_2+\rho_2(t),t)dx_2  \lesssim e^{-\sigma |x_1|}\, , \quad \forall \, (x_1,t) \in \mathbb R^2 \, ,
 \end{equation}
 then, there exist $c_1>0$ (close to $c_0$) and $\rho^0=(\rho^0_{1},\rho^0_{2}) \in \mathbb R^2$ such that 
 \begin{equation} \label{NonLinearLiouville3}
u(x_1,x_2,t)=Q_{c_1}(x_1-c_1t-\rho^0_{1},x_2-\rho^0_{2}) \, .
 \end{equation}
\end{theorem}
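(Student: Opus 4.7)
The plan follows the Martel--Merle approach to the Liouville-type rigidity property for gKdV solitons, adapted to the two-dimensional ZK setting.

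First, I would set up a modulation decomposition: using orbital stability and an implicit function theorem argument, one refines the translation $\rho(t)$ in the hypothesis and writes
$$u(x,t) = Q_{c(t)}\big(x - \tilde\rho(t)\big) + w(x,t),$$
with $c \in C^1(\mathbb R, \mathbb R_+)$ and $\tilde\rho \in C^1(\mathbb R, \mathbb R^2)$ such that $w(\cdot,t)$ is orthogonal to $\partial_{x_j} Q_{c(t)}(\cdot - \tilde\rho(t))$ for $j=1,2$ and to one further direction (e.g.\ $\Lambda Q_{c(t)}$) that fixes $c(t)$. The hypothesis \eqref{NonLinearLiouville1} gives $\|w(t)\|_{H^1} + |c(t) - c_0| \lesssim \epsilon$ uniformly in $t$, and \eqref{NonLinearLiouville2} transfers to $w$ up to a harmless exponential correction. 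The modulation ODEs take the form $\tilde\rho_1'(t) = c(t) + O(\|w\|_\mathrm{loc})$, $\tilde\rho_2'(t) = O(\|w\|_\mathrm{loc})$ and $c'(t) = O(\|w\|_\mathrm{loc}^2)$.

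The core is a localized virial identity for the linearized ZK flow around the soliton. Pick a smooth, bounded, strictly increasing weight $\varphi:\mathbb R\to\mathbb R$ (such as $\varphi(s)=\tanh(\mu s)$ for small $\mu>0$), and set
$$I(t) = \int_{\mathbb R^2} w^2(x,t)\,\varphi\big(x_1 - \tilde\rho_1(t)\big)\,dx.$$
Differentiating $I(t)$ using the equation satisfied by $w$ in the moving frame, and integrating by parts (it is essential that $\varphi$ depends only on $x_1$: the transverse term $\partial_{x_2}^2 w$ then contributes a favorable sign via a single integration by parts), one arrives at an identity of the schematic form
$$\frac{dI}{dt} = - \int \Big[3(\partial_{x_1} w)^2 + (\partial_{x_2} w)^2 + \big(c(t) - p\, Q_{c(t)}^{p-1}\big) w^2\Big] \varphi'\big(x_1 - \tilde\rho_1(t)\big)\,dx + R(t),$$
where $R(t)$ collects cubic nonlinear remainders, modulation errors and exponentially localized lower-order linear corrections, all controlled by \eqref{NonLinearLiouville1}--\eqref{NonLinearLiouville2}. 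Since $\varphi$ is bounded and $w \in L^\infty_t L^2_x$, $I(t)$ is uniformly bounded; integrating over $[-T,T]$ and letting $T\to\infty$ yields $\int_{\mathbb R} \mathcal Q(w(t))\,dt < \infty$, with $\mathcal Q$ the weighted quadratic form in the bracket.

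To conclude, coercivity of $\mathcal Q$ modulo the symmetry directions is needed. The translation directions $\partial_{x_j} Q_c$ are killed by the modulation orthogonality, while the scaling direction $\Lambda Q_c$ is controlled through the spectral sign condition $\int \mathcal L^{-1}\Lambda Q \cdot \Lambda Q < 0$ (verified for $d=2$, $p=2$ in the appendix). This gives $\mathcal Q(w(t)) \gtrsim \|w(t)\|^2_{H^1_\mathrm{loc}}$ modulo finite-dimensional controlled directions; combined with the time-integrability and the uniform $H^1$-smallness, this forces $w \equiv 0$ and $c(t) \equiv c_1$ for some $c_1 > 0$ close to $c_0$. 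The modulation ODEs then yield $\tilde\rho_1(t) = c_1 t + \rho_1^0$ and $\tilde\rho_2(t) = \rho_2^0$, which is exactly \eqref{NonLinearLiouville3}. The main obstacle is precisely the anisotropy of the virial identity: the weight $\varphi(x_1)$ produces an asymmetric quadratic form with unequal coefficients on $(\partial_{x_1} w)^2$ and $(\partial_{x_2} w)^2$, and its coercivity modulo $\Lambda Q$ is a genuinely multidimensional issue absent in the gKdV case, resolved only through the appendix's spectral condition; likewise, absorbing the transverse (not $x_2$-localized) error contributions crucially relies on the $L^2_{x_2}$-exponential decay assumption \eqref{NonLinearLiouville2}.
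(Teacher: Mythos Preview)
Your outline captures the Martel--Merle architecture (modulation, localized virial, coercivity via the spectral condition), but it diverges from the paper at the central technical step and leaves a real gap there.

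\textbf{The dual problem.} You run the virial directly on the remainder $w$. The paper does not: it chooses the third orthogonality condition as $(\eta,Q_c)=0$ (not $(\eta,\Lambda Q_c)=0$), which makes $|c'(t)|\lesssim \|\eta\|_{\mathrm{loc}}^2$ quadratic, and then passes to the \emph{dual variable} $v=\mathcal L_c\eta-\eta^2$. The point is that $v$ satisfies the equation
\[
\partial_t v=\mathcal L_c\partial_{x_1}v-2\eta\,\partial_{x_1}v+(\rho_1'-c)\partial_{x_1}v+\rho_2'\partial_{x_2}v+c'(Q_c+\eta),
\]
in which the modulation error enters as $(\rho_1'-c)\partial_{x_1}v$; in the virial $-\tfrac12\tfrac{d}{dt}\int\varphi_A v^2$ this integrates by parts to $\tfrac{\rho_1'-c}{2}\int\phi_A v^2$, which is $O(\|\eta\|_{\mathrm{loc}})\times\int\phi_A v^2$ and is absorbed by the coercive term. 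In your primal virial on $w$, the corresponding modulation contribution is $(\rho_1'-c)\int\varphi_A\, w\,\partial_{x_1}Q_c$; since $|\varphi_A|\lesssim A$ and $\partial_{x_1}Q_c$ is merely localized, this term is of size $A\|w\|_{\mathrm{loc}}^2$ with $A$ large, and you give no mechanism to absorb it. Moreover, the almost-orthogonality $|\!\int v\,\Lambda Q_c|\lesssim\|\eta\|_{L^2}^2$ that feeds the coercivity Lemma~\ref{coercivity} is obtained precisely through the choice $(\eta,Q_c)=0$ combined with $\mathcal L_c\Lambda Q_c=-Q_c$; your alternative orthogonality does not produce this structure.

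\textbf{Monotonicity and the endgame.} Because $v$ involves $\Delta\eta$ and $\eta^2$, its exponential decay in $x_1$ (needed below) does \emph{not} follow from hypothesis \eqref{NonLinearLiouville2} alone; the paper builds a full ladder of monotonicity lemmas (Lemmas~\ref{nlL2monotonicity}--\ref{nlhomonotonicity}, including an $H^2$ step to compensate for the failure of $H^1\hookrightarrow L^\infty$ in $\mathbb R^2$) to obtain $\sup_t\int(\partial^\alpha u)^2(\cdot+\rho(t))e^{\tilde\sigma|x_1|}<\infty$ for all $\alpha$, hence \eqref{NonLinearLiouville11}. Finally, your last sentence ``time-integrability $+$ uniform smallness forces $w\equiv0$'' is not a valid implication: finiteness of $\int_{\mathbb R}\mathcal Q(w(t))\,dt$ only gives $\mathcal Q(w(t_n))\to0$ along sequences. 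The paper closes this by (i) using the exponential decay in $x_1$ to upgrade $\int\phi_A v^2(t_n)\to0$ to $\int v^2(t_n)\to0$, and (ii) re-integrating the virial inequality between two such sequences $s_n\to-\infty$, $t_n\to+\infty$ to conclude $\int_{\mathbb R}\!\int\phi_A v^2\,dx\,dt=0$, hence $v\equiv0$ and then $\eta\equiv0$ via the coercivity bound $\|\eta\|_{H^1}\lesssim\|v\|_{L^2}$.
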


\begin{remark}
Due to the stability result of de Bouard \cite{deBo}, Theorems  \ref{AsymptStab}  and \ref{NonLinearLiouville}  still hold true if we assume that 
\begin{equation} \label{NonLinearLiouville4}
\|u_0-Q_{c_0}\|_{H^1} \le\epsilon \, ,
\end{equation}
instead of \eqref{AsymptStab.1} and \eqref{NonLinearLiouville1}.
\end{remark}

\begin{remark} Theorem \ref{NonLinearLiouville}  still holds true if we replace assumption \eqref{NonLinearLiouville2} by the weaker assumption that  the solution $u$ is \textit{$L^2$-compact} in the $x_1$ direction, \textit{i.e.}: 
\begin{displaymath}
\forall \, \epsilon>0, \ \exists \, A>0 \ \text{such that} \quad \sup_{t \in \mathbb R}\int_{  |x_1|>A}u^2(x+\rho(t),t)dx \le \epsilon \, .
\end{displaymath}
\end{remark}

\medskip

We also prove a rigidity theorem for the solutions of the linearized gZK (or \eqref{ggZK}) equation in spatial dimension $d=2$  around $Q_c$ which are uniformly localized in the direction $x_1$. 

\begin{theorem}(Linear Liouville property around $Q_{c_0}$) \label{LinearLiouville} 
Assume $d=2$. There exists $2<p_2<3$ such that for all $2 \le p <p_2$, the following holds true. Let $c_0>0$ and $\eta \in C(\mathbb R : H^1(\mathbb R^2))$ be a solution to 
\begin{equation} \label{LinearLiouville1}
\partial_t\eta=\partial_{x_1}\mathcal{L}_{c_0}\eta
 \quad \text{on} \ \mathbb R^2 \times \mathbb R \, , 
 \end{equation}
 where $\mathcal{L}_{c_0}$ is defined in \eqref{Lc}.
 Moreover, assume that there exists a constant $\sigma>0$ such that 
 \begin{equation} \label{LinearLiouville2} 
  \int_{x_2}\eta^2(x_1,x_2,t)dx_2  \lesssim e^{-\sigma |x_1|}\, , \quad \forall \, (x_1,t) \in \mathbb R^2 \, .
 \end{equation}
 Then, there exists $(a_1,a_2) \in \mathbb R^2$ such that 
 \begin{equation} \label{LinearLiouville3}
 \eta(x,t)=a_1 \partial_{x_1}Q_{c_0}(x) +a_2 \partial_{x_2}Q_{c_0} (x),\quad\forall \, (x,t) \in \mathbb R^3 \, .
 \end{equation}
\end{theorem}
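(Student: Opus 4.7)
The proof adapts the linear rigidity strategy of Martel and Martel--Merle from 1D gKdV to the 2D ZK equation. Throughout, I write $Q=Q_{c_0}$ and $\mathcal L=\mathcal L_{c_0}$ for brevity.

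\textbf{Step 1: decomposition.} By \eqref{kernel}, $\ker\mathcal L=\text{span}\{\partial_{x_1}Q,\partial_{x_2}Q\}$. I would first decompose orthogonally in $L^2$
\begin{equation*}
\eta(t)=a_1(t)\,\partial_{x_1}Q+a_2(t)\,\partial_{x_2}Q+\tilde\eta(t),\qquad \tilde\eta(t)\perp \partial_{x_j}Q\ \ (j=1,2).
\end{equation*}
Projecting \eqref{LinearLiouville1} onto $\partial_{x_j}Q$, using $\mathcal L\partial_{x_j}Q=0$ and the commutator identity $[\partial_{x_1},\mathcal L]=-p(p-1)Q^{p-2}\partial_{x_1}Q$, produces linear expressions for $a_j'(t)$ bounded by $\|\eta(t)\|_{L^2(e^{-\delta|x_1|}dx)}$. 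Thus it suffices to show $\tilde\eta\equiv 0$; once this is established the same projections force $a_j'\equiv 0$ and yield \eqref{LinearLiouville3}.

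\textbf{Step 2: dual variable and Virial functional.} Since $\mathcal L$ is self-adjoint and invertible on $\{\partial_{x_j}Q\}^{\perp}$, I would define $v(t)$ in that subspace by $\mathcal L v(t)=\tilde\eta(t)$. Differentiating this identity in time and using \eqref{LinearLiouville1} shows that $v$ essentially satisfies $\partial_tv=\partial_{x_1}\tilde\eta$ modulo the modulation terms. The key functional I propose is
\begin{equation*}
\mathcal I(t)=\int \varphi(x_1)\,v(x,t)\,\tilde\eta(x,t)\,dx,
\end{equation*}
where $\varphi$ is a bounded nondecreasing weight whose derivative $\varphi'(x_1)$ is a positive bump. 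Integration by parts performed only in $x_1$ (with $x_2$ treated as a parameter, the transverse Laplacian $\partial_{x_2}^2$ being symmetric for this integration) should yield a Virial inequality of the form
\begin{equation*}
\frac{d}{dt}\mathcal I(t)\le -\mathcal Q(\tilde\eta(t))+E(t),
\end{equation*}
where $\mathcal Q$ is a quadratic form localized by $\varphi'(x_1)$ and $E(t)$ is a remainder absorbed using \eqref{LinearLiouville2}.

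\textbf{Step 3: time integration and conclusion.} The decay \eqref{LinearLiouville2} together with the uniform $H^1$ bound following from \eqref{LinearLiouville1} implies $|\mathcal I(t)|\le C$ for all $t$. Integrating the Virial inequality on $\mathbb R$ yields $\int_{\mathbb R}\mathcal Q(\tilde\eta(t))\,dt<\infty$, hence sequences $t_n^\pm\to\pm\infty$ along which $\tilde\eta(t_n^\pm)\to 0$ in $L^2_{\mathrm{loc}}(\mathbb R^2)$. The exponential $x_1$-localization in \eqref{LinearLiouville2} upgrades this to $L^2(\mathbb R^2)$ convergence, and a standard backward/forward uniqueness argument for the $L^2$ linear flow associated to $\partial_{x_1}\mathcal L$ then propagates $\tilde\eta(t_n^\pm)=0$ to $\tilde\eta\equiv 0$ on $\mathbb R^2\times\mathbb R$.

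\textbf{Main obstacle.} The hard part is the coercivity of $\mathcal Q$ on $\{\partial_{x_j}Q\}^{\perp}$. The presence of $\partial_{x_2}^2$ in $\mathcal L$ generates mixed transverse terms that do not automatically have a definite sign, so the weight $\varphi$ must be chosen so that every such term either telescopes in $x_2$ or is absorbed into the good part of $\mathcal Q$. Once the problem is reduced to a scalar inequality on the radial sector, coercivity boils down precisely to the spectral condition $\int \mathcal L^{-1}\Lambda Q\cdot \Lambda Q<0$ verified numerically in the appendix, which is exactly what forces the restriction $2\le p<p_2$.
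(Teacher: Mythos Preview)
Your outline captures the right spirit (dual problem, weighted virial, spectral condition), but two concrete choices are set up incorrectly and would prevent the argument from closing.

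\textbf{The dual variable goes the wrong way.} You define $v$ by $\mathcal Lv=\tilde\eta$, i.e.\ $v=\mathcal L^{-1}\tilde\eta$. The paper does the opposite: it sets $v=\mathcal L\eta-\alpha_0 Q$ (so morally $v=\mathcal L\eta$). The reason is that from $\partial_t\eta=\partial_{x_1}\mathcal L\eta$ one gets the clean dual equation $\partial_t v=\mathcal L\partial_{x_1}v$, and then the virial $-\tfrac12\frac{d}{dt}\int\varphi_A v^2$ produces exactly $\int\phi_A(\partial_{x_1}v)^2+\tfrac12 H_A(v,v)$ plus a sign-favourable potential term. With your choice $v=\mathcal L^{-1}\tilde\eta$, the claimed relation $\partial_t v\approx\partial_{x_1}\tilde\eta$ does not hold: from $\mathcal L\partial_t v=\partial_t\tilde\eta=\partial_{x_1}\mathcal L\tilde\eta-\sum a_j'\partial_{x_j}Q$ you cannot cancel one power of $\mathcal L$ without picking up the commutator $[\mathcal L,\partial_{x_1}]$, so the mixed functional $\int\varphi\,v\,\tilde\eta$ does not differentiate into a coercive form.

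\textbf{The third orthogonality is missing.} You work only with $v\perp\partial_{x_1}Q,\partial_{x_2}Q$. But $\mathcal L$ has a simple negative eigenvalue, so $H(v,v)=(\mathcal Lv,v)$ is \emph{not} positive on $\{\partial_{x_j}Q\}^\perp$; one needs an additional orthogonality to kill the negative direction. In the paper this is $(v,\Lambda Q)=0$, and it is enforced by hand via the shift $v=\mathcal L\eta-\alpha_0 Q$ with $\alpha_0$ chosen so that $\int v\Lambda Q=0$ at $t=0$; the choice propagates because $\mathcal L\Lambda Q=-Q$ makes $\frac{d}{dt}\int v\Lambda Q=\int v\partial_{x_1}Q=0$. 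The spectral condition $(\mathcal L^{-1}\Lambda Q,\Lambda Q)<0$ is exactly what guarantees coercivity of $H$ under these \emph{three} orthogonalities (Weinstein's criterion). In your sketch this condition is mentioned only at the end as producing coercivity on $\{\partial_{x_j}Q\}^\perp$, which is not what it does; without the $\Lambda Q$ constraint the quadratic form $\mathcal Q$ you obtain cannot be positive and the virial inequality fails.

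A smaller point: once $v\equiv 0$, the paper concludes directly that $\eta=\beta_0\Lambda Q+a_1(t)\partial_{x_1}Q+a_2(t)\partial_{x_2}Q$ and uses the equation to force $\beta_0=0$, $a_j'=0$; no backward/forward uniqueness argument is needed.
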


\begin{remark}
It will be clear from the proof (\textit{c.f.} Remark \ref{rema.mono}) that  Theorem \ref{LinearLiouville}  still holds true if we replace assumption \eqref{LinearLiouville2} by the weaker assumption that  the solution $\eta$ is \textit{$L^2$-compact} in the $x_1$ direction, \textit{i.e.}: $\eta \in C_b(\mathbb R : H^1(\mathbb R^2))$ and 
\begin{displaymath}
\forall \, \epsilon>0, \ \exists \, A>0 \ \text{such that} \quad \sup_{t \in \mathbb R}\int_{  |x_1|>A}\eta^2(x,t)dx \le \epsilon \, .
\end{displaymath}
\end{remark}

\begin{remark} \label{remarkscaling}
By using the scaling invariance of \eqref{ZK}, it is enough to prove Theorems \ref{LinearLiouville}, \ref{NonLinearLiouville} and \ref{AsymptStab} in the case where $c_0=1$. 
\end{remark}

\medskip

Recall that the first result of asymptotic stability of solitons for generalized KdV equations was proved by Pego and Weinstein \cite{PW} in weighted spaces (see also \cite{Mi} for some refinements on the weights). In \cite{MM1}, Martel and Merle have given the first asymptotic result for the solitons of gKdV in the energy space $H^1$. They improved their result in \cite{MM5} and generalized it to a larger class of nonlinearities than the pure power case in \cite{MM2}. 

Their proof relies on a Liouville type theorem for $L^2$-compact solutions around a soliton (similar to Theorem \ref{NonLinearLiouville} in one dimension). Then, it is proved that a solution near a soliton converges (up to subsequence) to a limit object, whose emanating solution satisfies a good decay property. Due to the rigidity result, this  limit object has to be a soliton. 

It is worth noting that this technique of proof was also adapted to prove asymptotic stability in the energy space for other one dimensional models such as the BBM equation \cite{ElDi} and the BO equation \cite{KeMa}.

We also refer to \cite{MeVe,AlMuVe,MiTz} for stability results for KdV and mKdV in $L^2$ and to \cite{BGS,GraSme} for asymptotic results for the Gross-Pitaevskii equation in one dimension. For other results on asymptotic stability for nonlinear Schr\"odinger and wave equations, see \cite{SW,BP,KS,KK} and references therein.
\medskip

{\bf About the proofs. Comparison with previous results.} When proving Theorems \ref{AsymptStab}, \ref{NonLinearLiouville} and \ref{LinearLiouville}, we generalize the ideas of Martel and Merle \cite{MM1,MM5,MM2} and Martel \cite{Ma} to a multidimensional model. However, compared with these previous results, the higher dimensional case describing the ZK dynamics presents new challenges, that we explain in the following lines. 

\medskip

First of all, as far as we know, our results represent the first two dimensional model where asymptotic stability is proved, in the energy space, and with no nonstandard spectral assumptions on the linearized dynamical operator.  As we expressed before, we only need to check the numerical condition
\be\label{N_C}
\int \mathcal{L}^{-1}\Lambda Q \Lambda Q  <0.
\ee
Obtaining a direct proof of this result seems far from any reasonable approach because the soliton $Q$, and therefore, the function $ \mathcal{L}^{-1}\Lambda Q$, have no closed and explicit forms. This is the first difference with respect to the one dimensional case: we work with a solitary wave that is \emph{not explicit} at all. 

\medskip

We will see through the proofs that ZK behaves as a KdV equation in the $x_1$ direction, and as a \emph{nonlinear Schr\"odinger} (NLS) equation in the $x_2$ variable. In particular, we are able to prove monotonicity properties (see Lemma \ref{nlL2monotonicity}) along the $x_1$ direction and along a \emph{slighted perturbed cone} around the $x_1$ direction (Lemma \ref{AsymptMonotonicity}). This last result is, to our knowledge, new in the literature and makes use of the geometrical properties of the nonlinear ZK dynamics around a solitary wave. Remark \ref{1R} and the asymptotic stability result inside the set (see \eqref{ASt})
\[
\mathcal{AS}(t,\theta):= \Big\{(x_1,x_2)\in \R^2 \ : \   x_1-\beta t +(\tan\theta)\, x_2 >0  \Big\}, \ \text{where} \ \theta \in (-\frac{\pi}3,\frac \pi 3),
\]
are deep consequences of these geometrical properties. Recall that such rich foliations are not present in the one dimensional case. We also complement our results by a simple  linear analysis leading to the same formal conclusions, carried out in Appendix \ref{CC}. 

\medskip

Another barrier that appears in the higher dimensional case is the \emph{lack of $L^\infty$ control} on the solution if we only assume $H^1$ bounds. We need such a control to ensure pointwise exponential decay around solitons at infinity for a compact part of the solution. In the one dimensional case, the proof of this fact is direct from the Sobolev embedding. However, since $H^1$ is not contained in $L^\infty$ in $\R^2$, we must prove new monotonicity properties at the $H^2$ level (cf. Lemma \ref{nlH2monotonicity}), which are obtained by proving new energy estimates.

\medskip

No monotonicity property seems to hold for the $x_2$ direction, mainly because of the conjectured existence of  trains of small solitons moving to the right in $x_1$ but without restrictions on the $x_2$ coordinate. From the point of view associated to the $x_2$ variable, such solutions represent movement of mass along the $x_2$ direction without a privileged dynamics. In particular, no asymptotic stability result is expected for a half-plane involving the $x_2$ variable only (see Fig. \ref{fig:2}). This is the standard situation  in many $2d$ models like KP-I and NLS equations. However, here we are able to prove the asymptotic stability of ZK solitons because the KdV dynamics is exactly enough to control the movement of mass along the $x_2$ direction.

\begin{figure}
\begin{center}
\begin{tikzpicture}[
	>=stealth',
	axis/.style={semithick,->},
	coord/.style={dashed, semithick},
	yscale = 1,
	xscale = 1]
	\newcommand{\xmin}{0};
	\newcommand{\xmax}{9};
	\newcommand{\ymin}{0};
	\newcommand{\ymax}{5};
	\newcommand{\ta}{3};
	\newcommand{\fsp}{0.2};
	\draw [axis] (\xmin-\fsp,0) -- (\xmax,0) node [right] {$x_1$};
	\filldraw[color=light-gray3] (8,1) circle (0.4);
	\filldraw[color=light-gray2] (0,2) rectangle (9,5); 
	\draw [axis] (0,\ymin-\fsp) -- (0,\ymax) node [below left] {$x_2$};
	\draw [thick,->] (8,1) -- (9,1);
	\filldraw[color=light-gray2] (5,1) circle (0.5); 
	\draw [thick,->] (5,1) -- (5.6,1);
	\draw [thick,-] (-0.2,2) -- (9,2);
	%\filldraw[color=light-gray1] (1,1) circle (0.6); 
	%\draw [thick,->] (1,1) -- (1.4,1);
	\filldraw[color=light-gray1] (2,4) circle (0.7); 
	\draw [thick,->] (2,4) -- (2.5,4);
	%\draw [<->] (-0.2,1) -- (-0.2,4);
	\draw (6.2,-0.5) node [left] {$c_1<x_1<c_2$};
	%\draw [dashed] (-0.2,1) -- (9,1);
	%\draw [dashed] (-0.2,4) -- (2,4);
	\draw [dashed] (4,0) -- (4,5);
	\draw [dashed] (6,0) -- (6,5);
	\draw (3,2) node [above] {$x_2>c_0$};
	%\draw [dashed] (1,1) -- (1,2.5);
	%\draw [dashed] (5,1) -- (5,2.5);
	%\draw [<->] (5,2.5) -- (8,2.5);
	%\draw (6.5,2.5) node [above] {$L$};
	%\draw [dashed] (8,1) -- (8,2.5);
\end{tikzpicture}
\end{center}
\caption{A schematic example of why no asymptotic stability is expected to hold on the $x_2$ direction. The band in the $x_2$ variable defined by fixed $c_1<x_1<c_2$ has increasing and decreasing variation of mass along time. Faster solitons are darker and more concentrated; speed is commensurate with arrow length.}\label{fig:2}
\end{figure}
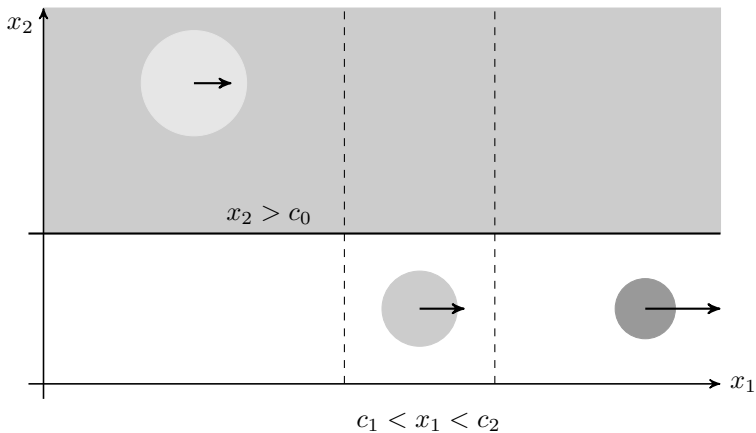

\medskip

The second ingredient in the proofs of Theorems \ref{AsymptStab}, \ref{NonLinearLiouville} and \ref{LinearLiouville} is a \emph{new virial identity} in higher dimensions (cf. \eqref{LinearLiouville9}), which holds only for the half space $\{x_1> c_0 t\}$ and for $p=2$ and slightly larger. Compared with the previous works by Martel and Merle, the additional dimensions make things harder because they induce transversal variations that seem to destroy any virial-type inequality. In order to overcome this difficulty, we use a different orthogonality condition for the function $v$ employed in the virial (see Lemma \ref{coercivity}): 
\be\label{C_O}
\int v\Lambda Q =0.
\ee
We emphasize that this condition is somehow natural and necessary if we want to get full control of the $x_2$ perturbations appearing from the variations of the virial terms. Without using this modified condition, any form of two-dimensional virial identity is no longer true. 

\medskip 

Here is when the nonstandard spectral condition \eqref{N_C} appears: under the orthogonality condition \eqref{C_O}, the \emph{virial identity  holds provided \eqref{N_C} is satisfied}. We prove that \eqref{N_C} holds for the case $p=2$ and $d=2$, as expressed by some numerical computations obtained in Appendix \ref{A}. This condition in fact generalizes the Martel one in \cite{Ma} and seems to be the natural one for the $2d$ case, as described when proving the nonlinear stability result (see \eqref{NonLinearLiouville8a}-\eqref{NonLinearLiouville9} for instance). It is worth noting that this condition has already been used by Kenig and Martel in the Benjamin-Ono context \cite{KeMa}, for different reasons. For powers of the nonlinearity which are definitely larger than $2$, or just the three dimensional case for $p=2$, we have a strong instability effect at the level of the previous spectral theory, probably associated to the dynamics around the soliton in the $x_2$ variable, and the virial identity seems no longer to hold.\footnote{In the one dimensional case, this instability condition does not appear, see Martel \cite{Ma}.} Once again, a good understanding of the dynamics for powers close the the critical case $p=3$ or supercritical  as in \cite{Ma} needs a deep extension of \eqref{LinearLiouville9} by incorporating now the dynamics in the $x_2$ variable, which could be very complicated, in view of some results by del Pino et al. \cite{delP}.  The extension of the ideas introduced by Martel \cite{Ma} to any power of $p$ seems a very interesting problem. 

\medskip

Finally, we mention that another crucial application of the monotonicity formula on \emph{perturbed cones}, needed in the higher dimensional case, is given in Lemma \ref{L2CV}. Here, a new \emph{compact} region $\mathcal R$ of the plane is introduced, outside of which we prove \emph{exponential decay}. This set is constructed in order to prove the strong convergence of sequences of bounded solutions, thanks to the use of the Sobolev embedding theorem. 

\medskip

One can also ask for the nonlinear dynamics in the remaining part of the plane, namely the region $\mathcal{AS}(t,\theta)^c$, see \eqref{ASt}. We believe that in addition to radiation, one can find small solitons $Q_c$ moving to the right in a very slow fashion. No finite energy solitary waves with speed along the $x_2$ direction are present, as shows the following (general) definition and result. As usual, we define the symbol $ \partial_{x_1}^{-1}\partial_{x_j}$ by using its corresponding Fourier representation $\xi_1^{-1}\xi_j \mathcal F(\cdot )$.

\begin{definition}
We say that $v \in H^1(\R^d)$, $ \partial_{x_1}^{-1}\partial_{x_2}v, \cdots, \partial_{x_1}^{-1}\partial_{x_d}v\in L^2(\R^d)$ is the profile of a solitary wave of speeds $(c_1,c_2,\cdots, c_d)\in \mathbb R^d$  if 
\[
u(x_1,x_2, \cdots, x_d,t) := v(x_1-c_1 t, x_2-c_2 t,\cdots, x_d-c_d t), \quad v\not\equiv 0,
\]
is solution of \eqref{gZK}.
\end{definition}

Note that such a $v$ must satisfy the equation in $\R^d$
\be\label{SW}
\Delta v -c_1 v + v^p -\sum_{j=2}^dc_j \partial_{x_1}^{-1}\partial_{x_j} v=0 \, .
\ee

\begin{theorem}\label{NonExistence}
Assume that  $c_j\neq 0$ for some $j \in \{2, 3, \dots, d \}$. Then \eqref{SW} has no finite energy solutions. 
\end{theorem}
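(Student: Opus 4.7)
The plan is to establish the identity $\int_{\R^d} v^p\,dx = 0$ by integrating \eqref{SW} twice (first in $x_1$, then over the transverse variable $\tilde x := (x_2,\dots,x_d)$), which for the physically relevant case $p=2$ forces $v\equiv 0$, contradicting the assumption of a nontrivial finite energy solitary wave. The key starting observation is that the finite energy hypothesis $w_j := \partial_{x_1}^{-1}\partial_{x_j}v\in L^2(\R^d)$ (applied to any $j\ge 2$ with $c_j\neq 0$) forces the $x_1$-marginal
\[
V(\tilde x) := \int_{\R} v(x_1,\tilde x)\,dx_1 \equiv 0.
\]
Indeed, in Fourier variables $\widehat{w_j}(\xi) = (\xi_j/\xi_1)\widehat v(\xi)$, and square-integrability near the hyperplane $\{\xi_1=0\}$ rules out $\widehat v(0,\tilde\xi)\neq 0$ on a positive measure set; by the Fourier slice theorem this is precisely $V\equiv 0$. (Equivalently, applying $\partial_{x_1}$ to \eqref{SW} yields $\partial_{x_1}(\Delta v+v^p-c_1 v) = \sum_{j\ge 2} c_j\partial_{x_j}v$, and integrating over $x_1\in\R$ together with decay gives $\sum_{j\ge 2} c_j \partial_{x_j} V = 0$, which combined with decay of $V$ at infinity forces $V\equiv 0$.) In particular, the antiderivative $\phi(x_1,\tilde x) := \int_{-\infty}^{x_1} v(s,\tilde x)\,ds$ satisfies $\phi(\pm\infty,\tilde x) = 0$ and $w_j = \partial_{x_j}\phi$.

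The second step is to integrate \eqref{SW} over the full space $\R^d$. Three of the four terms vanish essentially for free: $\int_{\R^d}\Delta v\,dx = 0$ by the divergence theorem, $-c_1\int_{\R^d} v\,dx = -c_1\int_{\R^{d-1}} V\,d\tilde x = 0$ by Step 1, and $\int_{\R^d} w_j\,dx = \int_{\R^d} \partial_{x_j}\phi\,dx = 0$ again by the divergence theorem in the $x_j$ direction (using decay of $\phi$ in $\tilde x$, inherited from that of $v$). What remains is
\[
\int_{\R^d} v^p\,dx = 0.
\]
For $p = 2$ this reads $\|v\|_{L^2(\R^d)}^2 = 0$, giving $v\equiv 0$ and the desired contradiction.

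The main technical obstacle in making each integration by parts rigorous is a pointwise (exponential) decay statement for $v$ and its derivatives; for a finite energy solution of the elliptic equation \eqref{SW}, such decay is standard via a bootstrap argument using Kato-type inequalities applied to $(-\Delta + c_1)v = v^p - \sum c_j w_j$. For odd $p$ (or more general nonlinearities such as $|v|^{p-1}v$) the identity $\int v^p\,dx = 0$ does not force $v\equiv 0$ on its own; one then supplements the argument with Pohozaev identities coming from the anisotropic scalings $v\mapsto v(\lambda x_1, \mu\tilde x)$ and with the bilinear relation $\int\partial_{x_1}v\,\partial_{x_j}v\,dx = -\tfrac{c_j}{2}\|v\|_{L^2}^2$ (obtained by computing $\frac{d}{dt}\int x_j u^2\,dx$ along the ZK flow and exploiting that $\int x_j u^2\,dx$ grows linearly in $t$ for a traveling wave), and concludes via Cauchy--Schwarz on this bilinear relation.
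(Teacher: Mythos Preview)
Your approach for even $p$ is different from the paper's, and for general $p$ it has a genuine gap.

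\medskip

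\textbf{The paper's argument.} After a rotation in $(x_2,\dots,x_d)$ one may assume $c_2\neq 0$ and $c_3=\cdots=c_d=0$. Two Pohozaev multipliers are used: testing \eqref{SW} against $x_2\,\partial_{x_1}v$ gives
\[
\int \partial_{x_1}v\,\partial_{x_2}v\,dx \;=\; -\tfrac{c_2}{2}\int v^2\,dx,
\]
while testing against $x_1\,\partial_{x_2}v$ (and using $\partial_{x_2}v=\partial_{x_1}w_2$) gives
\[
\int \partial_{x_1}v\,\partial_{x_2}v\,dx \;=\; \tfrac{c_2}{2}\int w_2^{\,2}\,dx.
\]
Since $c_2\neq 0$, these combine to $\int v^2 + \int w_2^{\,2}=0$, forcing $v\equiv 0$. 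The nonlinear term $v^p$ drops out of \emph{both} identities, so the argument is insensitive to the value or parity of $p$, and no sign condition on $c_1$ is used. Only the finite-energy hypotheses $v\in H^1$ and $w_2\in L^2$ (together with a standard cut-off approximation) are needed to justify the integrations by parts.

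\medskip

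\textbf{Where your argument falls short.} You obtain the first of these two identities (via the equivalent dynamical route $\tfrac{d}{dt}\int x_j u^2$), but not the second. Cauchy--Schwarz applied to $\int\partial_{x_1}v\,\partial_{x_j}v=-\tfrac{c_j}{2}\|v\|_{L^2}^2$ only yields the inequality $|c_j|\,\|v\|_{L^2}^2 \le 2\|\partial_{x_1}v\|_{L^2}\|\partial_{x_j}v\|_{L^2}$, which is not a contradiction; the anisotropic scaling Pohozaev identities you allude to do not by themselves produce the opposite-sign relation needed to close the argument. The missing ingredient is precisely the second multiplier $x_1\,\partial_{x_j}v$, which converts the $w_j$ term into $+\tfrac{c_j}{2}\|w_j\|_{L^2}^2$.

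\medskip

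\textbf{On the even-$p$ route.} Your integration-over-$\R^d$ argument (yielding $\int v^p=0$) is an interesting alternative for even $p$, but it carries a hidden assumption: the Kato-type bootstrap you invoke for exponential decay of $v$ requires $c_1>0$, which is \emph{not} part of the hypotheses of the theorem. Without decay, neither $\int_{\R^d} v\,dx$ nor $\int_{\R^d} w_j\,dx$ is obviously well-defined from $v\in H^1$, $w_j\in L^2$ alone, and your Fourier-slice justification of $V\equiv 0$ relies on continuity of $\widehat v$ (hence $v\in L^1$), which again is not free. The paper's Pohozaev pair sidesteps all of this: every integral that appears is an $L^2$ pairing of quantities already assumed to lie in $L^2$, so no decay beyond the stated hypotheses is needed.
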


We prove this result in Appendix \ref{D}, using adapted Pohozahev identities.

\medskip

Finally, as a consequence of the monotonicity properties associated to the linear part of the dynamics, in particular, using Lemma \ref{monotonicity}, we are able to prove the stability of the sum of $N$ essentially non-colliding solitons.

\begin{definition}\label{Ldec}
Let $N \ge 2$ be an integer and $L \ge 0$. Consider $N$ solitons with scalings $c_1^0, \dots, c_N^0 >0$ and centers $\rho^{1,0}, \dots, \rho^{N,0} \in \R^2$, where $\rho^{j,0} =(\rho^{j,0}_1,\rho^{j,0}_2)$. 
We say that these $N$ solitons are $L$-decoupled if 
\be\label{Condition}
 \inf \left\{ \big|((c_k^0 - c_j^0) t ,0) +  \rho^{k,0} - \rho^{j,0}\big| \mid j \ne k, \ t \ge 0 \right\} \ge L,
\ee
that is, the solitons centers remains separated by a distance of at least $L$ for positive times. (See Fig. \ref{fig:3} below.) 
\end{definition}

$L$-decoupled solitons can be characterized by a condition on the initial data only, at least up to a constant in $L$: indeed, one can check that if, for all $j\ne k$, we have either:
\begin{itemize}
\item $|\rho^{j,0}_2 - \rho^{k,0}_2| \ge L$, or
\item $c_k^0 >c_j^0$ and $\rho^{k,0}_1 - \rho^{j,0}_1  \ge L$,
\end{itemize}
then the $N$ solitons are $L$-decoupled.

\begin{theorem}[Stability of the sum of $N$ decoupled solitons]\label{NSoliton}

Assume $d=2$. Consider a set of $N$ solitons of the form
\[
Q_{c_1^0}(x-\rho^{1,0}),  \, Q_{c_2^0}(x-\rho^{2,0}), \ldots,  \, Q_{c_N^0}(x-\rho^{N,0}),
\] 
where each $c_j^0$ is a fixed positive scaling, $c_j^0 \neq c_k^0$ for all $j\neq k$, and $\rho^{j,0} =(\rho^{j,0}_1,\rho^{j,0}_2) \in \R^2$. Assume that the $N$ solitons are $L$-decoupled, in the sense of Definition \ref{Ldec}.
%Assume additionally the following non-collision condition: 
%\be\label{Condition}
%\hbox{ if for $j< k$ one has }  \rho^{j,0}_2 =\rho^{k,0}_2, \hbox{ then } c_j^0 <c_k^0 \hbox{ and }   \rho^{j,0}_1 < \rho^{k,0}_1.
%\ee
Then there are $\ve_0>0$, $C_0>0$ and $L_0>0$ depending on the previous parameters such that, for all $\ve \in (0,\ve_0)$, and for every $L>L_0$, the following holds.   Suppose that $u_0\in H^1(\R^2)$ satisfies
\be\label{InitialCondition}
\|u_0 - \sum_{j=1}^N Q_{c_j^0}(x-\rho^{j,0}) \|_{H^1} <\ve.
\ee
%and for $j\neq k$, the parameter
%\[
%\delta^{j,k,0} := 
%\begin{cases}  
%| \rho^{j,0}_1 - \rho^{k,0}_1| & \hbox{ if } \rho^{j,0}_2 =\rho^{k,0}_2,\\
%| \rho^{j,0}_2 - \rho^{k,0}_2| & \hbox{ otherwise},\\
%\end{cases}
%\]
%obeys the condition
%\be\label{ConditionL}
%\min \{ \delta^{j,k,0} \ : \ j\neq k \} >L.
%\ee
Then there are $\ga_1>0$ fixed and $\rho^{j}(t) \in \R^2$ defined for all $t\geq 0$ such that $u(t)$, solution of \eqref{ZK} with initial data $u(0) =u_0$ satisfies
\be\label{StabilityN}
\sup_{t\geq 0}\|u(t) - \sum_{j=1}^N Q_{c_j^0}(x-\rho^{j}(t)) \|_{H^1} <C_0(\ve + e^{-\ga_1 L}).
\ee

\end{theorem}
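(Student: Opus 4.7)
The plan is a modulation-plus-bootstrap argument in the spirit of Martel--Merle--Tsai for multi-soliton stability of gKdV, adapted to 2D using the monotonicity formulas along tilted half-spaces developed earlier in the paper (notably the perturbed cone monotonicity of Lemma \ref{monotonicity}). On a maximal bootstrap time interval $[0,T^*)$, I apply the implicit function theorem to find $C^1$ centers $\rho^j(t)\in\R^2$ near $\rho^{j,0} + (c_j^0 t, 0)$ such that the error $w(t,x) := u(t,x) - R(t,x)$, with $R(t,x) := \sum_j Q_{c_j^0}(x-\rho^j(t))$, satisfies the $2N$ orthogonality conditions
\[
\int w(t,x)\,\partial_{x_i}Q_{c_j^0}(x-\rho^j(t))\,dx = 0, \qquad i=1,2,\ j=1,\dots,N.
\]
The scalings $c_j^0$ are kept fixed (no asymptotic relaxation is claimed). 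The bootstrap hypothesis is $\|w(t)\|_{H^1} + \max_j|\rho^j(t)-\rho^{j,0}-(c_j^0 t,0)| \le K_*(\ve + e^{-\ga_0 L})$ on $[0,T^*)$ for some large $K_*$; by continuity $T^* > 0$, and the goal is to show $T^* = +\infty$.

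For each pair $j \neq k$, let $\vec n_{jk}(t)$ be (a mild rotation of) the unit vector along $\rho^k(t) - \rho^j(t) \approx \bigl((c_k^0-c_j^0)t + \rho^{k,0}_1-\rho^{j,0}_1,\,\rho^{k,0}_2-\rho^{j,0}_2\bigr)$, and set $\psi_{jk}(t,x) := \Psi\bigl(\vec n_{jk}(t)\cdot(x - m_{jk}(t))\bigr)$, with $\Psi$ a smoothed Heaviside and $m_{jk}(t)$ chosen on the segment between $\rho^j(t)$ and $\rho^k(t)$. The $L$-decoupling guarantees that, up to a small reorientation, $\vec n_{jk}$ lies in the cone of aperture strictly less than $\pi/3$ about $e_1$ where the tilted monotonicity is available (Lemma \ref{AsymptMonotonicity}). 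Define localized mass and energy
\[
M_j(t) := \int u^2(t,x)\,\Phi_j(t,x)\,dx, \qquad E_j(t) := \int\Bigl(\tfrac12|\nabla u(t,x)|^2 - \tfrac13 u(t,x)^3\Bigr)\Phi_j(t,x)\,dx,
\]
where $\Phi_j$ is the product of $\psi_{jk}$'s (or $1-\psi_{jk}$'s) that isolates soliton $j$; the tilted monotonicity then yields
\[
|M_j(t)-M_j(0)| + |E_j(t)-E_j(0)| \le C\bigl(e^{-\ga_1 L} + \|w\|_{L^\infty_t H^1_x}^3\bigr).
\]

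Assemble the Lyapunov functional $\mathcal F(t) := \sum_j \bigl(E_j(t)+c_j^0 M_j(t)\bigr)$ and expand $\mathcal F(t)-\mathcal F(0)$ using $u = R+w$. The zeroth-order contributions cancel because each $Q_{c_j^0}$ is a critical point of $H+c_j^0 M$; the first-order contributions vanish by the orthogonality conditions; cross terms between distinct solitons are $O(e^{-\ga_1 L})$ by the exponential decay in \eqref{elliptPDE1}; the $w$-quadratic part is coercive in $H^1$ by de~Bouard's subcritical coercivity (applicable since $p=2 < 1 + 4/d = 3$ for $d=2$), the two directions $\partial_{x_i}Q_{c_j^0}$ being annihilated by the orthogonality conditions. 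Combining with the almost-conservation above, and absorbing the cubic term using the bootstrap, I get $\|w(t)\|_{H^1}^2 \le C(\ve^2+e^{-\ga_1 L})$. In parallel, differentiating the orthogonality conditions in $t$ produces an almost-diagonal ODE for $\dot\rho^j-(c_j^0,0)$ with forcing $O(\|w\|_{H^1}+e^{-\ga_1 L})$, whence $|\rho^j(t)-\rho^{j,0}-(c_j^0 t,0)| \le C(\ve+e^{-\ga_1 L})$. Both estimates strictly improve the bootstrap hypothesis when $K_*$ is large and $\ve_0$, $e^{-\ga_1 L_0}$ are small, so $T^*=+\infty$ and \eqref{StabilityN} follows with $\ga_1$ as above.

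The main obstacle is the construction of the tilted cutoffs $\psi_{jk}$. Unlike in 1D, the separating direction of two solitons need not align with $e_1$: the $L$-decoupling allows purely transverse separation at $t=0$ for pairs $j,k$ with $\rho^{j,0}_1 \approx \rho^{k,0}_1$ but $|\rho^{j,0}_2 - \rho^{k,0}_2| \ge L$. To apply the monotonicity formulas, $\vec n_{jk}(t)$ must remain within the admissible cone $(-\pi/3,\pi/3)$ about $e_1$ (the angle $\pi/3$ is hard-coded in the linear ZK dynamics, cf.\ Remark \ref{2R}), while smoothly interpolating between a nearly transverse direction for small $t$ and an almost $e_1$ direction as $t\to\infty$. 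This time-dependent reorientation of the cutoffs, and the bookkeeping needed to keep the localized mass and energy estimates compatible with both the coercivity step and the ODE control of the centers, is the technical heart of the argument.
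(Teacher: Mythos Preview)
Your proposal has two genuine gaps.

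\textbf{Coercivity.} With only the $2N$ orthogonality conditions $\int w\,\partial_{x_i}Q_{c_j^0}=0$, the quadratic form $\sum_j\langle \mathcal{L}_{c_j^0} w,w\rangle$ is \emph{not} coercive: those conditions kill the kernel of $\mathcal{L}_c$ but leave the negative eigendirection $\chi_0$ uncontrolled. De~Bouard/Weinstein coercivity requires in addition $w\perp Q_{c_j}$ (or $w\perp\Lambda Q_{c_j}$). The paper follows MMT and modulates the scalings $c_j(t)$ as well, imposing $\int z\,\tilde Q_j=0$ as a third orthogonality; the Lyapunov argument then controls the variations $\hat\Delta c_j(t)$ via monotonicity of the localized masses $M_j$ and the identity $H_0/M_0=-\tfrac12$, and only afterwards passes back from $Q_{c_j(t)}$ to $Q_{c_j^0}$ using $|\hat\Delta c_j|\lesssim\|z\|_{H^1}^2+e^{-2\gamma_1 L}$. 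Without modulating $c_j$, your ``$w$-quadratic part is coercive'' step fails.

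\textbf{Tilted cutoffs.} You correctly identify the obstacle but do not resolve it, and it cannot be resolved by ``reorientation'': if two solitons are initially separated purely in $x_2$, the separating direction makes angle $\pi/2$ with $e_1$, strictly outside the admissible cone $(-\pi/3,\pi/3)$, and no small rotation fixes this. For small $t$ there is simply no monotonicity-compatible hyperplane between them. The paper sidesteps this entirely: since the $c_j^0$ are distinct, there is a \emph{fixed} time $T_\#$ (depending only on the initial parameters, not on $\varepsilon$ or $L$) after which the solitons are ordered in $x_1$; on $[0,T_\#]$ a crude Gronwall estimate on $u-R_0$ gives $\|z_0(T_\#)\|_{H^1}\lesssim e^{T_\#}(\varepsilon+e^{-\gamma_0 L})$. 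From $T_\#$ on, the cutoffs are purely $x_1$-aligned, $\varphi_j(x,t)=\psi_A(x_1-\sigma_j t)$ with $\sigma_j=\tfrac12(c_j^0+c_{j-1}^0)$, and only the straight $x_1$-monotonicity of Lemma~\ref{nlL2monotonicity} is used; the perturbed-cone monotonicity of Lemma~\ref{AsymptMonotonicity} plays no role here.

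A smaller issue: integrating $|\dot\rho^j-(c_j^0,0)|=O(\|w\|_{H^1}+e^{-\gamma_1 L})$ over $[0,\infty)$ cannot give $|\rho^j(t)-\rho^{j,0}-(c_j^0 t,0)|\le C(\varepsilon+e^{-\gamma_1 L})$ uniformly in $t$. Fortunately the theorem does not require this; one only needs that the centers remain well separated, which follows once the scalings are modulated and shown to stay close to the distinct $c_j^0$.
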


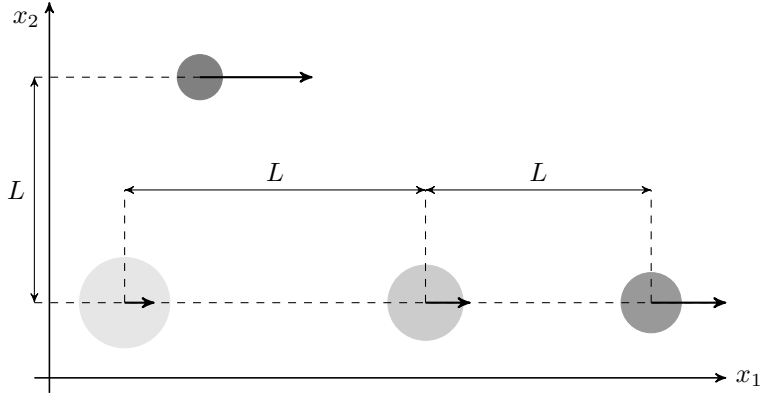
\begin{figure}
\begin{center}
\begin{tikzpicture}[
	>=stealth',
	axis/.style={semithick,->},
	coord/.style={dashed, semithick},
	yscale = 1,
	xscale = 1]
	\newcommand{\xmin}{0};
	\newcommand{\xmax}{9};
	\newcommand{\ymin}{0};
	\newcommand{\ymax}{5};
	\newcommand{\ta}{3};
	\newcommand{\fsp}{0.2};
	\draw [axis] (\xmin-\fsp,0) -- (\xmax,0) node [right] {$x_1$};
	\draw [axis] (0,\ymin-\fsp) -- (0,\ymax) node [below left] {$x_2$};
	\filldraw[color=light-gray3] (8,1) circle (0.4); 
	\draw [thick,->] (8,1) -- (9,1);
	\filldraw[color=light-gray2] (5,1) circle (0.5); 
	\draw [thick,->] (5,1) -- (5.6,1);
	\filldraw[color=light-gray1] (1,1) circle (0.6); 
	\draw [thick,->] (1,1) -- (1.4,1);
	\filldraw[color=gray] (2,4) circle (0.3); 
	\draw [thick,->] (2,4) -- (3.5,4);
	\draw [<->] (-0.2,1) -- (-0.2,4);
	\draw (-0.2,2.5) node [left] {$L$};
	\draw [dashed] (-0.2,1) -- (9,1);
	\draw [dashed] (-0.2,4) -- (2,4);
	\draw [<->] (1,2.5) -- (5,2.5);
	\draw (3,2.5) node [above] {$L$};
	\draw [dashed] (1,1) -- (1,2.5);
	\draw [dashed] (5,1) -- (5,2.5);
	\draw [<->] (5,2.5) -- (8,2.5);
	\draw (6.5,2.5) node [above] {$L$};
	\draw [dashed] (8,1) -- (8,2.5);
\end{tikzpicture}
\end{center}
\caption{A schematic example of admissible initial data. Solitons are represented by the disk where their mass is concentrated.}\label{fig:3}
\end{figure}

The proof of this result is obtained by adapting the ideas by Martel, Merle and Tsai \cite{MMT} for the generalized, one dimensional KdV case. Note that  we do not need strictly well-prepared initial data as in \cite{MMT}. Instead, from \eqref{Condition} we just need sufficient well-separated solitons in the $x_2$ variable (no particular order at the beginning), and in the case where solitons have the same $x_2$ coordinate, we ask for well-ordered solitons to avoid multi-collisions.

\bigskip

The rest of the paper is organized as follows. The linear and nonlinear Liouville properties (Theorems \ref{LinearLiouville} and \ref{NonLinearLiouville}) are proved respectively in Section \ref{sectionLL} and \ref{NonlinearRigidity}. The nonlinear Liouville property is used to show Theorem \ref{AsymptStab} in Section \ref{sectionAS}. Section \ref{sectionNSoliton} is devoted to the proof of Theorem \ref{NSoliton}. Finally, in  Appendix \ref{A}, we present some numerical computations which establish the negativity of a scalar product in the case $p=d=2$. Recall that this condition is a crucial element in the proofs of the rigidity results (Theorems \ref{NonLinearLiouville} and \ref{LinearLiouville}). We also make an interesting observation about the plane wave solutions of the linear part of ZK in Appendix \ref{CC} and give the proof of Theorem \ref{NonExistence} in Appendix \ref{D}.

\section{Linear Liouville property} \label{sectionLL}

This section is devoted to the proof of Theorem \ref{LinearLiouville}. According to Remark \ref{remarkscaling}, we will assume in this section that $c_0=1$.

\subsection{Monotonicity} In this subsection, we prove a monotonicity formula for the solutions of \eqref{LinearLiouville1} satisfying \eqref{LinearLiouville2}. 

Let $L$ denote a positive number such that $L \ge 4$.  We define $\psi_L \in C^{\infty}(\mathbb R : \mathbb R)$ by 
\begin{equation} \label{psi0}
\psi_L(y)=\frac2{\pi}\arctan (e^{y/L}),
\end{equation}
so that $\lim_{y \to +\infty}\psi_L(y)=1$ and $\lim_{y \to -\infty}\psi_L(y)=0$. Note also that 
\begin{equation} \label{psi}
\psi_L'(y)=\frac1{\pi L \cosh (y/L) } \quad \text{and} \quad |\psi'''(y)| \le \frac1{L^2}\psi_L'(y)\le \frac1{16}\psi_L'(y) \, .
\end{equation}

\begin{lemma} \label{monotonicity}
Let  $t_0 \in \mathbb R$, $y_0>0$ and $\tilde{x}_1=x_1-\frac12(t_0-t)-y_0$ for $t \le t_0$. Let $\eta \in C(\mathbb R : H^1(\mathbb R^2))$ be a solution of \eqref{LinearLiouville1} satisfying condition \eqref{LinearLiouville2}. Define $L_0=\max\{4,\frac1{(p-1)\delta}\}$, where $\delta$ is the positive number in \eqref{elliptPDE1}. Then
\begin{equation} \label{monotonicity1}
\int(\partial^{\alpha}\eta)^2(x,t_0)\psi_L(x_1-y_0)dx+
\int_{-\infty}^{t_0}\int\big( |\nabla\partial^{\alpha}\eta|^2+ (\partial^{\alpha}\eta)^2\big)(x,t)\psi'_L(\tilde{x}_1)dxdt \lesssim_L e^{-y_0/L} ,
\end{equation}
for all multi-index $\alpha \in \mathbb N^2$ and all positive number $L \ge L_{0}$.
\end{lemma}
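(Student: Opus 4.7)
\medskip

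\emph{Sketch of proof.} The plan is to adapt a Kato-type moving-weight monotonicity argument to the linearized ZK flow, and then run an induction on $|\alpha|$.

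\emph{Base case $\alpha = 0$.} Set $\psi(x_1,t) := \psi_L(\tilde{x}_1)$ with $\tilde{x}_1 = x_1 - y_0 - (t_0-t)/2$, so that $\partial_t \psi = \tfrac{1}{2} \psi'$. Using $\partial_t \eta = \partial_{x_1}(-\Delta \eta + \eta - pQ^{p-1}\eta)$ and integrating by parts in $x_1$ and $x_2$ (exploiting that $\psi$ is independent of $x_2$), the functional $I_0(t) := \int \eta^2\psi\,dx$ satisfies the virial identity
\begin{equation*}
\frac{d I_0}{dt} = -3\!\int (\partial_{x_1}\eta)^2 \psi'\,dx - \!\int (\partial_{x_2}\eta)^2 \psi'\,dx -\tfrac{1}{2}\! \int \eta^2 \psi'\,dx + \!\int \eta^2 \psi'''\,dx + R(t),
\end{equation*}
where $R(t) := p\int Q^{p-1}\eta^2 \psi'\,dx - p(p-1)\int Q^{p-2}(\partial_{x_1}Q)\,\eta^2\psi\,dx$ gathers the contributions from the potential $V = 1 - pQ^{p-1}$. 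The first three terms on the right are favorable, and the $\psi'''$ term is absorbed using $|\psi'''|\leq L^{-2}\psi' \leq \tfrac{1}{16}\psi'$, which holds since $L\geq 4$.

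\emph{Controlling $R(t)$.} This is the main difficulty: a pointwise bound alone yields $|R(t)|\lesssim e^{-cy_0}$, which is \emph{not} integrable on $(-\infty,t_0]$. The trick is to split the $x_1$-axis at the moving threshold $x_1^\star := y_0/2 + (t_0-t)/4$. On $\{x_1\geq x_1^\star\}$, the exponential decay of $Q$ gives $Q^{p-1}(x),\, Q^{p-2}|\partial_{x_1}Q|(x) \leq C e^{-(p-1)\delta x_1^\star}$; on $\{x_1\leq x_1^\star\}$, $\tilde{x}_1 \leq -x_1^\star$, so $\psi(\tilde{x}_1),\, L\psi'(\tilde{x}_1) \leq C e^{-x_1^\star/L}$. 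The hypothesis $L\geq L_0 = \max\{4, 1/((p-1)\delta)\}$ ensures $(p-1)\delta \geq 1/L$, so both envelopes reduce to $C e^{-y_0/(2L)} e^{-(t_0-t)/(4L)}$. Combining with the uniform bound $\|\eta(t)\|_{L^2}\leq C$ (obtained by integrating \eqref{LinearLiouville2} in $x_1$) and absorbing a small multiple of $\int \eta^2\psi'$ into $-\tfrac12\int \eta^2\psi'$, one obtains
\begin{equation*}
\frac{d I_0}{dt} \leq -\!\int \bigl(|\nabla\eta|^2 + c\,\eta^2\bigr)\psi'\,dx + C\,e^{-y_0/(2L)}\,e^{-(t_0-t)/(4L)}, \quad c>0.
\end{equation*}
By \eqref{LinearLiouville2} and dominated convergence (since $\psi(\tilde{x}_1(t)) \to 0$ pointwise as $t\to-\infty$), $I_0(t) \to 0$ as $t \to -\infty$. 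Integrating from $-\infty$ to $t_0$ and evaluating the exponential time integral yields \eqref{monotonicity1} at $\alpha = 0$, the stated rate $e^{-y_0/L}$ following from a cosmetic adjustment of $L_0$.

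\emph{Induction on $|\alpha|$.} For $|\alpha|\geq 1$, $\eta^{(\alpha)} := \partial^\alpha \eta$ satisfies $\partial_t \eta^{(\alpha)} = \partial_{x_1}\mathcal L\eta^{(\alpha)} + F_\alpha$, where $F_\alpha$ is a sum of terms of the form $\partial_{x_1}\bigl(\partial^\beta(Q^{p-1})\,\partial^{\alpha-\beta}\eta\bigr)$ with $0 < \beta \leq \alpha$, each coefficient $\partial^\beta(Q^{p-1})$ being exponentially decaying. The same virial computation applied to $\int (\eta^{(\alpha)})^2\psi\,dx$ produces the analogous favorable terms plus an extra commutator $-2\int \eta^{(\alpha)} F_\alpha\,\psi\,dx$; after one integration by parts in $x_1$ and the same spatial split as in the base case, this commutator is controlled via the inductive bound on $\int(\partial^\beta\eta)^2\psi'\,dx$ for $|\beta| < |\alpha|$. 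This closes the induction and establishes \eqref{monotonicity1} for every $\alpha$.
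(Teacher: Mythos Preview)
Your overall structure matches the paper's, and the base case is essentially correct; your two-region split at $x_1^\star=y_0/2+(t_0-t)/4$ is a variant of the paper's three-region split (at a fixed $R_1$ and at $(t_0-t)/2+y_0$) and yields $e^{-y_0/(2L)}$ instead of $e^{-y_0/L}$, which is harmless downstream.

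The induction step, however, has a genuine gap: you do not address the boundary term $\int(\partial^\alpha\eta)^2\psi_L(\tilde x_1)\,dx$ as $t\to-\infty$. For $\alpha=0$ you correctly invoke \eqref{LinearLiouville2} and dominated convergence, but for $|\alpha|\ge 1$ there is no a priori bound on $\|\partial^\alpha\eta(t)\|_{L^2}$ (the linearized flow conserves no $H^k$ quantity), so dominated convergence is unavailable. The paper closes this via its (2.15)--(2.17): the gradient part of the level-$(k{-}1)$ hypothesis already gives $\int_{-\infty}^{t_0}\!\int(\partial^\alpha\eta)^2\psi_L'(\tilde x_1)\,dx\,dt\lesssim e^{-y_0/L}$; using $\psi_L'(\tilde x_1)\gtrsim e^{\tilde x_1/L}$ on $\{\tilde x_1<0\}$, letting $y_0\to+\infty$, and renormalizing, one deduces that the $\psi_L$-weighted integral is time-integrable and hence has $\liminf$ equal to zero. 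Without this step the induction does not close.

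There is also a secondary imprecision in your treatment of the commutator. Your base-case split produces a bound of the form $|R(t)|\lesssim e^{-x_1^\star/L}\sup_t\|\eta\|_{L^2}^2$, which uses the uniform $L^2$ bound coming from \eqref{LinearLiouville2}. In the inductive step no such uniform bound on $\partial^\gamma\eta$ is available, so ``the same spatial split'' does not transfer as stated. The paper's route is different: it proves the pointwise estimate $|\partial^\beta(Q^{p-1})|\,\psi_L(\tilde x_1)\lesssim_L \psi_L'(\tilde x_1)$ (splitting at $\tilde x_1=0$, not at $x_1^\star$), which converts the entire commutator into $\sum_{|\gamma|\le k}\int(\partial^\gamma\eta)^2\psi_L'$; these \emph{time-integrated} $\psi_L'$ quantities are exactly what the inductive hypothesis supplies. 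Note that top-order terms $|\gamma|=k$ do appear (from $\partial_{x_1}\partial^{\alpha-\beta}\eta$ with $|\beta|=1$ and from the $\eta^{(\alpha)}$ factor itself) and are handled by the gradient part of the level-$(k{-}1)$ estimate, so your claim that only $|\beta|<|\alpha|$ enters is not quite right.
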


\begin{remark} 
The implicit constant appearing in \eqref{monotonicity1} depends only on $L$. In particular it does not depend on $y_0$ and $t_0$.
\end{remark}

\begin{remark} \label{rema.mono}
It will be clear from the proof below that  Lemma \ref{monotonicity} and thus Theorem \ref{LinearLiouville}  still hold true if we replace \eqref{LinearLiouville2} by the weaker assumption that  the solution $\eta$ is \textit{$L^2$-compact} in the $x_1$ direction, \textit{i.e.}: $\eta \in C_b(\mathbb R : H^1(\mathbb R^2))$ and 
\begin{displaymath}
\forall \, \epsilon>0, \ \exists \, A>0 \ \text{such that} \quad \sup_{t \in \mathbb R}\int_{  |x_1|>A}\eta^2(x,t)dx \le \epsilon \, .
\end{displaymath}
\end{remark}

\begin{proof} First, we prove \eqref{monotonicity1} for $\alpha=(0,0)$. Fix  $L \ge L_0$. Observe from condition \eqref{LinearLiouville2} that 
\begin{equation} \label{monotonicity1b}
\sup_{t \in \mathbb R} \int \eta(x,t)^2dx \le C \, .
\end{equation}
By using the equation \eqref{LinearLiouville1}, integrations by parts and the inequality in \eqref{psi}, we compute that 
\begin{equation} \label{monotonicity2}
\begin{split} 
\frac{d}{dt}\int\eta^2\psi_L(\tilde{x}_1)dx &  =2\int\eta\partial_t\eta \psi_L(\tilde{x}_1)dx+\frac12\int\eta^2\psi_L'(\tilde{x}_1)dx
\\&\le -\int \big(3(\partial_{x_1}\eta)^2+(\partial_{x_2}\eta)^2\big)\psi_L'(\tilde{x}_1)dx-\frac14 \int \eta^2\psi_L'(\tilde{x}_1)dx
\\ & \quad   + p\int \eta^2\big(-\partial_{x_1}(Q^{p-1})\psi_L(\tilde{x}_1)+Q^{p-1}\psi_L'(\tilde{x}_1)\big)dx \, .
\end{split}
\end{equation}

To deal with the last term appearing on the right-hand side of \eqref{monotonicity2}, let us define 
\begin{equation} \label{monotonicity3}
 \mathcal{T}_0(\eta)=p\int \eta^2\big(-\partial_{x_1}(Q^{p-1})\psi_L(\tilde{x}_1)+Q^{p-1}\psi_L'(\tilde{x}_1)\big)dx \, .
 \end{equation}
 We claim that 
\begin{equation} \label{monotonicity4}
\big|\mathcal{T}_0(\eta)\big| \le C e^{-(\frac12(t_0-t)+y_0)/L}+\frac18\int \eta^2\psi_L'(\tilde{x}_1)dx \, .
\end{equation}
To prove \eqref{monotonicity4}, we argue as in Lemma 5 in \cite{Ma}. Recall from \eqref{elliptPDE1}  and \eqref{psi0} that 
\begin{displaymath}
\big|\partial_{x_1}(Q^{p-1})\psi_L(\tilde{x}_1)\big|+\big|Q^{p-1}\psi_L'(\tilde{x}_1)\big| \lesssim e^{-\delta(p-1)|x|}\psi_L(\tilde{x}_1) \lesssim e^{-\delta(p-1)|x_1|}\psi_L(\tilde{x}_1) \, ,
\end{displaymath}
where $\delta$ is the positive constant appearing in \eqref{elliptPDE1}. Let  $R_1>0$ to be fixed later. We consider the three following cases. 

\noindent \textit{Case: $x_1<R_1$.} Then $\tilde{x}_1<R_1-\frac12(t_0-t)-y_0$, so that 
\begin{displaymath}
e^{-\delta(p-1)|x_1|}\psi_L(\tilde{x}_1) \lesssim \psi_L(\tilde{x}_1) \le e^{\tilde{x}_1/L} \le e^{(R_1-\frac12(t_0-t)-y_0)/L} \, .
\end{displaymath}

\noindent \textit{Case: $R_1<x_1<\frac12(t_0-t)+y_0$.} Then, we get that
\begin{displaymath}
e^{-\delta(p-1)|x_1|}\psi_L(\tilde{x}_1) \lesssim e^{-\delta (p-1)x_1}\psi_L'(\tilde{x}_1) \le e^{-\delta (p-1)R_1} \psi_L'(\tilde{x}_1)  \, ,
\end{displaymath}
since $\psi_L(\tilde{x}_1) \lesssim \psi_L'(\tilde{x}_1)$ for $\tilde{x}_1<0$.

 \noindent \textit{Case: $\frac12(t_0-t)+y_0<x_1$.} In this case 
 \begin{displaymath}
e^{-\delta (p-1)|x_1|}\psi_L(\tilde{x}_1) \lesssim e^{-\delta (p-1) (\frac12(t_0-t)+y_0)}\le   e^{- (\frac12(t_0-t)+y_0)/L} \, ,
\end{displaymath}
since $L \ge\frac1{(p-1)\delta}$. 

We deduce then that
\begin{displaymath}
\big|\mathcal{T}_0(\eta)\big| \le Ce^{(R_1-\frac12(t_0-t)-y_0)/L}\int \eta^2dx+Ce^{-\delta (p-1) R_1}\int \eta^2\psi_L'(\tilde{x}_1)dx \, ,
\end{displaymath}
which yields estimate \eqref{monotonicity4} by using \eqref{monotonicity1b} and fixing $R_1$ large enough so that $Ce^{-\delta (p-1) R_1} \le \frac18$.

Thus, we conclude gathering \eqref{monotonicity2}--\eqref{monotonicity4} and integrating between $t$ and $t_0$ that
\begin{equation} \label{monotonicity5}
\begin{split}
&\int\eta^2(x,t_0)\psi_L(x_1-y_0)dx+
\frac18\int_{t}^{t_0}\int\big( |\nabla\eta|^2+ \eta^2\big)(x,s)\psi_L'(\tilde{x}_1)dxds \\ &\lesssim e^{-y_0/L} +\int\eta^2(x,t)\psi_L(x_1-\frac12(t_0-t)-y_0)dx,
\end{split}
\end{equation} 
for all $y_0>0$ and $t<t_0$. To handle the second term of the right-hand side of \eqref{monotonicity5}, we use the fact that $\eta$ satisfies condition \eqref{LinearLiouville2}. Given $\epsilon>0$, there exists $A>0$ such that 
\begin{equation} \label{monotonicity6}
\int_{|x_1|>A}\eta^2(x,t)dx \le \int_{|x_1|>A}e^{-\sigma |x_1|}dx_1 \le \epsilon \, .
\end{equation}
On the other hand, it follows from \eqref{monotonicity1b} that
\begin{equation} \label{monotonicity7}
\int_{|x_1|\le A}\eta^2(x,t)\psi_L(\tilde{x}_1)dx \le \psi_L(A-\frac12(t_0-t)-y_0)\int\eta^2(x,t)dx \underset{t \to -\infty}{\longrightarrow} 0 \, .
\end{equation}
Therefore, we conclude the proof of \eqref{monotonicity1} in the case $\alpha=(0,0)$ by using  \eqref{monotonicity6}--\eqref{monotonicity7} and sending $t$ to $-\infty$ in \eqref{monotonicity5}.

Next, we prove \eqref{monotonicity1} in the general case by induction on $k=|\alpha|$. Let $k \in \mathbb N$ be such that $1 \le k \le k_0$. Assume that estimate \eqref{monotonicity1} is true for all $\tilde{\alpha} \in \mathbb N^2$ such that $|\tilde{\alpha}| \le k-1$. Let $\alpha \in \mathbb N^2$ be such that $|\alpha|=k$. Arguing as in \eqref{monotonicity2}, we get that
\begin{equation} \label{monotonicity8}
\begin{split} 
\frac{d}{dt}\int(\partial^{\alpha}\eta)^2\psi_L(\tilde{x}_1)dx &\le -\int \big(|\nabla\partial^{\alpha}\eta|^2+\frac14(\partial^{\alpha}\eta)^2\big)\psi_L'(\tilde{x}_1)dx+ \mathcal{T}_{\alpha}(\eta)\, ,
\end{split}
\end{equation}
where 
\begin{equation} \label{monotonicity9}
\mathcal{T}_{\alpha}(\eta)=-2p\int \partial_{x_1}\partial^{\alpha}(Q^{p-1}\eta)\partial^{\alpha}\eta\psi_L(\tilde{x_1})dx \, .
\end{equation}
By using the Leibniz rule and integrations by parts, we get that 
\begin{equation} \label{monotonicity10}
\begin{split}
\mathcal{T}_{\alpha}(\eta)&=\sum_{0\le\beta\le \alpha}C_{\beta}\int\partial_{x_1}\big(\partial^{\beta}(Q^{p-1})\partial^{\alpha-\beta}\eta\big)\partial^{\alpha}\eta\psi_L(\tilde{x}_1)dx \\
&=\sum_{0 <\beta\le \alpha}C_{\beta}\int\partial_{x_1}\big(\partial^{\beta}(Q^{p-1})\partial^{\alpha-\beta}\eta\big)\partial^{\alpha}\eta\psi_L(\tilde{x}_1)dx 
\\ & \quad  -p\int\partial_{x_1}(Q^{p-1})(\partial^{\alpha}\eta)^2\psi_L(\tilde{x}_1)dx+p\int Q^{p-1}(\partial^{\alpha}\eta)^2\psi_L'(\tilde{x}_1)dx \, .
\end{split}
\end{equation}
On the other hand, observe from \eqref{elliptPDE1} that 
\begin{equation} \label{monotonicity11}
|\partial^{\beta}(Q^{p-1})\psi_L(\tilde{x}_1)| \lesssim e^{-\delta (p-1) |x|}\psi_L(\tilde{x}_1) \lesssim_L \psi'_L(\tilde{x}_1), \quad \text{for all} \ |\beta| \le k+1\, .
\end{equation}
Indeed, it is clear in the case $\tilde{x}_1\le 0$, since $\psi_L(\tilde{x}_1) \lesssim_L\psi_L'(\tilde{x}_1)$. In the case where $\tilde{x}_1>0$, then $0<\tilde{x}_1<x_1$ and $$e^{-\delta (p-1) |x|}\psi_L(\tilde{x}_1) \lesssim e^{-\delta (p-1) x_1} \lesssim e^{-\delta (p-1) \tilde{x}_1} \lesssim \psi_L'(\tilde{x}_1),$$ since $L \ge \frac1{(p-1)\delta}$.
Therefore, we deduce gathering \eqref{monotonicity10}--\eqref{monotonicity11} and using Young's inequality that
\begin{equation} \label{monotonicity12}
\big|\mathcal{T}_{\alpha}(\eta)\big| \lesssim \sum_{0\le \beta \le \alpha}\int (\partial^{\beta}\eta)^2\psi_L'(\tilde{x}_1)dx \, .
\end{equation}

We integrate \eqref{monotonicity8} between $t$ and $t_0$ and use \eqref{monotonicity12}  to obtain that 
\begin{equation} \label{monotonicity13}
\begin{split}
&\int(\partial^{\alpha}\eta)^2(x,t_0)\psi_L(x_1-y_0)dx+\int_t^{t_0}\int  |\nabla\partial^{\alpha}\eta|^2(x,t)\psi_L'(\tilde{x}_1)dxdt \\ &\lesssim \sum_{0\le \beta \le \alpha}\int_t^{t_0}\int (\partial^{\beta}\eta)^2\psi_L'(\tilde{x}_1)dx  +\int (\partial^{\alpha}\eta)^2(x,t)\psi_L(x_1-\frac12(t_0-t)-y_0)dx  .
\end{split}
\end{equation}
Then, we deduce after letting $t \to -\infty$ in \eqref{monotonicity13} and using the induction hypothesis that
\begin{equation} \label{monotonicity14}
\begin{split}
\int(\partial^{\alpha}\eta)^2(x,t_0)&\psi_L(x_1-y_0)dx+\int_{-\infty}^{t_0}\int  |\nabla\partial^{\alpha}\eta|^2(x,t)\psi_L'(\tilde{x}_1)dxdt \\ &\lesssim e^{-y_0/L}+\liminf_{t \to -\infty}\int (\partial^{\alpha}\eta)^2(x,t)\psi_L(x_1-\frac12(t_0-t)-y_0)dx  .
\end{split}
\end{equation}
To handle the second term on the right-hand side of \eqref{monotonicity14}, we use again \eqref{monotonicity1} with $|\tilde{\alpha}|=k-1$ to get 
\begin{displaymath}
 \int_{-\infty}^{t_0}\int (\partial^{\alpha}\eta)^2(x,t)\psi_L'(\tilde{x}_1)dxdt \lesssim e^{-y_0/L} \, ,
\end{displaymath}
so that
\begin{equation} \label{monotonicity15}
 \int_{-\infty}^{t_0}\int_{x_1<(t_0-t)/2+y_0} (\partial^{\alpha}\eta)^2(x,t)e^{(x_1-\frac12(t_0-t))/L}dxdt \lesssim 1 \, ,
\end{equation}
since $\psi_L'(\tilde{x_1}) \gtrsim e^{\tilde{x}_1/L}$ for $\tilde{x}_1<0$. Note that the implicit constant is independent of $y_0>0$. We deduce by passing to the limit as $y_0 \to +\infty$ in \eqref{monotonicity15} and then multiplying by $e^{-y_0/L}$  that 
\begin{equation} \label{monotonicity16}
 \int_{-\infty}^{t_0}\int (\partial^{\alpha}\eta)^2(x,t)\psi_L(x_1-\frac12(t_0-t)-y_0)dxdt \lesssim e^{-y_0/L} \, ,
\end{equation}
since $\psi_L(x_1) \le e^{x_1/L}$ for $x_1 \in \mathbb R$. Therefore, 
\begin{equation} \label{monotonicity17}
\liminf_{t \to -\infty}\int (\partial^{\alpha}\eta)^2(x,t)\psi_L(x_1-\frac12(t_0-t)-y_0)dx =0 \, ,
\end{equation}
which combined with \eqref{monotonicity14} implies \eqref{monotonicity1} in the case $|\alpha|=k$.

This concludes the proof of Lemma \ref{monotonicity}.
\end{proof}

In particular, we deduce from the monotonicity formula that the exponential decay in the $x_1$ direction of the solutions of \eqref{LinearLiouville1} imply the exponential decay of all their derivatives in the direction $x_1$.
\begin{corollary} \label{coro.monotonicity}
Let $\eta \in C(\mathbb R : H^1(\mathbb R^2))$ be a solution of \eqref{LinearLiouville1} satisfying condition \eqref{LinearLiouville2}. Then, there exists $\tilde{\sigma}>0$ such that
\begin{equation} \label{coro.monotonicity.1}
\sup_{t \in \mathbb R}\int (\partial^{\alpha} \eta)^2(x,t)e^{\tilde{\sigma} |x_1|}dx \lesssim 1, \quad \forall \, \alpha \in \mathbb N^2\, .
\end{equation}
\end{corollary}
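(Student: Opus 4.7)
\emph{Proof proposal.} The plan is to convert the integrated tail bound supplied by Lemma \ref{monotonicity} into a pointwise exponential weight on the half-space $\{x_1 > 0\}$, and then exploit a discrete symmetry of equation \eqref{LinearLiouville1} to handle $\{x_1 < 0\}$.

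Fix $L \ge L_0$ and a multi-index $\alpha \in \mathbb N^2$. Since $\psi_L$ is increasing with $\psi_L(0) = 1/2$, we have $\psi_L(x_1 - y_0) \ge 1/2$ on $\{x_1 \ge y_0\}$, so Lemma \ref{monotonicity} applied at $t_0 = t$ yields, uniformly in $t \in \mathbb R$ and $y_0 > 0$,
\[
\int_{x_1 \ge y_0}(\partial^{\alpha}\eta)^2(x,t)\,dx \le C_{L,\alpha}\, e^{-y_0/L}.
\]
Writing $e^{\sigma x_1} - 1 = \sigma\int_0^{x_1} e^{\sigma y_0}\,dy_0$ for $x_1 > 0$ and applying Fubini, this gives, for any $\sigma \in (0, 1/L)$,
\[
\int_{x_1 > 0}(\partial^{\alpha}\eta)^2(x,t)\, e^{\sigma x_1}\,dx \le C_{L,\alpha}\Big(1 + \sigma\int_0^{\infty} e^{(\sigma-1/L)y_0}\,dy_0\Big) \le \tilde C_{L,\alpha,\sigma},
\]
uniformly in $t$. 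This handles the positive $x_1$ half-space.

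For the negative half-space, the key observation is that, because $Q_{c_0}$ is radial (hence invariant under $x_1 \mapsto -x_1$), the linear equation \eqref{LinearLiouville1} is invariant under the involution $(x_1, t) \mapsto (-x_1, -t)$: a direct verification shows that $\tilde\eta(x_1, x_2, t) := \eta(-x_1, x_2, -t)$ again solves \eqref{LinearLiouville1}, since $\partial_t$ and $\partial_{x_1}$ each pick up one minus sign under the substitution while $\Delta$ and $Q_{c_0}^{p-1}$ are unaffected. The decay hypothesis \eqref{LinearLiouville2} is manifestly preserved (it is symmetric in $x_1$ and uniform in $t$), so the previous step applied to $\tilde\eta$, changed back via $x_1 \leftrightarrow -x_1$, gives
\[
\int_{x_1 < 0}(\partial^{\alpha}\eta)^2(x,t)\, e^{-\sigma x_1}\,dx \lesssim 1
\]
uniformly in $t$. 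Choosing $\tilde\sigma = \sigma$ and combining the two estimates yields the corollary.

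The only subtle point is spotting the time-reversal symmetry $(x_1, t) \mapsto (-x_1, -t)$; Lemma \ref{monotonicity} has a built-in preferred direction in $x_1$ coming from the KdV-type drift, so a direct one-sided application is insufficient. Once the symmetry is recognized, the rest is a Fubini calculation.
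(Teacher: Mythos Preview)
Your proof is correct and follows essentially the same strategy as the paper: extract the exponential decay on $\{x_1>0\}$ from the first term of Lemma \ref{monotonicity}, then use the time-reversal symmetry $\tilde\eta(x,t)=\eta(-x,-t)$ (the paper reflects all of $x$, you reflect only $x_1$; both work since $Q$ is radial) to transfer the bound to $\{x_1<0\}$. The only cosmetic difference is that the paper avoids the Fubini step by using the lower bound $\psi_L(\tilde x_1)\gtrsim e^{\tilde x_1/L}$ for $\tilde x_1<0$ directly, which yields $\int_{x_1<y_0}(\partial^\alpha\eta)^2 e^{x_1/L}\,dx\lesssim 1$ in one line before sending $y_0\to+\infty$.
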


\begin{proof} Define $\tilde{\sigma}=\frac1{L_0}$ where $L_0$ is given by Lemma \ref{monotonicity}. Since $\psi_L(\tilde{x_1}) \gtrsim e^{\tilde{\sigma}\tilde{x}_1}$ for $\tilde{x}_1<0$, we deduce by using the inequality on the first term in \eqref{monotonicity1} that 
\begin{equation} \label{coro.monotonicity.2}
\int_{x_1<y_0} (\partial^{\alpha} \eta)^2(x,t)e^{\tilde{\sigma} x_1}dx 
\lesssim 1, \quad \forall \, y_0>0, \ t \in \mathbb R \, .
\end{equation}
Thus, it follows sending $y_0$ to $+\infty$ in \eqref{coro.monotonicity.2} that 
\begin{equation} \label{coro.monotonicity.3}
\sup_{t \in \mathbb R}\int (\partial^{\alpha} \eta)^2(x,t)e^{\tilde{\sigma} x_1}dx
\lesssim 1\, .
\end{equation}

To obtain the exponential decay in the direction $x_1<0$, we observe that $\tilde{\eta}(x,t)=\eta(-x,-t)$ is also a solution to \eqref{LinearLiouville1} satisfying \eqref{LinearLiouville2}. Therefore, we deduce arguing as above that 
\begin{equation} \label{coro.monotonicity.4}
\sup_{t \in \mathbb R}\int (\partial^{\alpha} \eta)^2(x,t)e^{-\tilde{\sigma} x_1}dx=\sup_{t \in \mathbb R}\int (\partial^{\alpha} \tilde{\eta})^2(x,t)e^{\tilde{\sigma} x_1}dx
\lesssim 1\, .
\end{equation}

We conclude the proof of \eqref{coro.monotonicity.1} follows gathering \eqref{coro.monotonicity.3}--\eqref{coro.monotonicity.4}.
\end{proof}

\subsection{Proof of Theorem \ref{LinearLiouville}}  Following Martel in \cite{Ma} for the gKdV equation, we will work with a dual problem. Let us define 
\begin{equation} \label{LinearLiouville4}
v=\mathcal{L}\eta-\alpha_0Q, \quad \text{where} \ \alpha_0=\frac{\int \mathcal{L}\eta(\cdot,0)\Lambda Q dx}{\int Q \Lambda Q dx} \, .
\end{equation}
Note that formula \eqref{LambdaQeq2} implies that $\int Q \Lambda Q dx >0,$ since we are in the subcritical case, so that $\alpha_0$ in \eqref{LinearLiouville4} is well-defined.
Then, we deduce from  \eqref{kernel},  \eqref{LinearLiouville1} and the definition of $v$ in \eqref{LinearLiouville4} that $v$ is a solution to
 \begin{equation} \label{LinearLiouville5}
\partial_t v=\mathcal{L}\partial_{x_1}v+\alpha_0\mathcal{L}\partial_{x_1}Q=\mathcal{L}\partial_{x_1}v,
\end{equation}
and $v$ satisfies the orthogonality conditions
\begin{equation} \label{LinearLiouville6}
\int v\partial_{x_1}Q dx=\int v\partial_{x_2}Q dx=0,
\end{equation}
and
\begin{equation} \label{LinearLiouville7}
\int v \Lambda Q dx=0 \, .
\end{equation}
To verify \eqref{LinearLiouville7}, we first observe gathering \eqref{LambdaQeq} and \eqref{LinearLiouville6} that
\begin{displaymath} 
\frac{d}{dt}\int v \Lambda Q dx= \int \mathcal{L}\partial_{x_1}v \Lambda Q dx=-\int v \partial_{x_1}\mathcal{L}\Lambda Q dx= \int v \partial_{x_1}Q dx =0,
\end{displaymath}
so that \eqref{LinearLiouville7} follows from the choice of $\alpha_0$ in \eqref{LinearLiouville4}. 

Now, we infer from the monotonicity property that $v \in C(\mathbb R:H^1(\mathbb R^2))$ satisfies 
\begin{equation} \label{LinearLiouville8}
\int_{x_2}v^2(x_1,x_2,t)dx_2  
\lesssim e^{-\tilde{\sigma} |x_1|} \, , \quad \forall (x_1,t) \in \mathbb R^2 \, .
\end{equation}
for some $\tilde{\sigma}>0$. Indeed, we get from the definitions of $v$ in \eqref{LinearLiouville4}, the decay properties of $Q$ in \eqref{elliptPDE1} and formula \eqref{coro.monotonicity.1} with $|\alpha| \le 3$ that
\begin{equation} \label{LinearLiouville8b}
\sup_{t \in \mathbb R}\int \big(v^2+(\partial_{x_1}v)^2\big)(x,t)e^{\tilde{\sigma} |x_1|}dx \lesssim 1\, ,
\end{equation}
for some positive constant $\tilde{\sigma}$. We compute then by using the Sobolev embedding $H^1(\mathbb R) \hookrightarrow L^{\infty}(\mathbb R)$ and the Cauchy-Schwarz inequality in $x_2$ that
\begin{equation} \label{LinearLiouville8b2}
\begin{split}
\Big\|\Big( \int_{x_2}&v^2(x,t)e^{\tilde{\sigma}|x_1|}dx_2  \Big)^{\frac12}\Big\|_{L^{\infty}_{x_1}} \\& \lesssim 
\Big\| \Big(\int_{x_2}v^2(x,t)e^{\tilde{\sigma}|x_1|}dx_2 \Big)^{\frac12} \Big\|_{H^1_{x_1}} \\ 
& \lesssim \Big(\int v^2(x,t)e^{\tilde{\sigma}|x_1|}dx\Big)^{\frac12}
+\Big(\int_{x_1}\frac{\big(\int_{x_2}vv_{x_1}dx_2\big)^2}{\int_{x_2}v^2dx_2} e^{\tilde{\sigma}|x_1|}dx_1\Big)^{\frac12} 
\\ & \lesssim  \left(\int \big(v^2+(\partial_{x_1}v)^2\big)(x,t)e^{\tilde{\sigma} |x_1|}dx \right)^{\frac12} \, ,
\end{split}
\end{equation}
which together with estimate \eqref{LinearLiouville8b} implies estimate \eqref{LinearLiouville8}.

Next, we derive a virial identity for the solutions of \eqref{LinearLiouville1}. Let $\phi \in C^2(\mathbb R)$ be an even positive function such that $\phi' \le 0$ on $\mathbb R_+$,
\begin{equation} \label{phi}
\phi_{|_{[0,1]}}=1, \quad \phi(x_1)=e^{-x_1} \ \text{on} \, [2,+\infty),  \quad \ e^{-x_1} \le \phi(x_1) \le 3e^{-x_1} \  \text{on} \,  \mathbb R_+
 \, .\end{equation}
 \begin{equation} \label{phi2}
 |\phi'(x_1)| \le C \phi(x_1) \quad \text{and} \quad |\phi''(x_1)|\le C \phi(x_1) \, ,.
 \end{equation}
 for some positive constant $C$.
Let $\varphi$ be be defined by $\varphi(x_1)=\int_0^{x_1}\phi(y)dy$. Then $\varphi$ is an odd function such that $\varphi(x_1)=x_1$ on $[-1,1]$ and $|\varphi(x_1)|\le 3$. For a parameter $A$ (which will be fixed below), we set 
\begin{equation} \label{varphiA}
\varphi_A(x_1)=A\varphi(x_1/A) \quad \text{so that} \quad \varphi_A'(x_1)=\phi(x_1/A)=:\phi_A(x_1)
\, ,\end{equation}
and 
\begin{equation} \label{varphiA2}
\varphi_A(x_1)=x_1 \ \text{on} \, [-A,A],  \ |\varphi_A(x_1)| \le 3A \ \text{and}  \ e^{-|x|/A} \le \phi_A(x) \le 3e^{-|x|/A} \ \text{on} \, \mathbb R
\end{equation}
Then, we have that
\begin{equation} \label{LinearLiouville9}
\begin{split}
-\frac12\frac{d}{dt}\int \varphi_A v^2 dx &=\int \phi_A(\partial_{x_1}v)^2dx+\frac12\int \phi_A\big(|\nabla v|^2+v^2-pQ^{p-1}v^2 \big)dx \\
& \quad -\frac12\int \phi_A''v^2dx-\frac{p}2\int \varphi_A\partial_{x_1}(Q^{p-1})v^2dx \, .
\end{split}
\end{equation}

The following coercivity property will be proved in the next subsection.
\begin{lemma} \label{coercivity}
Consider the bilinear form 
\begin{equation} \label{coercivity.1}
H_{A}(v,w)=\int \phi_A \big(\nabla v \cdot \nabla w+vw-pQ^{p-1}vw \big)dx \, .
\end{equation}
Then, there exists $2<p_2<3$ such that the following holds true for all $2 \le p<p_2$. There exists $\lambda>0$ and $A_0>0$ such that 
\begin{equation} \label{coercivity.2}
H_A(v,v) \ge \lambda \int \phi_A \big(|\nabla v|^2 +v^2 \big)dx \, ,
\end{equation}
for all $v \in H^1(\mathbb R^2)$ satisfying $(v,\Lambda Q)=(v,\partial_{x_1}Q)=(v,\partial_{x_2}Q)=0$ and $A \ge A_0$. (Recall that $(f,g) :=\int fg dx$.)
\end{lemma}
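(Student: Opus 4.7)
The plan is to reduce the weighted coercivity estimate to an unweighted spectral question via the substitution $w := \phi_A^{1/2} v$, and then to invoke a classical minimization argument that makes essential use of the spectral condition $(\mathcal{L}^{-1}\Lambda Q, \Lambda Q) < 0$ verified numerically in Appendix \ref{A}. Writing $v = \phi_A^{-1/2} w$ and integrating by parts, the pointwise bounds $|\partial_{x_1}\phi_A| \lesssim A^{-1}\phi_A$ and $|\partial_{x_1}^2\phi_A| \lesssim A^{-2}\phi_A$ (which follow from \eqref{phi2} and the definition of $\phi_A$) yield
\[
H_A(v,v) = \langle \mathcal{L} w, w\rangle + \mathcal{O}(A^{-2}) \|w\|_{L^2}^2 \quad \text{and} \quad \int \phi_A(|\nabla v|^2 + v^2)\,dx = \|w\|_{H^1}^2 + \mathcal{O}(A^{-2}) \|w\|_{L^2}^2.
\]
So it suffices to prove the unweighted bound $\langle \mathcal{L} w, w\rangle \geq \mu \|w\|_{H^1}^2$ for $w$ satisfying the three orthogonality conditions, and then to account for the fact that $w = \phi_A^{1/2}v$ need only satisfy them approximately.

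For the unweighted coercivity, I would minimize $\langle \mathcal{L} w, w\rangle$ on the unit sphere of $L^2(\mathbb{R}^2)$ intersected with the three linear constraints $(w, \Lambda Q) = (w, \partial_{x_j}Q) = 0$, $j = 1, 2$. Relative compactness of $pQ^{p-1}$ ensures that a minimizer $w_*$ exists, with Lagrange equation $\mathcal{L} w_* = \la w_* + \mu_0 \Lambda Q$ (the multipliers in front of $\partial_{x_j}Q$ vanish because $\Lambda Q$ is radial, hence orthogonal to $\partial_{x_j}Q$). If $\la = 0$, then using $\mathcal{L}\Lambda Q = -Q$ and the convention that $\mathcal{L}^{-1}\Lambda Q$ is taken orthogonal to $\ker\mathcal{L}$, the condition $w_* \perp \partial_{x_j}Q$ forces $w_* = \mu_0\mathcal{L}^{-1}\Lambda Q$; then $w_* \perp \Lambda Q$ and $w_* \neq 0$ yield $(\mathcal{L}^{-1}\Lambda Q, \Lambda Q) = 0$, contradicting the hypothesis. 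A Weinstein-type continuation argument then fixes the sign $\la > 0$ under $(\mathcal{L}^{-1}\Lambda Q, \Lambda Q) < 0$; the upgrade from $L^2$- to $H^1$-coercivity is routine using the boundedness and decay of $Q^{p-1}$. The range $2 \leq p < p_2$ is precisely the one in which this scalar product is negative, per Appendix \ref{A}.

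To handle the orthogonality defects for $w = \phi_A^{1/2}v$, note that $\phi_A^{1/2} \equiv 1$ on $\{|x_1| \leq A\}$, while $\Lambda Q$ and $\partial_{x_j}Q$ decay like $e^{-\de|x|}$; using the exact orthogonalities of $v$ and a weighted Cauchy--Schwarz inequality,
\[
|(w, \Lambda Q)| = \Big|\int_{|x_1| > A}(\phi_A^{1/2} - 1)\, v \, \Lambda Q \, dx \Big| \leq \|w\|_{L^2} \Big(\int_{|x_1|>A}(1 - \phi_A^{-1/2})^2 \Lambda Q^2\,dx\Big)^{1/2} \lesssim e^{-\de' A}\|w\|_{L^2}
\]
for some $\de' > 0$ (provided $A$ is large enough that $2\de > 1/A$), and analogously for $(w, \partial_{x_j}Q)$. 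Decomposing $w = w_0 + a \Lambda Q + b_1 \partial_{x_1}Q + b_2 \partial_{x_2}Q$ with $w_0$ orthogonal to $\{\Lambda Q, \partial_{x_1}Q, \partial_{x_2}Q\}$, one obtains $a, b_j = \mathcal{O}(e^{-\de' A}\|w\|_{L^2})$; combining the unweighted coercivity applied to $w_0$ with $\mathcal{L}\partial_{x_j}Q = 0$ and $\mathcal{L}\Lambda Q = -Q$ produces $\langle \mathcal{L} w, w\rangle \geq \tfrac{\mu}{2}\|w\|_{H^1}^2$ for $A$ large, which combined with the first step yields \eqref{coercivity.2}. The main obstacle is the spectral step: $Q$ has no closed form, so the hypothesis $(\mathcal{L}^{-1}\Lambda Q, \Lambda Q) < 0$ must be verified numerically, and this is what pins down the allowable range $p < p_2$.
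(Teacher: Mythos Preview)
Your proposal is correct and follows essentially the same route as the paper. Both arguments substitute $w=\sqrt{\phi_A}\,v$, reduce to the unweighted coercivity of $\mathcal{L}$ under the three orthogonality constraints (obtained via Weinstein's variational lemma together with the numerically verified condition $(\mathcal{L}^{-1}\Lambda Q,\Lambda Q)<0$), and then absorb the orthogonality defects of $w$ using the exponential decay of $\Lambda Q$ and $\partial_{x_j}Q$ outside $\{|x_1|\le A\}$; the only organizational difference is that the paper packages the ``approximate orthogonality $\Rightarrow$ coercivity'' step as a separate lemma (Lemma~\ref{lemma.coercivity}), whereas you carry out the decomposition $w=w_0+a\Lambda Q+b_1\partial_{x_1}Q+b_2\partial_{x_2}Q$ directly.
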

Observe from the choice of $\varphi_A$ (negative for $x_1<0$ and positive for $x_1>0$) that the last on term on the right-hand side of \eqref{LinearLiouville9} is nonnegative.  Moreover, it follows from \eqref{phi2} and \eqref{varphiA} that $|\phi_A''(x_1)| \le C/A^2\phi_A(x_1)$. We fix $A \ge \max\{A_0,2\sqrt{\frac{C}{\lambda}}\}$. Therefore, it follows from \eqref{LinearLiouville9} and \eqref{coercivity.2} that 
\begin{equation} \label{LinearLiouville10}
-\frac12\frac{d}{dt}\int \varphi_A(x_1) v^2 dx \ge \int \phi_A(x_1)\big((\partial_{x_1}v)^2+\frac{\lambda}2|\nabla v|^2+\frac{\lambda}4v^2 \big)dx \, .
\end{equation}
Integrating \eqref{LinearLiouville10}, we deduce that
\begin{equation} \label{LinearLiouville11}
\int_{-\infty}^{+\infty}\int \phi_A(x_1)v^2(x,t)dxdt  
\le \frac{12}{\lambda}A\sup_{t}\|v(\cdot,t)\|_{L^2}^2<+\infty \, ,
\end{equation}
which is finite from \eqref{LinearLiouville8}. Thus, there exists a sequence $\{t_n\}$ satisfying $t_n \to +\infty$ such that 
\begin{equation} \label{LinearLiouville12}
\int \phi_A(x_1)v^2(x,t_n)dx \underset{n \to +\infty}{\longrightarrow} 0 \, .
\end{equation}
By using, the exponential decay of $v$ in the $x_1$ direction, we infer then that 
\begin{equation} \label{LinearLiouville13}
\int v^2(x,t_n)dx \underset{n \to +\infty}{\longrightarrow} 0 \, .
\end{equation}
Indeed, for all $R>0$, there exists $C_R>0$ such that $\phi_A(x_1) \ge C_R$ if $|x_1| \le R$. Then, we deduced from \eqref{LinearLiouville8} that 
\begin{displaymath} 
\begin{split}
\int v^2(x,t_n)dx &=\int_{|x_1|\le R} v^2(x,t_n)dx+\int_{|x_1| > R} v^2(x,t_n)dx\\ 
&\le \frac1{C_R}\int \phi_A(x_1)v^2(x,t_n)dx+\frac2{\tilde{\sigma}}e^{-\tilde{\sigma}R} \, ,
\end{split}
\end{displaymath}
for all $R>0$ which yields \eqref{LinearLiouville13} in view of \eqref{LinearLiouville12}. We show similarly that there exists a sequence $\{s_n\}$ satisfying $s_n \to -\infty$ and
\begin{equation} \label{LinearLiouville14}
\int v^2(x,s_n)dx \underset{n \to +\infty}{\longrightarrow} 0 \, .
\end{equation}

Therefore, we deduce after integrating \eqref{LinearLiouville10} between $s_n$ and $t_n$ and using \eqref{LinearLiouville13} and \eqref{LinearLiouville14} to let $n \to +\infty$ that 
\begin{equation} \label{LinearLiouville15}
\int_{-\infty}^{+\infty}\int \phi_A(x_1)v^2(x,t)dxdt  
\le \frac{6}{\lambda}A\lim_{n \to +\infty}\Big(\int v^2(x,s_n)^2dx+\int v^2(x,t_n)^2dx\Big)=0 \, .
\end{equation}
Since $\phi_A$ is positive function, \eqref{LinearLiouville15} implies that 
\begin{equation} \label{LinearLiouville16}
v(x,t) =0 \quad \text{for all} \ (x,t) \in \mathbb R^3 \, .
\end{equation}

Thus \eqref{kernel}, \eqref{LambdaQeq}, the definition of $v$ in \eqref{LinearLiouville4} and \eqref{LinearLiouville16} implies that there exist $\beta_0 \in \mathbb R$ and two bounded $C^1$ functions $\alpha_1$, $\alpha_2$ such that 
\begin{equation} \label{LinearLiouville17}
\eta(\cdot,t)=\beta_0\Lambda Q+\alpha_1(t)\partial_{x_1}Q+\alpha_2(t)\partial_{x_2}Q \, .
\end{equation}
By using the equation \eqref{LinearLiouville1}, we obtain that 
\begin{equation} \label{LinearLiouville18}
\left\{\begin{array}{l}\alpha_1'(t)=\beta_0 \\\alpha_2'(t)=0
\end{array}\right.
\quad \Rightarrow \quad
\left\{\begin{array}{l}\beta_0=0 \\\alpha_1(t)=a_1\\ \alpha_2(t)=a_2
\end{array}\right. \, ,
\end{equation}
for two real numbers $a_1$, $a_2$. This finishes the proof of Theorem \ref{LinearLiouville}.

\subsection{Coercivity of the bilinear form $H_{A}$}
The aim of this subsection is to prove Lemma \ref{coercivity}. We first prove a similar result for the non-localized quadratic form. 
\begin{proposition} \label{prop.coercivity}
Consider the bilinear form 
\begin{equation} \label{prop.coercivity.1}
H(v,w)=(\mathcal{L}v,w)=\int \big(\nabla v \cdot \nabla w+vw-2Qvw \big)dx \, .
\end{equation}
Then, there exists $\tilde{\lambda}>0$ such that 
\begin{equation} \label{prop.coercivity.2}
H(v,v) \ge \tilde{\lambda} \|v\|_{H^1}^2 \, ,
\end{equation}
for all $v \in H^1(\mathbb R^2)$ satisfying $(v,\Lambda Q)=(v,\partial_{x_1}Q)=(v,\partial_{x_2}Q)=0$. 
\end{proposition}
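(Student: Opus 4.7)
The argument is variational. Setting
\begin{equation*}
\kappa := \inf\bigl\{ (\mathcal{L}v,v) : v\in H^1(\R^2),\ \|v\|_{L^2}=1,\ v\perp \Lambda Q, \partial_{x_1}Q, \partial_{x_2}Q\bigr\},
\end{equation*}
I aim to prove $\kappa>0$ by contradiction, then upgrade $(\mathcal{L}v,v)\ge \kappa\|v\|_{L^2}^2$ to the $H^1$ estimate \eqref{prop.coercivity.2} via $\|v\|_{H^1}^2 = (\mathcal{L}v,v) + p\int Q^{p-1}v^2 \le (\mathcal{L}v,v) + p\|Q\|_{L^\infty}^{p-1}\|v\|_{L^2}^2$ combined convexly with the $L^2$ lower bound.

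Assume $\kappa\le 0$. A minimizing sequence is $H^1$-bounded by the interpolation above, and since $Q^{p-1}$ decays exponentially, a weakly convergent subsequence $v_n\rightharpoonup v_*$ satisfies $\int Q^{p-1}v_n^2 \to \int Q^{p-1}v_*^2$. A Brezis--Lieb splitting then yields $\|v_*\|_{L^2}=1$, strong $H^1$-convergence, and $(\mathcal{L}v_*,v_*) = \kappa$. The Lagrange multiplier relation reads
\begin{equation*}
\mathcal{L}v_* = \kappa v_* + \mu\Lambda Q + \nu_1\partial_{x_1}Q + \nu_2\partial_{x_2}Q,
\end{equation*}
and pairing with $\partial_{x_j}Q$, together with $\mathcal{L}\partial_{x_j}Q = 0$ and the parity identity $(\Lambda Q, \partial_{x_j}Q)=0$ (since $\Lambda Q$ is radial and $\partial_{x_j}Q$ is odd in $x_j$), forces $\nu_j = 0$.

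When $\mu = 0$, $v_*$ is an eigenfunction of $\mathcal{L}$ with eigenvalue $\kappa \le 0$, so by the spectral description $v_*$ lies in $\ker\mathcal{L}$ or in $\mathrm{span}(\chi_0)$; the kernel case conflicts with $v_*\perp \partial_{x_j}Q$ and $\|v_*\|=1$, while $v_*\in \mathrm{span}(\chi_0)$ conflicts with $v_*\perp \Lambda Q$ because $\mathcal{L}\Lambda Q = -Q$ yields
\begin{equation*}
(\chi_0,\Lambda Q) = \lambda_0^{-1}(\chi_0,Q) > 0
\end{equation*}
by positivity of $\chi_0$ and $Q$. The same positivity rules out $\kappa=-\lambda_0$ also when $\mu\ne 0$, since Fredholm solvability of $(\mathcal{L}+\lambda_0)v_* = \mu\Lambda Q$ requires $\Lambda Q\perp \chi_0$.

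There remains $\mu\ne 0$ with $\kappa\in(-\lambda_0,0]$. Invertibility of $\mathcal{L}-\kappa$ on $(\ker)^\perp$, together with radiality of $\Lambda Q$ (which propagates to $(\mathcal{L}-\kappa)^{-1}\Lambda Q$ and ensures orthogonality to $\partial_{x_j}Q$), gives $v_* = \mu(\mathcal{L}-\kappa)^{-1}\Lambda Q$, and the constraint $v_*\perp \Lambda Q$ reduces to
\begin{equation*}
g(\kappa) := \bigl((\mathcal{L}-\kappa)^{-1}\Lambda Q, \Lambda Q\bigr) = 0.
\end{equation*}
Spectral decomposition of $\Lambda Q$ against $\chi_0$ and the positive spectral subspace of $\mathcal{L}$ gives
\begin{equation*}
g(\kappa) = -\frac{(\chi_0,\Lambda Q)^2}{\lambda_0+\kappa} + \int_{\sigma_+(\mathcal{L})}\frac{d\mu_{\Lambda Q}(\lambda)}{\lambda - \kappa},
\end{equation*}
whence $g$ is strictly increasing on $(-\lambda_0, 0]$, $g(\kappa)\to -\infty$ as $\kappa \to -\lambda_0^+$, and $g(0) = (\mathcal{L}^{-1}\Lambda Q, \Lambda Q)$. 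The main obstacle is precisely the sign of $g(0)$: because $Q$ has no closed form, the strict inequality $(\mathcal{L}^{-1}\Lambda Q, \Lambda Q) < 0$ is not accessible to direct analysis and must be imported from the numerical investigation of Appendix \ref{A}, which is the source of the restriction $2\le p < p_2$. Granting this, $g<0$ on all of $(-\lambda_0, 0]$, contradicting $g(\kappa)=0$, which closes the argument.
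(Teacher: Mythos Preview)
Your proof is correct and follows essentially the same approach as the paper. The paper's own proof simply verifies the two hypotheses $(\Lambda Q,\chi_0)>0$ and $\Lambda Q\in(\ker\mathcal{L})^\perp$ and then invokes Weinstein's Lemma~E.1 in \cite{We} together with the spectral condition of Proposition~\ref{SpectralProperty}; what you have done is spell out that classical variational argument (minimizer existence, Lagrange multipliers, reduction to the monotone function $g(\kappa)=((\mathcal{L}-\kappa)^{-1}\Lambda Q,\Lambda Q)$, and the sign condition $g(0)<0$) rather than cite it.
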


The proof of Proposition \ref{prop.coercivity} relies on the following spectral property.

\begin{proposition} \label{SpectralProperty}
Assume that $d=2$. There exists $2<p_2<3$ such that   
\begin{equation} \label{SpectralProperty1}
\big(\mathcal{L}^{-1}\Lambda Q, \Lambda Q \big) <0 \, ,
\end{equation}
for all $2 \le p <p_2$.
\end{proposition}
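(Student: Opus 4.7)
The plan is to combine a numerical verification at the endpoint $p=2$ (carried out in Appendix \ref{A}) with a continuity-in-$p$ argument extending the result to a small range $[2,p_2)$.

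First, verify that $\mathcal{L}^{-1}\Lambda Q$ is well-defined. By \eqref{LambdaQ}, $\Lambda Q = \frac{1}{p-1}Q + \frac{1}{2}x\cdot \nabla Q$ is radial and exponentially decaying thanks to \eqref{elliptPDE1}. Since $\ker \mathcal{L} = \mathrm{span}\{\partial_{x_1}Q,\partial_{x_2}Q\}$ and each $\partial_{x_j}Q$ is odd in $x_j$ while $\Lambda Q$ is even in each coordinate, $\Lambda Q \perp \ker \mathcal{L}$. By self-adjointness of $\mathcal{L}$ and the spectral structure \eqref{spectrum}--\eqref{kernel}, one defines $\mathcal{L}^{-1}\Lambda Q$ as the unique radial element of $(\ker \mathcal{L})^\perp \cap H^2(\R^2)$ such that $\mathcal{L}(\mathcal{L}^{-1}\Lambda Q)=\Lambda Q$; in particular the scalar product $(\mathcal{L}^{-1}\Lambda Q,\Lambda Q)$ makes sense.

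Second, set $I(p):=(\mathcal{L}_p^{-1}\Lambda Q_p,\Lambda Q_p)$ and observe that $p\mapsto I(p)$ is continuous on $(1,\infty)$ in dimension $d=2$. Indeed, by Kwong's theorem \cite{Kw} and the implicit function theorem applied to the radial ODE $-\Delta Q + Q - Q^p=0$, the ground state $Q_p$ depends smoothly on $p$ in a space of exponentially decaying radial functions. Hence $\Lambda Q_p$ and the potential $pQ_p^{p-1}$ appearing in $\mathcal{L}_p$ are continuous in $p$, and standard analytic perturbation theory (using the uniform spectral gap of $\mathcal{L}_p$ at $0$ coming from the non-degeneracy of $Q_p$) yields continuity of $\mathcal{L}_p^{-1}$ on $(\ker\mathcal{L}_p)^\perp$ in operator norm; composing with the continuous bilinear pairing gives continuity of $I$.

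Third, Appendix \ref{A} provides a numerical computation which establishes $I(2)<0$ with a quantitative margin. By continuity of $I$, there exists $p_2>2$ such that $I(p)<0$ on $[2,p_2)$. Since $\int Q\Lambda Q$ vanishes at the $L^2$-critical exponent $p=1+\frac{4}{d}=3$ by \eqref{LambdaQeq2}, and the quadratic form controlled by $\mathcal{L}$ on $(\text{span}\{Q,\partial_{x_1}Q,\partial_{x_2}Q\})^\perp$ degenerates there, we may (and should) take $p_2<3$.

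The principal obstacle is the second step, namely justifying continuity of the resolvent $\mathcal{L}_p^{-1}$ as $p$ varies near $2$ on the appropriate invariant subspace. This reduces to a uniform-in-$p$ lower bound on the distance from $0$ to the rest of the spectrum of $\mathcal{L}_p$, restricted to $(\ker \mathcal{L}_p)^\perp$, which is a consequence of Kwong's non-degeneracy theorem combined with classical perturbation theory for isolated eigenvalues of self-adjoint operators. Once this is in hand, the proposition follows from the numerical input of Appendix \ref{A} alone.
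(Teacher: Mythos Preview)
Your proposal is correct and follows the same essential strategy as the paper: both rely on the numerical computation of Appendix \ref{A}, which is the only proof the paper offers for Proposition \ref{SpectralProperty}. The one difference is that the paper simply computes $(\mathcal{L}^{-1}\Lambda Q,\Lambda Q)$ numerically over a \emph{range} of $p$ and locates the zero crossing near $p\approx 2.1491$, whereas you isolate the numerical input to the single value $p=2$ (where Appendix \ref{A} reports $\approx -0.4767$) and then invoke analytic continuity of $p\mapsto I(p)=(\mathcal{L}_p^{-1}\Lambda Q_p,\Lambda Q_p)$ to propagate negativity to a neighborhood $[2,p_2)$. Your route is arguably tidier in that the perturbation-theoretic continuity of $Q_p$, $\mathcal{L}_p$, and $\mathcal{L}_p^{-1}$ restricted to $(\ker\mathcal{L}_p)^\perp$ is rigorous, so only one numerical evaluation carries the burden; the paper's approach, on the other hand, yields an explicit numerical value for $p_2$. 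One minor remark: your argument that $p_2<3$ via degeneracy at the critical exponent is unnecessary and not quite to the point---once continuity gives some $p_2>2$, you may simply replace it by $\min(p_2,3)$, so the constraint $p_2<3$ in the statement is automatic.
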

The proof of Proposition \ref{SpectralProperty} is given in the appendix by using numerical methods. 

\begin{proof}[Proof of Proposition \ref{prop.coercivity}]
 From  \eqref{LambdaQeq}, we have
\begin{displaymath} 
( \Lambda Q, \chi_0 )=-\frac1{\lambda_0}(\Lambda Q,\mathcal{L}\chi_0)=\frac1{\lambda_0} \int Q \chi_0 dx >0 \quad \text{and} \quad \Lambda Q \in \big( \ker \mathcal{L} \big)^{\perp} \, .
\end{displaymath} 
Therefore, we conclude the proof of Proposition \ref{prop.coercivity} by invoking Lemma E.1 (and the proof of Proposition 2.9) in \cite{We}, since $\big(\mathcal{L}^{-1}\Lambda Q, \Lambda Q \big) <0$ in our case due to Proposition \ref{SpectralProperty}.
\end{proof}

To deduce Lemma \ref{coercivity} from Proposition \ref{prop.coercivity}, we follow the ideas in the appendices of \cite{MM3,Co} and first prove a technical lemma. 
\begin{lemma} \label{lemma.coercivity}
There exists $\kappa>0$ (depending on $\tilde{\lambda}$ given by Proposition \ref{prop.coercivity}) such that 
\begin{equation} \label{lemma.coercivity.1}
H(v,v) =\int \big(|\nabla v|^2+v^2-pQ^{p-1}v^2 \big)dx \ge \frac{\tilde{\lambda}}2 \|v\|_{H^1}^2 \, ,
\end{equation}
for all $v \in H^1(\mathbb R^2)$ satisfying 
\begin{equation} \label{lemma.coercivity.2}
\big|\big(v,\frac{\Lambda Q}{\|\Lambda Q\|_{L^2}}\big)\big|+\big|\big(v,\frac{\partial_{x_1}Q}{\|\partial_{x_1}Q\|_{L^2}}\big)\big|+\big|\big(v,\frac{\partial_{x_2}Q}{\|\partial_{x_2}Q\|_{L^2}}\big)\big| \le \kappa \|v\|_{H^1} \, .
\end{equation}
\end{lemma}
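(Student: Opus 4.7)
The plan is to treat hypothesis \eqref{lemma.coercivity.2} as a small perturbation of the strict orthogonality hypothesis of Proposition~\ref{prop.coercivity}. Concretely, I decompose $v$ into a component orthogonal to $\{\Lambda Q,\partial_{x_1}Q,\partial_{x_2}Q\}$ plus a ``small'' remainder with coefficients controlled by $\kappa\|v\|_{H^1}$, apply Proposition~\ref{prop.coercivity} to the orthogonal component, and then absorb the error terms into the main coercivity constant by choosing $\kappa$ small enough.

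First I observe that the three functions $e_1 := \Lambda Q/\|\Lambda Q\|_{L^2}$, $e_2 := \partial_{x_1}Q/\|\partial_{x_1}Q\|_{L^2}$, $e_3 := \partial_{x_2}Q/\|\partial_{x_2}Q\|_{L^2}$ form an $L^2$-orthonormal system: indeed, $\Lambda Q$ is radial (hence even in each $x_j$), while $\partial_{x_j}Q$ is odd in $x_j$, so all three pairings vanish by parity. Given $v$ satisfying \eqref{lemma.coercivity.2}, I set $\alpha_j := (v,e_j)$ and $w := v - \sum_{j=1}^3 \alpha_j e_j$. Then $(w,e_j)=0$ for $j=1,2,3$, and $|\alpha_j|\le\kappa\|v\|_{H^1}$, so Proposition~\ref{prop.coercivity} applies to $w$ and gives $H(w,w)\ge\tilde\lambda\|w\|_{H^1}^2$.

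Next I compute the cross terms. Using $\mathcal{L}\Lambda Q = -Q$ from \eqref{LambdaQeq} and $\mathcal{L}\partial_{x_j}Q = 0$ from \eqref{kernel}, one has $H(u,e_j)=0$ for $j=2,3$ and every $u$, while $H(u,e_1) = -(u,Q)/\|\Lambda Q\|_{L^2}$. Combined with $\int Q\partial_{x_j}Q\,dx = 0$, this yields $H(e_i,e_j)=0$ for $i\ne j$, $H(e_j,e_j)=0$ for $j=2,3$, and $H(e_1,e_1) = -\int Q\Lambda Q\,dx/\|\Lambda Q\|_{L^2}^2$. In particular, $H(w,e_j)=0$ for $j=2,3$ and $|H(w,e_1)|\le C\|w\|_{L^2}$. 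Expanding by bilinearity, everything collapses to
\[
H(v,v) \;=\; H(w,w) \,+\, 2\alpha_1 H(w,e_1) \,+\, \alpha_1^2\, H(e_1,e_1).
\]

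It only remains to collect constants. By AM-GM and $|\alpha_1|\le\kappa\|v\|_{H^1}$, the cross term is bounded in absolute value by $\tfrac{\tilde\lambda}{4}\|w\|_{H^1}^2 + C\kappa^2\|v\|_{H^1}^2$; the $\alpha_1^2$ term is bounded by $C\kappa^2\|v\|_{H^1}^2$. Moreover $\|v\|_{H^1}^2 \le 2\|w\|_{H^1}^2 + C\kappa^2\|v\|_{H^1}^2$, so $\|w\|_{H^1}^2 \ge \tfrac14\|v\|_{H^1}^2$ once $\kappa$ is small. Putting these bounds together gives $H(v,v) \ge \bigl(\tfrac{3\tilde\lambda}{16} - C\kappa^2\bigr)\|v\|_{H^1}^2$, and choosing $\kappa$ sufficiently small (depending only on $\tilde\lambda$ and fixed constants associated with $Q$) yields \eqref{lemma.coercivity.1}. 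The main obstacle is merely careful bookkeeping of constants: there is no analytic difficulty beyond the spectral input already packaged in Proposition~\ref{prop.coercivity}.
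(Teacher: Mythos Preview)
Your approach is essentially the same as the paper's: decompose $v$ as an orthogonal part $w$ plus a small remainder controlled by $\kappa\|v\|_{H^1}$, apply Proposition~\ref{prop.coercivity} to $w$, and absorb the cross terms. Your explicit computation of the cross terms via $\mathcal{L}\partial_{x_j}Q=0$ and $\mathcal{L}\Lambda Q=-Q$ is a nice refinement (the paper simply invokes continuity of $H$), but note that your final constant $\tfrac{3\tilde\lambda}{16}$ falls short of the stated $\tfrac{\tilde\lambda}{2}$; this is easily repaired by using $\|w\|_{H^1}\ge (1-C\kappa)\|v\|_{H^1}$ from the triangle inequality directly rather than the wasteful squared version $\|v\|_{H^1}^2\le 2\|w\|_{H^1}^2+C\kappa^2\|v\|_{H^1}^2$.
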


\begin{proof} Let $v$ in $H^1(\mathbb R^2)$ satisfying \eqref{lemma.coercivity.2}. We use the decomposition 
\begin{equation} \label{lemma.coercivity.3}
v=v_1+b_0\frac{\Lambda Q}{\|\Lambda Q\|_{L^2}}+b_1\frac{\partial_{x_1}Q}{\|\partial_{x_1}Q\|_{L^2}}+b_2\frac{\partial_{x_2}Q}{\|\partial_{x_2}Q\|_{L^2}} =v_1+v_2\, ,
\end{equation}
with $(v_1,\Lambda Q)=(v_1,\partial_{x_1}Q)=(v_1,\partial_{x_2}Q)=0$, so that \eqref{lemma.coercivity.2} yields 
\begin{equation} \label{lemma.coercivity.4}
|b_0|+|b_1|+|b_2| \le \kappa \|v\|_{H^1} \, .
\end{equation}
Moreover, if $0<\kappa \le \frac12$, \eqref{lemma.coercivity.3} and \eqref{lemma.coercivity.4} imply that 
\begin{equation} \label{lemma.coercivity.5}
\frac{\sqrt{3}}2\|v\|_{H^1} \le \|v_1\|_{H^1} \le \|v\|_{H^1} \, .
\end{equation}

Now, we compute 
\begin{equation} \label{lemma.coercivity.6}
H(v,v)=H(v_1,v_1)+H(v_2,v_2)+2H(v_1,v_2) \, ,
\end{equation} 
On the one hand, it follows by \eqref{prop.coercivity.2} and \eqref{lemma.coercivity.5} that
\begin{equation} \label{lemma.coercivity.7}
H(v_1,v_1)\ge \tilde{\lambda}\|v_1\|_{H^1} \ge \frac{3\tilde{\lambda}}4\|v\|_{H^1}^2 \, .
\end{equation}
On the other hand, the continuity of $H$ and \eqref{lemma.coercivity.4} give that 
\begin{equation} \label{lemma.coercivity.8}
H(v_2,v_2) \lesssim |b_0|^2+|b_1|^2+|b_2|^2 \lesssim \kappa^2\|v\|_{H^1}^2 \le \frac{\tilde{\lambda}}8 \|v\|_{H^1}^2
\end{equation}
and 
\begin{equation} \label{lemma.coercivity.9}
H(v_1,v_2) \lesssim \|v_1\|_{H^1}\|v_2\|_{H^1} \lesssim \|v\|_{H^1}\big( |b_0|+|b_1|+|b_2|\big)
\lesssim \kappa \|v\|_{H^1}^2 \le \frac{\tilde{\lambda}}8 \|v\|_{H^1}^2 \, ,
\end{equation}
as soon as $\kappa$ is chosen small enough (as a function of $\tilde{\lambda}$).

The proof of Lemma \ref{lemma.coercivity} is concluded gathering \eqref{lemma.coercivity.6}--\eqref{lemma.coercivity.9}. 
\end{proof}

\begin{proof}[Proof of Lemma \ref{coercivity}]
Let $v \in H^1(\mathbb R^2)$ be such that $(v,\Lambda Q)=(v,\partial_{x_1}Q)=(v,\partial_{x_2}Q)=0$. Recall from \eqref{phi} that $\phi_A$ is a positive function. Then, a direct computation gives 
\begin{equation} \label{coercivity.3}
H_{A}(v,v)=H(\sqrt{\phi_A}v,\sqrt{\phi_A}v)-\int (\partial_{x_1}\sqrt{\phi_A})^2v^2dx
-\int \phi_A'v\partial_{x_1}vdx \, .
\end{equation}
Thanks to the orthogonality properties on $v$, the definition of $\phi_A$ in \eqref{phi} and \eqref{varphiA} and the decay property of $Q$ and its derivatives \eqref{elliptPDE1}
\begin{equation} \label{coercivity.4}
\Big| \int \sqrt{\phi_A}v \frac{\partial_{x_1}Q}{\|\partial_{x_1}Q\|_{L^2}}dx \Big|=
\Big| \int (1-\sqrt{\phi_A})v \frac{\partial_{x_1}Q}{\|\partial_{x_1}Q\|_{L^2}}dx \Big| \le \kappa \|\sqrt{\phi_A}v\|_{L^2} \, ,
\end{equation}
if $A$ is chosen large enough. Arguing similarly, we have that 
\begin{equation} \label{coercivity.5}
\Big| \int \sqrt{\phi_A}v \frac{\partial_{x_2}Q}{\|\partial_{x_1}Q\|_{L^2}}dx \Big| \le \kappa \|\sqrt{\phi_A}v\|_{L^2}
\end{equation}
and 
\begin{equation} \label{coercivity.6}
\Big| \int \sqrt{\phi_A}v \frac{\Lambda Q}{\|\Lambda Q\|_{L^2}}dx \Big| \le \kappa \|\sqrt{\phi_A}v\|_{L^2} \, ,
\end{equation}
for $A$ chosen large enough. Then, it follows from Lemma \ref{lemma.coercivity} that
\begin{equation} \label{coercivity.7}
H(\sqrt{\phi_A}v,\sqrt{\phi_A}v) \ge  \|\sqrt{\phi_A}v\|_{H^1}^2 \, .
\end{equation}
We deduce gathering \eqref{coercivity.3} and \eqref{coercivity.7} that
\begin{equation} \label{coercivity.8}
H_{A}(v,v)\ge  \frac{\tilde{\lambda}}2 \int \phi_A \big( v^2+|\nabla v|^2\big)-\int (\partial_{x_1}\sqrt{\phi_A})^2v^2dx
+(\frac{\tilde{\lambda}}2-1)\int \phi_A'v\partial_{x_1}vdx  .
\end{equation}
We use \eqref{phi2} and \eqref{varphiA}  to control the last two terms on the right-hand side of \eqref{coercivity.8}. It follows that 
\begin{equation} \label{coercivity.9}
\int (\partial_{x_1}\sqrt{\phi_A})^2v^2dx \le \frac{C}{A^2}\int \phi_A v^2 dx \le \frac{\tilde{\lambda}}8 \int \phi_A v^2 dx 
\end{equation}
and 
\begin{equation} \label{coercivity.10}
\int \phi_A'v\partial_{x_1}vdx \le \frac{C}{2A^2}\int \phi_A\big(v^2+(\partial_{x_1}v)^2\big) dx \le \frac{\tilde{\lambda}}8 \int \phi_A \big(v^2+(\partial_{x_1}v)^2\big)  dx \, ,
\end{equation}
if $A$ is chosen large enough. 

Therefore, we conclude the proof of \eqref{coercivity.2} gathering \eqref{coercivity.8}--\eqref{coercivity.10}. Observe that we can choose $\lambda=\frac{\tilde{\lambda}}4$, where $\tilde{\lambda}$ is given by  Proposition \ref{prop.coercivity}.

\end{proof}

\section{Nonlinear Liouville Property } \label{NonlinearRigidity}
In this section, we give the proof of Theorem \ref{NonLinearLiouville}. According to Remark \ref{remarkscaling}, we will assume in this section that $c_0=1$.

\subsection{Modulation of a solution close to the soliton $Q$}

\begin{lemma} \label{modulation}
There exist $\epsilon_0>0$,  $\delta_0>0$ and $K_0>0$ such that for any $0<\epsilon \le \epsilon_0$ the following is true. For any solution $u \in C(\mathbb R:H^1(\mathbb R^2))$ of \eqref{ZK} satisfying
\begin{equation} \label{modulation.1}
\inf_{\tau \in \mathbb R^2} \|u(\cdot,t)-Q(\cdot-\tau)\|_{H^1} \le \epsilon \quad \forall \, t \in \mathbb R \, ,
\end{equation}
there exist $\rho=\big(\rho_1,\rho_2 \big) \in C^1(\mathbb R : \mathbb R^2)$ and $c \in C^1(\mathbb R : \mathbb R)$ such that 
\begin{equation} \label{modulation.2}
\eta(x,t)=u(x+\rho(t),t)-Q_{c(t)}(x)
\end{equation}
satisfies for all $t \in \mathbb R$
\begin{equation} \label{modulation.3}
|c(t)-1|+\|\eta(\cdot,t)\|_{H^1} \le K_0 \epsilon \, ,
\end{equation}

\begin{equation} \label{modulation.4}
\int \eta(x,t)\partial_{x_1}Q_{c(t)}(x)dx=\int \eta(x,t)\partial_{x_2}Q_{c(t)}(x)dx
=\int \eta(x,t) Q_{c(t)}(x)dx=0
\end{equation}
and
\begin{equation} \label{modulation.5}
|c'(t)|^{\frac12}+|\rho_1'(t)-c(t)|+|\rho_2'(t)| \le K_0\Big(\int \eta(x,t)^2e^{-\delta |x|} dx\Big)^{\frac12} \, .
\end{equation}
Moreover the functions $\rho$ and $c$ satisfying \eqref{modulation.2}--\eqref{modulation.5} are unique.
\end{lemma}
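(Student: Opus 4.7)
The plan is a standard modulation argument via the implicit function theorem, adapted from the one-dimensional gKdV case (cf.\ \cite{MM1,MM2,Ma}), with the three modulation parameters $(c,\rho_1,\rho_2)$ chosen to impose the three orthogonality conditions \eqref{modulation.4}. The only non-standard feature here is that the scaling orthogonality is imposed against $Q_c$ rather than $\Lambda Q_c$.

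\textbf{Step 1: Static modulation via the implicit function theorem.} Consider, for $v$ close to $Q$ in $H^1$, the map
\[
\Phi(v;c,\tau_1,\tau_2) = \Bigl( (v(\cdot+\tau)-Q_c,\partial_{x_1}Q_c),\ (v(\cdot+\tau)-Q_c,\partial_{x_2}Q_c),\ (v(\cdot+\tau)-Q_c, Q_c) \Bigr).
\]
At the base point $v=Q$, $\tau=0$, $c=1$, we have $\Phi=0$, and its Jacobian with respect to $(c,\tau_1,\tau_2)$ is, using radial symmetry of $Q$ and integration by parts (so that $(\partial_{x_1}Q,\partial_{x_2}Q)=0$, $(\Lambda Q,\partial_{x_j}Q)=0$, $(\partial_{x_j}Q,Q)=0$),
\[
J \;=\; \begin{pmatrix} -(\Lambda Q,\partial_{x_1}Q) & -\|\partial_{x_1}Q\|_{L^2}^2 & 0 \\ -(\Lambda Q,\partial_{x_2}Q) & 0 & -\|\partial_{x_2}Q\|_{L^2}^2 \\ -(\Lambda Q,Q) & 0 & 0 \end{pmatrix} \;=\; \begin{pmatrix} 0 & -\|\partial_{x_1}Q\|_{L^2}^2 & 0 \\ 0 & 0 & -\|\partial_{x_2}Q\|_{L^2}^2 \\ -c_{p,d}\|Q\|_{L^2}^2 & 0 & 0 \end{pmatrix}.
\]
Since $p=2$, $d=2$ gives $c_{p,d}=\frac12>0$ by \eqref{LambdaQeq2}, this matrix is invertible. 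The implicit function theorem yields, for $\epsilon$ small enough, unique $C^1$ functions $c(v)$, $\tau(v)$ near $(1,0)$ for every $v$ near the $Q$-orbit, together with the quantitative bound $|c-1|+|\tau-\tau_*| \lesssim \|v-Q(\cdot-\tau_*)\|_{H^1}$. Applying this pointwise in $t$ to $u(\cdot,t)$, using \eqref{modulation.1}, and setting $\rho(t)=\tau(u(\cdot,t))$ and $\eta$ as in \eqref{modulation.2}, produces parameters satisfying \eqref{modulation.3}-\eqref{modulation.4}, with the uniqueness being local.

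\textbf{Step 2: Time regularity and equation for $\eta$.} Because $u\in C(\mathbb R:H^1)$ and the functionals in $\Phi$ are smooth, the implicit function theorem gives $c,\rho\in C^0(\mathbb R)$; the $C^1$ regularity then follows from the linear system derived below, after verifying solvability. A direct computation using \eqref{ZK} and $-\Delta Q_c + cQ_c - Q_c^p = 0$ shows that $\eta$ satisfies
\[
\partial_t\eta \;=\; \partial_{x_1}\mathcal L_c\eta + (\rho_1'-c)\partial_{x_1}(Q_c+\eta) + \rho_2'\,\partial_{x_2}(Q_c+\eta) - c'\Lambda Q_c + \mathcal N(\eta),
\]
where $\mathcal N(\eta)=-\partial_{x_1}\bigl[(Q_c+\eta)^p-Q_c^p-pQ_c^{p-1}\eta\bigr]$ is quadratic in $\eta$.

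\textbf{Step 3: Linear system for the modulation parameters.} Differentiating the three orthogonality identities in \eqref{modulation.4}, substituting the equation above, and integrating by parts (so that every derivative hitting $\eta$ is shifted onto the smooth, exponentially decaying test functions $\partial_{x_j}Q_c$, $Q_c$, $\partial_c\partial_{x_j}Q_c$, $\partial_c Q_c=\Lambda Q_c$), one obtains a $3\times 3$ linear system
\[
\bigl(J + O(\|\eta\|_{H^1})\bigr)\begin{pmatrix} c' \\ \rho_1'-c \\ \rho_2' \end{pmatrix} = F(\eta),
\]
where $F(\eta)$ collects linear-in-$\eta$ contributions of the form $(\mathcal L_c\eta,\partial^2_{x_1}Q_c)$, $(\mathcal L_c\eta,\partial_{x_1}\partial_{x_2}Q_c)$, $(\mathcal L_c\eta,\partial_{x_1}Q_c)$, together with the quadratic contribution $(\mathcal N(\eta),\cdot)$. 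Every such term is controlled, by Cauchy-Schwarz and the exponential decay \eqref{elliptPDE1}, by $\bigl(\int \eta^2 e^{-\delta|x|}dx\bigr)^{1/2}$ (the quadratic terms being absorbed thanks to $\|\eta\|_{L^\infty}\lesssim\|\eta\|_{H^2}$ or more simply via Sobolev and Hölder with the exponentially decaying factor). Since $J$ is invertible and $\|\eta\|_{H^1}\lesssim\epsilon$ is small, the system is invertible and yields \eqref{modulation.5}, the factor $|c'|^{1/2}$ arising from the fact that the equation for $c'$ has leading coefficient $-c_{p,d}\|Q\|_{L^2}^2$ so that $c'$ is directly controlled by $F(\eta)$ (the square root is a safety margin used later in Section~\ref{NonlinearRigidity}).

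\textbf{Main obstacle.} The only subtle point is ensuring the bound on $(c',\rho_1'-c,\rho_2')$ in \eqref{modulation.5} involves only the \emph{weighted $L^2$} norm of $\eta$ (and not weighted $H^1$), in contrast to \eqref{modulation.3} which is in $H^1$. This forces the integration by parts that moves all derivatives off $\eta$ onto the soliton profile, exploiting $Q_c\in C^\infty$ with exponential decay; one must check that the only surviving $H^1$-type quantity is the quadratic remainder, which is sub-leading.
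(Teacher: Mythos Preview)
Your overall approach is the same as the paper's, and Steps 1--2 are fine. However, Step 3 contains a genuine gap in the treatment of $c'$. You write that $F(\eta)$ contains linear-in-$\eta$ terms and that ``the square root is a safety margin''. This is not correct: if you only had $|c'|\lesssim \bigl(\int\eta^2 e^{-\delta|x|}\bigr)^{1/2}$, then for small $\eta$ you would get $|c'|^{1/2}\lesssim \bigl(\int\eta^2 e^{-\delta|x|}\bigr)^{1/4}$, which is \emph{larger} than $\bigl(\int\eta^2 e^{-\delta|x|}\bigr)^{1/2}$, and \eqref{modulation.5} would fail.

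The point you are missing is that the third orthogonality condition is deliberately taken against $Q_c$ (not $\Lambda Q_c$) precisely so that the linear-in-$\eta$ contribution to the $c'$ equation \emph{vanishes}. Differentiating $\int\eta\,Q_c=0$ and using your equation for $\eta$, the linear term is
\[
\int \partial_{x_1}\mathcal L_c\eta\;Q_c\,dx \;=\; -\int \eta\,\mathcal L_c\partial_{x_1}Q_c\,dx \;=\; 0,
\]
since $\partial_{x_1}Q_c\in\ker\mathcal L_c$. The terms proportional to $(\rho_1'-c)$ and $\rho_2'$ also vanish by the orthogonality $\int\partial_{x_j}(Q_c+\eta)\,Q_c=0$. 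What remains is genuinely quadratic: $c'\bigl(\int Q_c\Lambda Q_c+O(\|\eta\|)\bigr)=\int\eta^2\,\partial_{x_1}Q_c$, giving $|c'|\lesssim \int\eta^2 e^{-\delta|x|}$ and hence $|c'|^{1/2}\lesssim \bigl(\int\eta^2 e^{-\delta|x|}\bigr)^{1/2}$. The paper explicitly stresses that this quadratic control on $c'$ ``will be of crucial importance in the proof of Theorem~\ref{NonLinearLiouville}''; it is not a safety margin but the actual content of the estimate.
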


\begin{proof} The proof of Lemma \ref{modulation} is a classical application of the implicit function theorem (see for example Proposition 1 in \cite{MM4} or page 225 in \cite{MM1}). Note that the non-degeneracy conditions to satisfy the orthogonality conditions \eqref{modulation.4} are given by 
\begin{equation} \label{nondegenerancy}
\int (\partial_{x_1}Q)^2dx>0, \quad \int (\partial_{x_2}Q)^2dx>0 \quad \text{and} \ \int Q\Lambda Q dx>0\, .
\end{equation}
The last condition in \eqref{nondegenerancy} is satisfied since we are in the subcritical case (see formula \eqref{LambdaQeq2}). 

For the sake of completeness, we explain how to deduce \eqref{modulation.5} from \eqref{modulation.2}--\eqref{modulation.4}. In particular, the fact that $|c'(t)|$ is bounded by a quadratic function of $\|\eta\|_{L^2}$ is a consequence of the orthogonality of $\eta$ and $Q_{c}$ in \eqref{modulation.4}  and will be of crucial importance in the proof of Theorem \ref{NonLinearLiouville}.

First, we derive the equation on $\eta$. Since $u$ is a solution to \eqref{ZK}, we compute that 
\begin{equation} \label{equationeta}
\begin{split} 
\partial_t\eta&=-\partial_{x_1}(\Delta u+u^2)+\rho'\cdot\nabla u-c'\Lambda Q_{c} \\
&=\partial_{x_1}\big(\mathcal{L}_c\eta-\eta^2 \big)+(\rho_1'-c)\partial_{x_1}(Q_c+\eta)+\rho_2'\partial_{x_2}(Q_c+\eta)-c'\Lambda Q_c \, ,
\end{split}
\end{equation}
where $\mathcal{L}_c$ was defined in \eqref{Lc}. Thus, we obtain by deriving the last orthogonality condition in \eqref{modulation.4} with respect to the time that 
\begin{equation} \label{modulation.6}
\begin{split} 
\int \partial_{x_1}\mathcal{L}_c&\eta Q_cdx-\int\partial_{x_1}(\eta^2)Q_cdx+(\rho_1'-c)\int \partial_{x_1}(Q_c+\eta)Q_cdx
\\&+\rho_2'\int\partial_{x_2}(Q_c+\eta)Q_cdx-c'\int Q_c\Lambda Q_cdx+c'\int \eta \Lambda Q_cdx=0 \, .
\end{split}
\end{equation}
Now, observe  that 
\begin{equation} \label{modulation.7}
\int \partial_{x_1}\mathcal{L}_c\eta Q_cdx=-\int \eta  \mathcal{L}_c\partial_{x_1}Q_cdx=0 \, ,
\end{equation}
since $\partial_{x_1}Q_c$ belongs to the kernel of $\mathcal{L}_c$. Moreover, the orthogonality conditions in \eqref{modulation.4} yield
\begin{equation} \label{modulation.8}
\int \partial_{x_1}(Q_c+\eta)Q_cdx=\int\partial_{x_2}(Q_c+\eta)Q_cdx=0 \, .
\end{equation}
Thus, we deduce from \eqref{elliptPDE1}, \eqref{modulation.3} and\eqref{modulation.6}--\eqref{modulation.8} that
\begin{equation} \label{modulation.9}
|c'|=\frac{\big|\int \eta^2\partial_{x_1}Q_cdx \big|}{\big|\int Q_c\Lambda Q_c dx-\int \eta \Lambda Q_c dx\big|} \le \frac{\int\eta^2e^{-\delta |x|}dx}{\int Q_c\Lambda Q_c dx-K_0\epsilon_0}
\, .
\end{equation}
Recall from formula \eqref{LambdaQeq2} that $\int Q_c\Lambda Q_cdx>0$ since we are in the subcritical case. Therefore, \eqref{modulation.9} yields the first inequality in \eqref{modulation.5} if $\epsilon_0$ and $K_0$ are chosen correctly.
\end{proof}

\begin{lemma} \label{modulationdecay}
Under the assumptions of Lemma \ref{modulation}. Assume moreover that there exist $\sigma>0$ and some function $\tilde{\rho} \in C(\mathbb R : \mathbb R^2)$
\begin{equation} \label{modulationdecay.1}
 \int_{x_2}u^2(x+\tilde{\rho}(t),t)dx_2  \lesssim e^{-\sigma |x_1|}\, , \quad \forall \, (x_1,t) \in \mathbb R^2 \, .
\end{equation}
Then, 
\begin{equation} \label{modulationdecay.2}
 \int_{x_2}u^2(x+\rho(t),t)dx_2  \lesssim e^{-\sigma |x_1|}\, , \quad \forall \, (x_1,t) \in \mathbb R^2 \, ,
\end{equation}
where $\rho$ is the function obtained from the modulation theory in Lemma \ref{modulation}.
\end{lemma}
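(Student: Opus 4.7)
\emph{Proof plan.} Set $F(y_1, t) := \int_{\R} u^2(y_1, x_2, t)\, dx_2$. Since $\int_{x_2} u^2(x_1 + r_1, x_2 + r_2, t)\, dx_2 = F(x_1 + r_1, t)$ by translation of the $x_2$-variable, neither side of \eqref{modulationdecay.1} nor of \eqref{modulationdecay.2} depends on the second coordinate of the shift. My plan is therefore to reduce the statement to the uniform bound
\[
\sup_{t \in \R} |\rho_1(t) - \tilde\rho_1(t)| \le R_0
\]
for some $R_0 > 0$: combined with the change of variable $y_1 = x_1 + \rho_1(t) - \tilde\rho_1(t)$ in \eqref{modulationdecay.1} and the triangle inequality $|y_1| \ge |x_1| - R_0$, this bound immediately yields \eqref{modulationdecay.2} (at the cost of an extra multiplicative constant $e^{\sigma R_0}$).

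To establish the uniform bound, I would compare lower and upper estimates on the localized mass $\int_{|x_1| \le R} F(x_1 + \rho_1(t), t)\, dx_1$, for $R$ to be chosen large. From \eqref{modulation.3}, one writes $u(\cdot + \rho(t), t) = Q_{c(t)} + \eta(\cdot, t)$ with $|c(t) - 1| + \|\eta(\cdot, t)\|_{H^1} \lesssim \epsilon$, and the exponential decay \eqref{elliptPDE1} of $Q_{c(t)}$ together with the fact that $\|Q_{c(t)}\|_{L^2}^2 \ge \|Q\|_{L^2}^2/2$ (for $\epsilon_0$ small, using the expression $Q_c(x) = c^{1/(p-1)}Q(c^{1/2}x)$) yield
\[
\int_{|x_1|\le R} F(x_1 + \rho_1(t),t)\, dx_1 \ge \tfrac12 \|Q\|_{L^2}^2 - C\bigl(e^{-\delta R} + \epsilon_0\bigr) \ge m_0 > 0
\]
uniformly in $t$, provided $R$ is large and $\epsilon_0$ small. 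For the matching upper estimate, setting $D(t) := \rho_1(t) - \tilde\rho_1(t)$ and substituting $y_1 = x_1 + D(t)$, assumption \eqref{modulationdecay.1} gives
\[
\int_{|x_1|\le R} F(x_1 + \rho_1(t),t)\, dx_1 = \int_{|y_1 - D(t)| \le R} F(y_1 + \tilde\rho_1(t),t)\, dy_1 \lesssim \int_{|y_1 - D(t)| \le R} e^{-\sigma|y_1|}\, dy_1,
\]
which is $\lesssim \sigma^{-1} e^{-\sigma(|D(t)| - R)}$ as soon as $|D(t)| \ge R$. Requiring this to exceed $m_0$ then forces $|D(t)| \le R + C'$ for some constant $C' = C'(m_0, \sigma)$, which is the desired uniform bound.

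I do not expect any delicate obstacle: the argument is purely $L^2$-based, uses no dynamical information on $u$ beyond the modulation identity \eqref{modulation.2} and the smallness estimate \eqref{modulation.3}, and all constants $m_0$, $R$, $R_0$ are $t$-independent thanks to the $t$-uniformity of \eqref{modulation.3}. The only conceptual point worth noting is that the $x_2$-component of the shift plays no role, which is what makes the hypothesis \eqref{modulationdecay.1} transfer cleanly to the canonical modulation shift $\rho(t)$.
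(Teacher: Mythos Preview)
Your proposal is correct and follows essentially the same route as the paper: both arguments reduce the lemma to a uniform bound $\sup_t|\rho_1(t)-\tilde\rho_1(t)|<\infty$, obtained by confronting a lower bound on the localized mass of $u(\cdot+\rho(t),t)$ (coming from the decomposition $u=Q_{c(t)}+\eta$) with an upper bound forced by the decay hypothesis \eqref{modulationdecay.1}. Your version is marginally cleaner in two respects---you make explicit that the $x_2$-component of the shift is irrelevant, and you localize in the strip $\{|x_1|\le R\}$ rather than the ball $\{|x|\le 1\}$ used in the paper---but the substance is the same.
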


\begin{proof} First, we infer that there exists $A>0$ such that 
\begin{equation} \label{modulationdecay.3}
\big|\rho_1(t)-\tilde{\rho}_1(t) \big| \le A, \quad \forall \, t \in \mathbb R \, .
\end{equation}
Indeed, on the one hand we get from the triangle inequality that 
\begin{displaymath} 
\|u(\cdot+\rho(t),t)\|_{L^2(|x|\le 1)} \ge \|Q\|_{L^2(|x|\le 1)}-\|Q-Q_{c(t)}\|_{L^2}-\|\eta(\cdot,t)\|_{L^2} \, .
\end{displaymath}
Then, since the function $:t \in \mathbb R \mapsto Q_{c(t)} \in H^1(\mathbb R^2)$ is continuous, we conclude from \eqref{modulation.3} that 
\begin{equation} \label{modulationdecay.4}
\|u(\cdot+\rho(t),t)\|_{L^2(|x|\le 1)} \ge \frac12\|Q\|_{L^2(|x|\le 1)}, \quad \forall \, t \in \mathbb R \, ,
\end{equation}
if $\epsilon_0$ is chosen small enough. On the other hand, we deduce from \eqref{modulationdecay.1} that there exists $\tilde{A}>0$ such that
\begin{equation} \label{modulationdecay.5}
\Big(\int_{|x_1|\ge \tilde{A}}u^2(x+\tilde{\rho}(t),t)dx\Big)^{\frac12}\lesssim 
\Big(\int_{|x_1|\ge \tilde{A}}e^{-\sigma |x_1|}dx_1\Big)^{\frac12} \le \frac14\|Q\|_{L^2(|x|\le 1)} \, .
\end{equation}
Let us define $A=\tilde{A}+1$. Assume by contradiction that $|\tilde{\rho}(t)-\rho(t)| \ge A$ for some $t \in \mathbb R$. Then, \eqref{modulationdecay.5} implies that 
\begin{displaymath} 
\|u(\cdot+\rho(t),t)\|_{L^2(|x|\le 1)}\le
\Big(\int_{|x_1+\rho_1(t)-\tilde{\rho}_1(t)| \le 1} u^2(x+\tilde{\rho}(t),t) dx\Big)^{\frac12} \le \frac14\|Q\|_{L^2(|x|\le 1)} \, .
\end{displaymath}
This contradicts \eqref{modulationdecay.4} and thus proves \eqref{modulationdecay.3}.

Finally, we get from \eqref{modulationdecay.1}  that
\begin{displaymath}
\begin{split}
 \int_{x_2}u^2(x+\rho(t),t)dx_2&= \int_{x_2}u^2(x+\tilde{\rho}(t)+(\rho(t)-\tilde{\rho}(t)),t)dx_2 
 \lesssim e^{-\sigma|x_1+\rho_1(t)-\tilde{\rho}_1(t)|} \, ,
\end{split}
\end{displaymath}
which together with \eqref{modulationdecay.3} concludes the proof of estimate \eqref{modulationdecay.2}.

\end{proof}

\subsection{Monotonicity} In this subsection, we prove monotonicity properties first for the $L^2$-norm of $u$, then for an energy in $H^1$ associated to $u$ and finally for the $L^2$-norm of $\partial^{\alpha}u$, for any $\alpha \in \mathbb N^2$, by induction on $|\alpha|=k$. 

Let $u$ be a solution to \eqref{ZK} satisfying \eqref{NonLinearLiouville1}. Then, by using the decomposition in Lemma \ref{modulation}, there exists $\rho=(\rho_1,\rho_2) \in C(\mathbb R : \mathbb R^2)$ and $c \in C^1(\mathbb R : \mathbb R)$ satisfying \eqref{modulation.2}-\eqref{modulation.5}. 

Let us define $\psi_M$ be defined as in \eqref{psi0}--\eqref{psi}. For $y_0>0$, $t_0 \in \mathbb R$ and $t \le t_0$, we also define 
\begin{equation} \label{tildex1}
\tilde{x}_1=x_1-\rho_1(t_0)+\frac12(t_0-t)-y_0 \, .
\end{equation}

We first derive the $L^2$-monotonicity property.
\begin{lemma} \label{nlL2monotonicity}
Assume that $u \in C(\mathbb R : H^1(\mathbb R^2))$ is a solution of \eqref{ZK} satisfying \eqref{modulation.2}-\eqref{modulation.5}. For  $y_0>0$, $t_0 \in \mathbb R$ and $t \le t_0$, let us define 
\begin{equation}\label{nlL2monotonicity.1}
I_{y_0,t_0}(t)=\int u^2(x,t)\psi_M(\tilde{x}_1)dx \, ,
\end{equation}
where $\psi_M$ is defined as in \eqref{psi0}--\eqref{psi} and $\tilde{x}_1$ is defined as in \eqref{tildex1}. Then
\begin{equation} \label{nlL2monotonicity.2}
I_{y_0,t_0}(t_0)-I_{y_0,t_0}(t) \lesssim e^{-y_0/M} \, ,
\end{equation}
 if $\epsilon_0$ in \eqref{NonLinearLiouville1} is chosen small enough and $M \ge 4$.

If moreover, $u$ satisfies the decay assumption \eqref{NonLinearLiouville2}, then 
\begin{equation} \label{nlL2monotonicity.3}
\begin{split}
\int &u^2(x,t_0)\psi_M(x_1-\rho_1(t_0)-y_0)dx \\ &+\int_{-\infty}^{t_0}\int\big(|\nabla u|^2+u^2 \big)(x,t)\psi_M'(\tilde{x}_1)dxdt \lesssim e^{-y_0/M} \, .
\end{split}
\end{equation}
\end{lemma}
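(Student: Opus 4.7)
My plan is to mimic the monotonicity argument of Lemma~\ref{monotonicity}, adapted to the nonlinear ZK equation using the soliton decomposition $u(\cdot, t) = Q_{c(t)}(\cdot - \rho(t)) + \eta(\cdot - \rho(t), t)$ from Lemma~\ref{modulation}. Differentiating $I_{y_0,t_0}(t)$ in time, using $\partial_t \tilde{x}_1 = -1/2$ and $\partial_t u = -\partial_{x_1}(\Delta u + u^2)$, and performing the KdV-type integrations by parts exactly as in \eqref{monotonicity2} yields
\[
\frac{d}{dt} I_{y_0,t_0}(t) = -\!\int \bigl(3(\partial_{x_1}u)^2 + (\partial_{x_2}u)^2\bigr)\psi_M'(\tilde{x}_1)\,dx - \tfrac12\!\int u^2 \psi_M'(\tilde{x}_1)\,dx + \!\int u^2 \psi_M'''(\tilde{x}_1)\,dx + \tfrac{2}{3}\!\int u^3 \psi_M'(\tilde{x}_1)\,dx.
\]
The $\psi_M'''$ term is absorbed via \eqref{psi} since $|\psi_M'''| \le \tfrac{1}{16}\psi_M'$ for $M \ge 4$.

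The key new difficulty, compared to Lemma~\ref{monotonicity}, is the cubic term, since $H^1(\R^2) \not\hookrightarrow L^\infty$ forbids a direct bound $|\int u^3 \psi_M'| \lesssim \|u\|_\infty \int u^2 \psi_M'$. I plan to expand $u^3 = (Q_{c(t)} + \eta)^3$ in the shifted frame and treat each of the four pieces separately. From \eqref{modulation.5} and $|c-1| \le K_0 \epsilon$ one has $\rho_1(t_0) - \rho_1(t) = (1 + O(\epsilon))(t_0 - t)$, so the soliton center $\rho_1(t)$ lies at distance $\ge y_0 + (1-K_0\epsilon)(t_0-t)/2$ from the peak of $\psi_M'(\tilde{x}_1)$. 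Combined with \eqref{elliptPDE1}, this makes $\int Q_c^k(\cdot-\rho(t))\,\psi_M'(\tilde{x}_1)\,dx$ exponentially small in $y_0$ and integrable in $t$ for each $k \ge 1$, and the mixed terms $\int Q_c^2\eta\,\psi_M'\,dx$ and $\int Q_c\eta^2\,\psi_M'\,dx$ are then controlled by Cauchy--Schwarz using $\|\eta\|_{L^2}\lesssim \epsilon$. For the purely nonlinear piece I apply the 2D Gagliardo--Nirenberg bound $\|f\|_{L^4}^4 \lesssim \|f\|_{L^2}^2 \|\nabla f\|_{L^2}^2$ to $f = \eta\sqrt{\psi_M'}$, using $|\nabla\sqrt{\psi_M'}|^2 \lesssim \psi_M'/M^2$ to control the weight's derivative, which yields
\[
\Big|\int \eta^3 \psi_M'\,dx\Big| \lesssim \|\eta\|_{L^2} \int \bigl(\eta^2 + |\nabla\eta|^2\bigr)\psi_M'\,dx \lesssim \epsilon \int \bigl(u^2 + |\nabla u|^2\bigr)\psi_M'\,dx + (\text{exponentially small in } y_0),
\]
and is absorbed into the positive gradient terms for $\epsilon_0$ small. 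This absorption is \textbf{the main obstacle}, and requires the 2D argument to be structured differently from Martel's 1D proof.

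Combining everything gives, for some $c_0 > 0$, a differential inequality of the form
\[
-\frac{d}{dt}I_{y_0,t_0}(t) \ge c_0 \int\bigl(|\nabla u|^2 + u^2\bigr)\psi_M'(\tilde{x}_1)\,dx - C e^{-(t_0-t)/(CM)}e^{-y_0/M},
\]
after enlarging $M_0$ if necessary. Integrating from $t$ to $t_0$, the error term is integrable in $s$ with total $\lesssim e^{-y_0/M}$, which immediately produces \eqref{nlL2monotonicity.2} together with the stronger integrated bound
\[
c_0 \int_t^{t_0}\!\!\int \bigl(|\nabla u|^2 + u^2\bigr)\psi_M'(\tilde{x}_1)\,dx\,ds \le I_{y_0,t_0}(t) - I_{y_0,t_0}(t_0) + C e^{-y_0/M}.
\]
For \eqref{nlL2monotonicity.3} I would send $t \to -\infty$ in this last inequality. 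Using Lemma~\ref{modulationdecay} to transfer \eqref{NonLinearLiouville2} to the modulation frame and splitting the integral at $|x_1 - \rho_1(t)| \le A$ versus $> A$, the tail is bounded by $\int_{|x_1|>A} e^{-\sigma|x_1|}\,dx_1 \le C e^{-\sigma A}$ (arbitrarily small for $A$ large), while on the bounded piece $\psi_M(\tilde{x}_1)$ is evaluated at an argument $\le A + \rho_1(t) - \rho_1(t_0) + \tfrac12(t_0 - t) - y_0$, which tends to $-\infty$ as $t \to -\infty$ since $\rho_1(t_0) - \rho_1(t) \ge (1 - K_0\epsilon)(t_0 - t)$; combined with the uniform $L^2$-bound from \eqref{modulation.3}, this piece also vanishes. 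Hence $\liminf_{t\to-\infty} I_{y_0,t_0}(t) = 0$, which together with the integrated inequality above yields \eqref{nlL2monotonicity.3}.
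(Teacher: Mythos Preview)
Your argument is correct and follows the same overall strategy as the paper: differentiate $I_{y_0,t_0}$, absorb the cubic term into the positive gradient and $L^2$ terms using smallness of the perturbation and the fact that the soliton center sits at distance $\sim \tfrac12(t_0-t)+y_0$ from the peak of $\psi_M'$, integrate in time, and then send $t\to-\infty$ using the decay assumption.

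The only real difference is in how the cubic term is split. The paper uses the two–piece decomposition
\[
u^3 = Q_c(\cdot-\rho)\,u^2 + \bigl(u-Q_c(\cdot-\rho)\bigr)u^2,
\]
bounding the second piece directly by the Sobolev embedding $H^1(\R^2)\hookrightarrow L^3(\R^2)$ applied to $u\sqrt{\psi_M'}$ (giving $K_0\epsilon_0\int(|\nabla u|^2+u^2)\psi_M'$), and the first piece by the near/far split $|x-\rho|\gtrless R_1$. You instead expand the full binomial $(Q_c+\eta)^3$ into four pieces. This also works, but is slightly less economical: the cross terms $\int Q_c^2\eta\,\psi_M'$ and $\int Q_c\eta^2\,\psi_M'$ must be treated separately, and your one--line justification ``by Cauchy--Schwarz using $\|\eta\|_{L^2}\lesssim\epsilon$'' hides a step. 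For instance, for $\int Q_c\eta^2\psi_M'$ one really needs either the pointwise smallness of $Q_c\psi_M'$ (which is exactly the near/far split) or an estimate such as $\bigl(\int (Q_c\psi_M')^2\bigr)^{1/2}\|\eta\|_{L^4}^2$, using $\int Q_c^2\psi_M'$ exponentially small together with $H^1\hookrightarrow L^4$. The paper's decomposition avoids this by never isolating $Q_c\eta^2$. Your $\eta^3$ estimate via Gagliardo--Nirenberg on $\eta\sqrt{\psi_M'}$ is equivalent to the paper's $H^1\hookrightarrow L^3$ argument on $(u-Q_c)u^2$. The $t\to-\infty$ step is identical to the paper's.
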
 

\begin{remark} \label{remarkL2monotonicity}
It will be clear from the proof that \eqref{nlL2monotonicity.2} still holds true if, for any $0<\beta<1$, we redefine $I_{y_0,t_0}(t)$ by 
$$I_{y_0,t_0}(t)=\int u^2(x,t)\psi_M(x_1-\rho_1(t_0)+\beta(t_0-t)-y_0)dx \, ,$$
and choose $M=M(\beta)>0$ big enough.
\end{remark}

\begin{proof} Fix $M \ge 4$. We compute by using \eqref{ZK} and the inequality in \eqref{psi} that
\begin{equation} \label{nlL2monotonicity.4}
\begin{split}
\frac{d}{dt}I_{y_0,t_0}(t) &=2\int u\partial_tu\psi_M(\tilde{x}_1)dx-\frac12\int u^2 \psi_M'(\tilde{x}_1)dx
\\ & \le -\int \big(3(\partial_{x_1}u)^2+(\partial_{x_2}u)^2+\frac14u^2\big)\psi_M'(\tilde{x}_1)dx+\frac23\int u^3\psi_M'(\tilde{x}_1)dx \, .
\end{split}
\end{equation}
We decompose the nonlinear term on the right-hand side of \eqref{nlL2monotonicity.4} as follows 
\begin{equation} \label{nlL2monotonicity.5}
\int u^3\psi_M'(\tilde{x}_1)dx=\int Q_c(\cdot-\rho)u^2\psi_M'(\tilde{x}_1)dx+\int \big(u-Q_c(\cdot-\rho)\big)u^2\psi_M'(\tilde{x}_1)dx \, .
\end{equation}
To deal with the second term on the right-hand side of \eqref{nlL2monotonicity.5}, we use the Sobolev embedding $H^1(\mathbb R^2) \hookrightarrow L^3(\mathbb R^2)$. Then
\begin{equation} \label{nlL2monotonicity.6}
\begin{split}
\Big|\int \big(u-Q_{c}(\cdot-\rho)\big)u^2\psi_M'(\tilde{x}_1)dx\Big| &\lesssim \|u-Q_{c}(\cdot-\rho)\|_{H^1}\|\sqrt{\psi_M'}u\|_{H^1}^2 
\\ & \lesssim K_0\epsilon_0\int \big(|\nabla u|^2+u^2 \big)\psi_M'dx \, ,
\end{split}
\end{equation}
in view of \eqref{modulation.3} and \eqref{psi}. 

Now, we treat the first term on the right-hand side of \eqref{nlL2monotonicity.5}. Let $R_1$ be a positive number to be chosen later. In the case where $|x-\rho(t)| \ge R_1$, we have that 
\begin{equation} \label{nlL2monotonicity.7}
\Big| \int_{|x-\rho(t)| \ge R_1} Q_c(\cdot-\rho)u^2\psi_M'(\tilde{x}_1)dx\Big| \le Ce^{-\delta R_1}\int u^2\psi_M'dx \, ,
\end{equation}
where $\delta$ is the positive number given in \eqref{elliptPDE1}. 

In the case where $|x-\rho(t)| \le R_1$, we observe by using \eqref{modulation.3}, \eqref{modulation.5} and the mean value theorem that
\begin{displaymath}
|\tilde{x}_1| \ge |\rho_1(t_0)-\rho_1(t)+y_0|-\frac12(t_0-t)-|x_1-\rho_1(t)|
\ge \frac14(t_0-t)+y_0-R_1\, ,
\end{displaymath}
if $\epsilon_0$ is chosen small enough. Thus 
\begin{equation} \label{nlL2monotonicity.8}
\Big| \int_{|x-\rho(t)| \le R_1} Q_c(\cdot-\rho)u^2\psi_M'(\tilde{x}_1)dx\Big| \le e^{R_1/M}e^{-(\frac1{4}(t_0-t)+y_0)/M}\int u_0^2dx \, ,
\end{equation}
since $\psi_M'(\tilde{x}_1) \le e^{-|\tilde{x}_1|/M}$ and the $L^2$-norm of $u$ is conserved. 

We deduce gathering \eqref{nlL2monotonicity.5}--\eqref{nlL2monotonicity.8}, fixing the value of $R_1$ and choosing $\epsilon_0$ small enough that
\begin{equation} \label{nlL2monotonicity.9}
\frac23\Big|\int u^3\psi_M'(\tilde{x}_1)dx\Big| \le \frac18\int \big(|\nabla u|^2+u^2\big)\psi_M'(\tilde{x}_1)dx+Ce^{-(\frac1{4}(t_0-t)+y_0)/M} \, .
\end{equation}
Therefore, it follows integrating \eqref{nlL2monotonicity.4} between $t$ and $t_0$ and using 
\eqref{nlL2monotonicity.9} that 
\begin{equation} \label{nlL2monotonicity.10}
I_{y_0,t_0}(t_0)-I_{y_0,t_0}(t)+\frac18\int_t^{t_0}\int\big(|\nabla u|^2+u^2\big)(x,s)\psi_M'(\tilde{x}_1)dxds \lesssim e^{-y_0/M} \, ,
\end{equation}
which in particular implies estimate \eqref{nlL2monotonicity.2}.

Now, we assume that $u$ satisfies the decay assumption \eqref{NonLinearLiouville2}.  Then,  arguing as in \eqref{monotonicity6}--\eqref{monotonicity7}, we get that 
\begin{displaymath}
\lim_{t \to -\infty}\int u^2(x,t)\psi_M(\tilde{x}_1)dx = 0 \, .
\end{displaymath}
Therefore, we prove estimate \eqref{nlL2monotonicity.3} by passing to the limit as $t \to -\infty$ in \eqref{nlL2monotonicity.10}. This concludes the proof of Lemma \ref{nlL2monotonicity}.
\end{proof}

Next, we derive a monotonicity property for the energy. 
\begin{lemma} \label{nlEnergymonotonicity}
Assume that $u \in C(\mathbb R : H^1(\mathbb R^2))$ is a solution of \eqref{ZK} satisfying \eqref{modulation.2}-\eqref{modulation.5}. For  $y_0>0$, $t_0 \in \mathbb R$ and $t \le t_0$, let us define 
\begin{equation}\label{nlEnergymonotonicity.1}
J_{y_0,t_0}(t)=\int \big(|\nabla u|^2-\frac23u^3\big)(x,t)\psi_M(\tilde{x}_1)dx \, ,
\end{equation}
where $\psi_M$ is defined as in \eqref{psi0}--\eqref{psi} and $\tilde{x}_1$ is defined as in \eqref{tildex1}. Then
\begin{equation} \label{nlEnergymonotonicity.2}
J_{y_0,t_0}(t_0)-J_{y_0,t_0}(t) \lesssim e^{-y_0/M} \, ,
\end{equation}
if $\epsilon_0$ in \eqref{NonLinearLiouville1} is chosen small enough and $M \ge 4$.

If moreover $u$ satisfies the decay assumption \eqref{NonLinearLiouville2}, then 
\begin{equation} \label{nlEnergymonotonicity.3}
\begin{split}
\int &|\nabla u|^2(x,t_0)\psi_M(x_1-\rho_1(t_0)-y_0)dx \\ & 
+\int_{-\infty}^{t_0}\int\big(|\nabla^2 u|^2+|\nabla u|^2+u^4 \big)(x,t)\psi_M'(\tilde{x}_1) dxdt\lesssim e^{-y_0/M} \, .
\end{split}
\end{equation}
\end{lemma}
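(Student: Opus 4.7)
The strategy mirrors Lemma~\ref{nlL2monotonicity} at the level of the Hamiltonian density: the integrand $|\nabla u|^2-\tfrac{2}{3}u^3$ of $J_{y_0,t_0}$ is twice the density of the conserved quantity $H(u)$ in \eqref{H} (for $p=2$), so its localized version should satisfy an almost-monotonicity analogous to \eqref{nlL2monotonicity.2}, with the coercive gain now providing control on $|\nabla^2u|^2+|\nabla u|^2+u^4$ weighted by $\psi_M'$.

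I would start by differentiating $J_{y_0,t_0}(t)$ in time, substituting $\partial_t u=-\partial_{x_1}(\Delta u+u^2)$ from \eqref{ZK}, and integrating by parts repeatedly to trade every $\partial_t u$ for a weight derivative. The key algebraic observation is that $2\nabla u\cdot\nabla\partial_t u-2u^2\partial_t u$ equals, after one IBP, $-2(\Delta u+u^2)\partial_t u$ up to a $\psi_M'$ boundary term, and substituting the equation this becomes $\partial_{x_1}[(\Delta u+u^2)^2]$. Performing further IBP on the cross term $\int\partial_{x_1}u\,\partial_{x_1}\Delta u\,\psi_M'$ (as in the passage from \eqref{monotonicity8} to \eqref{monotonicity14} in the linear Lemma~\ref{monotonicity}), one obtains
\[
\frac{d}{dt}J_{y_0,t_0}(t) = -\int\!\!\big[(\Delta u+u^2)^2+2(\partial_{x_1}^2u)^2+2(\partial_{x_1}\partial_{x_2}u)^2\big]\psi_M'\,dx-\tfrac{1}{2}\int|\nabla u|^2\psi_M'\,dx+\mathcal{R}(t).
\]
Expanding $(\Delta u+u^2)^2=(\Delta u)^2+2u^2\Delta u+u^4$, the principal quadratic form in the second derivatives is positive definite, so the coercive term dominates $c\int(|\nabla^2u|^2+|\nabla u|^2+u^4)\psi_M'\,dx$ for some $c>0$. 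The remainder $\mathcal{R}(t)$ contains: (a) a $\psi_M'''$ term absorbed via $|\psi_M'''|\le M^{-2}\psi_M'$ from \eqref{psi} since $M\ge 4$; (b) cubic terms $\int u(\partial_{x_1}u)^2\psi_M'$ and $\int u^3\psi_M'$ from the $u^3$ piece of $J$; (c) a cross term $-2\int u^2\Delta u\,\psi_M'$, further integrated by parts into pieces of types (a) and (b).

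To handle the cubic errors I would repeat the splitting of \eqref{nlL2monotonicity.5}--\eqref{nlL2monotonicity.8}: write $u=Q_{c(t)}(\cdot-\rho(t))+\eta$ with $\|\eta\|_{H^1}\le K_0\epsilon$. The $Q_{c(t)}$-piece of each cubic integrand is split over $\{|x-\rho(t)|\le R_1\}$, where $|\tilde x_1|\ge\tfrac14(t_0-t)+y_0-R_1$ makes $\psi_M'$ exponentially small, and its complement, where $Q_{c(t)}$ itself decays exponentially by \eqref{elliptPDE1}; this gives a contribution $\lesssim e^{-y_0/M-(t_0-t)/(4M)}$ after fixing $R_1$ large. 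The $\eta$-piece is absorbed using the $2$D Sobolev embedding $H^1\hookrightarrow L^4$ and Gagliardo--Nirenberg: setting $v=\sqrt{\psi_M'}\,\partial_{x_j}u$,
\[
\Big|\int\eta\, v^2\,dx\Big|\le\|\eta\|_{L^2}\|v\|_{L^4}^2\lesssim\|\eta\|_{L^2}\|v\|_{L^2}\|\nabla v\|_{L^2}\lesssim K_0\epsilon\int(|\nabla^2u|^2+|\nabla u|^2)\psi_M'\,dx,
\]
which is absorbed by the coercive principal term once $\epsilon_0$ is small. Integrating from $t$ to $t_0$ yields both \eqref{nlEnergymonotonicity.2} and the stronger space-time bound $J_{y_0,t_0}(t_0)-J_{y_0,t_0}(t)+\tfrac{c}{2}\int_t^{t_0}\!\int(|\nabla^2u|^2+|\nabla u|^2+u^4)\psi_M'\,dx\,ds\lesssim e^{-y_0/M}$.

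For \eqref{nlEnergymonotonicity.3} I would pass to $t\to-\infty$ in this stronger bound. Under \eqref{NonLinearLiouville2} and Lemma~\ref{modulationdecay}, the $L^2$ result \eqref{nlL2monotonicity.3} gives $\int u^2\psi_M(\tilde x_1)\to 0$, hence $\int u^3\psi_M(\tilde x_1)\lesssim\|u\|_{H^1}^2\bigl(\int u^2\psi_M(\tilde x_1)\bigr)^{1/2}\to 0$ by interpolation; for the gradient term $\int|\nabla u|^2\psi_M(\tilde x_1)$, the decomposition $u=Q_{c(t)}(\cdot-\rho(t))+\eta$ combined with the geometric fact that $\tilde x_1$ evaluated at the soliton position tends to $-\infty$ as $t\to-\infty$ (since $\rho_1(t)\sim c_0 t=t$ while the weight transition recedes only at speed $1/2$) makes the $Q_c$-part vanish, and the $\eta$-part tends to $0$ along a suitable subsequence using the space-time bound just obtained. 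Thus $J_{y_0,t_0}(t_n)\to 0$ along some $t_n\to-\infty$, and \eqref{nlEnergymonotonicity.3} follows. The main obstacle is the two-dimensional Sobolev situation: since $H^1(\mathbb R^2)\not\hookrightarrow L^\infty(\mathbb R^2)$, one cannot pull an $L^\infty$ bound on $u$ or $\eta$ out of the cubic or quartic error integrals, which forces the Gagliardo--Nirenberg-type absorption combined with the geometric soliton splitting sketched above.
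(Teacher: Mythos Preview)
Your argument for \eqref{nlEnergymonotonicity.2} is correct and essentially matches the paper. The paper differentiates $\int|\nabla u|^2\psi_M$ and $-\tfrac23\int u^3\psi_M$ separately and observes that the dangerous terms $\int\big((\partial_{x_1}u)^3+\partial_{x_1}u(\partial_{x_2}u)^2\big)\psi_M$ cancel upon adding, whereas you organize the same computation through the expression $(\Delta u+u^2)^2$; after one further integration by parts on the cross term $\int\partial_{x_1}^2u\,\partial_{x_2}^2u\,\psi_M'$ the two principal quadratic forms coincide up to a $\psi_M'''$ term, and the cubic errors are handled identically via the splitting \eqref{nlL2monotonicity.5}--\eqref{nlL2monotonicity.9}.

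There is, however, a genuine gap in your treatment of \eqref{nlEnergymonotonicity.3}. To pass $t\to-\infty$ you need $\liminf_{t\to-\infty}\int|\nabla u|^2\psi_M(\tilde x_1)\,dx=0$, and your decomposition $u=Q_{c(t)}(\cdot-\rho(t))+\eta$ does not deliver this: the $Q_c$-part indeed vanishes by the geometric argument you give, but the assertion that the $\eta$-part ``tends to $0$ along a suitable subsequence using the space-time bound just obtained'' is unjustified. All the space-time bounds at your disposal (either from \eqref{nlL2monotonicity.3} or from the first part of this lemma) carry the weight $\psi_M'$, not $\psi_M$, and $\psi_M\lesssim\psi_M'$ fails for $\tilde x_1>0$. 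Moreover you cannot simply bound $\int|\nabla\eta|^2\psi_M\le\|\eta\|_{H^1}^2\le K_0^2\epsilon^2$, since this does not tend to zero.

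The paper closes this gap not by decomposing $u$ but by invoking the conversion trick \eqref{monotonicity15}--\eqref{monotonicity17}: from $\int_{-\infty}^{t_0}\!\int(|\nabla u|^2+u^2)\psi_M'(\tilde x_1)\,dx\,dt\lesssim e^{-y_0/M}$ (already in \eqref{nlL2monotonicity.3}), one uses $\psi_M'(\tilde x_1)\gtrsim e^{\tilde x_1/M}$ for $\tilde x_1<0$, sends $y_0\to+\infty$ to drop the spatial restriction, and then multiplies back by $e^{-y_0/M}$, obtaining $\int_{-\infty}^{t_0}\!\int(|\nabla u|^2+u^2)\psi_M(\tilde x_1)\,dx\,dt<\infty$ and hence $\liminf_{t\to-\infty}\int(|\nabla u|^2+u^2)\psi_M(\tilde x_1)\,dx=0$. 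Combined with the elementary bound $J_{y_0,t_0}(t)\le\int(u^2+|\nabla u|^2)\psi_M$ (Sobolev $H^1\hookrightarrow L^3$), this gives the vanishing liminf of $J$ directly, without any soliton decomposition.
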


\begin{remark} \label{remarkEnergymonotonicity}
It will be clear from the proof that \eqref{nlEnergymonotonicity.2} still holds true if, for any $0<\beta<1$, we redefine $J_{y_0,t_0}(t)$ by 
$$J_{y_0,t_0}(t)=\int \big(|\nabla u|^2-\frac23u^3\big)(x,t)\psi_M(x_1-\rho_1(t_0)+\beta(t_0-t)-y_0)dx \, ,$$
and choose $M=M(\beta)>0$ big enough.
\end{remark}

\begin{proof} Straightforward computations using \eqref{ZK} and \eqref{psi} show that
\begin{equation} \label{nlEnergymonotonicity.4}
\begin{split}
\frac{d}{dt}\int |\nabla u|^2&\psi_M(\tilde{x}_1)dx\\ & \le -\int \big(|\nabla^2u|^2+\frac14|\nabla u|^2\big)\psi_M'(\tilde{x}_1)dx+2\int u|\nabla u|^2\psi_M'(\tilde{x}_1)dx  \\ &\quad
-2\int \big((\partial_{x_1}u)^3+\partial_{x_1}u(\partial_{x_2}u)^2\big)\psi_M(\tilde{x}_1)dx \, ,
\end{split}
\end{equation}
and 
\begin{equation} \label{nlEnergymonotonicity.5}
\begin{split}
&-\frac23\frac{d}{dt}\int u^3\psi_M(\tilde{x}_1)dx \\ & =   -\int u^4\psi_M'(\tilde{x}_1)dx+
\int \big(6u(\partial_{x_1} u)^2+2u(\partial_{x_2} u)^2\big)\psi_M'(\tilde{x}_1)dx
\\ &\quad  +\frac13\int u^3\big(\psi_M'(\tilde{x}_1)-2\psi_M'''(\tilde{x}_1) \big)dx+2\int \big((\partial_{x_1}u)^3+\partial_{x_1}u(\partial_{x_2}u)^2\big)\psi_M(\tilde{x}_1)dx \, .
\end{split}
\end{equation}
Observe that the last terms on the right-hand side of \eqref{nlEnergymonotonicity.4} and \eqref{nlEnergymonotonicity.5} cancel out. Therefore, it follows by adding \eqref{nlEnergymonotonicity.4} and \eqref{nlEnergymonotonicity.5} and using \eqref{psi} again that
\begin{equation} \label{nlEnergymonotonicity.6}
\begin{split}
\frac{d}{dt}J_{y_0,t_0}(t) &\le -\int \big(|\nabla^2u|^2+\frac14|\nabla u|^2+u^4\big)\psi_M'(\tilde{x}_1)dx+C\int u^3\psi_M'(\tilde{x}_1)dx \\ &\quad+4\int \big(2u(\partial_{x_1} u)^2+u(\partial_{x_2} u)^2\big)\psi_M'(\tilde{x}_1)dx \, .
\end{split}
\end{equation}
We deduce arguing exactly as in \eqref{nlL2monotonicity.5}--\eqref{nlL2monotonicity.9} that the last two terms on the right-hand side of \eqref{nlEnergymonotonicity.6} are bounded by
\begin{displaymath}
\frac18\int \big(|\nabla^2 u|^2+|\nabla u|^2+u^2\big)\psi_M'(\tilde{x}_1)dx+Ce^{-(\frac1{4}(t_0-t)+y_0)/M} \, ,
\end{displaymath}
if $\epsilon_0$ is chosen small enough. Thus after integrating \eqref{nlEnergymonotonicity.6} between $t$ and $t_0$, we get from \eqref{nlL2monotonicity.10} that
\begin{equation} \label{nlEnergymonotonicity.7}
J_{y_0,t_0}(t_0)-J_{y_0,t_0}(t)+\frac18\int_t^{t_0}\int\big(|\nabla^2 u|^2+|\nabla u|^2+u^4\big)(x,s)\psi_M'(\tilde{x}_1)dxds \lesssim e^{-y_0/M} \, ,
\end{equation}
which in particular implies estimate \eqref{nlEnergymonotonicity.2}. 

Now, we assume moreover that $u$ satisfies the decay assumption \eqref{NonLinearLiouville2}. On the one hand, by using the Sobolev embedding $H^1(\mathbb R^2) \hookrightarrow L^3(\mathbb R^2)$ and the fact that $u$ is bounded in $H^1$ we get that 
\begin{equation} \label{nlEnergymonotonicity.8}
J_{y_0,t_0}(t) \le \int \big(u^2+|\nabla u|^2\big)(x,t)\psi_M(\tilde{x}_1)dx\, .
\end{equation}
On the other hand, we deduce by using the second inequality in \eqref{nlL2monotonicity.3} and arguing exactly as in \eqref{monotonicity15}--\eqref{monotonicity17} that
\begin{displaymath}
\liminf_{t \to -\infty}\int \big(u^2+|\nabla u|^2\big)(x,t)\psi_M(\tilde{x}_1)dx=0 \, .
\end{displaymath}
It follows then by letting $t \to -\infty$ in \eqref{nlEnergymonotonicity.7} that 
\begin{equation} \label{nlEnergymonotonicity.9}
J_{y_0,t_0}(t_0)+\frac18\int_{-\infty}^{t_0}\int\big(|\nabla^2 u|^2+|\nabla u|^2+u^4\big)(x,s)\psi_M'(\tilde{x}_1)dxds \lesssim e^{-y_0/M} \, .
\end{equation}
Next, observe that 
\begin{displaymath}
\begin{split}
\int |\nabla u|^2&(x,t_0)\psi_M(x_1-\rho_1(t_0)-y_0)dx  \\ &\le J_{y_0,t_0}(t_0)+\frac23\int u^3(x,t_0)\psi_M(x_1-\rho_1(t_0)-y_0)dx  \, .
\end{split}
\end{displaymath}
 Thus, we use the decomposition in \eqref{nlL2monotonicity.5}, the Sobolev embedding $H^1(\mathbb R^2) \hookrightarrow L^3(\mathbb R^2)$, \eqref{modulation.3} and the first inequality in \eqref{nlL2monotonicity.3} to get that
 \begin{displaymath}
 \int |\nabla u|^2(x,t_0)\psi_M(x_1-\rho_1(t_0)-y_0)dx \lesssim J_{y_0,t_0}(t_0)+e^{-y_0/M} \, ,
 \end{displaymath}
 which yields estimate \eqref{nlEnergymonotonicity.3} in view of \eqref{nlEnergymonotonicity.9}.
 \end{proof}
 
 \begin{corollary} \label{coro.nlmonotonicity}
 Let $u \in C(\mathbb R : H^1(\mathbb R^2))$ be a solution of \eqref{ZK} satisfying \eqref{modulation.2}-\eqref{modulation.5} and the decay assumption in $x_1$ \eqref{NonLinearLiouville2}. Assume that $\epsilon_0$ in Lemma \ref{modulation} is chosen small enough, then, there exists $\tilde{\sigma}>0$ such that
 \begin{equation} \label{coro.nlmonotonicity.1}
 \sup_{t \in \mathbb R} \int |\nabla u|^2(x+\rho(t),t)e^{\tilde{\sigma}|x_1|}dx \lesssim 1 \, .
 \end{equation}
Moreover, there exists $M_0 \ge 4$ such that
\begin{equation} \label{coro.nlmonotonicity.2}
\int_{-\infty}^{t_0}\int \Big(|\nabla^2 u|^2+|\nabla u|^2+u^2\Big)(x,t)e^{\tilde{x}_1/M} \lesssim e^{-y_0/M} \, ,
\end{equation} 
for all $M \ge M_0$, $t_0 \in \mathbb R$, $y_0>0$ and where $\tilde{x}_1$ is defined in \eqref{tildex1}.
 \end{corollary}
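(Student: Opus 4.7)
The plan is to deduce both estimates from the space-time monotonicity bounds just established in Lemmas \ref{nlL2monotonicity} and \ref{nlEnergymonotonicity}, combined with the time-reversal symmetry of the ZK equation (the map $u(x,t)\mapsto u(-x,-t)$ preserves \eqref{ZK}) and the lower bound $\rho_1'(t)\geq 3/4$, which follows from \eqref{modulation.3}--\eqref{modulation.5} for $\epsilon_0$ small and ensures that the soliton travels strictly faster than the moving frame that defines $\tilde x_1$.

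For estimate \eqref{coro.nlmonotonicity.1}, I start from the endpoint inequality in \eqref{nlEnergymonotonicity.3},
\[
\int |\nabla u|^2(x,t_0)\,\psi_M\bigl(x_1-\rho_1(t_0)-y_0\bigr)\,dx \lesssim e^{-y_0/M}.
\]
Since $\psi_M(z)\gtrsim e^{z/M}$ on $\{z\leq 0\}$, restricting to $\{x_1\leq \rho_1(t_0)+y_0\}$, multiplying by $e^{y_0/M}$, translating $y=x-\rho(t_0)$ and letting $y_0\to+\infty$ yields the uniform-in-$t_0$ bound $\int |\nabla u|^2(y+\rho(t_0),t_0)\,e^{y_1/M}\,dy \lesssim 1$, which gives decay as $y_1\to +\infty$. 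The $y_1\to -\infty$ direction is obtained by applying the same argument to the reversed solution $\tilde u(x,t):=u(-x,-t)$, a solution of \eqref{ZK} that inherits \eqref{NonLinearLiouville2} and admits a modulation decomposition with parameters $\tilde\rho(t)=-\rho(-t)$, $\tilde c(t)=c(-t)$. Combining both directions yields \eqref{coro.nlmonotonicity.1} with $\tilde\sigma=1/M$ for any $M\geq M_0$.

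For estimate \eqref{coro.nlmonotonicity.2}, I split the space-time integral along $\{\tilde x_1\leq 0\}$ and $\{\tilde x_1\geq 0\}$. On the first region, $\psi_M'(\tilde x_1)\gtrsim e^{\tilde x_1/M}$, so the contribution of $|\nabla^2 u|^2+|\nabla u|^2+u^2$ is dominated by the space-time bounds in \eqref{nlL2monotonicity.3} and \eqref{nlEnergymonotonicity.3}, giving $\lesssim e^{-y_0/M}$. On $\{\tilde x_1\geq 0\}$ the weight grows, but the region lies far to the right of the soliton: changing to the soliton frame $y_1=x_1-\rho_1(t)$ and using $\rho_1(t_0)-\rho_1(t)\geq 3(t_0-t)/4$ for $t\leq t_0$ gives $e^{\tilde x_1/M}\leq e^{-y_0/M}\,e^{-(t_0-t)/(4M)}\,e^{y_1/M}$. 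The contributions of $u^2$ and $|\nabla u|^2$ are then controlled by the pointwise-in-$t$ estimate \eqref{coro.nlmonotonicity.1} together with the convergent integral $\int_{-\infty}^{t_0}e^{-(t_0-t)/(4M)}\,dt\leq 4M$, yielding a total bound of order $e^{-y_0/M}$.

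The main obstacle is to control the $|\nabla^2 u|^2$ contribution on $\{\tilde x_1\geq 0\}$, since the first part of the corollary only provides a pointwise-in-$t$ weighted $H^1$ bound and the inequality $e^{y/M}\lesssim\psi_M'(y)$ fails for $y>0$. I expect to resolve this by establishing an $H^2$ analog of Lemma \ref{nlEnergymonotonicity}, obtained by running the same multiplier/energy estimate argument on $\partial^\alpha u$ for $|\alpha|\leq 2$ and controlling the resulting commutator and nonlinear terms inductively, in the spirit of the inductive proof of Lemma \ref{monotonicity} in the linear setting. Once the pointwise-in-$t$ weighted $H^2$ bound $\sup_t \int |\nabla^2 u|^2(y+\rho(t),t)\,e^{\tilde\sigma|y_1|}\,dy \lesssim 1$ is available, the frame-change argument above applies verbatim to the second-derivative term and closes the proof.
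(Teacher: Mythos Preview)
Your treatment of \eqref{coro.nlmonotonicity.1} is fine and matches the paper: this is exactly the argument of Corollary \ref{coro.monotonicity} applied to the endpoint estimate in \eqref{nlEnergymonotonicity.3}, together with the time-reversal $u(-x,-t)$.

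For \eqref{coro.nlmonotonicity.2}, however, you are overcomplicating the argument and creating a gap that is in fact circular. You correctly observe that on $\{\tilde x_1\le 0\}$ the bound follows from $\psi_M'(\tilde x_1)\gtrsim e^{\tilde x_1/M}$ together with the space--time estimates in \eqref{nlL2monotonicity.3} and \eqref{nlEnergymonotonicity.3}. The point you miss is that this \emph{same} observation, applied with an auxiliary parameter $\tilde y_0$ in place of $y_0$, already delivers the full integral. Indeed, \eqref{nlL2monotonicity.3} and \eqref{nlEnergymonotonicity.3} with parameter $\tilde y_0>0$ give
\[
\int_{-\infty}^{t_0}\!\!\int_{x_1<\rho_1(t_0)-\frac12(t_0-t)+\tilde y_0}\!\!\bigl(|\nabla^2 u|^2+|\nabla u|^2+u^2\bigr)\,e^{(x_1-\rho_1(t_0)+\frac12(t_0-t))/M}\,dx\,dt\lesssim 1,
\]
with implicit constant independent of $\tilde y_0$. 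Letting $\tilde y_0\to+\infty$ removes the spatial restriction entirely, and then multiplying by $e^{-y_0/M}$ yields \eqref{coro.nlmonotonicity.2}. No splitting into $\{\tilde x_1\ge 0\}$, no frame change, and no pointwise-in-$t$ weighted $H^2$ bound is needed.

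Your proposed fix---to first establish an $H^2$ monotonicity analogous to Lemma \ref{nlEnergymonotonicity}---is not merely unnecessary, it is circular in the structure of the paper: the $H^2$ monotonicity (Lemma \ref{nlH2monotonicity}) is proved \emph{after} this corollary, and its proof explicitly invokes \eqref{coro.nlmonotonicity.2} (with $\tilde M=M/3$) to control the term $\int|\nabla^2 u|^2 e^{3\tilde x_1/M}$ arising from the nonlinearity. So you cannot assume an $H^2$ bound here.
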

 
 \begin{proof} Estimate \eqref{coro.nlmonotonicity.1} follows from the first inequality in \eqref{nlEnergymonotonicity.3} arguing exactly as in the proof of Corollary \ref{coro.monotonicity}. 
 
 Now, we prove estimate \eqref{coro.nlmonotonicity.2}. Since $\psi'_M(\tilde{x}_1) \gtrsim e^{\tilde{x}_1/M}$ for $\tilde{x}_1<0$, it follows from \eqref{nlL2monotonicity.3} and \eqref{nlEnergymonotonicity.3} that
\begin{displaymath} 
\int_{-\infty}^{t_0}\int_{x_1<\rho_1(t_0)-\frac12(t_0-t)+\tilde{y}_0} \Big(|\nabla^2 u|^2+|\nabla u|^2+u^2\Big)(x,t)e^{(x_1-\rho_1(t_0)+\frac12(t_0-t))/M} dxdt\lesssim 1\, ,
\end{displaymath}
for all $\tilde{y}_0>0$. This yields \eqref{coro.nlmonotonicity.2} by passing to the limit as $\tilde{y}_0 \to +\infty$ and multiplying the result by $e^{-\tilde{y}_0/M}$.
 \end{proof}
 
Due to the failure of the Sobolev embedding $H^1 \hookrightarrow L^{\infty}$ in two dimensions, we are not able at this point to derive monotonicity properties for $(\partial^{\alpha}u)^2$ at any order of  $|\alpha|$ by induction as it was done for the KdV equation in \cite{LM}.  We need first to derive a monotonicity property in $H^2$, which in turn will implies that the solutions of \eqref{ZK} close to a soliton are bounded in $H^3$.
 \begin{lemma} \label{nlH2monotonicity}
Assume that $u \in C(\mathbb R : H^1(\mathbb R^2))$ is a solution of \eqref{ZK} satisfying \eqref{modulation.2}-\eqref{modulation.5} and the decay property \eqref{NonLinearLiouville2}. If $\epsilon_0$ in Lemma \ref{modulation} is chosen small enough, then  there exists $M_0 \ge 12$ such that
\begin{equation} \label{nlH2monotonicity.1}
\begin{split}
\int &|\nabla^2 u|^2(x,t_0)\psi_M(x_1-\rho_1(t_0)-y_0)dx \\ & 
+\int_{-\infty}^{t_0}\int \big( |\nabla^2 u|^2+\sum_{|\alpha|=3}(\partial^{\alpha} u)^2\big)(x,t)\psi_M'(\tilde{x}_1) dxdt\lesssim e^{-y_0/M} \, .
\end{split}
\end{equation}
for  $M\ge M_0$, $y_0>0$, $t_0 \in \mathbb R$ and $t \le t_0$ where $\psi_M$ is defined as in \eqref{psi0}--\eqref{psi} and $\tilde{x}_1$ is defined as in \eqref{tildex1}. 
\end{lemma}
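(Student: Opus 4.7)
The plan is to parallel the proofs of Lemmas~\ref{nlL2monotonicity} and~\ref{nlEnergymonotonicity} at the next level of regularity: for each multi-index $\alpha$ with $|\alpha|=2$, compute $\frac{d}{dt}\int(\partial^{\alpha}u)^2(x,t)\psi_M(\tilde x_1)\,dx$, sum over $\alpha$, derive a differential inequality of the same flavor as~\eqref{nlL2monotonicity.10} and~\eqref{nlEnergymonotonicity.7}, and then integrate from $t$ to $t_0$ and let $t\to-\infty$ as in~\eqref{nlEnergymonotonicity.8}--\eqref{nlEnergymonotonicity.9} to produce both terms of~\eqref{nlH2monotonicity.1}.

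Differentiating~\eqref{ZK} twice yields $\partial_t\partial^{\alpha}u=-\partial_{x_1}\Delta\partial^{\alpha}u-\partial_{x_1}\partial^{\alpha}(u^2)$. Pairing with $2\partial^{\alpha}u\,\psi_M(\tilde x_1)$ and using the drift $\frac{d}{dt}\tilde x_1=-\tfrac12$, standard integrations by parts on the linear piece give, up to commutators involving $\psi_M'''$ which are controlled by $\psi_M'/M^2$ thanks to~\eqref{psi}, the coercive output
\[
-\sum_{|\alpha|=2}\int\Bigl(3(\partial_{x_1}\partial^{\alpha}u)^2+(\partial_{x_2}\partial^{\alpha}u)^2+\tfrac14(\partial^{\alpha}u)^2\Bigr)\psi_M'(\tilde x_1)\,dx,
\]
which is exactly the quadratic form $\int\bigl(|\nabla^2 u|^2+\sum_{|\alpha|=3}(\partial^{\alpha}u)^2\bigr)\psi_M'\,dx$ appearing in~\eqref{nlH2monotonicity.1}; hence the lower bound $M_0\geq 12$.

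The main obstacle is the nonlinear term $-2\int\partial_{x_1}\partial^{\alpha}(u^2)\,\partial^{\alpha}u\,\psi_M\,dx$. After Leibniz and one further integration by parts to trade $\psi_M$ for $\psi_M'$, the bad contributions take the schematic forms $\int u\,(\partial^{\beta}u)^2\psi_M'\,dx$ and $\int\partial_{x_j}u\,(\partial^{\beta}u)^2\psi_M'\,dx$ with $|\beta|=2$, and cross terms of the same homogeneity. Since $H^1(\R^2)\not\hookrightarrow L^{\infty}(\R^2)$, as the paper explicitly flags, no direct $L^{\infty}$ bound on $u$ is available; I circumvent this by splitting $u=Q_c(\cdot-\rho)+\eta$. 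The $Q_c$ contribution is localized near $x=\rho(t)$, whereas $\psi_M'(\tilde x_1)$ concentrates near $x_1=\rho_1(t_0)-\tfrac12(t_0-t)+y_0$; splitting into $|x-\rho(t)|\le R_1$ and $|x-\rho(t)|\ge R_1$ and arguing exactly as in~\eqref{nlL2monotonicity.7}--\eqref{nlL2monotonicity.8} produces an error of order $e^{-y_0/M}e^{-(t_0-t)/(4M)}$. For the $\eta$ piece, I appeal to the two-dimensional Gagliardo--Nirenberg inequality
\[
\|\sqrt{\psi_M'}\,\partial^{\beta}u\|_{L^4}^2\lesssim \|\sqrt{\psi_M'}\,\partial^{\beta}u\|_{L^2}\,\|\nabla\bigl(\sqrt{\psi_M'}\,\partial^{\beta}u\bigr)\|_{L^2},
\]
combined with H\"older and~\eqref{modulation.3}, which bounds $\int\eta\,(\sqrt{\psi_M'}\,\partial^{\beta}u)^2\,dx$ by $K_0\epsilon_0\int\bigl(|\nabla^2 u|^2+\sum_{|\alpha|=3}(\partial^{\alpha}u)^2\bigr)\psi_M'\,dx$ plus lower-order terms which are controlled by Lemma~\ref{nlEnergymonotonicity} and Corollary~\ref{coro.nlmonotonicity}. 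Taking $\epsilon_0$ small absorbs the $\eta$ contribution into a fraction (say $\tfrac18$) of the coercive term. Similarly, the terms involving a loose $\partial_{x_j}u$ are handled either by Cauchy--Schwarz together with Gagliardo--Nirenberg on $\nabla u$, or by transferring the derivative onto $\psi_M$ via integration by parts.

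Once the differential inequality is established, integrating from $t$ to $t_0$ reduces the proof of~\eqref{nlH2monotonicity.1} to showing that along some sequence $t_n\to-\infty$,
\[
\int|\nabla^2 u|^2(x,t_n)\,\psi_M(x_1-\rho_1(t_0)+\tfrac12(t_0-t_n)-y_0)\,dx\longrightarrow 0.
\]
This is achieved by combining the uniform $H^1$ bound~\eqref{modulation.3}, the exponential decay in $x_1$ transferred to the modulated frame by Lemma~\ref{modulationdecay}, and the space--time bound produced by the differential inequality itself at the $H^2$ level, in the same spirit as~\eqref{monotonicity15}--\eqref{monotonicity17}. This yields both the pointwise-in-$t_0$ first term and the full space--time second term of~\eqref{nlH2monotonicity.1}.
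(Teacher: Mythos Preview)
Your outline has the right overall shape, but there is a real gap in the treatment of the nonlinear term, and your explanation of why $M_0\ge 12$ is not the correct one.

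You assert that ``after Leibniz and one further integration by parts to trade $\psi_M$ for $\psi_M'$, the bad contributions take the schematic forms $\int u\,(\partial^{\beta}u)^2\psi_M'\,dx$ and $\int\partial_{x_j}u\,(\partial^{\beta}u)^2\psi_M'\,dx$.'' This is not true. Take $\alpha=(2,0)$: after Leibniz and the obvious integration by parts on $\int u\,\partial_{x_1}^3u\,\partial_{x_1}^2u\,\psi_M\,dx=\tfrac12\int u\,\partial_{x_1}\big((\partial_{x_1}^2u)^2\big)\psi_M\,dx$, one is left with a term $\int \partial_{x_1}u\,(\partial_{x_1}^2u)^2\,\psi_M\,dx$ carrying the weight $\psi_M$, \emph{not} $\psi_M'$. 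Equivalently, if you do not integrate by parts, the problematic piece is $\int (u-Q_c(\cdot-\rho))\,\partial_{x_1}^3u\,\partial_{x_1}^2u\,\psi_M\,dx$. Your Gagliardo--Nirenberg bound is stated for $\sqrt{\psi_M'}\,\partial^{\beta}u$, which does not match this term; if you try to run it with $\sqrt{\psi_M}$ instead, the $H^1$ factor produces $\int|\nabla\partial^{\beta}u|^2\psi_M\,dx$, which is strictly larger than the coercive output $\int|\nabla\partial^{\beta}u|^2\psi_M'\,dx$ on the region $\tilde x_1>0$ and hence cannot be absorbed. Integrating by parts once more only reintroduces the third derivative with the same $\psi_M$ weight, so the argument circles.

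The paper closes this term differently. Writing $\mathcal N_{(2,0)}^{1.2}(u)=\int\eta\,\partial_{x_1}^3u\,\partial_{x_1}^2u\,\psi_M\,dx$ with $\eta=u-Q_c(\cdot-\rho)$, two applications of Young's inequality give
\[
\big|\mathcal N_{(2,0)}^{1.2}(u)\big|\le \tfrac1{32}\int(\partial_{x_1}^3u)^2\psi_M'\,dx
+C\int\eta^4(\partial_{x_1}^2u)^2\psi_M'\,dx
+C\int(\partial_{x_1}^2u)^2\,\frac{\psi_M(\tilde x_1)^4}{\psi_M'(\tilde x_1)^3}\,dx.
\]
The middle term is handled via $H^1\hookrightarrow L^6$ and smallness of $\|\eta\|_{H^1}$. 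For the last term one uses that $\psi_M^4/(\psi_M')^3\lesssim\psi_M'$ for $\tilde x_1\le0$ and $\psi_M^4/(\psi_M')^3\lesssim e^{3\tilde x_1/M}$ for $\tilde x_1>0$. The resulting contribution $\int(\partial_{x_1}^2u)^2 e^{3\tilde x_1/M}\,dx$ is then controlled, after time integration, by the space--time estimate \eqref{coro.nlmonotonicity.2} of Corollary~\ref{coro.nlmonotonicity} applied with parameter $\tilde M=M/3$. This forces $M/3\ge 4$, i.e.\ $M\ge 12$, which is the actual source of the constraint $M_0\ge 12$; it does not come from the linear commutator $|\psi_M'''|\le M^{-2}\psi_M'$ as you suggest (that only requires $M\ge 4$). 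Your proposal does invoke Corollary~\ref{coro.nlmonotonicity}, but only for ``lower-order terms''; the point is that its exponential-weight estimate \eqref{coro.nlmonotonicity.2} with a rescaled parameter is the key device for the top-order term.
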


\begin{proof} Arguing as previously, we get that 
\begin{equation} \label{nlH2monotonicity.2}
\frac{d}{dt}\int (\partial^{\alpha}u)^2 \psi_M(\tilde{x}_1)dx \le -\int\big(|\nabla \partial^{\alpha}u|^2+\frac14(\partial^{\alpha}u)^2 \big)\psi_M'(\tilde{x}_1)dx+\mathcal{N}_{\alpha}(u) \, ,
\end{equation}
for all multi-index $\alpha \in \mathbb N^2 $, where 
\begin{equation} \label{nlH2monotonicity.3}
\mathcal{N}_{\alpha}(u) =2\int \partial^{\alpha}\partial_{x_1}(u^2)\partial^{\alpha}u \psi_M(\tilde{x}_1)dx\, .
\end{equation}

Here, we explain how to handle $\mathcal{N}_{\alpha}$ when $|\alpha|=2$. We will only look at the nonlinearity $\mathcal{N}_{(2,0)}(u)$ since the other nonlinearities $\mathcal{N}_{\alpha}(u)$ with $|\alpha|=2$ could be treated similarly. Integrations by parts and the Leibniz rule give that 
\begin{equation} \label{nlH2monotonicity.4}
\begin{split}
\mathcal{N}_{(2,0)}(u)&=-20\int u\partial_{x_1}^3u\partial_{x_1}^2u\psi_M(\tilde{x}_1)dx
-24\int u(\partial_{x_1}^2u)^2\psi_M'(\tilde{x}_1)dx \\ & 
=:-20\mathcal{N}_{(2,0)}^1(u)-24\mathcal{N}_{(2,0)}^2(u) \, .
\end{split}
\end{equation}
To deal with $\mathcal{N}_{(2,0)}^1(u)$, we use the decomposition 
\begin{equation} \label{nlH2monotonicity.5}
\mathcal{N}_{(2,0)}^1(u)=\mathcal{N}_{(2,0)}^{1.1}(u)+\mathcal{N}_{(2,0)}^{1.2}(u)\, .
\end{equation} 
where
\begin{displaymath}
\mathcal{N}_{(2,0)}^{1.1}(u)=\int Q_c(\cdot-\rho)\partial_{x_1}^3u\partial_{x_1}^2u\psi_M(\tilde{x}_1)dx \, ,
\end{displaymath} 
and
\begin{displaymath} 
\mathcal{N}_{(2,0)}^{1.2}(u)=\int \big(u-Q_c(\cdot-\rho) \big)\partial_{x_1}^3u\partial_{x_1}^2u\psi_M(\tilde{x}_1)dx \, .
\end{displaymath} 
Integrating by parts and arguing as in \eqref{monotonicity11}, we get that 
\begin{equation} \label{nlH2monotonicity.6}
\mathcal{N}_{(2,0)}^{1.1}(u)=-\frac12\int (\partial_{x_1}^2u)^2 \partial_{x_1}\big(Q_c(\cdot-\rho)\psi_M(\tilde{x}_1)\big)dx 
\lesssim \int (\partial_{x_1}^2u)^2 \psi_M'(\tilde{x}_1)dx \, .
\end{equation}
On the other hand, it follows from Young's inequality that 
\begin{equation} \label{nlH2monotonicity.7}
\begin{split}
\mathcal{N}_{(2,0)}^{1.2}(u)&\le \frac1{32}\int (\partial_{x_1}^3u)^2 \psi_M'(\tilde{x}_1)dx 
\\ & \quad+ C\int \big(u-Q_c(\cdot-\rho) \big)^2(\partial_{x_1}^2u)^2\frac{\psi_M(\tilde{x}_1)^2}{\psi_M'(\tilde{x}_1)}dx \, .
\end{split}
\end{equation}
Applying Young's inequality again, we bound the second term on the right-hand side of \eqref{nlH2monotonicity.7} by 
\begin{displaymath}
C\int \big(u-Q_c(\cdot-\rho) \big)^4(\partial_{x_1}^2u)^2\psi_M'(\tilde{x}_1)dx+C\int (\partial_{x_1}^2u)^2\frac{\psi_M(\tilde{x}_1)^4}{\psi_M'(\tilde{x}_1)^3}dx \, .
\end{displaymath}
Now, since 
$$\frac{\psi_M(\tilde{x}_1)^4}{\psi_M'(\tilde{x}_1)^3} \lesssim \psi'_M(\tilde{x}_1), \ \text{for} \ \tilde{x}_1 \le 0 \quad \text{and} \quad \frac{\psi_M(\tilde{x}_1)^4}{\psi_M'(\tilde{x}_1)^3} \lesssim e^{3\tilde{x}_1/M}, \ \text{for} \ \tilde{x}_1 >0 \, ,$$ we deduce from the Sobolev embedding $H^1(\mathbb R^2) \hookrightarrow L^6(\mathbb R^2)$ and \eqref{modulation.3} that 
\begin{equation} \label{nlH2monotonicity.8}
\begin{split}
\mathcal{N}_{(2,0)}^{1.2}(u)&\le \frac1{16}\int \big( (\partial_{x_1}^3u)^2 +(\partial_{x_1}^2u)^2\big)\psi_M'(\tilde{x}_1)dx  \\& \quad
+C\int  (\partial_{x_1}^2u)^2 \big( \psi'_M(\tilde{x}_1)+e^{3\tilde{x}_1/M} \big)dx \, ,
\end{split}
\end{equation}
assuming $\epsilon_0$ is chosen small enough. Now, arguing as in \eqref{nlL2monotonicity.5}--\eqref{nlL2monotonicity.6}, we obtain that 
\begin{equation} \label{nlH2monotonicity.9}
\mathcal{N}_{(2,0)}^{2}(u)\le \frac1{16}\int \big( (\partial_{x_1}^3u)^2 +(\partial_{x_1}^2u)^2\big)\psi_M'(\tilde{x}_1)dx  
+C\int  (\partial_{x_1}^2u)^2  \psi'_M(\tilde{x}_1)dx \, .
\end{equation}

Therefore, we deduce gathering \eqref{nlH2monotonicity.2}--\eqref{nlH2monotonicity.3} for $|\alpha|=2$ and arguing as in \eqref{nlH2monotonicity.4}--\eqref{nlH2monotonicity.6}, \eqref{nlH2monotonicity.8} and \eqref{nlH2monotonicity.9} that 
\begin{displaymath}
\begin{split}
\frac{d}{dt}\int |\nabla^2 u|^2(x,t)\psi_M(\tilde{x}_1)dx 
+\frac18\sum_{|\alpha|=3}&\int (\partial^{\alpha} u)^2(x,t)\psi_M'(\tilde{x}_1) dx \\&\lesssim 
 \int  |\nabla^2 u|^2 \big( \psi'_M(\tilde{x}_1)+e^{3\tilde{x}_1/M} \big)dx \, ,
\end{split}
\end{displaymath}
which implies \eqref{nlH2monotonicity.1} by integrating between $t$ and $t_0$, letting $t \to -\infty$ and using \eqref{nlEnergymonotonicity.3} and \eqref{coro.nlmonotonicity.2} with $\tilde{M}=M/3$.
\end{proof}

\begin{corollary} \label{coro.H2monotonicity}
Assume that $u \in C(\mathbb R : H^1(\mathbb R^2))$ is a solution of \eqref{ZK} satisfying \eqref{modulation.2}-\eqref{modulation.5} and the decay property \eqref{NonLinearLiouville2}. If $\epsilon_0$ in Lemma \ref{modulation} is chosen small enough, then $u$ is bounded in $H^3$.
\end{corollary}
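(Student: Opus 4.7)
The plan is a three-step bootstrap from Lemma \ref{nlH2monotonicity}, climbing one order in regularity.

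First, I would obtain a uniform $H^2$ bound on $u$. Letting $y_0 \to 0^+$ in the first inequality of \eqref{nlH2monotonicity.1} yields
\[
\int |\nabla^2 u(x, t_0)|^2 \, \psi_M(x_1 - \rho_1(t_0))\, dx \lesssim 1
\]
uniformly in $t_0$, which controls $\nabla^2 u(\cdot, t_0)$ on the asymptotic right half-plane. For the left half, I would consider $\tilde u(x_1, x_2, t) := u(-x_1, x_2, -t)$, which solves \eqref{ZK} and satisfies \eqref{NonLinearLiouville2} with center $(-\rho_1(-t), \rho_2(-t))$. Applying Lemma \ref{nlH2monotonicity} to $\tilde u$ and translating back controls the complementary half. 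Combined with $L^2$ conservation and Lemma \ref{nlEnergymonotonicity}, this yields $\sup_t \|u(t)\|_{H^2(\R^2)} \lesssim 1$.

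Next, the two-dimensional Sobolev embedding $H^2(\R^2) \hookrightarrow L^\infty(\R^2)$ upgrades this to $\sup_t \|u(t)\|_{L^\infty(\R^2)} \lesssim 1$.

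Finally, I would derive an $H^3$ monotonicity analogous to Lemma \ref{nlH2monotonicity}. Differentiating $\int (\partial^\alpha u)^2 \psi_M(\tilde x_1)\, dx$ in time for $|\alpha|=3$ produces, via \eqref{ZK} and integration by parts, a coercive term $-\int (|\nabla \partial^\alpha u|^2 + \tfrac14 (\partial^\alpha u)^2) \psi_M'\, dx$ plus commutators from $\partial^\alpha \partial_{x_1}(u^2)$. The top-order contribution $\int u (\partial^\alpha u)^2 \psi_M'\, dx$ is now controlled \emph{directly} by $\|u\|_{L^\infty}$ from the previous step, bypassing the $L^6$ trick used in \eqref{nlH2monotonicity.7}--\eqref{nlH2monotonicity.8} for $|\alpha|=2$. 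The mixed commutators $\int \partial^\beta u \, \partial^\gamma u \, \partial^\alpha u\, \psi_M^{(j)}\, dx$ with $|\beta|+|\gamma|=3$ and $|\beta|,|\gamma|\le 2$ are handled by Cauchy--Schwarz and the two-dimensional Gagliardo--Nirenberg inequalities $\|\nabla u\|_{L^4} \lesssim \|u\|_{H^2}$ and $\|\nabla^2 u\|_{L^4} \lesssim \|u\|_{H^2}^{1/2}\|u\|_{H^3}^{1/2}$, producing right-hand sides that scale strictly subquadratically in $\|u\|_{H^3}$ and hence are absorbable via Young's inequality. The same forward--backward argument then yields $\sup_t \|\nabla^3 u(t)\|_{L^2} \lesssim 1$, i.e.\ $u$ is bounded in $H^3$.

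The main obstacle is precisely the treatment of the mixed commutators in the last step: a naive estimate would require an $L^\infty$ bound on $\nabla u$, which is \emph{not} furnished by $H^2$ in two dimensions. The key will be to exploit Gagliardo--Nirenberg interpolation together with the spacetime control $\int_{-\infty}^{t_0}\!\int |\nabla^3 u|^2 \psi_M'\, dx\, dt \lesssim 1$ already provided by \eqref{nlH2monotonicity.1}, so that the resulting differential inequality closes uniformly in time rather than merely locally.
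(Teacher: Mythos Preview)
Your approach is viable but takes a different route from the paper. You propose to (i) extract a uniform $H^2$ bound from the first term of \eqref{nlH2monotonicity.1} via the forward--backward trick, (ii) upgrade to $L^\infty$ control on $u$, and (iii) run a fresh monotonicity computation at the $H^3$ level. The paper instead exploits the \emph{second} term of \eqref{nlH2monotonicity.1}: the spacetime bound $\int_{-\infty}^{t_0}\!\int\sum_{|\alpha|=3}(\partial^\alpha u)^2\psi_M'\,dx\,dt\lesssim e^{-y_0/M}$ (together with its reflection) gives $\int_{t_0-1}^{t_0}\|u(t)\|_{H^3}^2\,dt\le\theta$ uniformly in $t_0$. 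One then picks $t_1\in(t_0-1,t_0)$ with $\|u(t_1)\|_{H^3}^2\le\theta$, and runs the crude energy inequality $\frac{d}{dt}\|u\|_{H^3}^2\lesssim\|\nabla u\|_{L^\infty}\|u\|_{H^3}^2$ forward from $t_1$ to $t_0$ by Gronwall, using $\int_{t_1}^{t_0}\|\nabla u\|_{L^\infty}\,ds\lesssim\int_{t_1}^{t_0}\|u\|_{H^3}\,ds\lesssim\theta^{1/2}$. This yields the uniform $H^3$ bound in a few lines without any new monotonicity computation; see \cite{LM}.

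Your step (iii) can be made to work, but the $L^\infty$ bound on $u$ is not the decisive ingredient. After Leibniz and integration by parts, the dangerous term at $|\alpha|=3$ is $\int u\,\partial^{\alpha+e_1}u\,\partial^\alpha u\,\psi_M\,dx$ (equivalently $\int\partial_{x_1}u\,(\partial^\alpha u)^2\psi_M\,dx$), carrying the weight $\psi_M$ rather than $\psi_M'$. Controlling this with $\|u\|_{L^\infty}$ alone does not produce the $\psi_M'$ weight needed to close the monotonicity; you would have to repeat the $Q_c$-decomposition and double-Young argument of \eqref{nlH2monotonicity.5}--\eqref{nlH2monotonicity.8} verbatim, ultimately invoking the spacetime $H^3$ control from \eqref{nlH2monotonicity.1} (with $M$ replaced by $M/3$) to absorb the $e^{3\tilde x_1/M}$-weighted remainder. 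The genuinely new mixed term $\int(\partial^\beta u)^2\partial^\alpha u\,\psi_M$ with $|\beta|=2$ is, as you suggest, handled by Gagliardo--Nirenberg and the uniform $H^2$ bound. So your route ends up re-proving Lemma~\ref{nlH2monotonicity} at one order higher; the paper's Gronwall shortcut avoids this, and only afterwards (Lemma~\ref{nlhomonotonicity}) returns to the monotonicity-by-induction once $\|\nabla u\|_{L^\infty}$ is available.
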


\begin{proof} We follow the strategy of \cite{LM}. Arguing as in the proof of Corollary \ref{coro.nlmonotonicity}, we deduce from \eqref{nlH2monotonicity.1} that
\begin{equation} \label{coro.H2monotonicity.1}
\int_{-\infty}^{t_0}\int \Big(|\nabla^2u|^2+\sum_{|\alpha|=3}(\partial^{\alpha}u)^2\Big)(x,t)e^{x_1-\rho_1(t_0)+\frac12(t_0-t)/M}dxdt \lesssim 1\, ,
\end{equation}
for any $M \ge 12$. Moreover, it follows from \eqref{modulation.3}, \eqref{modulation.5} and the mean value theorem that $\rho_1(t)-\rho_1(t_0)+\frac12(t_0-t) \ge -\frac34(t_0-t) \ge -\frac34$ if $t \in [t_0-1,t]$. Thus we deduce from \eqref{coro.nlmonotonicity.2} and \eqref{coro.H2monotonicity.1}, (after applying the same argument to $v(x,t)=u(-x,-t)$), that
\begin{equation} \label{coro.H2monotonicity.2}
\int_{t_0-1}^{t_0}\int \Big(u^2+|\nabla u|^2+|\nabla^2 u|^2+\sum_{|\alpha|=3}(\partial^{\alpha}u)^2\Big)(x+\rho(t),t)e^{|x_1|/M}dxdt \lesssim 1 \, .
\end{equation}

In particular, we deduce from \eqref{coro.H2monotonicity.2} that there exists $\theta>0$ such that 
\begin{equation} \label{coro.H2monotonicity.3}
\int_{t_0-1}^{t_0}\int \Big(u^2+|\nabla u|^2+|\nabla^2 u|^2+\sum_{|\alpha|=3}(\partial^{\alpha}u)^2\Big)(x,t)dxdt \le \theta\, ,
\end{equation}
for all $t_0 \in \mathbb R$. Thus, $u(\cdot,t) \in H^3(\mathbb R^2)$ at least for some $t \in \mathbb R$ and  the persistence property of the well-posedness result implies that $u \in C(\mathbb R : H^3(\mathbb R^2))$. 

On the other hand, direct energy estimates using the equation \eqref{ZK} give 
\begin{displaymath}
\frac{d}{dt}\|u\|_{H^3}^2 \lesssim \|\nabla u\|_{L^{\infty}_x}\|u\|_{H^3}^2 \, .
\end{displaymath}
Then, we deduce from Gronwall's inequality that
\begin{displaymath} 
\|u(\cdot,t_0)\|_{H^3}^2 \le e^{C\int_{t_1}^{t_0}\|\nabla u(\cdot,s)\|_{L^{\infty}_x}ds}\|u(\cdot,t_1)\|_{H^3}^2 \, ,
\end{displaymath}
for all $t_1<t_0$. Therefore, it follows from \eqref{coro.H2monotonicity.3} and the Sobolev embedding $H^2(\mathbb R^2) \hookrightarrow L^{\infty}(\mathbb R^2)$ that 
\begin{equation} \label{coro.H2monotonicity.5}
\|u(\cdot,t_0)\|_{H^3}^2 \le e^{C(t_0-t_1)^{\frac12}\theta}\|u(\cdot,t_1)\|_{H^3}^2 \, ,
\end{equation}
for all $t_1 <t_0$.

Now, fix some $t_0 \in \mathbb R$. From  \eqref{coro.H2monotonicity.3}, there exists $t_1 \in (t_0-1,t_0)$ such that $\|u(\cdot,t_1)\|_{H^3}^2 \le \theta$. Thus, we infer from \eqref{coro.H2monotonicity.5} that 
\begin{displaymath} 
\|u(\cdot,t_0)\|_{H^3}^2 \le e^{C\theta}\theta \, ,
\end{displaymath}
which finishes the proof of Corollary \ref{coro.H2monotonicity}, since $\theta$ does not depend on $t_0$.
\end{proof}

The $H^3$ bound obtained in Corollary \ref{coro.H2monotonicity} allows us to derive a monotonicity properties for $(\partial^{\alpha}u)^2$ at any order of  $|\alpha|$. 
\begin{lemma} \label{nlhomonotonicity}
Assume that $u \in C(\mathbb R : H^1(\mathbb R^2))$ is a solution of \eqref{ZK} satisfying \eqref{modulation.2}-\eqref{modulation.5} and the decay property \eqref{NonLinearLiouville2}. Let $k \in \mathbb N$ be given. If $\epsilon_0$ in Lemma \ref{modulation} is chosen small enough, then, $u$ is bounded in $H^{k+1}$ and
\begin{equation} \label{nlhomonotonicity.1}
\begin{split}
\int &(\partial^{\alpha} u)^2(x,t_0)\psi_M(x_1-\rho_1(t_0)-y_0)dx \\ & 
+\int_{-\infty}^{t_0}\int \big( |\nabla\partial^{\alpha} u|^2+(\partial^{\alpha} u)^2\big)(x,t)\psi_M'(\tilde{x}_1) dxdt\lesssim e^{-y_0/M} \, .
\end{split}
\end{equation}
for any multi-index $\alpha \in \mathbb N^2$ with $|\alpha|=k$,  $M\ge 12$, $y_0>0$, $t_0 \in \mathbb R$ and $t \le t_0$ where $\psi_M$ is defined as in \eqref{psi0}--\eqref{psi} and $\tilde{x}_1$ is defined as in \eqref{tildex1}. 
\end{lemma}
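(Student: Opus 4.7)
The plan is to proceed by strong induction on $k = |\alpha|$. The cases $k=0,1$ follow from Lemma \ref{nlL2monotonicity} and Lemma \ref{nlEnergymonotonicity} (together with Corollary \ref{coro.nlmonotonicity}), while $k=2$ is precisely Lemma \ref{nlH2monotonicity} combined with Corollary \ref{coro.H2monotonicity}. Assume the result holds for all orders $\le k-1$, with $k \ge 3$. The inductive hypothesis gives $u\in L^\infty_t H^k(\mathbb R^2)$, so the Sobolev embedding $H^{k-1}(\mathbb R^2) \hookrightarrow L^\infty(\mathbb R^2)$ (valid since $k-1 \ge 2$) yields the uniform bound $\|\partial^\beta u\|_{L^\infty_{t,x}} \lesssim 1$ for all $|\beta| \le k-2$. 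This $L^\infty$ control is precisely the ingredient that was missing at the $H^2$ level and forced the ad hoc argument in Lemma \ref{nlH2monotonicity}.

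Given $|\alpha|=k$, I would compute $\frac{d}{dt}\int (\partial^\alpha u)^2 \psi_M(\tilde x_1)\,dx$ using the ZK equation. After integrations by parts, the linear contribution produces
\[
-\int \bigl(|\nabla \partial^\alpha u|^2 + \tfrac14 (\partial^\alpha u)^2\bigr)\psi_M'(\tilde x_1)\,dx
\]
up to a lower-order $\psi_M'''$ term absorbed via $|\psi_M'''|\le M^{-2}\psi_M'$. The nonlinear contribution reads $\mathcal N_\alpha(u) = 2\int \partial^\alpha\partial_{x_1}(u^2)\,\partial^\alpha u\,\psi_M(\tilde x_1)\,dx$; expanding by Leibniz gives a sum of terms of the form $\int (\partial^\beta u)(\partial^{\alpha+e_1-\beta}u)(\partial^\alpha u)\psi_M\,dx$ with $\beta \le \alpha+e_1$. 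I split these into three groups according to which factor has the top derivative.

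The \emph{high--low--top} term, where $|\beta|=0$ and hence the factor $\partial^{\alpha+e_1-\beta}u=\partial^{\alpha+e_1}u$ carries $k+1$ derivatives, is the only truly dangerous piece. It is handled exactly as in \eqref{nlH2monotonicity.4}--\eqref{nlH2monotonicity.9}: decompose $u = Q_c(\cdot-\rho)+\eta$, use integration by parts on the $Q_c$-piece (where the weight $\partial_{x_1}(Q_c\psi_M)$ is controlled by $\psi_M'$ thanks to \eqref{elliptPDE1} and the choice $M\ge 12$), and for the $\eta$-piece use Young's inequality together with $\|\eta\|_{H^1}\le K_0\epsilon_0$ from \eqref{modulation.3} to absorb into $\tfrac1{16}\int (\partial^{\alpha+e_1}u)^2\psi_M'$. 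All other terms have at least one factor $\partial^\gamma u$ with $|\gamma|\le k-2$, which is $L^\infty_{t,x}$ by the induction hypothesis and Sobolev embedding; the two remaining factors are controlled by $\|\sqrt{\psi_M'}\partial^\alpha u\|_{L^2}$ and (after one more integration by parts if needed) $\|\sqrt{\psi_M'}\nabla\partial^\alpha u\|_{L^2}$, so Young's inequality absorbs these terms as well. Integrating the resulting differential inequality from $t$ to $t_0$ and passing to the limit $t\to -\infty$ using the inductive decay exactly as in \eqref{monotonicity15}--\eqref{monotonicity17} yields \eqref{nlhomonotonicity.1}.

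To close the induction I still need the uniform $H^{k+1}$ bound. I would mimic the proof of Corollary \ref{coro.H2monotonicity}: from \eqref{nlhomonotonicity.1} applied at scales $M\ge 12$ together with the analogous estimate for $v(x,t)=u(-x,-t)$, one extracts
\[
\int_{t_0-1}^{t_0}\!\!\int \sum_{|\gamma|\le k+1}(\partial^\gamma u)^2(x,t)\,e^{|x_1|/M}\,dx\,dt \le \theta,
\]
uniformly in $t_0$. Hence some $t_1\in(t_0-1,t_0)$ satisfies $\|u(t_1)\|_{H^{k+1}}^2 \le \theta$. Standard energy estimates for ZK give $\frac{d}{dt}\|u\|_{H^{k+1}}^2 \lesssim \|\nabla u\|_{L^\infty}\|u\|_{H^{k+1}}^2$, and since the base case already provides $u\in L^\infty_t H^3 \hookrightarrow L^\infty_t W^{1,\infty}$, Gronwall on $[t_1,t_0]$ closes the uniform bound with a constant independent of $t_0$.

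The main obstacle is the bookkeeping of the Leibniz expansion of $\mathcal N_\alpha(u)$: one must verify that every mixed term can be treated either by integration by parts (shifting a derivative onto the weight $\psi_M$, which costs a factor $\psi_M'$) or by the $L^\infty$ bound from the inductive $H^k$ control, without requiring more regularity than what is available. The high--low--top term is genuinely delicate because it contains $k+1$ derivatives, exactly at the critical level, and its treatment relies in an essential way on the smallness of $\eta = u-Q_c(\cdot-\rho)$ combined with the exponential localization of $Q_c$; all other terms are subcritical thanks to the $L^\infty$ gain that was unavailable for $k=2$.
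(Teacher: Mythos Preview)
Your inductive framework, the base cases, and the closing $H^{k+1}$ argument via Corollary~\ref{coro.H2monotonicity} all match the paper. The difference lies in how you handle the top-order piece of $\mathcal{N}_\alpha(u)$. You re-run the $Q_c+\eta$ decomposition from Lemma~\ref{nlH2monotonicity}, whereas the paper exploits the very fact that the $H^3$ bound of Corollary~\ref{coro.H2monotonicity} is already in hand: since $H^2(\mathbb R^2)\hookrightarrow L^\infty$, one has $\|u\|_{L^\infty_{t,x}}+\|\partial_{x_1}u\|_{L^\infty_{t,x}}\lesssim 1$, and a single integration by parts gives
\[
\int \partial_{x_1}\big(u\,\partial^\alpha u\big)\partial^\alpha u\,\psi_M(\tilde x_1)\,dx
=\tfrac12\!\int \partial_{x_1}u\,(\partial^\alpha u)^2\psi_M-\tfrac12\!\int u\,(\partial^\alpha u)^2\psi_M'
\lesssim \int (\partial^\alpha u)^2\big(\psi_M+\psi_M'\big),
\]
which is then controlled after time-integration by the induction hypothesis at level $k-1$ (the $\psi_M$-weighted piece via the argument of \eqref{monotonicity15}--\eqref{monotonicity17}). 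The whole point of singling out the $H^2$ and $H^3$ steps was precisely to unlock this $L^\infty$ shortcut for all $k\ge 3$; your route works but forfeits that simplification.

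Two small inaccuracies in your write-up: first, the weight on the middle Leibniz terms is $\psi_M$, not $\psi_M'$, so those terms are not directly ``absorbed by Young'' into the dissipation but rather controlled, after integration in $t$, by the induction hypothesis exactly as above. Second, your claim that every non-top term carries a factor with at most $k-2$ derivatives fails at $k=3$ (two factors can each carry two derivatives); this is harmless once you use the paper's $L^\infty$ bound on $\partial_{x_1}u$ together with one integration by parts, but as written your bookkeeping does not cover it.
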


\begin{proof} We know from Lemmas \ref{nlL2monotonicity}, \ref{nlEnergymonotonicity} and \ref{nlH2monotonicity} that \eqref{nlhomonotonicity.1} holds true for $k=0,1,2$. 

To prove \eqref{nlhomonotonicity.1} for the general case, we argue by induction on $k$ . Let $k \in \mathbb N$ be given such that $k \ge 3$. Assume that \eqref{nlhomonotonicity.1} is true for any $\tilde{\alpha} \in \mathbb N^2$ with $|\tilde{\alpha}| \le k-1$. Let $\alpha \in \mathbb N^2$ with $|\alpha|=k$.

Recalling estimate \eqref{nlH2monotonicity.2}, we need to control $\mathcal{N}_{\alpha}(u)$ defined in \eqref{nlH2monotonicity.3}. By using the Leibniz rule, we have that 
\begin{equation}  \label{nlhomonotonicity.2}
\mathcal{N}_{\alpha}(u)=\sum_{0<\beta<\alpha}C_{\beta}\int \partial_{x_1}\big(\partial^{\beta}u\partial^{\alpha-\beta}u\big)\partial^{\alpha}u\psi_M(\tilde{x}_1)dx+4\int \partial_{x_1}\big(u\partial^{\alpha}u \big)\partial^{\alpha}u\psi_M(\tilde{x}_1)dx \, .
\end{equation}
We only treat the last term appearing on the right-hand side of \eqref{nlhomonotonicity.2} which is the most difficult. After integrating by parts, we get that
\begin{displaymath}  
\begin{split}
\int \partial_{x_1}\big(u\partial^{\alpha}u \big)\partial^{\alpha}u\psi_M(\tilde{x}_1)dx&= 
\frac12\int \partial_{x_1}u(\partial^{\alpha}u)^2\psi_M(\tilde{x}_1)dx 
-\frac12\int u(\partial^{\alpha}u)^2\psi_M'(\tilde{x}_1)dx \, .
\end{split}
\end{displaymath}
Thus, the Sobolev embedding $H^2(\mathbb R^2) \hookrightarrow L^{\infty}(\mathbb R^2)$ and the $H^3$-bound obtained in  Corollary \ref{coro.H2monotonicity} imply that
\begin{equation} \label{nlhomonotonicity.3}
\int \partial_{x_1}\big(u\partial^{\alpha}u \big)\partial^{\alpha}u\psi_M(\tilde{x}_1)dx\le C\int (\partial^{\alpha}u)^2\big(\psi_M(\tilde{x}_1)+\psi_M'(\tilde{x}_1)\big)dx \, .
\end{equation}

Estimate \eqref{nlhomonotonicity.1} follows then by integrating \eqref{nlH2monotonicity.2} between $t$ and $t_0$, using \eqref{nlhomonotonicity.2}--\eqref{nlhomonotonicity.3} and the induction hypothesis to control the term corresponding to $\mathcal{N}_{\alpha}(u)$ and  letting $t \to -\infty $ as previously. Finally, the $H^{k+1}$-bound can be deduced from \eqref{nlhomonotonicity.1} arguing as we did in the proof of Corollary \ref{coro.H2monotonicity}.
\end{proof}

Arguing as in the proof of Corollary \ref{coro.nlmonotonicity}, we finally deduce from Lemma \ref{nlhomonotonicity} the exponential decay in the $x_1$-direction at any order.
 \begin{corollary} \label{coro.homonotonicity}
 Let $u \in C(\mathbb R :  H^1(\mathbb R^2))$ be a solution of \eqref{ZK} satisfying \eqref{modulation.2}-\eqref{modulation.5} and the decay assumption in $x_1$ \eqref{NonLinearLiouville2}. Assume that $\epsilon_0$ in Lemma \ref{modulation} is chosen small enough, then there exists $\tilde{\sigma}>0$ such that
 \begin{equation} \label{coro.homonotonicity.1}
 \sup_{t \in \mathbb R} \int(\partial^{\alpha}u)^2(x+\rho(t),t)e^{\tilde{\sigma}|x_1|}dx \lesssim 1 \, ,
 \end{equation}
for any $\alpha \in \mathbb N^2$.
 \end{corollary}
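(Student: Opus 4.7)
The plan is to promote the family of local monotonicity estimates supplied by Lemma \ref{nlhomonotonicity} into a pointwise-in-time exponential decay statement, following verbatim the scheme used to deduce Corollary \ref{coro.nlmonotonicity} from Lemma \ref{nlEnergymonotonicity}. Fix any multi-index $\alpha \in \mathbb N^2$ and take $M \ge 12$; I will ultimately choose $\tilde\sigma = 1/M$, independent of $\alpha$ since $M$ in Lemma \ref{nlhomonotonicity} does not depend on $|\alpha|$.

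First, I would use the elementary inequality $\psi_M(y) \gtrsim e^{y/M}$ for $y<0$ inside the first summand of \eqref{nlhomonotonicity.1}. At a fixed time $t_0$ this yields, for every $y_0>0$,
\[
\int_{x_1 < \rho_1(t_0)+y_0} (\partial^{\alpha}u)^2(x,t_0)\, e^{(x_1-\rho_1(t_0)-y_0)/M}\,dx \lesssim e^{-y_0/M},
\]
with implicit constants independent of $y_0$ and $t_0$. Multiplying through by $e^{y_0/M}$, letting $y_0 \to +\infty$, and translating $x \mapsto x+\rho(t_0)$ gives
\[
\sup_{t_0 \in \mathbb R} \int (\partial^{\alpha}u)^2(x+\rho(t_0),t_0)\, e^{x_1/M}\,dx \lesssim 1,
\]
which is already the half of \eqref{coro.homonotonicity.1} that controls the decay as $x_1 \to +\infty$.

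To obtain the symmetric decay as $x_1 \to -\infty$, I would introduce the reflected solution $\tilde u(x,t) := u(-x,-t)$. A direct verification shows that $\tilde u$ is again a solution of \eqref{ZK}, that it still satisfies the $L^2$-exponential decay assumption \eqref{NonLinearLiouville2} with the same rate $\sigma$ (replacing $\rho(t)$ by $\tilde\rho(t) := -\rho(-t)$), and that its associated modulation parameters are $\tilde c(t) = c(-t)$ and $\tilde\rho(t) = -\rho(-t)$. Applying the previous step to $\tilde u$ and changing back to the original variables yields the analogous bound with weight $e^{-x_1/M}$; combining the two half-plane bounds produces \eqref{coro.homonotonicity.1} with $\tilde\sigma = 1/M$.

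The only non-mechanical point, and what I view as the main (modest) obstacle, is the bookkeeping needed to confirm that the reflected $\tilde u$ really lies within the hypotheses of Lemma \ref{nlhomonotonicity}: in particular, that the orthogonality conditions \eqref{modulation.4} are preserved under $x \mapsto -x$, $t \mapsto -t$. This follows from the radial symmetry of $Q_c$ together with the parity of $\partial_{x_j} Q_c$ and $\Lambda Q_c$, so both modulation identities for $\tilde u$ hold with the reflected parameters and with the same smallness constant $\epsilon_0$. Once that is recorded, the argument reduces to a direct re-run of Corollary \ref{coro.nlmonotonicity} applied once to $u$ and once to $\tilde u$.
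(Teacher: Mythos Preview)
Your proposal is correct and follows essentially the same route as the paper, which simply refers back to the argument of Corollary \ref{coro.nlmonotonicity} (itself a repetition of Corollary \ref{coro.monotonicity}) applied to the higher-order monotonicity estimate \eqref{nlhomonotonicity.1}. One cosmetic slip: the third orthogonality condition in \eqref{modulation.4} involves $Q_c$, not $\Lambda Q_c$; the parity bookkeeping you describe works just as well (indeed more simply) with $Q_c$, which is even under $x\mapsto -x$.
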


\subsection{Proof of Theorem \ref{NonLinearLiouville}} 
 We first decompose $u$ by using the modulation theory in Lemma \ref{modulation}. Then, we can assume that there exist $\rho=\big(\rho_1,\rho_2 \big) \in C^1(\mathbb R : \mathbb R^2)$ and $c \in C^1(\mathbb R : \mathbb R)$ such that
\begin{equation} \label{NonLinearLiouville5} 
\eta(x,t)=u(x+\rho(t),t)-Q_{c(t)}(x)
\end{equation}
satisfies \eqref{modulation.3}--\eqref{modulation.5}. Moreover, due to Lemma \ref{modulationdecay}, $u$ still satisfies the exponential decay assumption \eqref{NonLinearLiouville2} in the $x_1$ direction.\\

\noindent \underline{\textit{Introduction of a dual problem.}} Following Martel and Merle \cite{MM2}, we will work on a dual problem. Let us define 
\begin{equation} \label{NonLinearLiouville6}
v=\mathcal{L}_c\eta-\eta^2=-\Delta \eta+c\eta-2Q_c\eta-\eta^2 \, .
\end{equation}
By using the equation satisfied by $\eta$ in \eqref{equationeta}, we get that 
\begin{displaymath} 
\begin{split}
\partial_tv&=\mathcal{L}_c\partial_t\eta-2\eta\partial_t\eta+c'\eta-2c'\Lambda Q_c \eta \\ 
& =\mathcal{L}_c\partial_{x_1}v+(\rho_1'-c)\mathcal{L}_c\partial_{x_1}\eta+\rho_2'\mathcal{L}_c\partial_{x_2}\eta+c'Q_c \\ & \quad -2\eta\partial_{x_1}v-2(\rho_1'-c)\eta\partial_{x_1}(Q_c+\eta)
-2\rho_2'\eta\partial_{x_2}(Q_c+\eta)+c'\eta \, .
\end{split}
\end{displaymath}
Now, direct computations give that 
\begin{displaymath}
\mathcal{L}_c\partial_{x_1}\eta=\partial_{x_1}\mathcal{L}_c\eta+2\eta\partial_{x_1}Q_c=\partial_{x_1}v+2\eta\partial_{x_1}\eta+2\eta\partial_{x_1}Q_c  
\end{displaymath}
and similarly
\begin{displaymath}
\mathcal{L}_c\partial_{x_2}\eta=\partial_{x_2}v+2\eta\partial_{x_2}\eta+2\eta\partial_{x_2}Q_c \, .
\end{displaymath}
Thus, the equation satisfied by $v$ writes
\begin{equation} \label{NonLinearLiouville7}
\partial_tv=\mathcal{L}_c\partial_{x_1}v-2\eta\partial_{x_1}v+(\rho_1'-c)\partial_{x_1}v+\rho_2'\partial_{x_2}v+c'(Q_c+\eta) \, .
\end{equation}

\noindent \underline{\textit{Almost orthogonality conditions.}} We have from the definition of $v$ in \eqref{NonLinearLiouville6} and  \eqref{kernel} that 
\begin{displaymath} 
\int v\partial_{x_i}Q_cdx=\int \eta\mathcal{L}_c\partial_{x_i}Q_cdx-\int\eta^2\partial_{x_i}Q_cdx=-\int\eta^2\partial_{x_i}Q_cdx \, ,
\end{displaymath}
for $i=1,2$, so that 
\begin{equation} \label{NonLinearLiouville8}
\Big|\int v\partial_{x_i}Q_cdx\Big| \le  \|\partial_{x_i}Q_c\|_{L^{\infty}}\|\eta\|_{L^2}^2 \quad i=1,2 \, .
\end{equation}
In a similar way, it follows from formula \eqref{LambdaQeq} and  the third orthogonality condition in \eqref{modulation.4} 
\begin{equation}\label{NonLinearLiouville8a}
\int v\Lambda Q_cdx=\int \eta\mathcal{L}_c\Lambda Q_cdx-\int\eta^2\Lambda Q_cdx=-\int \eta Q_cdx-\int\eta^2\Lambda Q_c \, ,
\end{equation}
 so that 
\begin{equation} \label{NonLinearLiouville9}
\Big|\int v\Lambda Q_cdx \Big| \le  \|\Lambda Q_c\|_{L^{\infty}}\|\eta\|_{L^2}^2 \, .
\end{equation}
 
It was proved in \cite{We} that the bilinear form $H(v,v)=(\mathcal{L}_cv,v)$ is coercive under the orthogonality conditions \eqref{modulation.4}. Thus, there exists $\lambda_1>0$ such that 
\begin{displaymath}
(v,\eta) =(\mathcal{L}_c\eta,\eta)-(\eta^2,\eta) \ge \lambda_1\|\eta\|_{H^1}^2-\int\eta^3dx \, .
\end{displaymath}
The Sobolev embedding $H^1(\mathbb R^2) \hookrightarrow L^3(\mathbb R^2)$ and \eqref{modulation.3} implies then that
\begin{displaymath}
(v,\eta)  \ge (\lambda_1-C\|\eta\|_{H^1})\|\eta\|_{H^1}^2 \ge(\lambda_1-K_0\epsilon_0)\|\eta\|_{H^1}^2 \ge \frac{\lambda_1}2\|\eta\|_{H^1}^2 \, ,
\end{displaymath}
provided $\epsilon_0$ is chosen small enough. Therefore, we deduce from the Cauchy-Schwarz inequality that 
\begin{equation} \label{NonLinearLiouville10}
\|\eta\|_{H^1} \le \frac2{\lambda_1}\|v\|_{L^2} \, .
\end{equation}

\noindent \underline{\textit{Exponential decay in the $x_1$ direction.}} It is a consequence of the monotonicity property that there exists $\tilde{\sigma}>0$ such that
\begin{equation} \label{NonLinearLiouville11}
\int_{x_2}v^2(x_1,x_2,t)dx_2 \lesssim e^{-\tilde{\sigma}|x_1|}, \quad \forall \, (x_1,t) \in \mathbb R^2 \, .
\end{equation}
Indeed, we get from the definition of $v$ in \eqref{NonLinearLiouville6}, the decay properties of $Q$ in \eqref{elliptPDE1} and formula \eqref{coro.homonotonicity.1} with $|\alpha| \le 3$ that 
\begin{equation} \label{NonLinearLiouville12}
\sup_{t\in \mathbb R} \int \big(v^2+(\partial_{x_1}v)^2 \big)e^{\tilde{\sigma}|x_1|}dx \lesssim 1 \, .
\end{equation}
Estimate \eqref{NonLinearLiouville11} is then deduced from estimate \eqref{NonLinearLiouville12} just as in the linear case (see \eqref{LinearLiouville8}). \\ 

\noindent \underline{\textit{Virial type estimate.}} Let $A$ be a positive number to be chosen later. We define $\varphi_A=\varphi_A(x_1)$ and $\phi_A=\phi_A(x_1)$ as in \eqref{phi}--\eqref{varphiA}. Then $\phi_A$ and $\varphi_A$ satisfy the properties in \eqref{varphiA2}. 

By using \eqref{NonLinearLiouville7}, a direct computation gives that
\begin{displaymath}
\begin{split}
-\frac12\frac{d}{dt}\int \varphi_Av^2dx&=-\int \varphi_A\mathcal{L}_c\partial_{x_1}v vdx+2\int \varphi_A \eta \partial_{x_1}v v dx-(\rho_1'-c)\int\varphi_A \partial_{x_1}v v dx 
 \\ & \quad-\rho_2'\int \varphi_A \partial_{x_2}v v dx-c'\int \varphi_A (Q_c+\eta) v dx
\end{split}
\end{displaymath}
Then, arguing as in \eqref{LinearLiouville9}, we deduce that 
\begin{equation} \label{NonLinearLiouville13}
\begin{split}
-\frac12\frac{d}{dt}\int \varphi_Av^2dx&= \int \phi_A (\partial_{x_1}v)^2dx+\frac12\int \phi_A \big(|\nabla v|^2+v^2-2Q_cv^2 \big)dx \\ & \quad-\frac12\int \phi_A''v^2dx 
-\int\varphi_A\partial_{x_1}Q v^2dx+\mathcal{R}_A(\eta,v) \, ,
\end{split}
\end{equation}
where 
\begin{equation} \label{RA.0}
\begin{split}
\mathcal{R}_A(\eta,v)&=-\int \phi_A \eta v^2dx-\int \varphi_A \partial_{x_1}\eta v^2\\ &\quad +\frac{\rho_1'-c}2\int \phi_A v^2 dx-c'\int \varphi_A (Q_c+\eta)vdx \, .
\end{split}
\end{equation}

The following lemmas will be proved in the next subsection. 
\begin{lemma} \label{nonlinearcoercivity}
Consider the bilinear form 
\begin{equation} \label{nonlinearcoercivity.1}
H_A(v,w)=\int \phi_A \big(\nabla v \cdot \nabla w +vw-2Q_cvw\big)dx \, .
\end{equation}
Then, there exists $\lambda_2>0$ and $A_2>0$ such that
\begin{equation} \label{nonlinearcoercivity.1b}
H_A(v,v) \ge \lambda_2\int \phi_A \big(|\nabla v|^2+v^2 \big)dx \, ,
\end{equation}
for all functions $v$ defined in \eqref{NonLinearLiouville5}--\eqref{NonLinearLiouville6} provided $A \ge A_2$ and $\epsilon_0>0$ is chosen small enough.
\end{lemma}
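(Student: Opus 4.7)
The plan is to reduce the estimate \eqref{nonlinearcoercivity.1b} to the linear coercivity Lemma \ref{coercivity} applied to $w := \sqrt{\phi_A}\,v$, treating the almost-orthogonality relations \eqref{NonLinearLiouville8}--\eqref{NonLinearLiouville9} as small perturbations afforded by the quadratic (in $\eta$) nature of the orthogonality defects $(v,\varphi)$.

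First I would extend Lemma \ref{coercivity} from $Q = Q_1$ to $Q_c$ uniformly for $c$ in a neighborhood of $1$. This is a direct consequence of the smooth dependence of $Q_c$ on $c$ via the scaling $Q_c(x) = c\,Q(\sqrt{c}\,x)$, combined with $|c(t)-1|\le K_0\epsilon_0$ given by \eqref{modulation.3}: the constants $\lambda$ and $A_0$ of Lemma \ref{coercivity} can be chosen uniformly on a small neighborhood of $c=1$.

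Next I would mimic the proof of Lemma \ref{coercivity}: expand
\[
H_A(v,v) = H(\sqrt{\phi_A}\,v,\sqrt{\phi_A}\,v) - \int(\partial_{x_1}\sqrt{\phi_A})^2 v^2\,dx - \int \phi_A'\, v\,\partial_{x_1} v\, dx,
\]
absorb the last two error terms using $|\phi_A''|\lesssim A^{-2}\phi_A$, and apply the $Q_c$-version of Lemma \ref{lemma.coercivity} to $w := \sqrt{\phi_A}\,v$. The task then reduces to verifying the approximate orthogonality $|(w,\varphi)| \le \kappa \|w\|_{H^1}$ for each $\varphi \in \{\Lambda Q_c,\,\partial_{x_1}Q_c,\,\partial_{x_2}Q_c\}$ (suitably normalized). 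Splitting
\[
(w,\varphi) = (v,\varphi) + (v,(\sqrt{\phi_A}-1)\varphi),
\]
the first (``defect'') term is controlled by \eqref{NonLinearLiouville8}--\eqref{NonLinearLiouville9}: $|(v,\varphi)| \lesssim \|\eta\|_{L^2}^2 \le \|\eta\|_{L^2}\|\eta\|_{H^1} \lesssim K_0\epsilon_0 \|v\|_{L^2}$ by \eqref{modulation.3} and \eqref{NonLinearLiouville10}. The exponential decay \eqref{NonLinearLiouville11}, together with $\phi_A \gtrsim 1$ on $\{|x_1|\le A\}$, yields $\|v\|_{L^2} \lesssim \|w\|_{L^2} + e^{-cA}$, so this defect is $\lesssim \epsilon_0 \|w\|_{L^2} + \epsilon_0 e^{-cA}$. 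The second (``cutoff'') term is handled exactly as in the proof of Lemma \ref{coercivity}: rewrite it as $-\int w \cdot (1-\sqrt{\phi_A})\varphi/\sqrt{\phi_A}\,dx$ and use the exponential decay of $\varphi$ against the mild growth of $1/\sqrt{\phi_A}$ for $A$ large to get $\lesssim e^{-cA}\|w\|_{L^2}$. Choosing first $A$ large and then $\epsilon_0$ small produces $|(w,\varphi)| \le \kappa \|w\|_{H^1}$ with $\kappa$ below the threshold of Lemma \ref{lemma.coercivity}, which concludes.

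The main obstacle is reconciling the \emph{global} norm $\|v\|_{L^2}$ appearing in the orthogonality defect with the \emph{weighted} norm $\int \phi_A v^2$ governing the conclusion: the defect is naturally estimated against the former, whereas coercivity is stated in terms of the latter. This is precisely where the monotonicity-based exponential decay \eqref{NonLinearLiouville11} intervenes, allowing passage between the two norms up to an $O(e^{-cA})$ error that is harmless for $A$ large. Equally crucial is the quadratic smallness $(v,\varphi) = O(\|\eta\|_{L^2}^2)$ of the defect, originating from the nonlinear correction $-\eta^2$ in the definition of $v$; this furnishes the free factor of $\epsilon_0$ needed to close the perturbation argument, without which a linear-in-$\eta$ defect could not be absorbed into the small parameter $\kappa$ demanded by Lemma \ref{lemma.coercivity}.
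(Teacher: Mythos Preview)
Your overall strategy is sound, but there is a genuine gap in the treatment of the orthogonality defects, and it occurs precisely at the step you flag as the ``main obstacle''. You bound the defect term by
\[
|(v,\varphi)| \lesssim \|\eta\|_{L^2}^2 \lesssim \epsilon_0 \|v\|_{L^2} \lesssim \epsilon_0\bigl(\|w\|_{L^2} + e^{-cA}\bigr),
\]
invoking the exponential decay \eqref{NonLinearLiouville11} for the last step. The additive constant $\epsilon_0 e^{-cA}$ is fatal: Lemma \ref{lemma.coercivity} requires $|(w,\varphi)| \le \kappa\|w\|_{H^1}$ \emph{multiplicatively}, and since the coercivity must hold for every time $t$ --- in particular for times at which $\|w\|_{H^1}$ is arbitrarily small (indeed the whole point of the virial argument is to drive $v$ to zero) --- no choice of $A$ and $\epsilon_0$ can absorb a fixed additive error into $\kappa\|w\|_{H^1}$.

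The paper closes this gap by two devices you are missing. First, it proves a weighted analogue of \eqref{NonLinearLiouville10}, namely Lemma \ref{technicalcoercivity}: $\|\sqrt{\phi_A}\,\eta\|_{H^1} \lesssim \|\sqrt{\phi_A}\,v\|_{L^2}$, with \emph{no} additive error. This follows by applying the linear coercivity (Lemma \ref{coercivity}) directly to $\eta$, which enjoys the exact orthogonality conditions \eqref{modulation.4}. Second, rather than splitting $(w,\varphi) = (v,\varphi) + ((\sqrt{\phi_A}-1)v,\varphi)$ as you do, the paper decomposes via the \emph{definition} $v = \mathcal{L}_c\eta - \eta^2$:
\[
\int \sqrt{\phi_A}\,v\,\varphi \;=\; \int \sqrt{\phi_A}\,\mathcal{L}_c\eta\,\varphi \;-\; \int \sqrt{\phi_A}\,\eta^2\,\varphi.
\]
For the quadratic piece, Cauchy--Schwarz gives $\bigl|\int \sqrt{\phi_A}\,\eta^2\varphi\bigr| \le \|\varphi\|_{L^\infty}\|\eta\|_{L^2}\|\sqrt{\phi_A}\,\eta\|_{L^2} \lesssim \epsilon_0\|\sqrt{\phi_A}\,v\|_{L^2}$, which is purely multiplicative. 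For the linear piece one moves $\mathcal{L}_c$ onto $\varphi$: when $\varphi = \partial_{x_i}Q_c$ this vanishes up to commutators with $\sqrt{\phi_A}$ of size $O(A^{-1}\|\sqrt{\phi_A}\,\eta\|_{H^1})$; when $\varphi = \Lambda Q_c$ one uses $\mathcal{L}_c\Lambda Q_c = -Q_c$ together with the exact orthogonality $\int \eta\,Q_c = 0$. Every bound is proportional to $\|\sqrt{\phi_A}\,\eta\|_{H^1}$ and hence to $\|w\|_{L^2}$, so Lemma \ref{lemma.coercivity} applies cleanly.
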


\begin{lemma} \label{RA}
There exist $K_3>0$ and $A_3>0$ such that 
\begin{equation} \label{RA.1}
\Big|\mathcal{R}_A(\eta,v)\Big| \le K_3 A \|\eta\|_{H^1}\int \phi_A \big(|\nabla v|^2+v^2 \big)dx \, ,
\end{equation} 
for all $(\eta,v)$ defined as in \eqref{NonLinearLiouville5}--\eqref{NonLinearLiouville6} provided $A \ge A_3$.
\end{lemma}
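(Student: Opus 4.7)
The plan is to bound each of the four terms in the definition \eqref{RA.0} of $\mathcal{R}_A(\eta,v)$ separately, relying on three recurring tools: the modulation estimates \eqref{modulation.5}, the coercivity-based conversion \eqref{NonLinearLiouville10} of $\|\eta\|_{H^1}$ into $\|v\|_{L^2}$, and a weighted Sobolev-type inequality. Specifically, since $|\phi_A'|\le (C/A)\phi_A$ by \eqref{phi2}--\eqref{varphiA}, one has $|\nabla\sqrt{\phi_A}|^2\lesssim A^{-2}\phi_A$, and combining this with the 2D embedding $H^1\hookrightarrow L^4$ yields
\begin{equation*}
\|\sqrt{\phi_A}\,v\|_{L^4}^2 \lesssim \|\sqrt{\phi_A}\,v\|_{H^1}^2 \lesssim \int \phi_A(v^2+|\nabla v|^2)\,dx,
\end{equation*}
provided $A$ is large enough.

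For the first term, the factorization $\int \phi_A\eta v^2 = \int \eta(\sqrt{\phi_A}\,v)^2$ together with Cauchy--Schwarz and the above estimate gives $|\int \phi_A \eta v^2|\lesssim \|\eta\|_{L^2}\int\phi_A(v^2+|\nabla v|^2)$, which fits the target bound without any $A$ factor. For the second term I integrate by parts using $\varphi_A'=\phi_A$:
\begin{equation*}
-\int \varphi_A\partial_{x_1}\eta\,v^2\,dx = \int \phi_A\eta v^2\,dx + 2\int \varphi_A\eta\,v\partial_{x_1}v\,dx.
\end{equation*}
The first piece is treated as in Term 1. For the cross-term I use the pointwise bound $|\varphi_A|\le 3A$ together with H\"older and Sobolev to get
\begin{equation*}
\Big|\int \varphi_A\eta\,v\partial_{x_1}v\,dx\Big| \lesssim A\|\eta\|_{L^4}\|v\|_{L^4}\|\partial_{x_1}v\|_{L^2}\lesssim A\|\eta\|_{H^1}\|v\|_{H^1}^2,
\end{equation*}
and then compare the unweighted norm $\|v\|_{H^1}^2$ to the weighted integral by splitting at $|x_1|=A$: on $|x_1|\le A$ one has $\phi_A\ge e^{-1}$, while on $|x_1|>A$ the exponential $x_1$-decay of $v$ and of $\nabla v$ from \eqref{NonLinearLiouville12} and Corollary \ref{coro.homonotonicity} yields an exponentially small remainder $Ce^{-\tilde\sigma A}$ that is absorbed for $A$ large.

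The third term is immediate: \eqref{modulation.5} gives $|\rho_1'-c|\lesssim \|\eta\|_{L^2}$, so its contribution is bounded by $\|\eta\|_{H^1}\int \phi_Av^2$ with no $A$ factor. For the fourth term, \eqref{modulation.5} combined with \eqref{NonLinearLiouville10} gives $|c'|\lesssim \|\eta\|_{L^2}^2 \lesssim \|\eta\|_{H^1}\|v\|_{L^2}$; using $|\varphi_A|\le 3A$ together with the exponential decay \eqref{elliptPDE1} of $Q_c$ then yields $|c'\int \varphi_A(Q_c+\eta)v|\lesssim A\|\eta\|_{H^1}\|v\|_{L^2}^2$, and $\|v\|_{L^2}^2$ is in turn absorbed into $\int\phi_A(v^2+|\nabla v|^2)$ by the same splitting argument used for Term 2.

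The main technical obstacle is the cross-term in Term 2, where the growing weight $\varphi_A$ is only controlled by the pointwise bound $|\varphi_A|\le 3A$ and not by any power of $\phi_A$. This forces us to pay the factor $A$ on the right-hand side of \eqref{RA.1} and to rely essentially on the exponential decay of $v$ in $x_1$ (via Corollary \ref{coro.homonotonicity}) in order to convert the unweighted $H^1$ quantities of $v$ into the $\phi_A$-weighted integral that appears on the right-hand side of the claimed estimate.
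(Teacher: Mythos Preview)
Your treatment of the first and third terms is fine and essentially matches the paper. The gap is in the second term (and the same issue recurs in the fourth). After integrating by parts you bound the cross-term by $A\|\eta\|_{H^1}\|v\|_{H^1}^2$ and then try to replace the \emph{unweighted} $\|v\|_{H^1}^2$ by the \emph{weighted} integral $\int\phi_A(|\nabla v|^2+v^2)\,dx$ through a splitting at $|x_1|=A$; this yields at best
\[
\|v\|_{H^1}^2 \le C\int\phi_A\bigl(|\nabla v|^2+v^2\bigr)\,dx + Ce^{-\tilde\sigma A}\,.
\]
The additive remainder $Ce^{-\tilde\sigma A}$ cannot be ``absorbed'': for fixed $A$ it is a fixed positive number, while the right-hand side of \eqref{RA.1} must vanish as $v\to 0$. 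So the inequality you obtain is not of the form \eqref{RA.1}. The same problem arises in the fourth term, where you end up with $A\|\eta\|_{H^1}\|v\|_{L^2}^2$ and invoke the same splitting.

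The paper never passes through unweighted norms of $v$. For the second term it uses the pointwise comparison $|\varphi_A|\le CA\phi_A$, so that $\big|\int\varphi_A\,\partial_{x_1}\eta\,v^2\big|\le CA\int|\partial_{x_1}\eta|\,(\sqrt{\phi_A}\,v)^2$, after which H\"older and $H^1\hookrightarrow L^4$ give the claim directly with $\|\sqrt{\phi_A}\,v\|_{H^1}^2$ on the right. For the third and fourth terms the key observation you are missing is that the modulation parameters are themselves controlled by the \emph{weighted} norm: since $e^{-\delta|x|}\lesssim\phi_A$ for $A$ large, \eqref{modulation.5} together with Lemma~\ref{technicalcoercivity} gives $|c'|^{1/2}+|\rho_1'-c|\lesssim\|\sqrt{\phi_A}\eta\|_{L^2}\lesssim\|\sqrt{\phi_A}v\|_{L^2}$, so a factor $(\int\phi_A v^2)^{1/2}$ is already built in and the estimate closes without any additive error.
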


By fixing $A = \max\{A_2,A_3,2\sqrt{C/\lambda_2}\}$ (where $C$ is the positive constant appearing in \eqref{phi2}), we deduce using \eqref{modulation.3}, \eqref{NonLinearLiouville13}, \eqref{nonlinearcoercivity.1b}, \eqref{RA.1} and arguing as in \eqref{LinearLiouville10} that
\begin{equation} \label{NonLinearLiouville14}
-\frac12\frac{d}{dt}\int \varphi_A(x_1)v^2dx \ge\frac{\lambda_2}8\int \phi_A(x_1)\big(|\nabla v|^2+v^2 \big)dx \, ,
\end{equation}
provided $\epsilon_0$ is small enough.
Then, we conclude by using \eqref{NonLinearLiouville11} and arguing as in \eqref{LinearLiouville11}--\eqref{LinearLiouville16} that $v(x,t)=0$ for all $(x,t) \in \mathbb R^3$. This implies in view of \eqref{NonLinearLiouville5} and \eqref{NonLinearLiouville10} that 
\begin{equation} \label{NonLinearLiouville15}
\eta(x,t)=0 \quad \Rightarrow \quad u(x+\rho(t),t)=Q_{c(t)}(x) \quad \forall \, (x,t) \in \mathbb R^3 \, .
\end{equation}
Moreover, \eqref{modulation.5} yields
\begin{displaymath}
c(t)=c(0), \quad \rho_1(t)=\rho_1(0)+c(0)t \quad \text{and} \quad \rho_2(t)=\rho_2(0) \, ,
\end{displaymath} 
which concludes the proof of Theorem \ref{NonLinearLiouville}.

\subsection{Coercivity of the bilinear form $H_A(v,v)$} The aim of this subsection is to prove Lemmas \ref{nonlinearcoercivity} and \ref{RA}. We begin first with a technical lemma. 
\begin{lemma} \label{technicalcoercivity}
There exists $K_4>0$ and $A_4$ such that 
\begin{equation} \label{technicalcoercivity.1}
\|\sqrt{\phi_A}\eta\|_{H^1} \le K_4 \|\sqrt{\phi_A}v\|_{L^2} \, ,
\end{equation}
if $A \ge A_4$ and $\epsilon_0$ small enough.
\end{lemma}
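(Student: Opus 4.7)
The plan is to establish a localized analog of the unweighted inequality \eqref{NonLinearLiouville10}. First, I multiply the defining relation $\mathcal{L}_c\eta = v + \eta^2$ by $\phi_A\eta$, integrate over $\mathbb{R}^2$, and integrate by parts twice on the Laplacian term (the commutator producing $-\tfrac12\int \phi_A''\eta^2$). This yields the identity
\[
H_A(\eta,\eta) = \int \phi_A v\,\eta\,dx + \int \phi_A \eta^3\,dx - (c-1)\int \phi_A \eta^2\,dx + \tfrac{1}{2}\int \phi_A''\eta^2\,dx,
\]
with $H_A$ as in \eqref{nonlinearcoercivity.1}. The strategy is then to bound the right-hand side by $\|\sqrt{\phi_A}v\|_{L^2}\|\sqrt{\phi_A}\eta\|_{L^2}$ plus remainders that can be absorbed into a weighted coercivity lower bound on $H_A(\eta,\eta)$.

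The key input is the weighted coercivity
\[ H_A(\eta,\eta) \ge \lambda \int \phi_A\bigl(|\nabla\eta|^2 + \eta^2\bigr)\,dx, \]
analogous to Lemma \ref{coercivity} but with the orthogonality direction $\Lambda Q$ replaced by $Q_c$. Since $\eta$ satisfies the \emph{exact} orthogonality conditions \eqref{modulation.4} against $Q_c, \partial_{x_1}Q_c, \partial_{x_2}Q_c$, one may appeal to the classical subcritical Weinstein coercivity of $\mathcal{L}_c$ on $\{Q_c,\nabla Q_c\}^\perp$, which, in contrast to Proposition \ref{prop.coercivity}, does \emph{not} require the new spectral condition \eqref{N_C}. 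An analog of Lemma \ref{lemma.coercivity} (tolerating small violations of orthogonality) combined with the localization step of the proof of Lemma \ref{coercivity} (rewriting $H_A(\eta,\eta)$ in terms of $H(\sqrt{\phi_A}\eta,\sqrt{\phi_A}\eta)$ modulo controlled lower-order commutators) delivers the bound for $A$ sufficiently large. The discrepancy between working with $Q$ and $Q_c$ is controlled via $|c-1| \le K_0\epsilon$ from \eqref{modulation.3}.

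The remaining terms are estimated routinely: Cauchy--Schwarz gives $|\int \phi_A v\eta| \le \|\sqrt{\phi_A}v\|_{L^2}\|\sqrt{\phi_A}\eta\|_{L^2}$; by H\"older and the Sobolev embedding $H^1(\mathbb{R}^2)\hookrightarrow L^4(\mathbb{R}^2)$ together with \eqref{modulation.3},
\[ \Bigl|\int \phi_A \eta^3\,dx\Bigr| \le \|\eta\|_{L^2}\|\sqrt{\phi_A}\eta\|_{L^4}^2 \lesssim \|\eta\|_{H^1}\|\sqrt{\phi_A}\eta\|_{H^1}^2 \le K_0\epsilon_0 \|\sqrt{\phi_A}\eta\|_{H^1}^2; \]
the $(c-1)$ term is bounded similarly by $K_0\epsilon_0\|\sqrt{\phi_A}\eta\|_{L^2}^2$; and the $\phi_A''$ term is $\lesssim A^{-2}\|\sqrt{\phi_A}\eta\|_{L^2}^2$ using $|\phi_A''|\lesssim A^{-2}\phi_A$ from \eqref{phi2}-\eqref{varphiA}. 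Absorbing these small contributions (choosing $\epsilon_0$ small and $A \ge A_4$ large) and invoking Young's inequality on the $v$--$\eta$ pairing yields $\|\sqrt{\phi_A}\eta\|_{H^1} \le K_4 \|\sqrt{\phi_A}v\|_{L^2}$, as claimed.

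The main obstacle is the weighted coercivity step. The delicate point is controlling the orthogonality loss when passing from $\eta$ (which is exactly orthogonal to $Q_c$ and $\nabla Q_c$) to $\sqrt{\phi_A}\eta$ (which only approximately is). As in the proof of Lemma \ref{coercivity}, this relies crucially on the exponential decay of $Q_c$ and its derivatives \eqref{elliptPDE1}: the function $(\sqrt{\phi_A}-1)Q_c$ is exponentially small in $A$, which, after the clever trick of writing the error as $\int \eta (\sqrt{\phi_A}-1)Q_c = \int \sqrt{\phi_A}\eta \cdot \frac{(1-\sqrt{\phi_A})}{\sqrt{\phi_A}} Q_c$ and exploiting that $\frac{(1-\sqrt{\phi_A})}{\sqrt{\phi_A}}Q_c$ has $L^2$-norm exponentially small in $A$, yields a bound of the form $\kappa \|\sqrt{\phi_A}\eta\|_{L^2}$ with $\kappa$ arbitrarily small for $A$ large.
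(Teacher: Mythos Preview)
Your proposal is correct and follows essentially the same approach as the paper: write $\int \phi_A v\eta$ in terms of $H_A(\eta,\eta)$ plus lower-order commutator and cubic terms, apply localized coercivity, and absorb the remainders using $|\phi_A''|\lesssim A^{-2}\phi_A$ and $\|\eta\|_{H^1}\le K_0\epsilon_0$. Your version is in fact slightly more careful than the paper's in two respects: you correctly track the $(c-1)\int\phi_A\eta^2$ term arising from the discrepancy between $\mathcal{L}_c$ and the potential in $H_A$, and you invoke Weinstein's coercivity on $\{Q_c,\nabla Q_c\}^\perp$ (which is exactly what \eqref{modulation.4} gives) rather than Lemma~\ref{coercivity} as the paper does, the latter being stated for $\{\Lambda Q,\nabla Q\}^\perp$.
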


\begin{proof} A direct computation shows that 
\begin{displaymath}
\int \phi_Av\eta dx=H_A(\eta,\eta)+\int\phi_A''\eta^2 dx-\int \phi_A \eta^3 dx \, .
\end{displaymath}
Since $\eta$ satisfies the orthogonality conditions \eqref{modulation.4}, we deduce from Lemma \ref{coercivity}, \eqref{phi2} and the Sobolev embedding $H^1(\mathbb R^2) \hookrightarrow L^3(\mathbb R^2)$ that
\begin{displaymath} 
\|\sqrt{\phi_A}v\|_{L^2}\|\sqrt{\phi_A}\eta\|_{L^2} \ge \lambda \int \phi_A\big(|\nabla \eta|^2+\eta^2\big)dx-\frac{C}{A^2}\int \phi_A \eta^2dx-\|\eta\|_{H^1}\|\sqrt{\phi_A}\eta\|_{H^1}^2 \, ,
\end{displaymath}
which implies \eqref{technicalcoercivity.1} in view of \eqref{modulation.3} if we choose $A \ge A_5=\sqrt{\frac{2C}{\lambda}}$ and $\epsilon_0$ is chosen small enough.
\end{proof}

\begin{proof}[Proof of Lemma \ref{nonlinearcoercivity}] The proof of Lemma \ref{nonlinearcoercivity} follows the lines of the one of Lemma \ref{coercivity}. In order to use Lemma \ref{lemma.coercivity}, we need to verify that $\sqrt{\phi_A}v$ satisfies \eqref{lemma.coercivity.2}.

By using the definition of $v$ in \eqref{NonLinearLiouville6}, we compute for $i=1,2$,
\begin{equation} \label{nonlinearcoercivity.2}
\Big|\int \sqrt{\phi_A}v\partial_{x_i}Q_c dx \Big|  \le \Big|\int \sqrt{\phi_A}\mathcal{L}_c\eta\partial_{x_i}Q_c dx \Big| +\Big|\int \sqrt{\phi_A}\eta^2\partial_{x_i}Q_c dx \Big| 
\end{equation}
On the one hand, by using the Cauchy-Schwarz inequality, \eqref{modulation.3} and \eqref{technicalcoercivity.1}, we have that
\begin{equation} \label{nonlinearcoercivity.3}
\Big|\int \sqrt{\phi_A}\eta^2\partial_{x_i}Q_c dx \Big| \le \|\partial_{x_i}Q_c\|_{L^{\infty}}\|\eta\|_{L^2}\|\sqrt{\phi_A}\eta\|_{L^2}\le \frac{\kappa}2\|\sqrt{\phi_A}v\|_{L^2} \, ,
\end{equation}
if $\epsilon_0$ is chosen small enough. On the other hand, it follows from \eqref{kernel} that
\begin{displaymath} 
\Big|\int \sqrt{\phi_A}\mathcal{L}_c\eta\partial_{x_i}Q_c dx \Big| \le 
\Big|\int \big(\sqrt{\phi_A}\big)''\eta\partial_{x_i}Q_c dx \Big|
+2\Big|\int \big(\sqrt{\phi_A}\big)'\eta\partial^2_{x_1x_i}Q_c dx \Big| 
\end{displaymath}
Thus, \eqref{phi2} and \eqref{technicalcoercivity.1} yield 
\begin{equation} \label{nonlinearcoercivity.4}
\Big|\int \sqrt{\phi_A}\mathcal{L}_c\eta\partial_{x_i}Q_c dx \Big| \lesssim \frac1A\|\sqrt{\phi_A}\eta\|_{H^1} \le \frac{\kappa}2\|\sqrt{\phi_A}v\|_{L^2} \, ,
\end{equation}
if $A$ is chosen large enough. Hence, we deduce gathering \eqref{nonlinearcoercivity.2}--\eqref{nonlinearcoercivity.4} that 
\begin{equation} \label{nonlinearcoercivity.5}
\Big|\int \sqrt{\phi_A}v\partial_{x_i}Q_c dx \Big|  \le  \kappa\|\sqrt{\phi_A}v\|_{L^2} \, ,
\end{equation}
if $\epsilon_0$ is small enough and $A$ is large enough, where $\kappa$ is the small positive number given by Lemma \ref{lemma.coercivity}.

Arguing as above, we get that
\begin{equation} \label{nonlinearcoercivity.6}
\Big|\int \sqrt{\phi_A}v\Lambda Q_c dx \Big|  \le \Big|\int \sqrt{\phi_A}\eta\mathcal{L}_c \Lambda Q_c dx \Big| +\frac{\kappa}2\|\sqrt{\phi_A}v\|_{L^2} \, .
\end{equation}
To deal with the first term on the right-hand side of \eqref{nonlinearcoercivity.6}, observe from \eqref{LambdaQeq} and \eqref{modulation.4} that 
\begin{equation} \label{nonlinearcoercivity.7}
\int \sqrt{\phi_A}\eta\mathcal{L}_c \Lambda Q_c dx =\int \frac{1-\sqrt{\phi_A}}{\sqrt{\phi_A}}Q_c \sqrt{\phi_A}\eta dx \, .
\end{equation}
Moreover, we infer from the decay property of $Q_c$ (\textit{c.f.} \eqref{elliptPDE1}) and the definition of $\phi_A$ that 
\begin{displaymath}
\Big| \frac{1-\sqrt{\phi_A}}{\sqrt{\phi_A}}Q_c \Big| \lesssim e^{\frac32\frac{|x_1|}A}e^{-c^{\frac12}\delta |x|}\chi_{|x_1| \ge A} \lesssim e^{-c^{\frac12}\frac{\delta}2 |x|}\chi_{|x| \ge A} \, ,
\end{displaymath}
if $A$ is chosen large enough. Hence, we deduce from the Cauchy-Schwarz inequality, \eqref{technicalcoercivity.1} and \eqref{nonlinearcoercivity.7} that
\begin{equation} \label{nonlinearcoercivity.7b}
\Big|\int \sqrt{\phi_A}\eta\mathcal{L}_c \Lambda Q_c dx \Big| \lesssim \|e^{-c^{\frac12}\frac{\delta}2 |x|}\chi_{|x| \ge A}\|_{L^2}\|\sqrt{\phi_A}\eta\|_{L^2} \le \frac{\kappa}2 \|\sqrt{\phi_A}v\|_{L^2} \, ,
\end{equation} 
provided $A$ is large enough. Thus \eqref{nonlinearcoercivity.6} and \eqref{nonlinearcoercivity.7b} imply that 
\begin{equation} \label{nonlinearcoercivity.8}
\Big|\int \sqrt{\phi_A}v\Lambda Q_c dx \Big|  \le  \kappa\|\sqrt{\phi_A}v\|_{L^2} \, .
\end{equation}

With \eqref{nonlinearcoercivity.5} and \eqref{nonlinearcoercivity.8} in hand, we conclude the proof of Lemma \ref{nonlinearcoercivity} following the arguments in the proof of Lemma \ref{coercivity}.
\end{proof}

\begin{proof}[Proof of Lemma \ref{RA}] The Sobolev embedding $H^1(\mathbb R^2) \hookrightarrow L^3(\mathbb R^2)$ yields
\begin{equation} \label{RA.2}
\big| \int \phi_A \eta v^2dx\big| \lesssim \|\eta\|_{H^1}\|\sqrt{\phi_A}v\|_{H^1}^2 \lesssim 
\|\eta\|_{H^1}\int \phi_A\big(|\nabla v|^2+v^2 \big)dx \, .
\end{equation}
Moreover, we deduce from \eqref{modulation.5} and \eqref{technicalcoercivity.1} that there exists $A_3>0$ such that
\begin{displaymath}
|c'(t)|^{\frac12}+|\rho_1'(t)-c(t)| \lesssim \|\sqrt{\phi_A} \eta\|_{L^2} \lesssim \|\sqrt{\phi_A}v\|_{L^2} \, ,
\end{displaymath}
if $A \ge A_3$. Now we fix $A \ge A_3$. Thus,
\begin{equation} \label{RA.3}
|\rho_1'-c|\int \phi_A v^2 dx+|c'| \Big|\int \varphi_A (Q_c+\eta)vdx\Big| \lesssim \|\eta\|_{H^1}\int \phi_A\big(|\nabla v|^2+v^2 \big)dx \, .
\end{equation}
Finally, in addition to \eqref{phi}--\eqref{phi2}, we can assume that 
$|\varphi_A(x_1)| \le CA|\phi_A(x_1)|$ for some positive constant $C$. Then, it follows from H\"older's inequality and the Sobolev embedding $H^1(\mathbb R^2) \hookrightarrow L^4(\mathbb R^2)$ that 
\begin{equation} \label{RA.4}
\Big|\int \varphi_A\partial_{x_1}\eta v^2dx \Big| \le CA\int \big|\phi_A\partial_{x_1}\eta v^2\big|dx \lesssim A\|\eta\|_{H^1}\|\sqrt{\phi_A}v\|_{H^1}^2 \, .
\end{equation}
We conclude the proof of \eqref{RA.1} gathering \eqref{RA.0} and \eqref{RA.2}--\eqref{RA.4}.
\end{proof}

\bigskip

\section{Proof of the asymptotic stability result} \label{sectionAS}
This section is devoted to the proof of Theorem \ref{AsymptStab}. According to Remark \ref{remarkscaling}, we will assume in this section that $c_0=1$.
We follow the arguments of Martel and Merle in \cite{MM1,MM5,MM2} for the gKdV equations. The heart of the proof is the following proposition which states that a solution in a neighborhood of a soliton converges (up to a subsequence) to a limit object, whose emanating solution satisfies  a good decay property  in the $x_1$ direction. Then, due to the rigidity theorem proved in Section \ref{NonlinearRigidity}, this limit object has to be a soliton. 

\begin{proposition} \label{prop.AsymptStab}
Assume $d=2$. There exists $\epsilon_0>0$ such that if $0<\epsilon  \le \epsilon_0$ and $u \in C(\mathbb R : H^1(\mathbb R^2))$ is a solution of \eqref{ZK} satisfying 
\begin{equation} \label{prop.AsymptStab.1a}
\inf_{\tau \in \mathbb R^2}\|u(\cdot,t)-Q(\cdot-\tau)\|_{H^1} \le \epsilon, \quad \forall \, t \in \mathbb R \, ,
\end{equation}
then the following holds true. 

For any sequence $t_n \to +\infty$, there exists a subsequence $\{t_{n_k}\}$ and $\tilde{u}_0 \in H^1(\mathbb R)$ such that 
\begin{equation} \label{prop.AsymptStab.1}
u(\cdot+\rho(t_{n_k}),t_{n_k}) \underset{k \to +\infty}{\longrightarrow} \tilde{u}_0 \quad \text{in} \quad H^1(x_1>-A) \, ,
\end{equation}
for any $A>0$ and where $c(t)$ and $\rho(t)$ are the functions associated to the decomposition of $u$ given by the modulation theory in Lemma \ref{modulation}.

Moreover, the solution $\tilde{u}$ of \eqref{ZK} corresponding to $\tilde{u}(\cdot,0)=\tilde{u}_0$ satisfies 
\begin{equation} \label{prop.AsymptStab.2}
\|\tilde{u}(\cdot+\tilde{\rho}(t))-Q\|_{H^1} \lesssim \epsilon, \quad \forall \, t \in \mathbb R \,
\end{equation}
and
 \begin{equation} \label{prop.AsymptStab.3} 
  \int_{x_2}\tilde{u}^2(x+\tilde{\rho}(t),t)dx_2  \lesssim e^{-\tilde{\sigma} |x_1|}\, , \quad \forall \, (x_1,t) \in \mathbb R^2 \, ,
\end{equation}
for some positive constant $\tilde{\sigma}$, where $\tilde{c}(t)$ and $\tilde{\rho}(t)$ are the functions associated to the decomposition of $\tilde{u}$ given by the modulation theory in Lemma \ref{modulation}. Note also that $\tilde{\rho}(0)=0$.
 \end{proposition}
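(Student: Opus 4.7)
Apply Lemma \ref{modulation} to decompose $u(x+\rho(t),t) = Q_{c(t)}(x) + \eta(x,t)$ with $|c(t)-1|+\|\eta(\cdot,t)\|_{H^1} \le K_0\epsilon$, and set $\tilde u_k(x) := u(x + \rho(t_{n_k}), t_{n_k})$. The sequence $\{\tilde u_k\}$ is bounded in $H^1(\R^2)$, so, up to extracting a subsequence (not relabelled), $\tilde u_k \rightharpoonup \tilde u_0$ in $H^1(\R^2)$ and strongly in $L^2_{\mathrm{loc}}(\R^2)$ by Rellich--Kondrachov; a further extraction gives $c(t_{n_k}) \to c_1$ with $|c_1-1|\le K_0\epsilon$.

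To upgrade the convergence to the strong $H^1(\{x_1>-A\})$ sense, the idea is to localize the $L^2$ and $H^1$ norms using $\psi_M$-type weights and exploit the almost-conservation of localized mass and energy provided by Lemmas \ref{nlL2monotonicity} and \ref{nlEnergymonotonicity}. Applying these with $t_0 = t_{n_l}$ and $t = t_{n_k}$ ($k<l$), the moving weight $\psi_M(x_1 - \rho_1(t_{n_l}) + \tfrac12(t_{n_l}-t_{n_k}) - y_0)$ translates its essential support out to $x_1 \to +\infty$ as $l\to\infty$, since $\rho_1(t_{n_l}) - \rho_1(t_{n_k}) \ge \tfrac34(t_{n_l}-t_{n_k})$ by \eqref{modulation.5}, strictly larger than $\tfrac12(t_{n_l}-t_{n_k})$. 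Dominated convergence then forces the localized norms to be Cauchy along the sequence, and strong convergence in $H^1(\{x_1>-A\})$ follows upon combining with Rellich--Kondrachov and the exponential decay of $Q_{c_1}$.

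Let $\tilde u \in C(\R; H^1(\R^2))$ be the global $H^1$-solution of \eqref{ZK} with $\tilde u(\cdot,0) = \tilde u_0$. Weak lower semicontinuity of the soliton-distance functional gives $\inf_\tau \|\tilde u_0 - Q_{c_1}(\cdot-\tau)\|_{H^1} \lesssim \epsilon$, whence de Bouard's orbital stability \cite{deBo} yields \eqref{prop.AsymptStab.2}. Applying Lemma \ref{modulation} to $\tilde u$ provides the modulation parameters $\tilde c, \tilde\rho$, normalized via translation of $\tilde u_0$ so that $\tilde\rho(0)=0$.

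The main obstacle is the exponential decay \eqref{prop.AsymptStab.3}. For the right tail, the key identity is obtained by applying Lemma \ref{nlL2monotonicity} to $u$ with $t_0 = t_{n_l}$ and $t = t_{n_k}$ (indices $k<l$) and changing variables:
\[
\int \tilde u_l^2(z)\, \psi_M(z_1 - y_0)\, dz \;\le\; \int \tilde u_k^2(z)\, \psi_M\!\Bigl(z_1 - \bigl(\rho_1(t_{n_l}) - \rho_1(t_{n_k}) - \tfrac12(t_{n_l}-t_{n_k})\bigr) - y_0\Bigr)\, dz + C e^{-y_0/M}.
\]
The bracketed shift tends to $+\infty$ with $l$ by \eqref{modulation.5}, so the right-hand integral tends to $0$ by dominated convergence (the weight is dominated by $1$ and tends pointwise to $0$ against the fixed $L^1$ density $\tilde u_k^2$); passing to the weak limit via lower semicontinuity then yields $\int \tilde u_0^2\, \psi_M(z_1 - y_0)\, dz \lesssim e^{-y_0/M}$. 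The symmetric bound for $x_1 \to -\infty$ is obtained by running the same scheme on the reflected ZK solution $(x,t)\mapsto u(-x,-t)$, whose analogous compactness argument has weak limit $\tilde u_0(-\cdot)$; the right-side decay of the latter is exactly the left-side decay of $\tilde u_0$. The estimate at general time $t$ is then obtained by redoing the compactness along the translated sequence $t_{n_k}+t$ and identifying the resulting weak limit with $\tilde u(\cdot,t)$ via $H^1$ continuous dependence of the ZK flow. Finally, the pointwise-in-$x_1$ form \eqref{prop.AsymptStab.3} is deduced from these weighted $L^2_x$-bounds together with the analogue on $\partial_{x_1}\tilde u$ (Lemma \ref{nlhomonotonicity}), using the one-dimensional Sobolev embedding $H^1_{x_1}\hookrightarrow L^\infty_{x_1}$, exactly as in \eqref{LinearLiouville8b2}.
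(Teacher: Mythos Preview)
Your overall strategy follows the paper's Martel--Merle framework, but there are two genuine gaps.

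\textbf{Strong convergence on $\{x_1>-A\}$.} You invoke Rellich--Kondrachov, but the half-space $\{x_1>-A\}\subset\R^2$ is unbounded in $x_2$, so $H^1\hookrightarrow L^2$ is not compact there. Nothing in your argument controls the $x_2$-tails of $\tilde u_k$. The paper resolves this with a new ingredient: an \emph{angled} monotonicity (Lemma \ref{AsymptMonotonicity}) showing that for $|\theta_0|<\pi/3$,
\[
\limsup_{t\to+\infty}\int u^2(x+\rho(t),t)\,\psi_M(x_1+(\tan\theta_0)\,x_2-y_0)\,dx\lesssim e^{-y_0/M}.
\]
Taking $\theta_0=\pm\pi/4$ localizes the mass to a \emph{compact} triangular region $\mathcal R=\{x_1\ge -A,\ x_1\pm x_2\le R\}$, on which Rellich--Kondrachov does apply; the residual mass outside $\mathcal R$ is made small uniformly in $k$. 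Your straight-line monotonicity alone (only in $x_1$) cannot produce this compactness.

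\textbf{Left exponential decay.} The reflection argument is incorrect. If $v(x,t)=u(-x,-t)$, then running your right-tail scheme on $v$ along a sequence $s_n\to+\infty$ means looking at $u(-\cdot,-s_n)$ with $-s_n\to-\infty$. The resulting weak limit is a limit object of $u$ at $t\to-\infty$, which has no reason to equal $\tilde u_0(-\cdot)$; the two limits live at opposite ends of the time axis and are unrelated in general. The paper instead uses a \emph{forward-in-time} monotonicity (Lemma \ref{tildemonotonicity}) with weight $\psi_M(x_1-\rho_1(t)+\tfrac12(t-t_0)+y_0)$: starting from $t_0=t_n+\tilde t_0$ and going forward to $t_{n'}$, the weight converges to $1$, so the localized mass at $t_{n'}$ is close to $\tilde m_0=\int\tilde u_0^2$. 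Comparing with the strong convergence $u(\cdot+\rho(t_n+\tilde t_0),t_n+\tilde t_0)\to\tilde u(\cdot,\tilde t_0)$ in $L^2(x_1>-A)$ then forces
\[
\int \tilde u^2(x,\tilde t_0)\bigl(1-\psi_M(x_1-\tilde\rho_1(\tilde t_0)+y_0)\bigr)\,dx\lesssim e^{-y_0/M}.
\]
This step in turn requires the propagation of the limit to all $t$, which the paper obtains not by ``continuous dependence'' (weak limits do not propagate through nonlinear flows) but by a Gronwall argument on $v_n=u(\cdot+\rho(t_n),t_n+\cdot)-\tilde u$ that needs a finite-time right-side exponential decay of $\tilde u$ (Lemma \ref{exptildeu}) to close.
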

 
 First we give the proof of Theorem \ref{AsymptStab} assuming Proposition \ref{prop.AsymptStab}.
 \begin{proof}[Proof of Theorem \ref{AsymptStab}]
 Let $u$ be a solution of \eqref{ZK} satisfying the hypotheses of Theorem \ref{AsymptStab}. Assume moreover that $\epsilon_0$ is chosen small enough so that Theorem \ref{NonLinearLiouville} and Proposition \ref{prop.AsymptStab} hold true. 
 
 From Proposition \ref{prop.AsymptStab}, for any sequence $\{t_n\}$ with $t_n \to +\infty$, there exist a subsequence $\{t_{n_k}\}$, $\tilde{c}_0>0$ and $\tilde{u}_0 \in H^1(\mathbb R)$ such that 
  \begin{equation} \label{AsymptStab.3}
 c(t_{n_k}) \underset{k\to +\infty}{\longrightarrow} \tilde{c}_0 \quad \text{and} \quad u(\cdot+\rho(t_{n_k}),t_{n_k}) \underset{k \to +\infty}{\longrightarrow} \tilde{u}_0 \quad \text{in} \quad H^1(x_1>-A)\, ,
 \end{equation}
 for any $A>0$. Moreover, the solution $\tilde{u}$ of \eqref{ZK} satisfying $\tilde{u}(\cdot,0)=\tilde{u}_0$ satisfies \eqref{prop.AsymptStab.2}--\eqref{prop.AsymptStab.3} and it holds that $\tilde{c}(0)=\tilde{c}_0$ and $\tilde{\rho}(0)=0$.
 
 Thus, we deduce applying Theorem \ref{NonLinearLiouville} to $\tilde{u}$ that there exist $c_1$ close to $1$ and $\rho^1=(\rho^1_{1},\rho^1_{2}) \in \mathbb R^2$ such that 
 \begin{equation} \label{AsymptStab.4}
 \tilde{u}(x_1,x_2,t)=Q_{c_1}(x_1-c_1t-\rho^1_{1},x_2-\rho^1_{2}) \, .
 \end{equation} 
 The uniqueness of the decomposition in Lemma \ref{modulation} implies that $\rho^1=\tilde{\rho}(0)=0$ and $c_1=\tilde{c}_0$. Then, we deduce from \eqref{AsymptStab.3} and \eqref{AsymptStab.4} that $u(\cdot+\rho(t_{n_k}),t_{n_k})-Q_{c(t_{n_k})}$ tends to $0$ as $k$ tends to $+\infty$ in $H^1(x_1>-A)$. Since this is true for any sequence $\{t_n\}$ such that $t_n \to +\infty$, we conclude that
 \begin{equation} \label{AsymptStab.5b}
 u(\cdot+\rho(t),t)-Q_{c(t)} \underset{t \to +\infty}{\longrightarrow} 0\quad \text{in} \quad H^1(x_1>-A)\, , \end{equation}
 for any $A>0$.
 Then, if we define $\tilde{\eta}(x,t)=u(x,t)-Q_{c(t)}(x-\rho(t))$,  \eqref{AsymptStab.5b} implies that 
\begin{equation} \label{AsymptStab.10b}
\lim_{t \to +\infty}\int \big(|\nabla \tilde{\eta}|^2+\tilde{\eta}^2 \big) (x,t) \psi_M(x_1-\rho_1(t)+y_0)dx =0 \, ,
\end{equation}
for all $y_0>0$.

To prove the convergence of the scaling parameter $c(t)$ as $t$ tends to $+\infty$, we use Lemma \ref{tildemonotonicity}.  Thus, for any $\alpha>0$ and $M \ge 4$, there exists $y_1>0$ such that 
\begin{equation} \label{AsymptStab.7b} 
\int u^2(x,t)\psi_M(x_1-\rho_1(t)+y_0)dx \le \int u^2(x,t')\psi_M(x_1-\rho_1(t')+y_0)dx+\alpha \, ,
\end{equation}
for all $t \ge t'$ and $y_0>y_1$. On the other hand, it follows from \eqref{elliptPDE1} and \eqref{AsymptStab.5b} that there exist $y_2=y_2(\alpha)>0$ and $T_0=T_0(\alpha)>0$ such that 
\begin{equation} \label{AsymptStab.8b}
\Big|\int u^2(x,t)\psi_M(x_1-\rho_1(t)+y_0)dx -\int Q_{c(t)}^2(x)dx \Big| \le \alpha \, ,
\end{equation}
if $t \ge T_0$ and $y_0 >y_2$. Hence, we deduce combining \eqref{AsymptStab.7b} and \eqref{AsymptStab.8b} that 
\begin{displaymath}  
\int Q_{c(t)}^2(x)dx  \le \int Q_{c(t')}^2(x)dx + 3\alpha \, ,
\end{displaymath}
whenever $t \ge t' \ge T_0$. Since $\alpha>0$ is an arbitrarily small number, this implies that $ \int Q_{c(t)}^2(x)dx $ has a limit as $t$ tends to $+\infty$. 
Then, since $\int Q_{c(t)}^2(x)dx=c(t)^{\frac{2}{p-1}-\frac{d}2}\int Q^2(x)dx$ and we are in the case $p=d=2$, which is subcritical, we conclude that there exists $c_+>0$ such that 
\begin{equation} \label{AsymptStab.9b}
|c_+-1| \le K_0\epsilon \quad \text{and} \quad c(t) \underset{t \to +\infty}{\longrightarrow} c_+ \, . 
\end{equation}

Now, we improve the convergence result following the arguments in the proof of Proposition 3 in \cite{MM5}. Fix $0<\beta<1$. Observe that $u(-x,-t)$ is still a solution of \eqref{ZK} satisfying \eqref{AsymptStab.1}. Then, we deduce applying \eqref{nlL2monotonicity.2}, \eqref{nlEnergymonotonicity.2}, Remarks \ref{remarkL2monotonicity}, \ref{remarkEnergymonotonicity} to $u(-x,-t)$, using the property $\psi_M(-x)=1-\psi_M(x)$ and the conservation of the $L^2$-norm that 
\begin{equation} \label{AsymptStab.11b}
\begin{split}
\int \big(|\nabla u|^2&+u^2\big)(x,t_2)\psi_M\big(x_1-\rho_1(t_1)-\frac{\beta}2(t_2-t_1)+y_0\big)dx
\\ &\le \int \big(|\nabla u|^2+u^2\big)(x,t_1)\psi_M\big(x_1-\rho_1(t_1)+y_0\big)dx+K_1e^{-y_0/M} \, ,
\end{split}
\end{equation}
for all $t_1 \le t_2$ and $y_0>0$ if $M=M(\beta)$ is chosen big enough. Moreover, we observe from the third orthogonality condition in \eqref{modulation.4} that 
\begin{equation} \label{AsymptStab.12b} 
\begin{split}
\Big|\int \tilde{\eta}&(x+\rho(t),t)Q_{c(t)}(x)\psi_M(x_1+y_0)dx \Big| 
\\&=\Big|\int \tilde{\eta}(x+\rho(t),t)Q_{c(t)}(x)\big(1-\psi_M(x_1+y_0)\big)dx \Big|
 \lesssim e^{-y_0/2M} \, ,
\end{split}
\end{equation}
if $M$ is chosen large enough such that $M \ge \frac1{2\delta}$, where $\delta$ is the positive constant appearing in \eqref{elliptPDE1}, and $\epsilon_0>$ is chosen small enough. Therefore by using \eqref{AsymptStab.11b}, \eqref{AsymptStab.12b} and the decomposition
$\tilde{\eta}^2(x,t)=u^2(x,t)-2\tilde{\eta}(x,t)Q_{c(t)}(x-\rho(t))-Q_{c(t)}^2(x-\rho(t))$, we deduce that
\begin{equation} \label{AsymptStab.13b}
\begin{split}
&\int \tilde{\eta}^2(x,t_2)\psi_M\big(x_1-\rho_1(t_1)-\frac{\beta}2(t_2-t_1)+y_0\big)dx
\\ &\le \int \tilde{\eta}^2(x,t_1)\psi_M\big(x_1-\rho_1(t_1)+y_0\big)dx+\|Q\|_{L^2}^2\big|c(t_2)-c(t_1) \big|+Ke^{-y_0/M} \, .
\end{split}
\end{equation}
For $t>0$ large enough, we define $0<t'<t$ such that $\rho_1(t')+\frac{\beta}2(t-t')-y_0=\beta t$. Then, observe that $t \to +\infty$ as $t' \to +\infty$. Moreover, we deduce applying \eqref{AsymptStab.13b} with $t_2=t$ and $t_1=t'$ that 
\begin{displaymath} 
\begin{split}
\int \tilde{\eta}^2(x,t)\psi_M\big(x_1-\beta t\big)dx
&\le \int \tilde{\eta}^2(x,t')\psi_M\big(x_1-\rho_1(t')+y_0\big)dx 
\\ & \quad +\|Q\|_{L^2}^2\big|c(t)-c(t') \big|+Ke^{-y_0/M} \, .
\end{split}
\end{displaymath}
Thus, it follows from \eqref{AsymptStab.10b} and \eqref{AsymptStab.9b} that 
\begin{displaymath} 
\limsup_{t \to +\infty} \int \tilde{\eta}^2(x,t)\psi_M\big(x_1-\beta t\big)dx \le Ke^{-y_0/M} \, , 
\end{displaymath}
for any $y_0>0$, which yields 
\begin{equation} \label{AsymptStab.14b}
\int \tilde{\eta}^2(x,t)\psi_M\big(x_1-\beta t\big)dx \underset{t \to +\infty}{\longrightarrow} 0 \, .
\end{equation}
Arguing similarly, we get that
\begin{equation} \label{AsymptStab.15b}
\int | \nabla\tilde{\eta}|^2(x,t)\psi_M\big(x_1-\beta t\big)dx \underset{t \to +\infty}{\longrightarrow} 0 \, .
\end{equation}
Therefore, we conclude the proof of \eqref{AsymptStab.2} gathering \eqref{AsymptStab.9b}, \eqref{AsymptStab.14b} and \eqref{AsymptStab.15b}.

Finally, we prove \eqref{AsymptStab.200} gathering \eqref{elliptPDE1}, \eqref{modulation.5}, 
\eqref{AsymptStab.5b} and \eqref{AsymptStab.9b}.
\end{proof}
 
 Finally, it remains to prove Proposition \ref{prop.AsymptStab}. \\
 
\noindent \textit{Proof of Proposition \ref{prop.AsymptStab}.}
 By applying Lemma \ref{modulation}, there exist $c \in C^1(\mathbb R : \mathbb R)$ and $\rho=(\rho_1,\rho_2) \in C^1(\mathbb R : \mathbb R^2)$ such that 
 \begin{displaymath}
 \|u(\cdot+\rho(t),t)-Q\|_{H^1} \le \|u(\cdot+\rho(t),t)-Q_{c(t)}\|_{H^1}+\|Q-Q_{c(t)}\|_{H^1} \le 2K_0\epsilon \, ,
 \end{displaymath}
 if $\epsilon_0$ is small enough. 
 
 Let $\{t_n\}$ be a sequence such that $t_n \to +\infty$. Since $\{ u(\cdot+\rho(t_n),t_n)\}$ is bounded in $H^1$, there exist a subsequence extracted from $\{t_n\}$ (still denoted by $\{t_n\}$) and $\tilde{u}_0 \in H^1(\mathbb R^2)$ such that 
 \begin{equation} \label{prop.AsymptStab.4}
 u(\cdot+\rho(t_n),t_n) \underset{n \to +\infty}{\overset{w}{\longrightarrow}} \tilde{u}_0 \quad \text{in} \quad H^1(\mathbb R^2)\, .
 \end{equation}
 Moreover, 
 \begin{displaymath}
 \|\tilde{u}_0-Q\|_{H^1}\le \liminf_{n \to +\infty}\| u(\cdot+\rho(t_n),t_n)-Q\|_{H^1} \le 2K_0\epsilon \, .
 \end{displaymath}
 Let $\tilde{u}$ be the solution of \eqref{ZK} corresponding to $\tilde{u}(\cdot,0)=\tilde{u}_0$. Note from the global well-posedness result for ZK in $H^1$ in \cite{Fa} that $\tilde{u} \in C(\mathbb R : H^1(\mathbb R^2))$. Thus, de Bouard' stability result in \cite{deBo} implies that 
 \begin{equation} \label{AsymptStab.5}
 \sup_{t \in \mathbb R}\|\tilde{u}(\cdot+\tilde{\rho}(t),t)-Q\|_{H^1} \le \tilde{K}_0 \epsilon, 
 \end{equation}
 where $\tilde{\rho}$ is the corresponding modulation function defined in Lemma \ref{modulation}. We split the proof of Proposition \ref{prop.AsymptStab} into several lemmas. \\ 
 
 \noindent \underline{Step 1}: \textit{monotonicity properties for $u$ on the right.} Recall the definition of $\psi_M$ in \eqref{psi0}--\eqref{psi} and fix $M \ge 4$. First, we deduce from Lemmas \ref{nlL2monotonicity} and \ref{nlEnergymonotonicity} a monotonicity property in the $x_1$ direction. 
 \begin{lemma} \label{Asymptmono} 
Let $M \ge 4$. Then, we have that
 \begin{equation} \label{AsymptStab.6} 
 \limsup_{t \to +\infty} \int\big(u^2+|\nabla u|^2 \big)(x+\rho(t),t) \psi_M(x_1-y_0)dx \lesssim e^{-y_0/M} \, ,
 \end{equation}
 for every $y_0>0$. 
 \end{lemma}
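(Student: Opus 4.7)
Our plan is to apply the two monotonicity estimates \eqref{nlL2monotonicity.2} and \eqref{nlEnergymonotonicity.2} on the interval $[0,t_0]$ and then send $t_0\to+\infty$. The decisive observation is that \eqref{modulation.3}--\eqref{modulation.5} together with smallness of $\epsilon_0$ yield $\rho_1'(t)\ge 3/4$, so that $\rho_1(t_0)-\tfrac12 t_0\to+\infty$; consequently, at $t=0$ the argument $x_1-\rho_1(t_0)+\tfrac12 t_0-y_0$ of $\psi_M$ tends pointwise to $-\infty$ as $t_0\to+\infty$.

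I would first treat the $L^2$ part. Starting from \eqref{nlL2monotonicity.2} and changing variables $x\mapsto x+\rho(t_0)$ on the left,
\begin{equation*}
\int u^2(x+\rho(t_0),t_0)\,\psi_M(x_1-y_0)\,dx\le I_{y_0,t_0}(0)+C\,e^{-y_0/M}.
\end{equation*}
Since $u(\cdot,0)\in L^2(\R^2)$ and $\psi_M\le 1$ tends pointwise to $0$, dominated convergence gives $I_{y_0,t_0}(0)\to 0$; taking $\limsup$ yields the $L^2$ piece of \eqref{AsymptStab.6}. The same argument with $J_{y_0,t_0}$ in place of $I_{y_0,t_0}$ produces $\limsup_{t_0\to+\infty}J_{y_0,t_0}(t_0)\lesssim e^{-y_0/M}$, the vanishing of $J_{y_0,t_0}(0)$ being justified by $|\nabla u(\cdot,0)|^2\in L^1$ and $u(\cdot,0)\in L^3$ via the Sobolev embedding $H^1(\R^2)\hookrightarrow L^3(\R^2)$.

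The main obstacle is to pass from the bound on $J_{y_0,t_0}(t_0)$ to one on $\int|\nabla u|^2\psi_M\,dx$: writing
\begin{equation*}
\int|\nabla u(x+\rho(t_0),t_0)|^2\,\psi_M(x_1-y_0)\,dx=J_{y_0,t_0}(t_0)+\tfrac23\int u^3(x+\rho(t_0),t_0)\,\psi_M(x_1-y_0)\,dx,
\end{equation*}
the cubic correction must be controlled by $e^{-y_0/M}$. Since no $L^\infty$ bound on $u$ is available in dimension two, I would use Cauchy--Schwarz together with the Sobolev embedding $H^1(\R^2)\hookrightarrow L^4(\R^2)$,
\begin{equation*}
\Bigl|\int u^3\psi_M\,dx\Bigr|\le \Bigl(\int u^2\psi_M\,dx\Bigr)^{\!1/2}\Bigl(\int u^4\psi_M\,dx\Bigr)^{\!1/2}\lesssim \|u\|_{H^1}^2\Bigl(\int u^2\psi_M\,dx\Bigr)^{\!1/2},
\end{equation*}
and invoke the uniform $H^1$ bound from \eqref{prop.AsymptStab.1a} together with the $L^2$ estimate already proved. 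This yields decay at the slightly slower rate $e^{-y_0/(2M)}$, which is upgraded to $e^{-y_0/M}$ by applying the underlying monotonicity formulas with $2M$ in place of $M$ (legitimate since $2M\ge 4$) at the cost of enlarging the implicit constant. Note that the decay hypothesis \eqref{NonLinearLiouville2} needed for the stronger estimates \eqref{nlL2monotonicity.3} and \eqref{nlEnergymonotonicity.3} is unavailable in the present setting, which is why the whole argument must be one-sided in time.
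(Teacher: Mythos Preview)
Your argument for the $L^2$ part and for $\limsup_{t_0\to+\infty}J_{y_0,t_0}(t_0)\lesssim e^{-y_0/M}$ is correct and matches the paper. The issue is the final step, where you try to upgrade the cubic term from $e^{-y_0/(2M)}$ to $e^{-y_0/M}$ by ``applying the underlying monotonicity formulas with $2M$ in place of $M$.'' This does not work: replacing $M$ by $2M$ in \eqref{nlL2monotonicity.2} changes \emph{both} the weight and the exponent, yielding $\limsup\int u^2\psi_{2M}(x_1-y_0)\,dx\lesssim e^{-y_0/(2M)}$, which is no improvement. Nor can one compare $\psi_M$ and $\psi_{2M}$ in the direction you need (for $x_1>y_0$ one has $\psi_{2M}(x_1-y_0)\le\psi_M(x_1-y_0)$, not the reverse). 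So the upgrade is a genuine gap.

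The paper recovers the sharp rate $e^{-y_0/M}$ by handling $\int u^3\psi_M$ exactly as in \eqref{nlL2monotonicity.5}--\eqref{nlL2monotonicity.7}, but with $\psi_M$ in place of $\psi_M'$: decompose $u=Q_{c}(\cdot-\rho)+\eta$ and write
\[
\Bigl|\int u^3\psi_M\Bigr|\le\Bigl|\int Q_c(\cdot-\rho)\,u^2\psi_M\Bigr|+\Bigl|\int\eta\,u^2\psi_M\Bigr|.
\]
The second term is bounded via $H^1\hookrightarrow L^3$ by $C\|\eta\|_{H^1}\int\bigl(|\nabla u|^2+u^2\bigr)\psi_M\le CK_0\epsilon_0\int\bigl(|\nabla u|^2+u^2\bigr)\psi_M$, and the gradient piece can be absorbed on the left for $\epsilon_0$ small. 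The first term is simply $\lesssim\|Q_c\|_{L^\infty}\int u^2\psi_M\lesssim e^{-y_0/M}$ by the $L^2$ estimate you already proved. This yields \eqref{AsymptStab.6} with the stated exponent directly, without any halving of the rate. Your Cauchy--Schwarz route throws away the closeness of $u$ to a soliton, which is precisely what compensates for the lack of an $L^\infty$ bound here.
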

 
\begin{proof} Given $\varepsilon>0$, there exists $R_1>0$ such that $\int_{x_1 > R_1} u_0^2dx \le \varepsilon$. Thus, with the notations of Lemma \ref{nlL2monotonicity}, we have that
 \begin{equation} \label{AsymptStab.7} 
 I_{y_0,t}(0) \le \varepsilon +\psi_M(R_1-\rho_1(t)+\frac12t-y_0)\int u_0^2dx \, .
 \end{equation}
 Now, observe from \eqref{modulation.5} that
$ \lim_{t \to +\infty} \psi_M(R_1-\rho_1(t)+\frac12t-y_0)=0 \, ,$ 
which together with \eqref{AsymptStab.7} yields 
 \begin{equation} \label{AsymptStab.8} 
 \limsup_{t \to +\infty} I_{y_0,t}(0)=0 \, . 
 \end{equation}
 Arguing similarly we get that 
 \begin{equation} \label{AsymptStab.9} 
 \limsup_{t \to +\infty} J_{y_0,t}(0)=0 \, ,
 \end{equation}
 where $J_{y_0,t}$ is defined in Lemma \ref{nlEnergymonotonicity}. 
 
 Therefore, we conclude the proof of estimate \eqref{AsymptStab.6} gathering \eqref{nlL2monotonicity.1}, \eqref{nlEnergymonotonicity.1}, \eqref{AsymptStab.8}, \eqref{AsymptStab.9} and using the same arguments as in \eqref{nlL2monotonicity.5}--\eqref{nlL2monotonicity.7} with $\psi_M$ instead of $\psi_M'$.
 \end{proof}
 
We will also need to derive a monotonicity property along the lines in a cone around $x_1$ in order to recover the strong convergence on the right in $L^2$. 

Recall the definition of $\psi_M$ in \eqref{psi0}--\eqref{psi} and fix $M \ge 4$. For $y_0 >0$, $t_0 \in \mathbb R$, and $\theta_0 \in (-\frac{\pi}3,\frac{\pi}3)$ and $t \le t_0$, we define
\begin{equation} \label{AsymptMonotonicity.1} 
I_{y_0,t_0,\theta_0}(t)=\int u^2(x,t)\psi_M\big(x_1+(\tan \theta_0) \, x_2-\rho_1(t_0)+\frac12(t_0-t)-y_0\big)dx \, .
\end{equation}

\begin{lemma} \label{AsymptMonotonicity} 
Let $\theta_0 \in (-\frac{\pi}3,\frac{\pi}3)$. Assume that $\epsilon_0$ is chosen small enough. Then, it holds that
\begin{equation} \label{AsymptMonotonicity.2}
I_{y_0,t_0,\theta_0}(t_0)-I_{y_0,t_0,\theta_0}(t) \lesssim e^{-y_0/M},
\end{equation}
for every $y_0>0$, $t_0 \in \mathbb R$ and $t \le t_0$, and 
\begin{equation} \label{AsymptMonotonicity.2b} 
 \limsup_{t \to +\infty} \int u^2(x+\rho(t),t) \psi_M(x_1+(\tan\theta_0)\, x_2-y_0)dx \lesssim e^{-y_0/M} \, ,
 \end{equation}
 for every $y_0>0$. 
\end{lemma}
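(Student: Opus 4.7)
The plan is to adapt the $L^2$ monotonicity argument of Lemma \ref{nlL2monotonicity} to the tilted weight. Set $\zeta := x_1 + (\tan\theta_0)x_2 - \rho_1(t_0) + \tfrac12(t_0-t) - y_0$, so that $\partial_{x_1}\psi_M(\zeta)=\psi_M'(\zeta)$, $\partial_{x_2}\psi_M(\zeta)=(\tan\theta_0)\psi_M'(\zeta)$ and $\partial_t\psi_M(\zeta)=-\tfrac12\psi_M'(\zeta)$. Using \eqref{ZK} and integrating by parts in both $x_1$ and $x_2$ (the $x_2$-integration being new relative to the $\theta_0=0$ case), I expect to obtain
\[
\frac{d}{dt}I_{y_0,t_0,\theta_0}(t) \le -\int Q_{\theta_0}(\partial_{x_1}u,\partial_{x_2}u)\psi_M'(\zeta)\,dx + (1+\tan^2\theta_0)\int u^2\psi_M'''(\zeta)\,dx - \tfrac12\int u^2\psi_M'(\zeta)\,dx + C\int u^3\psi_M'(\zeta)\,dx,
\]
with $Q_{\theta_0}(a,b):=3a^2+2(\tan\theta_0)ab+b^2$. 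The symmetric matrix of $Q_{\theta_0}$ has determinant $3-\tan^2\theta_0$, so $Q_{\theta_0}$ is positive definite \emph{precisely} when $|\theta_0|<\pi/3$; this is exactly the angular constraint in the statement (and, morally, matches Remark \ref{1R}). For such $\theta_0$ one has $Q_{\theta_0}(a,b)\ge\lambda_0(\theta_0)(a^2+b^2)$ for some $\lambda_0>0$.

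I would absorb the $\psi_M'''$-term using $|\psi_M'''|\le M^{-2}\psi_M'$ (taking $M$ large depending on $\theta_0$), and treat the cubic term as in \eqref{nlL2monotonicity.5}--\eqref{nlL2monotonicity.9}: decompose $u=Q_{c(t)}(\cdot-\rho(t))+\eta$ via Lemma \ref{modulation}, then bound the $\eta$-contribution by H\"older and the 2D Sobolev embedding $H^1\hookrightarrow L^3$ (using $|\psi_M''|\le M^{-1}\psi_M'$), obtaining $\lesssim\|\eta\|_{H^1}\int(|\nabla u|^2+u^2)\psi_M'(\zeta)\,dx$, which is absorbed for $\epsilon_0$ small. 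For the $Q_c$-contribution, split at $|x-\rho(t)|=R_1$: on the exterior, use the exponential decay \eqref{elliptPDE1}; on the interior, use $\psi_M'(\zeta)\le e^{\zeta/M}$ together with the modulation bounds \eqref{modulation.5} applied to both $\rho_1'$ and $\rho_2'$ to conclude
\[
\zeta\,\big|_{|x-\rho(t)|\le R_1} \le -\bigl[\tfrac12-C\epsilon_0(1+|\tan\theta_0|)\bigr](t_0-t) + C(R_1,\theta_0,\rho_2(t_0)) - y_0.
\]
This yields $\tfrac{d}{dt}I_{y_0,t_0,\theta_0}(t)\lesssim e^{-(t_0-t)/(4M)}e^{-y_0/M}$ modulo a non-positive coercive remainder, and integrating from $t$ to $t_0$ produces \eqref{AsymptMonotonicity.2}.

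For the asymptotic bound \eqref{AsymptMonotonicity.2b}, I would reproduce the argument of Lemma \ref{Asymptmono}: since $\rho_1'(t)\to 1>\tfrac12$ by \eqref{modulation.5} together with $c(t)$ close to $1$, the argument of $\psi_M$ in $I_{y_0,t,\theta_0}(0)$ tends pointwise to $-\infty$ as $t\to+\infty$, hence $I_{y_0,t,\theta_0}(0)\to 0$ by dominated convergence with majorant $u_0^2\in L^1(\R^2)$. Applying \eqref{AsymptMonotonicity.2} (with $t$ in place of $t_0$ and $0$ in place of $t$) and changing variables $x\mapsto x+\rho(t)$ then gives \eqref{AsymptMonotonicity.2b}.

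The main obstacle is the first step: recognizing the tilted quadratic form $Q_{\theta_0}$ and verifying that its positivity threshold is exactly $|\theta_0|=\pi/3$, which is what pinpoints the geometric range of admissible cones. A secondary technical point is controlling the $x_2$-drift $\rho_2$ when bounding $\zeta$ on $B_{R_1}(\rho(t))$: one needs $\epsilon_0$ small compared to $(1+|\tan\theta_0|)^{-1}$ so that the drift $|\rho_2(t)-\rho_2(t_0)|\lesssim\epsilon_0(t_0-t)$ cannot overwhelm the $\tfrac12(t_0-t)$ gap produced by the cone moving slower than the soliton along $x_1$.
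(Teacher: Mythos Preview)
Your approach is the paper's: the same tilted weight, the same quadratic form $Q_{\theta_0}(a,b)=3a^2+2(\tan\theta_0)ab+b^2$ (the paper handles the cross term via Young's inequality with a parameter $\kappa_0$ satisfying $1<\kappa_0^2<3/\tan^2\theta_0$ rather than invoking the determinant directly, but this is equivalent), and the same splitting of the cubic term near and far from $\rho(t)$.

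There is, however, a genuine snag in your bookkeeping. In your bound on $\zeta$ over $|x-\rho(t)|\le R_1$ you allow the additive constant to depend on $\rho_2(t_0)$. Since only $|\rho_2'|\lesssim\epsilon_0$ is known, $\rho_2(t_0)$ is not uniformly bounded in $t_0$, so your implicit constant in \eqref{AsymptMonotonicity.2} depends on $t_0$; this invalidates your derivation of \eqref{AsymptMonotonicity.2b}, where you apply \eqref{AsymptMonotonicity.2} with $t_0=t\to+\infty$. Relatedly, the change of variables $x\mapsto x+\rho(t)$ does \emph{not} recover the integrand in \eqref{AsymptMonotonicity.2b}: it produces $\psi_M\bigl(x_1+(\tan\theta_0)x_2+(\tan\theta_0)\rho_2(t)-y_0\bigr)$ instead. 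The fix is to build the $x_2$-recentering into the weight from the outset, taking
\[
\zeta=x_1+(\tan\theta_0)x_2-\rho_1(t_0)-(\tan\theta_0)\rho_2(t_0)+\tfrac12(t_0-t)-y_0 .
\]
Then on $|x-\rho(t)|\le R_1$ only the \emph{increment} $\rho_2(t)-\rho_2(t_0)$ appears, which your own argument already absorbs via $|\rho_2(t)-\rho_2(t_0)|\lesssim\epsilon_0(t_0-t)$, giving a $t_0$-independent constant; and evaluating at $t=t_0$ followed by $x\mapsto x+\rho(t_0)$ now yields precisely the integrand in \eqref{AsymptMonotonicity.2b}.
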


\begin{remark}\label{2R_}
Note that the angle $\frac \pi 3$ in the previous result determines the validity of Theorem \ref{AsymptStab} as expanded in Remark \ref{1R}. We believe that this result is sharp for the strong energy norm. For a formal proof of this fact, see Appendix \ref{CC}.  We also expect that this a general result in any dimensions.
\end{remark}

\begin{proof} We briefly sketch the proof of \eqref{AsymptMonotonicity.2}, since it is very similar to the one of Lemma \ref{nlL2monotonicity}.  Define $\tilde{x}=x_1+\tan (\theta) x_2-\rho(t_0)+\frac12(t_0-t)-y_0$. Then, we compute by using \eqref{ZK} that, for all $t \le t_0$,
\begin{equation}   \label{AsymptMonotonicity.3}
\begin{split} 
\frac{d}{dt}I_{y_0,t_0,\theta_0}(t) & =-2\int u\partial_{x_1}^3u\psi_M(\tilde{x})dx-2\int u\partial_{x_1x_2x_2}^3u\psi_M(\tilde{x})dx
\\ & \quad    +\frac23\int u^3\psi_M'(\tilde{x})dx-\frac12\int u^2\psi'_M(\tilde{x})dx \, .
\end{split}
\end{equation}
We observe integrating by parts that 
\begin{displaymath}  
-2\int u\partial_{x_1}^3u\psi_M(\tilde{x})dx=-3\int \big( \partial_{x_1}u\big)^2\psi_M'(\tilde{x})dx+\int u^2\psi_M'''(\tilde{x})dx \, .
\end{displaymath}
and
\begin{displaymath}  
\begin{split}
-2\int u\partial_{x_1x_2x_2}^3u\psi_M(\tilde{x})dx&=-\int \big( \partial_{x_2}u\big)^2\psi'_M(\tilde{x})dx
-2\tan \theta_0\int\partial_{x_1}u\partial_{x_2}u\psi_M'(\tilde{x})dx \\ &\quad +\tan^2\theta_0\int u^2\psi_M'''(\tilde{x})dx \, .
\end{split}
\end{displaymath}
Now, we get applying Cauchy-Schwarz and Young's inequalities that 
\begin{displaymath} 
2\big| \tan\theta_0\int \partial_{x_1}u\partial_{x_2}u \psi'_M(\tilde{x})dx\big| \le \int \big(\kappa_0^2\tan^2 \theta_0 \, (\partial_{x_1}u)^2+\frac1{\kappa_0^2}(\partial_{x_2}u)^2 \big)(x,t)\psi_M'(\tilde{x})dx\, ,
\end{displaymath}
for some $\kappa_0>0$ satisfying $1<\kappa_0^2<\frac{3}{\tan^2\theta_0}$, which is possible since $|\theta_0| < \frac{\pi}3$. 
Therefore, we deduce from \eqref{psi} that
\begin{equation}   \label{AsymptMonotonicity.4}
\begin{split}
\frac{d}{dt}I_{y_0,t_0,\theta_0}(t) & \le 
-\int \big(\frac18u^2+(3-\kappa_0^2\tan^2 \theta_0)(\partial_{x_1}u)^2+(1-\frac1{\kappa_0^2})(\partial_{x_2}u)^2\big)(x,t)\psi_M'(\tilde{x})dx \\ & \quad+\frac23\int u^3\psi_M'(\tilde{x})dx
\end{split}
\end{equation}
for all $t \le t_0$.

To handle the second term on the right-hand side of \eqref{AsymptMonotonicity.4}, we 
argue exactly as in \eqref{nlL2monotonicity.5}--\eqref{nlL2monotonicity.9} and deduce that 
\begin{equation} \label{AsymptMonotonicity.5}
\begin{split}
\frac23\Big|\int u^3\psi_M'(\tilde{x})dx\Big|&\le K_0\epsilon_0\int \big(u^2+|\nabla u|^2(x,t)\psi_M'(\tilde{x})dx +C_{\epsilon_0}e^{-(\frac14(t_0-t)+y_0)/M} \, ,
\end{split}
\end{equation}
where $C_{\epsilon_0}$ is a positive constant depending on $\epsilon_0$.

Hence, we conclude the proof of \eqref{AsymptMonotonicity.2} gathering \eqref{AsymptMonotonicity.4} and \eqref{AsymptMonotonicity.5}, integrating between $t$ and $t_0$ and choosing $\epsilon_0$ small enough.

The proof of \eqref{AsymptMonotonicity.2b} follows exactly as the one of \eqref{AsymptStab.6}.
\end{proof}

 \noindent \underline{Step 2}: \textit{Strong $L^2$-convergence of $u(\cdot+\rho(t_n),t_n)$ to $\tilde{u}_0$ on the right.} 

 \begin{lemma} \label{L2CV} We have that
\begin{equation} \label{L2CV.1}
u(\cdot+\rho(t_n),t_n) \underset{n \to +\infty}{\longrightarrow} \tilde{u}_0 \quad \text{in} \quad L^2(x_1>-A) \, ,
\end{equation}
for all $A>0$. 
\end{lemma}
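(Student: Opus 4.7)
The plan is to upgrade the weak $H^1(\R^2)$ convergence from \eqref{prop.AsymptStab.4} to strong $L^2$ convergence on $\{x_1 > -A\}$ by partitioning this half-space into (i) a bounded region on which the Rellich--Kondrachov compact embedding delivers strong convergence automatically, and (ii) a far region on which the monotonicity formulas of Step 1 control the $L^2$ mass uniformly. Once both $u(\cdot+\rho(t_n),t_n)$ and $\tilde u_0$ have small $L^2$ mass on (ii), a triangle inequality will close the argument.

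To realize (i)--(ii), I would fix $\ve > 0$, an auxiliary angle $\theta_0 \in (0,\pi/3)$ (say $\theta_0 = \pi/4$), and a parameter $y_0 \gg 1$, and introduce the bounded region
\[
\mathcal R_A := \bigl\{ x \in \R^2 \mid x_1 > -A,\ x_1 \le y_0,\ x_1 \pm (\tan\theta_0)x_2 \le y_0 \bigr\}.
\]
The last two inequalities force $|x_2| \le (y_0+A)/\tan\theta_0$, so $\mathcal R_A$ is indeed bounded. Its complement inside $\{x_1 > -A\}$ is contained in the union of the three half-planes $\{x_1 > y_0\}$ and $\{x_1 \pm (\tan\theta_0)x_2 > y_0\}$. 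Since $\psi_M \ge 1/2$ on $[0,\infty)$, Lemma \ref{Asymptmono} applied to the first half-plane and Lemma \ref{AsymptMonotonicity} applied with angles $\pm\theta_0$ to the other two (both legitimate since $|\theta_0| < \pi/3$) will yield, for $y_0$ large enough and $n$ large enough,
\[
\int_{\{x_1 > -A\} \setminus \mathcal R_A} u^2(x+\rho(t_n),t_n)\,dx \,\lesssim\, e^{-y_0/M} \,<\, \ve.
\]

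On the bounded set $\mathcal R_A$, the weak $H^1$ convergence of $u(\cdot + \rho(t_n),t_n)$ combined with Rellich--Kondrachov compactness promotes to strong convergence $u(\cdot + \rho(t_n),t_n) \to \tilde u_0$ in $L^2(\mathcal R_A)$. Weak lower semi-continuity of the $L^2$-norm then transfers the outside mass bound to the limit:
\[
\|\tilde u_0\|_{L^2(\{x_1>-A\}\setminus \mathcal R_A)}^2 \,\le\, \liminf_{n\to\infty} \|u(\cdot+\rho(t_n),t_n)\|_{L^2(\{x_1>-A\}\setminus \mathcal R_A)}^2 \,\le\, \ve.
\]
Splitting $\|u(\cdot+\rho(t_n),t_n) - \tilde u_0\|_{L^2(x_1>-A)}^2$ according to $\mathcal R_A$ and its complement, sending $n\to\infty$ and then $\ve \to 0$, will give \eqref{L2CV.1}.

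The only substantive obstacle is the transverse $x_2$ direction: in the 1D Martel--Merle argument, pure $x_1$-monotonicity is enough, but in 2D this would leave the $x_2$ variable completely uncontrolled and the candidate region $\mathcal R_A$ unbounded. The tilted monotonicity of Lemma \ref{AsymptMonotonicity}, available precisely for angles in $(-\pi/3, \pi/3)$, is exactly what confines the mass in $x_2$; any $\theta_0 \in (0,\pi/3)$ suffices, so the angular constraint is not a barrier for this step, but it is the key feature that distinguishes the 2D case from 1D.
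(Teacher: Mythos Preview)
Your proposal is correct and follows essentially the same route as the paper: both confine the mass outside a bounded triangle using the tilted monotonicity of Lemma~\ref{AsymptMonotonicity} at angles $\pm\pi/4$, then invoke Rellich--Kondrachov on that triangle and conclude by the triangle inequality. Your extra constraint $x_1\le y_0$ (and the appeal to Lemma~\ref{Asymptmono}) is harmless but redundant, since $x_1+(\tan\theta_0)x_2\le y_0$ and $x_1-(\tan\theta_0)x_2\le y_0$ already imply $x_1\le y_0$; the paper accordingly uses only the two tilted half-planes to define its compact region~$\mathcal R$.
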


\begin{proof} Let $A>0$ and $\epsilon>0$. From \eqref{AsymptMonotonicity.2b}, there exists $R_{\frac{\pi}4}>0$ such that 
\begin{equation} \label{L2CV.2}
\|\tilde{u}_0\|_{L^2(x_1+x_2>R_{\frac{\pi}4})}+\|\tilde{u}_0\|_{L^2(x_1- x_2>R_{\frac{\pi}4})} \le \epsilon
\end{equation} 
and
\begin{equation} \label{L2CV.3}
\limsup_{n \to +\infty}\Big(\|u(\cdot+\rho(t_n),t_n)\|_{L^2(x_1+ x_2>R_{\frac{\pi}4})} +\|u(\cdot+\rho(t_n),t_n)\|_{L^2(x_1- x_2>R_{\frac{\pi}4})}\Big) \le \epsilon \, .
\end{equation} 

Let us denote by $\mathcal{R}$ the compact region of $\mathbb R^2$ (see Fig. \ref{fig3}) defined by $$\mathcal{R}=\big\{(x_1,x_2) \in \mathbb R^2 : x_1\ge -A, \ x_1+ x_2 \le R_{\frac{\pi}4}, \ x_1- x_2 \le R_{\frac{\pi}4} \big\} \, .$$

Since the embedding $H^1(\mathcal{R}) \hookrightarrow L^2(\mathcal{R})$ is compact, we deduce from \eqref{prop.AsymptStab.4} that 
\begin{equation} \label{L2CV.4}
\lim_{n \to +\infty}\|u(\cdot+\rho(t_n),t_n)-\tilde{u}_0\|_{L^2(\mathcal{R})}=0 \, .
\end{equation}

Therefore, we conclude the proof of \eqref{L2CV.1} gathering \eqref{L2CV.2}--\eqref{L2CV.4} and using the triangle inequality.
\end{proof} 

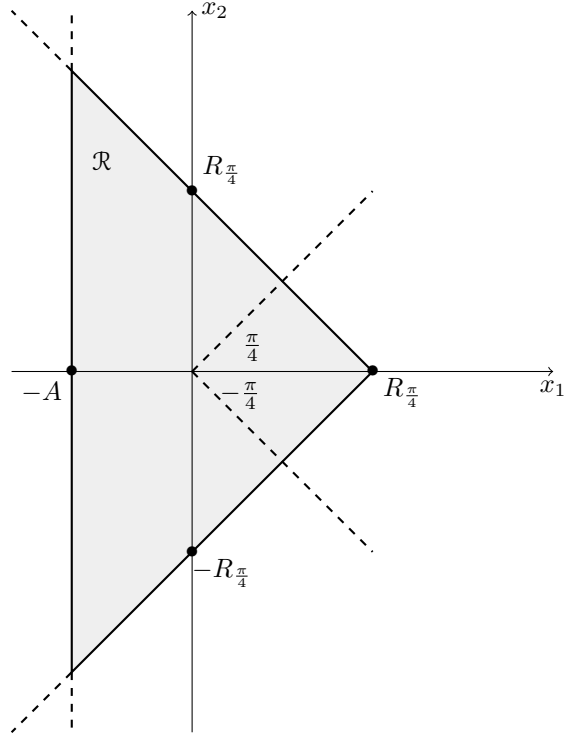
\begin{figure} 
\begin{center}
\begin{tikzpicture}[scale=0.8] 
%\draw[very thin,color=gray] (-0.1,-1.1) grid (6.4,3.9);
%\draw[color=blue, thick] plot (\x, -0.21*\x*\x + 0.92*\x +2) node at %(5.3,3.3) 
%{\small{$g(t)= -\frac{|E_0|}{2}t^2 + t\,\text{Im}\displaystyle\int %x\cdot \nabla u_0\,dx +h(0)$}};
\filldraw[thick, color=lightgray!25] (3,0) -- (-2,5) -- (-2,-5) -- (3,0);
\draw[thick, color=black] (3,0) -- (-2,5) -- (-2,-5) -- (3,0);
\draw[thick, dashed] (-2,5) -- (-3,6);
\draw[thick,dashed] (-2,-5)-- (-3,-6);
\draw[thick,dashed] (-2,5)--(-2,6);
\draw[thick,dashed] (-2,-5)--(-2,-6);
\draw[thick,dashed] (-0,0)--(3,3);
\draw[thick,dashed] (0,0)--(3,-3);
\draw[->] (-3,0) -- (6,0) node[below] {$x_1$};
\draw[->] (0,-6) -- (0,6) node[right] {$x_2$};
\node at (-2,0){$\bullet$};
\node at (-2.5,-0.35){$-A$};
\node at (3,0){$\bullet$};
\node at (3.5,-0.35){$R_{\frac{\pi}{4}}$};
\node at (0,3){$\bullet$};
\node at (0.5,3.35){$R_{\frac{\pi}{4}}$};
\node at (0,-3){$\bullet$};
\node at (0.5,-3.35){$-R_{\frac{\pi}{4}}$};
\node at (-1.5,3.5){$\mathcal{R}$};
\node at (1,0.4){$\frac{\pi}4$};
\node at (0.8,-0.4){$-\frac{\pi}4$};
\end{tikzpicture}
\end{center}
\caption{$\mathcal{R}$ is a compact set of $\mathbb R^2$.}\label{fig3}
\end{figure}

 \noindent \underline{Step 3}: \textit{Exponential decay of $\tilde{u}$ on the right on finite time intervals.} 
 
 \begin{lemma} \label{exptildeu}
Let $M \ge 4$ be given. Then, 
\begin{equation} \label{exptildeu.1}
\int \big(\tilde{u}_0^2+|\nabla \tilde{u}_0|^2 \big)(x)\psi_M(x_1-y_0)dx \lesssim e^{-y_0/M} \, ,
\end{equation}
for all $y_0>0$. 

Moreover, for all $t_0 \ge 0$, there exists $K(t_0)>0$ such that
\begin{equation} \label{exptildeu.2}
\sup_{t \in [0,t_0]} \int \big(\tilde{u}^2+|\nabla \tilde{u}|^2  \big)(x,t) e^{x_1/M}dx \le K(t_0) \, .
\end{equation}
 \end{lemma}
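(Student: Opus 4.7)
The plan splits following the two assertions. For \eqref{exptildeu.1}, we pass to a weak limit in the monotonicity bound \eqref{AsymptStab.6} of Lemma~\ref{Asymptmono}. Since $u(\cdot+\rho(t_n),t_n)\rightharpoonup\tilde u_0$ weakly in $H^1(\R^2)$ by \eqref{prop.AsymptStab.4} and the weight $\psi_M(x_1-y_0)$ is bounded and nonnegative, the quadratic form $v\mapsto\int(v^2+|\nabla v|^2)\psi_M(x_1-y_0)\,dx$ is convex and continuous on $H^1(\R^2)$, hence weakly lower semicontinuous. Taking the liminf along the sequence and invoking \eqref{AsymptStab.6} yields \eqref{exptildeu.1} at once.

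For \eqref{exptildeu.2}, the idea is to propagate the exponential decay forward in time via the one-sided monotonicity Lemmas~\ref{nlL2monotonicity} and~\ref{nlEnergymonotonicity}, both of which apply to $\tilde u$ since \eqref{AsymptStab.5} allows a modulation decomposition through Lemma~\ref{modulation}. Setting $s=0$ and choosing the reference time equal to $\tau\in[0,t_0]$ in \eqref{nlL2monotonicity.2} gives, for any $y_0>0$,
\[
\int\tilde u^2(x,\tau)\psi_M\bigl(x_1-\tilde\rho_1(\tau)-y_0\bigr)\,dx \le \int\tilde u_0^2(x)\psi_M\bigl(x_1-\tilde\rho_1(\tau)+\tfrac{\tau}{2}-y_0\bigr)\,dx + Ce^{-y_0/M}.
\]
Multiplying by $e^{y_0/M}$ and invoking \eqref{exptildeu.1} with the shifted parameter $y_0+\tilde\rho_1(\tau)-\tau/2$ (positive once $y_0$ exceeds $\sup_{[0,t_0]}|\tilde\rho_1|+t_0/2$) bounds the right-hand side by a constant $K(t_0)$ depending on this supremum. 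On the left, the elementary pointwise inequality $\psi_M(y)\ge\pi^{-1}e^{y/M}$ valid for $y\le 0$ yields
\[
\int_{x_1\le\tilde\rho_1(\tau)+y_0}\tilde u^2(x,\tau)\,e^{x_1/M}\,dx \le K(t_0),
\]
and monotone convergence as $y_0\to+\infty$ settles the $L^2$ half of \eqref{exptildeu.2}.

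The gradient half follows by applying the same recipe to Lemma~\ref{nlEnergymonotonicity}, with the caveat that the modified energy $J$ produces a cubic term $\int\tilde u^3 e^{x_1/M}\,dx$ on each side. On the initial-data side, Cauchy--Schwarz combined with a weighted $2$D Gagliardo--Nirenberg bound (using $\psi_M'\le\psi_M/M$) gives $\int|\tilde u_0|^3\psi_M(x_1-y_0')\,dx\lesssim e^{-y_0'/M}$, keeping the above scheme intact. On the running side, applying $2$D Gagliardo--Nirenberg to $\tilde u\,e^{x_1/(2M)}$ and combining the uniform $H^1$-bound on $\tilde u$ with the just-proved $L^2$ estimate yields $|\int\tilde u^3 e^{x_1/M}\,dx|\lesssim \bigl(G(\tau)+K(t_0)\bigr)^{1/2}$, where $G(\tau):=\int|\nabla\tilde u|^2 e^{x_1/M}\,dx$; Young's inequality then absorbs this into $G(\tau)$ to close the bound.

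The main technical hurdle I anticipate is precisely this cubic-term control via weighted Gagliardo--Nirenberg needed to close the gradient estimate; everything else reduces to careful but direct manipulation of the already-established monotonicity formulas together with weak lower semicontinuity.
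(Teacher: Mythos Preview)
Your argument for \eqref{exptildeu.1} is exactly the paper's: weak lower semicontinuity of the weighted $H^1$ norm combined with \eqref{AsymptStab.6}.

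For \eqref{exptildeu.2} you take a genuinely different route. The paper does not invoke the monotonicity Lemmas~\ref{nlL2monotonicity}--\ref{nlEnergymonotonicity}; instead it computes $\frac{d}{dt}\int\tilde u^2\psi_M(x_1-y_0)\,dx$ and $\frac{d}{dt}\int(|\nabla\tilde u|^2-\tfrac23\tilde u^3)\psi_M(x_1-y_0)\,dx$ directly, uses the proximity to $Q$ from \eqref{AsymptStab.5} to absorb the cubic terms (which appear only with the \emph{derivative} weight $\psi_M'$), obtains a linear differential inequality, and closes by Gronwall. The resulting constant $K(t_0)$ is exponential in $t_0$. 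Your approach instead treats the monotonicity lemmas as black boxes applied to $\tilde u$, so the cubic term survives with the full weight $\psi_M$ and must be handled by the weighted Gagliardo--Nirenberg/Young bootstrap you describe. This is correct, but note one point you glossed over: the closing inequality $G(\tau)\le K(t_0)+C\bigl(G(\tau)+K(t_0)\bigr)^{1/2}$ only yields a bound if $G(\tau)$ is already known to be finite, so you should run the argument at fixed $y_0$ (where everything is finite since $\psi_M\le 1$), obtain a uniform-in-$y_0$ bound on $e^{y_0/M}\int|\nabla\tilde u|^2\psi_M(x_1-\tilde\rho_1(\tau)-y_0)\,dx$, and then pass to the limit via Fatou. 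With that tweak your approach is valid; it is more economical in reusing prior results, while the paper's direct Gronwall is more self-contained and sidesteps the bootstrap entirely.
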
 
  
 \begin{proof} Observe from \eqref{prop.AsymptStab.4} that $u(\cdot+\rho(t_n),t_n) \sqrt{\psi_M(\cdot_1-y_0)} \overset{w}{\longrightarrow} \tilde{u}_0\sqrt{\psi_M(\cdot_1-y_0)}$ in $H^1(\mathbb R^2)$. Thus, 
 \begin{displaymath} 
 \| \tilde{u}_0 \sqrt{\psi_M(\cdot_1-y_0)} \|_{H^1} \le \liminf_{n \to +\infty} \| u(\cdot+\rho(t_n),t_n) \sqrt{\psi_M(\cdot_1-y_0)} \|_{H^1} \, ,
 \end{displaymath}
 which combined with \eqref{AsymptStab.6} yields \eqref{exptildeu.1}. 
 
 Now, we turn to the proof of \eqref{exptildeu.2}.  Fix $t_0>0$ and $y_0>0$. Since $\tilde{u}$ is a solution to \eqref{ZK}, we obtain after some integration by parts that 
 \begin{equation} \label{exptildeu.3}
 \begin{split}
 \frac{d}{dt}\int &\tilde{u}^2(x,t)\psi_{M}(x_1-y_0)dx \\ &\le -\int \Big(3(\partial_{x_1}\tilde{u})^2+(\partial_{x_2}\tilde{u})^2 \Big)(x,t)\psi_M'(x_1-y_0)dx \\ 
 &\quad +\int \tilde{u}^2(x,t)\psi_M(x_1-y_0)dx+\frac23\int \tilde{u}^3(x,t)\psi_M'(x_1-y_0)dx \, .
 \end{split} 
 \end{equation} 
 To deal with the last term on the right-hand side of the above expression, we use the decomposition $\tilde{u}^3=Q(\cdot-\tilde{\rho}(t))\tilde{u}^2+\big(\tilde{u}-Q(\cdot-\tilde{\rho}(t))\big)\tilde{u}^2$, the Sobolev embedding $H^1(\mathbb R^2) \hookrightarrow L^3(\mathbb R^2)$ and estimate \eqref{AsymptStab.5}. Thus, 
 \begin{equation} \label{exptildeu.4}
\begin{split}
\frac23\int \tilde{u}^3&(x,t)\psi_M'(x_1-y_0)dx \\ &\lesssim  K_0\epsilon_0 \| \tilde{u}\sqrt{\psi_M'(\cdot_1-y_0)}\|_{H^1}+\int \tilde{u}^2\psi_M'(x_1-y_0)dx \, .
\end{split}
 \end{equation}
 which implies that 
 \begin{displaymath} 
 \frac{d}{dt}\int \tilde{u}^2(x,t)\psi_{M}(x_1-y_0)dx \lesssim \int \tilde{u}^2(x,t)\psi_M(x_1-y_0)dx \, ,
 \end{displaymath}
by choosing $\epsilon_0$ small enough. Hence, Gronwall's inequality and \eqref{exptildeu.1} yields 
 \begin{equation} \label{exptildeu.5}
\sup_{t \in [0,t_0]} \int \tilde{u}^2(x,t)\psi_{M}(x_1-y_0)dx \le K(t_0) e^{-y_0/M} \, ,
 \end{equation}
 for some positive constant $K(t_0)$ depending only on $t_0$.
 
 Now, by using \eqref{ZK} and integrating by parts as in \eqref{nlEnergymonotonicity.4}--\eqref{nlEnergymonotonicity.5}, we get that 
 \begin{displaymath}
 \begin{split}
 \frac{d}{dt} \int &\big(|\nabla \tilde{u}|^2-\frac23\tilde{u}^3 \big) \psi_M(x_1-y_0)dx
 \\ &=-\int \Big( 3(\partial_{x_1}^2\tilde{u})^2+4(\partial_{x_1x_2}^2\tilde{u})^2+(\partial_{x_2}^2\tilde{u})^2+ \tilde{u}^4\Big)(x,t)\psi_M'(x_1-y_0)dx \\ & \quad +\int \big(|\nabla \tilde{u}|^2-\frac23\tilde{u}^3\big)(x,t) \psi_M'''(x_1-y_0)dx \\ & \quad 
 +4\int \tilde{u} \big(2(\partial_{x_1}\tilde{u})^2+(\partial_{x_2}\tilde{u})^2\big)(x,t)\psi_M(x_1-y_0)dx \, .
 \end{split}
 \end{displaymath}
Thus, it follows using \eqref{psi} and arguing as in \eqref{exptildeu.4} that 
\begin{displaymath} 
 \begin{split}
 \frac{d}{dt} \int \big(|\nabla \tilde{u}|^2&-\frac23\tilde{u}^3 \big) \psi_M(x_1-y_0)dx
 \\ &\lesssim \int|\nabla \tilde{u}|^2\psi_M'(x_1-y_0)dx +\int \tilde{u}^2\psi_M(x_1-y_0)dx \, ,
 \end{split}
\end{displaymath}
if $\epsilon_0$ is choosen small enough. Hence, we infer from \eqref{exptildeu.3} that 
\begin{equation} \label{exptildeu.6}
 \frac{d}{dt} \int \big(|\nabla \tilde{u}|^2-\frac23\tilde{u}^3+K_1\tilde{u}^2 \big) \psi_M(x_1-y_0)dx
 \lesssim \int \tilde{u}^2\psi_M(x_1-y_0)dx  \, ,
\end{equation}
for some positive constant $K_1$. Therefore, we conclude integrating \eqref{exptildeu.6} between $0$ and $t$ and using \eqref{exptildeu.4}--\eqref{exptildeu.5} that 
\begin{displaymath} 
\sup_{t \in [0,t_0]} \int \big( \tilde{u}^2+|\nabla \tilde{u}|^2\big)(x,t)\psi_{M}(x_1-y_0)dx \le \tilde{K}(t_0) e^{-y_0/M} \, ,
\end{displaymath}
which yields \eqref{exptildeu.2} since $\psi_M(x_1-y_0) \gtrsim e^{(x_1-y_0)/M}$ for $x_1-y_0 \le 0$.
 \end{proof}
 
  \noindent \underline{Step 4}: \textit{Strong $L^2$-convergence of $u(\cdot+\rho(t_n),t_n+t)$ to $\tilde{u}(\cdot,t)$ on the right.} 
 
 \begin{lemma} \label{L2bCV} We have that
\begin{equation} \label{L2bCV.1}
u(\cdot+\rho(t_n),t_n+t) \underset{n \to +\infty}{\longrightarrow} \tilde{u}(\cdot,t) \quad \text{in} \quad L^2(x_1>-A) \, ,
\end{equation}
for all $A>0$, $t \in \mathbb R$ and 
 \begin{equation} \label{L2bCV.1b}
 u(\cdot+\rho(t_n),t_n+t) \underset{n \to +\infty}{\overset{w}{\longrightarrow}} \tilde{u}(\cdot,t) \quad \text{in} \quad  H^1(\mathbb R^2)\, ,
 \end{equation}
for all $t \in \mathbb R$. Moreover, 
\begin{equation} \label{L2bCV.2}
\rho(t_n+t)-\rho(t_n) \longrightarrow \tilde{\rho}(t) \quad \text{and} \quad \tilde{\rho}(0)=0 \, ,
\end{equation}
for all $t \in \mathbb R$, where $\tilde{\rho}$ is the $C^1$-function associated to the decomposition of $\tilde{u}$ in Lemma \ref{modulation}.
\end{lemma}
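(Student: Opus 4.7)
The plan is to propagate the convergence obtained at time $t_n$ (from Proposition~\ref{prop.AsymptStab} and Lemma~\ref{L2CV}) to arbitrary time $t_n+t$, using an Aubin--Lions compactness argument together with the monotonicity properties and the uniqueness of $H^1$ solutions of \eqref{ZK}.

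\emph{Setup and extraction.} Set $u_n(x,t):=u(x+\rho(t_n),\,t+t_n)$, which solves \eqref{ZK}. By uniqueness in Lemma~\ref{modulation}, its modulation parameters are $c_n(t)=c(t+t_n)$ and $\rho_n(t)=\rho(t+t_n)-\rho(t_n)$, so in particular $\rho_n(0)=0$. The sequence $\{u_n\}$ is uniformly bounded in $L^\infty(\mathbb R;H^1(\mathbb R^2))$, and since $u_n^2 \in L^\infty_t L^2_x$ via the Sobolev embedding $H^1\hookrightarrow L^4$, the equation directly yields $\partial_t u_n$ uniformly bounded in $L^\infty(\mathbb R;H^{-2}(\mathbb R^2))$. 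An Aubin--Lions extraction on compacts $B_R\times[-T,T]$ together with a diagonal argument produces a subsequence (not relabeled) converging strongly in $L^2_{\mathrm{loc}}(\mathbb R^3)$ and weakly-$\ast$ in $L^\infty_t H^1_x$ to some $w$. Local strong convergence permits passage to the limit in the nonlinearity, so $w\in C(\mathbb R;H^1)$ solves \eqref{ZK}. Because $u_n(\cdot,0)\rightharpoonup\tilde u_0$ in $H^1$, the $L^2_{\mathrm{loc}}$ identification forces $w(\cdot,0)=\tilde u_0$, and $H^1$ uniqueness of \eqref{ZK} from \cite{Fa} gives $w=\tilde u$. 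Uniqueness of the limit upgrades the convergence from subsequential to sequential.

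\emph{Weak $H^1$ convergence and strong $L^2$ convergence on $\{x_1>-A\}$.} For each fixed $t$, the uniform $H^1$ bound combined with the strong $L^2_{\mathrm{loc}}$ identification of the limit yields \eqref{L2bCV.1b}. To obtain strong $L^2(x_1>-A)$ convergence, fix $\varepsilon,A>0$. A change of variables translates the bound \eqref{AsymptMonotonicity.2b} applied to $u$ at time $t_n+t$ into
\[
\int u_n^2(x,t)\,\psi_M\bigl(x_1+(\tan\theta_0)x_2-y_0\bigr)\,dx\;\lesssim\; e^{C|t|/M}\, e^{-y_0/M},
\]
for every $\theta_0\in(-\pi/3,\pi/3)$ and $n$ large; the factor $e^{C|t|/M}$ absorbs the bounded shift $\rho(t_n+t)-\rho(t_n)=O(|t|)$ coming from \eqref{modulation.5}. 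The identical bound holds for $\tilde u(\cdot,t)$. Choosing $\theta_0=\pm\pi/4$ and $y_0=R$ large, both $u_n(\cdot,t)$ and $\tilde u(\cdot,t)$ have $L^2$-mass less than $\varepsilon$ outside $\{x_1\pm x_2\le R\}$. The region $\mathcal R_A:=\{x_1>-A,\ x_1+x_2\le R,\ x_1-x_2\le R\}$ is compact; Rellich compactness applied to the weak $H^1$ convergence provides strong $L^2(\mathcal R_A)$ convergence. Combined with the tail bound this yields \eqref{L2bCV.1}, exactly in the spirit of the proof of Lemma~\ref{L2CV}.

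\emph{Convergence of the modulation parameters.} The parameters $\rho_n(t),c_n(t)$ are characterized by the orthogonality conditions \eqref{modulation.4} against $Q_{c_n(t)}$ and its derivatives, which are exponentially localized by \eqref{elliptPDE1}. These conditions depend continuously on $u_n(\cdot,t)$ in $L^2_{\mathrm{loc}}$, so the convergence just established forces $c_n(t)\to\tilde c(t)$ and $\rho_n(t)\to\tilde\rho(t)$ for every $t\in\mathbb R$, yielding \eqref{L2bCV.2}; the identity $\tilde\rho(0)=\lim_n\rho_n(0)=0$ is then immediate.

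\emph{Main obstacle.} The delicate step is the identification $w=\tilde u$: since $u_n(\cdot,0)\rightharpoonup\tilde u_0$ only weakly in $H^1$, one cannot take weak limits directly in the nonlinear term, and must first upgrade to strong $L^2_{\mathrm{loc}}$ space-time convergence via Aubin--Lions before invoking $H^1$ uniqueness from \cite{Fa}. A secondary technicality is the tracking of the translation $\rho(t_n+t)-\rho(t_n)$ when transporting the monotonicity bounds from $u$ to the rescaled solutions $u_n$; this shift is $O(|t|)$ uniformly in $n$ by \eqref{modulation.5} and is absorbed harmlessly into the exponential decay factor.
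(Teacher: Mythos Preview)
Your route via Aubin--Lions compactness and uniqueness of the ZK flow is genuinely different from the paper's, which never invokes compactness at this stage. Instead, the paper writes the equation for the difference $v_n:=u(\cdot+\rho(t_n),t_n+t)-\tilde u(\cdot,t)$, namely $\partial_t v_n+\partial_{x_1}\Delta v_n+\partial_{x_1}(2\tilde u\,v_n+v_n^2)=0$, and proves a weighted Gronwall estimate
\[
\sup_{t\in[0,t_0]}\int v_n^2(x,t)\,\psi_M(x_1)\,dx \;\le\; K_1(t_0)\int v_n^2(x,0)\,\psi_M(x_1)\,dx,
\]
using the exponential decay of $\tilde u$ on the right (Lemma~\ref{exptildeu}) to absorb the dangerous term $\partial_{x_1}\tilde u\,v_n^2\,\psi_M$. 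Since the right-hand side vanishes by Lemma~\ref{L2CV}, this gives \eqref{L2bCV.1} for $t\ge0$; negative times are handled by restarting the same forward argument from an earlier time and identifying via uniqueness of the Cauchy problem applied to two \emph{strong} $C_tH^1_x$ solutions. The modulation-parameter step is as you describe.

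Your argument has a real gap at the identification $w=\tilde u$. Aubin--Lions yields only $w\in L^\infty_tH^1_x\cap C_tL^2_{\mathrm{loc},x}$ solving \eqref{ZK} in the distributional sense with $w(\cdot,0)=\tilde u_0$; the assertion ``$w\in C(\mathbb R;H^1)$'' does not follow from the compactness, and the uniqueness in \cite{Fa} applies to solutions in the well-posedness class, not to arbitrary $L^\infty_tH^1_x$ distributional solutions. To conclude $w=\tilde u$ you would need an unconditional (or weak--strong) uniqueness statement for ZK in $H^1$, which is not among the cited results and is nontrivial for a derivative nonlinearity. Your ``main obstacle'' paragraph correctly flags this as the delicate step but does not resolve it: strong $L^2_{\mathrm{loc}}$ space--time convergence suffices to pass to the limit in the nonlinearity, yet it does not place the limit in a class where the available uniqueness theory applies. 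Closing this gap essentially forces one back to an energy estimate on the difference $u_n-\tilde u$, which is precisely the paper's direct approach. The remainder of your proposal (the oblique monotonicity tails with the $O(|t|)$ shift absorbed into the exponential, the compact-region Rellich step, and the passage to the limit in the orthogonality relations) is sound.
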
 

\begin{proof} Let us define $v_n(x,t)=u(x+\rho(t_n),t_n+t)-\tilde{u}(x,t)$. Then, it follows from \eqref{L2CV.1} that 
\begin{equation} \label{L2bCV.3}
\int v_n(x,0)^2\psi_M(x_1)dx \underset{n \to +\infty}{\longrightarrow} 0 \, .
\end{equation}
Moreover an easy computation using \eqref{ZK} shows that 
\begin{equation} \label{L2bCV.4}
\partial_t v_n+\partial_{x_1}\Delta v_n+\partial_{x_1}\big(2\tilde{u}v_n+v_n^2 \big)=0 \, . 
\end{equation}

Next, we infer that for all $t_0>0$ and $M \ge 8$, there exists $K_1(t_0)>0$ such that 
\begin{equation} \label{L2bCV.5}
\sup_{t \in [0,t_0]} \int v_n(x,t)^2\psi_M(x_1)dx \le K_1(t_0) \int v_n(x,0)^2 \psi_M(x_1)dx \, ,
\end{equation}
which together with \eqref{L2bCV.3} yields \eqref{L2bCV.1} for any $t \ge 0$. The weak convergence  in $\eqref{L2bCV.1b}$ for any $t \ge 0$ follows then by uniqueness of the weak limit in $H^1$.

Now, we prove \eqref{L2bCV.5}. Fix $t_0>0$ and $M \ge 8$. By using \eqref{L2bCV.4} and arguing as previously, we get that
\begin{equation} \label{L2bCV.6}
\begin{split}
\frac{d}{dt} \int v_n^2 \psi_M(x_1)dx &= 
-\int \big(3(\partial_{x_1}v_n)^2+(\partial_{x_2}v_n)^2 \big)\psi_M'(x_1)dx 
 +\int v_n^2 \psi_M'''(x_1)dx \\ & \quad
+2\int \big(2\tilde{u}v_n+v_n^2 \big)\partial_{x_1}\big(v_n\psi_M(x_1) \big)dx \, ,
\end{split}
\end{equation}
for all $t \in [0,t_0]$.
Observe that the last term on the right-hand side of the above formula can be rewritten as 
\begin{equation} \label{L2bCV.7}
\begin{split}
\int \big(2\tilde{u}&v_n+v_n^2 \big)\partial_{x_1}\big(v_n\psi_M(x_1) \big)dx \\ & =
\int \big(\frac23 v_n+\tilde{u}\big)v_n^2\psi_M'(x_1)dx-\int \partial_{x_1}\tilde{u}v_n^2\psi_M(x_1)dx \, .
\end{split}
\end{equation}
By using the Gagliardo-Nirenberg inequality $\|f\|_{L^4} \lesssim \|f\|_{L^2}^{\frac12}\|f\|_{H^1}^{\frac12}$ in two dimensions, we get that 
\begin{equation} \label{L2bCV.8}
\begin{split}
\Big|\int \big(\frac23 v_n+\tilde{u}\big)v_n^2\psi_M'(x_1)dx\Big| &\lesssim \big(\|v_n\|_{L^2}+\|\tilde{u}\|_{L^2} \big)\|v_n\sqrt{\psi_M'(x_1)}\|_{L^2}\|v_n\sqrt{\psi_M'(x_1)}\|_{H^1} \\ & \le
K\int v_n^2\psi_M'(x_1)dx+\frac1{32}\int |\nabla v_n|^2\psi_M'(x_1)dx \, ,
\end{split}
\end{equation}
where $K$ is a positive constant depending on $\|u_0\|_{L^2}$ and $\|\tilde{u}_0\|_{L^2}$. To estimate the second term on the right-hand side of \eqref{L2bCV.7}, we observe arguing as above that 
\begin{displaymath}
\Big|\int \partial_{x_1}\tilde{u}v_n^2\psi_M(x_1)dx \Big| \lesssim\big\| \partial_{x_1}\tilde{u} \frac{\psi_M(x_1)}{\psi_M'(x_1)}\big\|_{L^2}\|v_n\sqrt{\psi_M'(x_1)}\|_{L^2}\|v_n\sqrt{\psi_M'(x_1)}\|_{H^1} \, .
\end{displaymath}
Now, since 
\begin{displaymath}
\frac{\psi_M(x_1)^2}{\psi_M'(x_1)^2}  \lesssim 1, \ \text{for} \ x_1 \le 0 \quad \text{and} \quad \frac{\psi_M(x_1)^2}{\psi_M'(x_1)^2} \lesssim e^{2x_1/M},  \ \text{for} \ x_1 \ge 0 \, ,
\end{displaymath}
we deduce from \eqref{exptildeu.2} that 
\begin{equation} \label{L2bCV.9}
\Big|\int \partial_{x_1}\tilde{u}v_n^2\psi_M(x_1)dx \Big| \le \tilde{K}(t_0)\int v_n^2\psi_M'(x_1)dx+\frac1{32}\int |\nabla v_n|^2\psi_M'(x_1)dx \, ,
\end{equation}
where $\tilde{K}(t_0)$ is a positive constant depending on $\|\tilde{u}_0\|_{H^1}$ and $K(t_0)$. Hence,  it follows from \eqref{psi} and \eqref{L2bCV.6}--\eqref{L2bCV.9} that 
\begin{equation} \label{L2bCV.9b} 
\frac{d}{dt} \int v_n^2(x,t) \psi_M(x_1)dx +
\int |\nabla v_n|^2(x,t)\psi_M'(x_1)dx \le \tilde{K}(t_0) \int v_n^2(x,t) \psi_M(x_1)dx \, ,
\end{equation}
for all $0 \le t \le t_0$, which together with Gronwall's inequality implies \eqref{L2bCV.5}. 

To prove that \eqref{L2bCV.1} and \eqref{L2bCV.1b} also hold for $t \le 0$, we fix some $\tilde{t}_1 <0$. Since $\{u(\cdot+\rho(t_n),t_n+\tilde{t}_1)\}$ is bounded in $H^1(\mathbb R^2)$, there exists a subsequence extracted from $\{t_n\}$ (still denoted $\{t_n\}$) and $\tilde{u}_{1,0} \in H^1(\mathbb R^2)$ such that
\begin{equation} \label{L2bCV.10}
 u(\cdot+\rho(t_n),t_n+\tilde{t}_1) \underset{n \to +\infty}{\overset{w}{\longrightarrow}} \tilde{u}_{1,0} \quad \text{in} \ H^1\, .
 \end{equation}
Let $\tilde{u}_1 \in C(\mathbb R : H^1(\mathbb R^2))$ be the solution of \eqref{ZK} satisfying $\tilde{u}_1(\cdot,0)=\tilde{u}_{1,0}$. By reproducing the above analysis on $\tilde{u}_1$, we obtain that 
\begin{equation} \label{L2bCV.11} 
\begin{split}
& u(\cdot+\rho(t_n),t_n+\tilde{t}_1+t) \underset{n \to +\infty}{\overset{w}{\longrightarrow}} \tilde{u}_{1}(\cdot,t) \quad \text{in} \quad H^1(\mathbb R^2) \, , \\
& u(\cdot+\rho(t_n),t_n+\tilde{t}_1+t) \underset{n \to +\infty}{\longrightarrow} \tilde{u}_{1}(\cdot,t) \quad \text{in} \quad  L^2(x_1>-A) \, ,
\end{split}
\end{equation}
for all $A>0$ and $t \ge 0$. In particular, we deduce from \eqref{L2bCV.11} with $t=-\tilde{t_1}$ and \eqref{prop.AsymptStab.4} that $\tilde{u}_1(\cdot,-\tilde{t}_1)=\tilde{u}_0$. Thus the uniqueness of the Cauchy problem associated to \eqref{ZK} in $H^1(\mathbb R^2)$ implies that $\tilde{u}_1(\cdot,t-\tilde{t}_1)=\tilde{u}(\cdot,t)$, for all $t \in \mathbb R$. We conclude from \eqref{L2bCV.11} that \eqref{L2bCV.1}--\eqref{L2bCV.1b} still hold true for $t \ge \tilde{t}_1$ and then for $t \in \mathbb R$, since $\tilde{t}_1<0$ was chosen arbitrarily.

Finally, we prove \eqref{L2bCV.2}. By extracting another subsequence if necessary, we can assume from \eqref{modulation.3} and \eqref{modulation.5} that for all $t \in \mathbb R$, there exist $d(t) \in \mathbb R$ and $\beta(t) \in \mathbb R^2 $ such that
\begin{equation} \label{L2bCV.12} 
c(t_n+t) \underset{n \to +\infty}{\longrightarrow} d(t) \quad \text{and} \quad \rho(t_n+t)-\rho(t_n) \underset{n \to +\infty}{\longrightarrow} \beta(t) \, .
\end{equation}
Observe that $\beta(0)=0$. Let us define 
\begin{displaymath}
\eta_n(\cdot,t)=u(\cdot+\rho(t_n+t),t_n+t)-Q_{c(t_n+t)} \quad \text{and} \quad 
\tilde{\eta}(\cdot,t)=\tilde{u}(\cdot+\beta(t),t)-Q_{d(t)} \, .
\end{displaymath} 
It follows then from \eqref{modulation.4} that
\begin{displaymath}
\int \eta_n(x,t)Q_{c(t_n+t)}(x)dx=\int \eta_n(x,t)\partial_{x_i}Q_{c(t_n+t)}(x)dx
=0, \ i=1,2 \, .
\end{displaymath}
Therefore, we deduce letting $n \to +\infty$ and using \eqref{L2bCV.1b} and \eqref{L2bCV.12} that
\begin{equation} \label{L2bCV.13}
\int \tilde{\eta}(x,t)Q_{d(t)}(x)dx=\int \tilde{\eta}(x,t)\partial_{x_i}Q_{d(t)}(x)dx
=0, \ i=1,2 \, .
\end{equation}
This implies  by the uniqueness of the  decomposition of $\tilde{u}$ in Lemma \ref{modulation} that $d(t)=\tilde{c}(t)$ and $\beta(t)=\tilde{\rho}(t)$, for all $t \in \mathbb R$, which concludes the proof of \eqref{L2bCV.2}.
\end{proof}

 \noindent \underline{Step 5}: \textit{Exponential decay of $\tilde{u}$ on the right.}
 \begin{lemma} \label{rightexpdecay}
 Let $M \ge 4$. Then, 
 \begin{equation} \label{rightexpdecay.1}
 \int \big( \tilde{u}^2+|\nabla \tilde{u}|^2 \big)(x+\tilde{\rho}(t),t) \psi_M(x_1-y_0)dx \lesssim e^{-y_0/M} \, ,
 \end{equation}
 for all $y_0>0$ and $t \in \mathbb R$. 
 
 Moreover, 
 \begin{equation} \label{rightexpdecay.2}
 \int_{x_2}\tilde{u}^2(x+\tilde{\rho}(t),t)dx_2 \lesssim e^{-x_1/M}, \quad \forall \, (x_1,t) \in \mathbb R^2 \, .
 \end{equation} 
 \end{lemma}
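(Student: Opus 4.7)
The plan is to obtain \eqref{rightexpdecay.1} by transferring the monotonicity estimate of Lemma \ref{Asymptmono} (valid for the original solution $u$ at large times) to $\tilde u$ via weak lower semicontinuity along the approximating sequence constructed in Proposition \ref{prop.AsymptStab}, and then to deduce the pointwise bound \eqref{rightexpdecay.2} from \eqref{rightexpdecay.1} through a Sobolev trace argument in the $x_1$ variable entirely analogous to the one used in \eqref{LinearLiouville8b2}.

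For \eqref{rightexpdecay.1}, fix $t \in \mathbb R$ and $y_0>0$. From Lemma \ref{L2bCV} we have both the weak $H^1$-convergence $u(\cdot+\rho(t_n),t_n+t) \rightharpoonup \tilde u(\cdot,t)$ and the translation convergence $\rho(t_n+t)-\rho(t_n) \to \tilde\rho(t)$. Continuity of translation in $L^2$ combines them into
\[
u(\cdot+\rho(t_n+t),t_n+t) \underset{n \to +\infty}{\overset{w}{\longrightarrow}} \tilde u(\cdot+\tilde\rho(t),t) \quad \text{in } H^1(\mathbb R^2).
\]
Since multiplication by the bounded smooth function $\sqrt{\psi_M(x_1-y_0)}$ is continuous (hence weakly continuous) on $L^2$, and the weighted form can be written as
\[
\int (v^2+|\nabla v|^2)\psi_M(x_1-y_0)\,dx=\big\|\sqrt{\psi_M(x_1-y_0)}\,v\big\|_{L^2}^2+\big\|\sqrt{\psi_M(x_1-y_0)}\,\nabla v\big\|_{L^2}^2,
\]
it is lower semicontinuous with respect to weak $H^1$-convergence. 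This gives
\begin{align*}
\int (\tilde u^2+|\nabla \tilde u|^2)&(x+\tilde\rho(t),t)\,\psi_M(x_1-y_0)\,dx \\
&\le \liminf_{n\to+\infty}\int (u^2+|\nabla u|^2)(x+\rho(t_n+t),t_n+t)\,\psi_M(x_1-y_0)\,dx,
\end{align*}
and Lemma \ref{Asymptmono} applied along the sequence $t_n+t\to+\infty$ bounds the right-hand side by $Ce^{-y_0/M}$.

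To derive \eqref{rightexpdecay.2}, I use $\psi_M(x_1-y_0)\gtrsim e^{(x_1-y_0)/M}$ for $x_1\le y_0$, which converts \eqref{rightexpdecay.1} into
\[
\int_{x_1\le y_0}(\tilde u^2+|\nabla \tilde u|^2)(x+\tilde\rho(t),t)\,e^{x_1/M}\,dx \lesssim 1
\]
uniformly in $y_0>0$; letting $y_0\to+\infty$ yields the global exponentially weighted $H^1$ bound $\int (\tilde u^2+|\nabla \tilde u|^2)(x+\tilde\rho(t),t)\,e^{x_1/M}\,dx \lesssim 1$, uniform in $t$. Applying the one-dimensional Sobolev embedding $H^1(\mathbb R_{x_1})\hookrightarrow L^\infty(\mathbb R_{x_1})$ to the function $x_1 \mapsto e^{x_1/(2M)}\bigl(\int_{x_2}\tilde u^2(x+\tilde\rho(t),t)\,dx_2\bigr)^{1/2}$, exactly as in the computation leading to \eqref{LinearLiouville8b2}, then delivers \eqref{rightexpdecay.2}. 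I do not expect any serious obstacle: once the weak-convergence-with-shift provided by Lemma \ref{L2bCV} is combined with the monotonicity of Lemma \ref{Asymptmono}, what remains is only the verification of weak semicontinuity against the smooth bounded weight $\psi_M$ (immediate after factoring out $\sqrt{\psi_M}$) and the now-standard $H^1_{x_1}\hookrightarrow L^\infty_{x_1}$ trick.
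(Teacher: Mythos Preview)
Your proposal is correct and follows essentially the same approach as the paper's own proof: both pass the monotonicity bound \eqref{AsymptStab.6} to $\tilde u$ via weak lower semicontinuity along the shifted sequence $u(\cdot+\rho(t_n+t),t_n+t)$ (using Lemma~\ref{L2bCV}), and both deduce \eqref{rightexpdecay.2} from the weighted $H^1$ bound through the $H^1_{x_1}\hookrightarrow L^\infty_{x_1}$ argument of \eqref{LinearLiouville8b2}. Your decomposition of the weighted integral as a sum of two squared $L^2$-norms is in fact slightly cleaner than the paper's phrasing in terms of $\|\sqrt{\psi_M}\,v\|_{H^1}$, but the content is identical.
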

 
 \begin{proof} Observe from \eqref{L2bCV.1b} and \eqref{L2bCV.2} that 
\begin{displaymath}
u(\cdot+\rho(t_n+t),t_n+t) \sqrt{\psi_M(\cdot_1-y_0)} \underset{n \to +\infty}{\overset{w}{\longrightarrow}} \tilde{u}(\cdot+\tilde{\rho}(t),t)\sqrt{\psi_M(\cdot_1-y_0)} \quad \text{in} \ H^1(\mathbb R^2) \, ,
\end{displaymath} 
for all $t \in \mathbb R$. Thus, 
 \begin{displaymath} 
 \| \tilde{u}(\cdot+\tilde{\rho}(t),t) \sqrt{\psi_M(\cdot_1-y_0)} \|_{H^1} \le \liminf_{n \to +\infty} \| u(\cdot+\rho(t_n+t),t_n+t) \sqrt{\psi_M(\cdot_1-y_0)} \|_{H^1} \, ,
 \end{displaymath}
 which combined with \eqref{AsymptStab.6} yields \eqref{rightexpdecay.1}. 
 
 Since $\psi_M(x_1-y_0) \ge e^{x_1-y_0}$ for $x_1-y_0 \le 0$, it follows from \eqref{rightexpdecay.1} that
 \begin{displaymath}
 \int \big( \tilde{u}^2+|\nabla \tilde{u}|^2 \big)(x+\tilde{\rho}(t),t) e^{x_1/M}dx \lesssim 1 \, , 
 \end{displaymath}
 which yields \eqref{rightexpdecay.2} arguing as in \eqref{LinearLiouville8b2}.
 \end{proof}
 
\noindent \underline{Step 6}: \textit{Strong $H^1$-convergence of $u(\cdot+\rho(t_n),t_n+t)$ to $\tilde{u}(\cdot,t)$ on the right.} 
\begin{lemma} \label{H1CV}
We have that
\begin{equation} \label{H1CV.1}
u(\cdot+\rho(t_n),t_n+t) \underset{n \to +\infty}{\longrightarrow} \tilde{u}(\cdot,t) \quad \text{in} \quad H^1(x_1>-A) \, ,
\end{equation}
for all $A>0$, $t \in \mathbb R$.
\end{lemma}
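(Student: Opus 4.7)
With weak $H^1$ convergence \eqref{L2bCV.1b} and strong $L^2(x_1 > -A)$ convergence \eqref{L2bCV.1} already established, the standard Hilbert-space principle (weak convergence plus norm convergence implies strong convergence) reduces the problem to proving norm convergence $\|\nabla u_n(\cdot, t)\|_{L^2(x_1 > -A)} \to \|\nabla \tilde u(\cdot, t)\|_{L^2(x_1 > -A)}$. Introducing the weight $\psi_M(x_1 - y_0)$ with $M$ large and $y_0$ chosen so that $\psi_M(\cdot - y_0) \ge c > 0$ on $\{x_1 > -A\}$, it suffices to prove
\[
\int |\nabla u_n|^2(x, t) \psi_M(x_1 - y_0)\, dx \; \longrightarrow \; \int |\nabla \tilde u|^2(x, t) \psi_M(x_1 - y_0)\, dx.
\]

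First I would establish the analogous convergence for the cubic term: by Rellich--Kondrachov compactness, the strong $L^2$ convergence on bounded subsets of $\{x_1 > -A\}$ together with the 2D Gagliardo--Nirenberg inequality $\|f\|_{L^3}^3 \lesssim \|f\|_{L^2}^2 \|f\|_{H^1}$ and the uniform $H^1$ bound upgrades to strong $L^3$ convergence on bounded sets; combined with the exponential decay of $\psi_M$ on its left tail, this yields $\int u_n^3 \psi_M(x_1 - y_0)\, dx \to \int \tilde u^3 \psi_M(x_1 - y_0)\, dx$.

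Next I would prove convergence of the localized ZK energy $\int (\tfrac12 |\nabla u|^2 - \tfrac13 u^3) \psi_M(x_1 - y_0)\, dx$. For $u_n$, apply the energy monotonicity of Lemma \ref{nlEnergymonotonicity} between $t$ and the past time $s = -t_n$: at $s = -t_n$, $u_n(\cdot, -t_n) = u_0(\cdot + \rho(t_n))$ has mass concentrated near $y = -\rho(t_n) \to -\infty$, while the weight appearing in the monotonicity is translated to the right by $\rho_1(t_n) - (t_n+t)/2 \sim t_n/2 \to +\infty$, so the past value tends to $0$ and one obtains $\limsup_n \int(\tfrac12|\nabla u_n|^2 - \tfrac13 u_n^3) \psi_M \lesssim e^{-y_0/M}$. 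The analogous upper bound for $\tilde u$ is obtained by applying the same monotonicity to $\tilde u$ over arbitrarily long backward time intervals: using the modulation decomposition $\tilde u = Q_{\tilde c(s)}(\cdot - \tilde \rho(s)) + \tilde \eta(s)$ with $\|\tilde \eta\|_{H^1} \lesssim \epsilon$ from \eqref{AsymptStab.5}, the exponential decay of the soliton profile $Q_{\tilde c(s)}$, and the fact that the soliton propagates to the left at speed $\approx 1$ while the wall in $\psi_M$ recedes only at speed $1/2$, the soliton's contribution to the localized functional vanishes as $s \to -\infty$. Weak $H^1$ lower semicontinuity, combined with the convergence of the cubic term, provides the matching liminf bounds, and the chain of inequalities closes.

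The main obstacle is the upper bound for $\tilde u$: unlike $u_n$, which has the prescribed past initial condition $u_0(\cdot + \rho(t_n))$ available at $s = -t_n$, the limit $\tilde u$ has no such distinguished past state. The key is that subcriticality (encoded in $\int Q \Lambda Q > 0$ via \eqref{LambdaQeq2}) guarantees a definite positive propagation speed of the soliton that outruns the monotonicity wall, so that the backward monotonicity effectively delivers a vanishing past value in the limit $s \to -\infty$, thus completing the argument.
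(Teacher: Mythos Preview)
There is a genuine gap: the ``chain of inequalities'' does not close. The energy monotonicity of Lemma~\ref{nlEnergymonotonicity} is formulated with weight $\psi_M(\tilde x_1)$ where $\tilde x_1 = x_1 - \rho_1(t_0) + \frac12(t_0-t) - y_0$ and $y_0 > 0$; at the final time $t_0$ this weight is $\psi_M(x_1 - \rho_1(t_0) - y_0)$, centered strictly to the \emph{right} of the soliton. Hence the outcome of your backward-in-time argument for $u_n$ is only the right-tail bound
\[
\limsup_n \int\bigl(\tfrac12|\nabla u_n|^2 - \tfrac13 u_n^3\bigr)(y,t)\,\psi_M\bigl(y_1 - \tilde\rho_1(t) - y_0\bigr)\,dy \lesssim e^{-y_0/M},\qquad y_0>0,
\]
which is exactly Lemma~\ref{Asymptmono}, and the analogous statement for $\tilde u$ is Lemma~\ref{rightexpdecay}. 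But the weight you actually need---one with $\psi_M(\cdot - y_0) \ge c$ on $\{x_1 > -A\}$---requires $y_0 \lesssim -A$, and then the error $e^{-y_0/M}$ blows up. Even setting this aside, the three facts $\limsup E_n \lesssim e^{-y_0/M}$, $\tilde E \lesssim e^{-y_0/M}$, and $\liminf E_n \ge \tilde E$ (from weak lower semicontinuity plus cubic convergence) give no reverse inequality $\limsup E_n \le \tilde E$; they simply do not combine. Concretely, the localized energy on $\{x_1 > -A\}$ contains the full soliton contribution $H(Q_{c(t)})\neq 0$, and no right-tail monotonicity can detect whether this bulk part of $E_n$ converges to that of $\tilde E$.

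The paper avoids any norm-convergence argument and works directly with the difference $v_n = u_n - \tilde u$. The Kato-smoothing term in the $L^2$ estimate \eqref{L2bCV.9b}, integrated on $[t_0-1,t_0]$ and combined with \eqref{L2bCV.1}--\eqref{L2bCV.5}, yields $\int_{t_0-1}^{t_0}\!\int |\nabla v_n|^2 \psi_M'(x_1)\,dx\,dt \to 0$; this is upgraded from $\psi_M'$ to $\psi_M$ via the already established right-tail decay of both $u_n$ and $\tilde u$. A Gronwall-type energy estimate for $\int(|\nabla v_n|^2 - \frac23 v_n^3)\psi_M$, which uses the equation \eqref{L2bCV.4} for $v_n$ and crucially the $e^{x_1/M}$-weighted $H^1$ bound on $\tilde u$ from Lemma~\ref{exptildeu}, then converts time-averaged smallness into pointwise-in-time smallness at $t_0$. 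The missing idea in your plan is precisely this smoothing-on-the-difference mechanism, which bypasses the need to compare the bulk localized energies of $u_n$ and $\tilde u$.
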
 

\begin{proof} Arguing as in the proof of Lemma \ref{L2bCV}, it is enough to prove that \eqref{H1CV.1} holds for $t \ge 0$.

Recall that $v_n(x,t)=u(x+\rho(t_n),t_n+t)-\tilde{u}(x,t)$ satisfies the equation in \eqref{L2bCV.4}. Let $M \ge 12$. We claim that 
\begin{equation} \label{H1CV.2} 
\int |\nabla v_n|^2(x,t) \psi_M(x_1)dx \underset{n \to +\infty}{\longrightarrow} 0 \, ,
\end{equation}
for all $t \ge 0$, which implies \eqref{H1CV.1} together with \eqref{L2bCV.1}. 

To prove \eqref{H1CV.2}, we fix $t_0 >0$. First, we integrate \eqref{L2bCV.9} between $t_0-1$ and $t_0$ and use \eqref{L2bCV.1} to get that 
\begin{equation} \label{H1CV.3}
\int_{t_0-1}^{t_0}\int |\nabla v_n|^2(x,t) \psi_M'(x_1)dxdt \underset{n \to +\infty}{\longrightarrow} 0 \, .
\end{equation}
Thus, we infer that 
\begin{equation} \label{H1CV.4}
\int_{t_0-1}^{t_0}\int |\nabla v_n|^2(x,t) \psi_M(x_1)dxdt \underset{n \to +\infty}{\longrightarrow} 0 \, .
\end{equation}
Indeed, for all $y_0>0$, there exists $C_{y_0}>0$ such that $\psi_M'(x_1) \ge \frac1{C_{y_0}}$ if $x_1 \le y_0+\tilde{\rho}_1(t_0)$, so that 
\begin{equation} \label{H1CV.5}
\begin{split}
\int_{t_0-1}^{t_0}\int &|\nabla v_n|^2(x,t) \psi_M(x_1)dxdt \\&\lesssim C_{y_0}\int_{t_0-1}^{t_0}\int_{x_1 \le y_0+\tilde{\rho}_1(t)} |\nabla v_n|^2(x,t) \psi_M'(x_1)dxdt \\ & \quad
+\int_{t_0-1}^{t_0}\int_{x_1 \ge y_0+\tilde{\rho}_1(t)} |\nabla v_n|^2(x,t) \psi_M(x_1-y_0-\tilde{\rho}_1(t))dxdt \, .
\end{split}
\end{equation}
By using \eqref{AsymptStab.6}, \eqref{rightexpdecay.1} and \eqref{L2bCV.2}, we can make the second term on the right-hand side of \eqref{H1CV.5}arbitrarily small as soon as $y_0$ is chosen large enough. This fact together  with \eqref{H1CV.3} implies \eqref{H1CV.4}.

Now, we claim that 
\begin{equation} \label{H1CV.6}
\begin{split}
\int |\nabla &v_n|^2(x,t_0) \psi_M(x_1)dx \\ & \lesssim_{t_0} \int |\nabla v_n|^2(x,t) \psi_M(x_1)dx
 +\int_{t_0-1}^{t_0}\int |\nabla v_n|^2(x,t') \psi_M(x_1)dxdt'\\ & \quad+\sup_{t'\in [t_0-1,t_0]} \int v_n^2(x,t')\psi_M(x_1)dx \, ,
\end{split}
\end{equation}
for all $t \in [t_0-1,t_0]$, which implies \eqref{H1CV.2} after integrating between $t_0-1$ and $t_0$ and using \eqref{L2bCV.3}, \eqref{L2bCV.5} and \eqref{H1CV.4}.

It remains to prove \eqref{H1CV.6}. Let us define $J_n(t)=\int \big(|\nabla v_n|^2-\frac23 v_n^3 \big)(x,t)\psi_M(x_1)dx.$ It follows from \eqref{L2bCV.4} and after using some integrations by parts that 
\begin{equation} \label{H1CV.7}
\begin{split}
\frac{d}{dt} J_n(t)&=-\int \Big(3(\partial_{x_1}^2v_n)^2+4(\partial^2_{x_1x_2}v_n)^2+(\partial^2_{x_2}v_n)^2 +v_n^4\Big)\psi_M'(x_1)dx 
\\ & \quad +\int \big( |\nabla v_n|^2-\frac23v_n^3\big) \psi_M''(x_1)dx
\\ & \quad +
4\int \big(2(\partial_{x_1}v_n)^2+(\partial_{x_2}v_n)^2\big) \psi_M(x_1)dx 
\\ & \quad +
4\int \big(\partial_{x_1}\tilde{u} v_n\partial^2_{x_1}v_n +\partial_{x_2}\tilde{u} v_n\partial^2_{x_1x_2}
v_n \big)\psi_M(x_1)dx
\\ & \quad +
4\int \big(\partial_{x_1}\tilde{u} v_n\partial_{x_1}v_n +\partial_{x_2}\tilde{u} v_n\partial_{x_2}
v_n \big)\psi_M'(x_1)dx 
\\ & \quad 
-2\int \partial_{x_1}\tilde{u} |\nabla v_n|^2\psi_M(x_1)dx+2\int \tilde{u} |\nabla v_n|^2\psi_M'(x_1)dx 
\\ & \quad +\frac83 \int \partial_{x_1}\tilde{u} v_n^3 \psi_M(x_1)dx-\frac43 \int \tilde{u} v_n^3 \psi'_M(x_1)dx \, .
\end{split}
\end{equation}
Therefore, we deduce arguing as in the proofs of Lemmas \ref{exptildeu} and \eqref{L2bCV} and using \eqref{exptildeu.2}, \eqref{L2bCV.3} and \eqref{L2bCV.5}  that there exists $K_2(t_0)>0$ such that
\begin{equation} \label{H1CV.8}
\begin{split}
\frac{d}{dt} \int \big(|\nabla v_n|^2-\frac23 v_n^3\big)(x,t)\psi_M(x_1)dx  \le K_2(t_0)\int \big( |\nabla v_n|^2+v_n^2 \big)(x,t) \psi_M(x_1)dx \, .
\end{split}
\end{equation}
For example, we explain how to deal with the fourth term appearing on the right-hand side of \eqref{H1CV.7}, which is the most difficult. By using Young and H\"older's inequalities, we get that 
\begin{displaymath}
\begin{split}
\int \partial_{x_1}&\tilde{u} v_n\partial^2_{x_1}v_n \psi_M(x_1)dx \\
&\le \frac1{64}\int (\partial_{x_1}^2v_n)^2 \psi_M'(x_1)dx
+K \int (\partial_{x_1}\tilde{u})^2v_n^2\frac{\psi_M(x_1)^2}{\psi_M'(x_1)}dx
\\ & \le\frac1{64}\int (\partial_{x_1}^2v_n)^2 \psi_M'(x_1)dx
+K\|v_n \sqrt{\psi_M'}\|_{L^{\infty}} \int (\partial_{x_1}\tilde{u})^2\frac{\psi_M(x_1)^2}{\psi_M'(x_1)^2}dx \, .
\end{split}
\end{displaymath}
Observe that $\frac{\psi_M(x_1)^2}{\psi_M'(x_1)^2} \lesssim \big( 1+e^{2x_1/M}\big)$. Therefore, the Gagliardo-Nirenberg inequality 
$$\|v_n \sqrt{\psi_M'(x_1)}\|_{L^{\infty}}^2 \lesssim \|v_n \sqrt{\psi_M'(x_1)}\|_{H^1} \|v_n \sqrt{\psi_M'(x_1)}\|_{H^2} \, ,$$ 
Young's inequality and \eqref{exptildeu.2} give that 
\begin{displaymath}
\begin{split}
\int \partial_{x_1}&\tilde{u} v_n\partial^2_{x_1}v_n \psi_M(x_1)dx \\
&\le \frac1{32}\int | \nabla^2 v_n|^2 \psi_M'(x_1)dx
+K(t_0) \int \big( |\nabla v_n|^2+v_n^2\big) \psi_M'(x_1)dx \, .
\end{split}
\end{displaymath}

Finally, we conclude the proof of \eqref{H1CV.6} integrating \eqref{H1CV.8} between $t$ and $t_0$ and using the estimate
\begin{displaymath} 
\int v_n^3(x,t) \psi_M(x_1)dx \le K\int v_n^2(x,t) \psi_M(x_1)dx+\frac1{32}\int |\nabla v_n|^2(x,t) \psi_M(x_1)dx \, ,
\end{displaymath}
which follows arguing as in \eqref{L2bCV.8}.
\end{proof}

\noindent \underline{Step 7}: \textit{Exponential decay of $\tilde{u}$ on the left.}
Before proving the exponential decay of $\tilde{u}$ on the left, we need to derive another monotonicity property for $u$ in the $x_1$-direction for times moving forward. 
\begin{lemma} \label{tildemonotonicity}
Assume that $u \in C(\mathbb R : H^1(\mathbb R^2))$ is a solution of \eqref{ZK} satisfying \eqref{modulation.2}--\eqref{modulation.3} for $\epsilon_0$ small enough. For $M \ge 4$, let $\psi_M$ be defined as in \eqref{psi0}. For $y_0>0$, $t_0 \in \mathbb R$ and $t \ge t_0$, we define 
\begin{equation} \label{tildemonotonicity.1}
\widetilde{I}_{y_0,t_0}(t)= \int u^2(x,t) \psi_M(\tilde{\tilde{x}}_1)dx \quad \text{where} \quad 
\tilde{\tilde{x}}_1=x_1-\rho_1(t)+\frac12(t-t_0)+y_0 \, .
\end{equation}
Then, 
\begin{equation} \label{tildemonotonicity.2}
\tilde{I}_{y_0,t_0}(t)-\tilde{I}_{y_0,t_0}(t_0) \lesssim e^{-y_0/M} \, ,
\end{equation}
for all $t \ge t_0$.
\end{lemma}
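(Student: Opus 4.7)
The plan is to mimic the proof of Lemma \ref{nlL2monotonicity}, but now tracking the evolution \emph{forward} in time starting from $t_0$. Differentiating $\widetilde{I}_{y_0,t_0}(t)$, using $\partial_t u = -\partial_{x_1}(\Delta u + u^2)$, and performing the same integrations by parts as in \eqref{nlL2monotonicity.4}, one obtains
\[
\frac{d}{dt}\widetilde{I}_{y_0,t_0}(t) \le -\int\!\big(3(\partial_{x_1}u)^2+(\partial_{x_2}u)^2\big)\psi_M'(\tilde{\tilde{x}}_1)dx + \int u^2\psi_M'''(\tilde{\tilde{x}}_1)dx + C\!\int u^3\psi_M'(\tilde{\tilde{x}}_1)dx + \left(\tfrac12 - \rho_1'(t)\right)\!\int u^2\psi_M'(\tilde{\tilde{x}}_1)dx.
\]
The only structural difference with \eqref{nlL2monotonicity.4} is the last term, coming from $\partial_t\tilde{\tilde{x}}_1 = \tfrac12 - \rho_1'(t)$ instead of $-\tfrac12$.

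The key observation is that this extra term still has the correct sign: by \eqref{modulation.3} and \eqref{modulation.5}, one has $|\rho_1'(t)-c(t)|\lesssim \epsilon_0$ and $|c(t)-1|\le K_0\epsilon_0$, hence $\rho_1'(t) \ge 1 - \tilde{K}\epsilon_0 \ge \tfrac{3}{4}$ for $\epsilon_0$ small enough, so $\tfrac12-\rho_1'(t)\le -\tfrac14$. Combining this negative contribution with the $\psi_M'''$ bound $|\psi_M'''|\le \tfrac1{16}\psi_M'$ from \eqref{psi} yields
\[
\frac{d}{dt}\widetilde{I}_{y_0,t_0}(t) \le -\int\!\big(|\nabla u|^2+\tfrac18 u^2\big)\psi_M'(\tilde{\tilde{x}}_1)dx + C\!\int u^3\psi_M'(\tilde{\tilde{x}}_1)dx.
\]

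The cubic term is handled \emph{exactly} as in \eqref{nlL2monotonicity.5}--\eqref{nlL2monotonicity.9}: write $u^3 = Q_{c(t)}(\cdot-\rho(t))u^2 + (u-Q_{c(t)}(\cdot-\rho(t)))u^2$. The second piece is absorbed by the coercive terms thanks to \eqref{modulation.3} and the Sobolev embedding $H^1(\mathbb{R}^2)\hookrightarrow L^3(\mathbb{R}^2)$. For the first piece, split the region into $|x-\rho(t)|\ge R_1$ (where $Q_c$ is smaller than $e^{-\delta R_1}$, absorbed by coercivity after fixing $R_1$ large) and $|x-\rho(t)|\le R_1$. On the latter region, for $t\ge t_0$ one has $\tilde{\tilde{x}}_1 \ge y_0 + \tfrac12(t-t_0) - R_1$, so $\psi_M'(\tilde{\tilde{x}}_1)\lesssim e^{-(y_0+\frac12(t-t_0)-R_1)/M}$, and the $L^2$-conservation of $u$ yields a pointwise bound of the form $C\, e^{R_1/M}\, e^{-(y_0+\frac12(t-t_0))/M}$ on this contribution.

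Integrating the resulting differential inequality from $t_0$ to $t$ then gives
\[
\widetilde{I}_{y_0,t_0}(t) - \widetilde{I}_{y_0,t_0}(t_0) + \tfrac{1}{16}\int_{t_0}^{t}\!\!\int\!\big(|\nabla u|^2+u^2\big)\psi_M'(\tilde{\tilde{x}}_1)\,dx\,ds \lesssim \int_{t_0}^{\infty} e^{-(y_0+\frac12(s-t_0))/M}\,ds \lesssim e^{-y_0/M},
\]
which is exactly \eqref{tildemonotonicity.2}. The argument is essentially a mirror image of Lemma \ref{nlL2monotonicity}, and the only new verification is that $\rho_1'(t) > \tfrac12$, so there is no genuine obstacle; the main point is simply that the modulation parameter moves fast enough to the right to offset the time-derivative contribution coming from the shifted cutoff.
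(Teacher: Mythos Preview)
Your proof is correct and follows essentially the same approach as the paper: differentiate $\widetilde{I}_{y_0,t_0}$, use that $\rho_1'(t)\ge\tfrac34$ (from \eqref{modulation.3}--\eqref{modulation.5}) so the transport term contributes with the good sign, and treat the cubic term via the same decomposition and region-splitting as in \eqref{nlL2monotonicity.5}--\eqref{nlL2monotonicity.9}. The paper's argument is identical in structure, the only cosmetic difference being that it writes the bound on the near region as $|\tilde{\tilde{x}}_1|\ge y_0+\tfrac12(t-t_0)-R_1$ rather than the one-sided inequality you use.
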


\begin{proof} Let $t \ge t_0$. Since $u$ is a solution of \eqref{ZK}, we deduce from \eqref{psi} and \eqref{modulation.5} that
\begin{equation} \label{tildemonotonicity.3}
\begin{split}
\frac{d}{dt} \tilde{I}_{y_0,t_0}(t) &= 2 \int u\partial_tu \psi_M(\tilde{\tilde{x}}_1)dx-(\rho_1'(t)-\frac12)\int u^2 \psi_M'(\tilde{\tilde{x}}_1)dx \\ 
& \le -\int\big(3(\partial_{x_1}u)^2+(\partial_{x_2}u)^2+\frac14u^2 \big)\psi_M'(\tilde{\tilde{x}}_1)dx+\frac23\int u^3\psi_M'(\tilde{\tilde{x}}_1)dx \, ,
\end{split}
\end{equation} 
provided $\epsilon_0$ is chosen small enough. We decompose the nonlinear term on the right-hand side of \eqref{tildemonotonicity.3} as 
\begin{equation} \label{tildemonotonicity.4}
\int u^3\psi_M'(\tilde{\tilde{x}}_1)dx=\int Q_c(\cdot-\rho)u^2\psi_M'(\tilde{\tilde{x}}_1)dx
+\int \big(u-Q_c(\cdot-\rho)\big)u^2\psi_M'(\tilde{\tilde{x}}_1)dx \, .
\end{equation}
By using \eqref{modulation.3} and the Sobolev embedding $H^1(\mathbb R^2) \hookrightarrow L^3(\mathbb R^2)$, we get that
\begin{equation} \label{tildemonotonicity.5}
\Big| \int \big(u-Q_c(\cdot-\rho)\big)u^2\psi_M'(\tilde{\tilde{x}}_1)dx  \Big| \lesssim K_0 \epsilon_0 \int u^2 \psi_M'(\tilde{\tilde{x}}_1)dx \, .
\end{equation}
Let $R_1$ be a positive number to be fixed later. To deal with the first term on the right-hand side of \eqref{tildemonotonicity.4}, we fix first consider the case where $|x-\rho(t)| > R_1$. It follows then from \eqref{elliptPDE1} that
\begin{equation} \label{tildemonotonicity.6}
\Big| \int Q_c(\cdot-\rho)u^2\psi_M'(\tilde{\tilde{x}}_1)dx \Big| \lesssim e^{-\delta R_1} \int u^2 \psi_M'(\tilde{\tilde{x}}_1)dx \, .
\end{equation}
In the case where $|x-\rho(t)| \le R_1$, we have 
\begin{displaymath} 
|\tilde{\tilde{x}}_1| \ge |y_0+\frac12(t-t_0)|-|x_1-\rho_1(t)| \ge y_0+\frac12(t-t_0)-R_1 \, ,
\end{displaymath}
so that
\begin{equation} \label{tildemonotonicity.7}
\Big| \int Q_c(\cdot-\rho)u^2\psi_M'(\tilde{\tilde{x}}_1)dx \Big| \lesssim e^{R_1/M}e^{-\big(y_0+\frac12(t-t_0)\big)/M} \int u_0^2dx \, ,
\end{equation}
since $\psi'_M(\tilde{\tilde{x}}_1) \lesssim e^{-|\tilde{\tilde{x}}_1|/M}$.
Therefore, we deduce gathering \eqref{tildemonotonicity.4}--\eqref{tildemonotonicity.7}, fixing the value of $R_1$ and choosing $\epsilon_0$ small enough that
\begin{equation} \label{tildemonotonicity.8} 
\frac23\Big| \int u^3\psi_M'(\tilde{\tilde{x}}_1)dx \Big| \le \frac18 \int \Big(|\nabla u|^2+u^2 \Big)\psi_M'(\tilde{\tilde{x}}_1) dx+Ce^{-\big(y_0+\frac12(t-t_0)\big)/M} \, ,
\end{equation}
where $C$ is a positive constant depending of $\|u_0\|_{L^2}^2$.

Therefore, we conclude the proof of \eqref{tildemonotonicity.2} by integrating \eqref{tildemonotonicity.3} between $t_0$ and $t$ and using \eqref{tildemonotonicity.8}.
\end{proof} 

We are now in position to prove that $\tilde{u}$ decays exponentially on the left in the $x_1$-direction.
\begin{lemma} \label{leftexpdecay}
 Let $M \ge 4$. Then, 
 \begin{equation} \label{leftexpdecay.1}
 \int  \tilde{u}^2(x+\tilde{\rho}(t),t) \big(1-\psi_M(x_1+y_0)\big)dx \lesssim e^{-y_0/M} \, ,
 \end{equation}
 for all $y_0>0$ and $t \in \mathbb R$. 
 
 Moreover, 
 \begin{equation} \label{leftexpdecay.2}
 \int_{x_2}\tilde{u}^2(x+\tilde{\rho}(t),t)dx_2 \lesssim e^{x_1/M}, \quad \forall \, x_1 \le 0, \,  t \in \mathbb R \, .
 \end{equation} 
\end{lemma}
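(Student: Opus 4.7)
The plan is to exploit the forward $L^2$-monotonicity of Lemma~\ref{tildemonotonicity} applied \emph{to the original solution} $u$ (not to $\tilde u$), combined with the weak $H^1$-convergence $u(\cdot+\rho(t_n+\cdot),t_n+\cdot)\rightharpoonup\tilde u(\cdot+\tilde\rho(\cdot),\cdot)$ established in Lemmas~\ref{L2bCV} and \ref{H1CV}. Applied naively to $\tilde u$ alone, Lemma~\ref{tildemonotonicity} only bounds the left-mass by $\|\tilde\eta(\cdot,t)\|_{L^2}^2+Ce^{-y_0/M}=O(\epsilon^2)+Ce^{-y_0/M}$, which is not of the form $e^{-y_0/M}$ uniformly in $y_0$. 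The refinement comes from viewing $\tilde u$ as a late-time limit of $u$ and running the monotonicity between two base times $t_n+t$ and $t_m+t$ both tending to $+\infty$, so that weak lower semicontinuity recovers the full mass $\|\tilde u\|_{L^2}^2$.

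Fix $t\in\mathbb R$, $y_0>0$, $M\geq 4$. For $m>n$, applying Lemma~\ref{tildemonotonicity} to $u$ with base time $t_n+t$ and endpoint $t_m+t$, and passing to the co-moving frame at each time, we obtain
\[
\int u^2(y+\rho(t_m+t),t_m+t)\,\psi_M(y_1+R_{m,n})\,dy \;\le\; \int u^2(y+\rho(t_n+t),t_n+t)\,\psi_M(y_1+y_0)\,dy + Ce^{-y_0/M},
\]
where $R_{m,n}:=(t_m-t_n)/2+y_0\to+\infty$ as $m\to\infty$ for fixed $n$. Now let $m\to\infty$. Since $u(\cdot+\rho(t_m+t),t_m+t)\rightharpoonup\tilde u(\cdot+\tilde\rho(t),t)$ weakly in $L^2$ (Lemma~\ref{L2bCV}) and $\sqrt{\psi_M(\cdot_1+R_{m,n})}\to 1$ pointwise with uniform $L^\infty$ bound, dominated convergence applied to any test function $f\in L^2$ shows that $u(\cdot+\rho(t_m+t),t_m+t)\sqrt{\psi_M(\cdot_1+R_{m,n})}\rightharpoonup\tilde u(\cdot+\tilde\rho(t),t)$ weakly in $L^2$. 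Weak lower semicontinuity of the $L^2$-norm then yields
\[
\|\tilde u\|_{L^2}^2\;\le\;\int u^2(y+\rho(t_n+t),t_n+t)\,\psi_M(y_1+y_0)\,dy+Ce^{-y_0/M}.
\]
Finally let $n\to\infty$: strong $L^2(\{y_1>-A\})$-convergence from Lemma~\ref{L2bCV}, combined with the uniform exponential smallness of $\psi_M(y_1+y_0)$ on $\{y_1<-A\}$ for $A$ large, gives $\int u^2(y+\rho(t_n+t),t_n+t)\psi_M(y_1+y_0)\,dy\to\int\tilde u^2(y+\tilde\rho(t),t)\psi_M(y_1+y_0)\,dy$. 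Mass conservation for $\tilde u$ then produces
\[
\int\tilde u^2(y+\tilde\rho(t),t)\bigl(1-\psi_M(y_1+y_0)\bigr)\,dy=\|\tilde u\|_{L^2}^2-\int\tilde u^2\psi_M(y_1+y_0)\,dy\le Ce^{-y_0/M},
\]
which is \eqref{leftexpdecay.1}.

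The pointwise estimate \eqref{leftexpdecay.2} follows from a weighted one-dimensional Sobolev embedding in $x_1$, exactly as in \eqref{LinearLiouville8b2} and the proof of \eqref{rightexpdecay.2}, once \eqref{leftexpdecay.1} is upgraded to the $H^1$-version $\int(\tilde u^2+|\nabla\tilde u|^2)(1-\psi_M(x_1+y_0))\,dx\lesssim e^{-y_0/M}$. This upgrade is obtained by repeating the double-limit scheme on the energy density, based on a forward $H^1$-monotonicity for $u$ derived in the spirit of Lemmas~\ref{nlEnergymonotonicity} and \ref{tildemonotonicity} (controlling $|\nabla u|^2-\tfrac23 u^3$ against the moving cutoff), together with the weak $H^1$-convergence of Lemma~\ref{H1CV} for the lower semicontinuity step. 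The main obstacle is the weak-convergence argument in the inner limit $m\to\infty$: because the cutoff $\sqrt{\psi_M(\cdot_1+R_{m,n})}$ depends on $m$, one must verify carefully that multiplying the weakly convergent sequence by it is compatible with the weak $L^2$-limit so that the full mass $\|\tilde u\|_{L^2}^2$ is captured; it is essential that the limits be taken in the order $m\to\infty$ first, then $n\to\infty$.
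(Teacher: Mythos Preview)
Your argument for \eqref{leftexpdecay.1} is correct and is essentially the paper's proof, only with the two limits presented in a slightly different order. The paper first passes to the limit $n\to\infty$ on the right-hand side via the strong $L^2$ convergence on half-planes (Lemma~\ref{L2bCV}), then applies the forward monotonicity of Lemma~\ref{tildemonotonicity} up to a later time $t_{n'}$, and finally sends $n'\to\infty$ using again strong $L^2$ convergence on the right to recover the full mass $\tilde m_0$ from below. Your version replaces that last step by weak lower semicontinuity, which is equally valid; the core mechanism (forward monotonicity for $u$ combined with the convergence $u(\cdot+\rho(t_n+t),t_n+t)\to\tilde u(\cdot+\tilde\rho(t),t)$ and mass conservation for $\tilde u$) is identical.

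For \eqref{leftexpdecay.2}, however, you take an unnecessary detour. The paper does \emph{not} upgrade \eqref{leftexpdecay.1} to an $H^1$ statement, and no forward $H^1$-monotonicity is needed. Instead it uses the elementary half-line Sobolev inequality
\[
\|w\|_{L^\infty(x_1\le -y_0)}^2 \le 2\,\|w\|_{L^2(x_1\le -y_0)}\,\|\partial_{x_1}w\|_{L^2(x_1\le -y_0)}
\]
applied to $w(x_1)=\big(\int_{x_2}\tilde u^2(x+\tilde\rho(t),t)\,dx_2\big)^{1/2}$. From \eqref{leftexpdecay.1} one has $\|w\|_{L^2(x_1\le -y_0)}^2\lesssim e^{-y_0/M}$, while $\|\partial_{x_1}w\|_{L^2}$ is simply bounded by the global $H^1$ norm of $\tilde u$, which is finite by \eqref{AsymptStab.5}. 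This immediately yields \eqref{leftexpdecay.2}. Your proposed route via a forward energy monotonicity would presumably work, but it requires proving an additional lemma that the paper avoids entirely; the half-line trick with the unweighted $H^1$ bound is both shorter and more robust.
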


\begin{proof} Fix $\tilde{t}_0 \in \mathbb R$ and $y_0>0$. First, we observe from \eqref{L2bCV.1} and \eqref{L2bCV.2} that
\begin{displaymath}
\int u^2(x,t_n+\tilde{t}_0)\psi_M(x_1-\rho_1(\tilde{t}_0+t_n)+y_0)dx \underset{n \to +\infty}{\longrightarrow} 
\int \tilde{u}^2(x,\tilde{t}_0) \psi_M(x_1-\tilde{\rho}_1(\tilde{t}_0)+y_0)dx \, .
\end{displaymath}
Thus, if we denote $\tilde{m}_0=\int \tilde{u}_0^2(x)dx$, there exists $n_0=n_0(y_0) \in \mathbb N$ such that
\begin{equation} \label{leftexpdecay.3}
\begin{split}
\int u^2(x,t_n&+\tilde{t}_0)\psi_M(x_1-\rho_1(\tilde{t}_0+t_n)+y_0)dx \\&\le \int \tilde{u}^2(x,\tilde{t}_0) \psi_M(x_1-\tilde{\rho}_1(\tilde{t}_0)+y_0)dx+e^{-y_0/M}\\ & 
=\tilde{m}_0-\int \tilde{u}^2(x,\tilde{t}_0) \big(1-\psi_M(x_1-\tilde{\rho}_1(\tilde{t}_0)+y_0)\big)dx+e^{-y_0/M} \, ,
\end{split}
\end{equation}
for all $n \ge n_0$. Note also that we used the fact that the $L^2$-norm of $\tilde{u}$ is conserved in time, since $\tilde{u}$ is a solution of \eqref{ZK}. 

Now, we use the monotonicity property on $u$ for times moving forward. Let $n' \ge n$ be such that $t_{n'} \ge t_n+\tilde{t}_0$. It follows from \eqref{tildemonotonicity.2} that
\begin{displaymath} 
\begin{split}
\int u^2(x,t_{n'}) &\psi_M\big(x_1-\rho_1(t_{n'})+\frac12(t_{n'}-(t_n+\tilde{t}_0))+y_0\big)dx 
\\ & \lesssim \int u^2(x,t_n+\tilde{t}_0) \psi_M\big(x_1-\rho_1(t_n+\tilde{t}_0)+y_0\big)dx +e^{-y_0/M} \, .
\end{split}
\end{displaymath}
This implies together with \eqref{leftexpdecay.3} that
\begin{equation} \label{leftexpdecay.4}
\begin{split}
\int u^2(x+&\rho(t_{n'}),t_{n'}) \psi_M\big(x_1+\frac12(t_{n'}-(t_n+\tilde{t}_0))+y_0\big)dx  \\ & 
\le \tilde{m}_0-\int \tilde{u}^2(x,\tilde{t}_0) \big(1-\psi_M(x_1-\tilde{\rho}_1(\tilde{t}_0)+y_0)\big)dx+Ke^{-y_0/M} \, ,
\end{split}
\end{equation}
as soon as $n' \ge n \ge n_0$ satisfies $t_{n'} \ge t_n +\tilde{t}_0$.

On the other hand, it follows from \eqref{L2CV.1} that 
\begin{displaymath}
\int_{x_1 >-A}u^2(x+\rho(t_{n'}),t_{n'})\psi_M\big(x_1+\frac12(t_{n'}-(t_n+\tilde{t}_0))+y_0\big)dx
\underset{n' \to +\infty}{\longrightarrow} \int_{x_1 > -A}\tilde{u}_0^2(x)dx \, ,
\end{displaymath}
for any $A>0$. We fix $A>0$ such that $\int_{x_1>-A}\tilde{u}_0^2(x)dx \ge \tilde{m}_0-\frac12e^{-y_0/M}$. Then, there exists $n'_1=n'_1(y_0,n) \in \mathbb N$ such that
\begin{equation} \label{leftexpdecay.5}
\begin{split}
\int u^2(x+&\rho(t_{n'}),t_{n'}) \psi_M\big(x_1+\frac12(t_{n'}-(t_n+\tilde{t}_0))+y_0\big)dx  
\ge \tilde{m}_0-e^{-y_0/M} \, ,
\end{split}
\end{equation}
for all $n' \ge n'_1$.

We conclude the proof of \eqref{leftexpdecay.1} gathering \eqref{leftexpdecay.4} and \eqref{leftexpdecay.5}.

We turn now to the proof of \eqref{leftexpdecay.2}. Fix some $y_0>0$. Since $\big(1-\psi_M(x_1+y_0)\big) \ge \frac12$ if $x_1 \le -y_0$, it follows from \eqref{leftexpdecay.1} that 
\begin{equation} \label{leftexpdecay.6}
\sup_{t \in \mathbb R}\int_{x_1 \le -y_0} \tilde{u}^2(x+\tilde{\rho}(t),t)dx \lesssim e^{-y_0/M} \, ,
\end{equation}
where the implicit constant does not depend on $y_0 >0$. 

Let us recall the following Sobolev inequality for functions $w=w(x_1)$ of one variable. It holds that
\begin{equation} \label{leftexpdecay.7}
\|w\|^2_{L^{\infty}(x_1\le-y_0)} \le 2\|w\|_{L^2(x_1\le-y_0)}\|\partial_{x_1}w\|_{L^2(x_1\le-y_0)} \, ,
\end{equation}
for any $w=w(x_1) \in H^1(\mathbb R)$ and $y_0 >0$.

We apply \eqref{leftexpdecay.7} to the function $w(x_1)=\Big(\int_{x_2}\tilde{u}^2(x_1+\tilde{\rho}_1(t),x_2+\tilde{\rho}_2(t),t)dx_2\Big)^{\frac12}$. Thus, we can bound $\|w\|^2_{L^{\infty}(x_1\le-y_0)} $ by
\begin{displaymath}
\Big(\int\limits_{ x_1\le-y_0}\tilde{u}^2(x+\tilde{\rho}(t),t)dx\Big)^{\frac12}
\Big(\int\limits_{x_1\le-y_0}\frac{\Big( \int_{x_2}\tilde{u}\partial_{x_1}\tilde{u}(x+\tilde{\rho}(t),t)dx_2\Big)^2}{\int_{x_2}\tilde{u}^2(x+\tilde{\rho}(t),t)dx_2} dx_1 \Big)^{\frac12} \, .
\end{displaymath}
It follows then from \eqref{leftexpdecay.6}, the Cauchy-Schwarz inequality and the global $H^1$ bound in $\tilde{u}$ that 
\begin{equation} \label{leftexpdecay.8}
\sup_{t\in \mathbb R, \, x_1 \le -y_0}\int_{x_2}\tilde{u}^2(x_1+\tilde{\rho}_1(t),x_2+\tilde{\rho}_2(t),t)dx_2 
\lesssim e^{-y_0/M} \, .
\end{equation}
This implies \eqref{leftexpdecay.2} since the implicit constant in \eqref{leftexpdecay.8} does not depend on $y_0$.
\end{proof}

Finally we give the proof of Proposition \ref{prop.AsymptStab}. 
\begin{proof}[Proof of Proposition \ref{prop.AsymptStab}]
The $H^1$ convergence on the right is given by Lemma \ref{H1CV}, while the exponential decay in the $x_1$ direction is obtained gathering \eqref{rightexpdecay.1} and \eqref{leftexpdecay.2}. Note that we also have that $\tilde{\rho}(0)=0$ from \eqref{L2bCV.2}
\end{proof}

\section{Stability of the sum of $N$-solitons} \label{sectionNSoliton}
 
\medskip
 
In this section we prove Theorem \ref{NSoliton}. 

\subsection{Reduction to a well-prepared case} First of all, after relabeling the set of scalings $(c_j^0)$ and the corresponding initial positions $(\rho^{j,0})$, we can assume that 
\be\label{NewCondition0}
0<c_1^0 < c_2^0< \cdots < c_N^0.
\ee
In what follows we will prove that there is a time $T_\#>0$, a constant $A_\#>0$ and another constant $\ga_0>0$, depending only on the parameters $(\rho^{j,0})$ and $(c_j^0)$, such that for some $\rho^{j,\#}= \rho^j(T_\#) \in \R^2$ one has
\be\label{NewCondition1}
\|u(T_\#) - \sum_{j=1}^N Q_{c_j^0}(x-\rho^{j,\#}) \|_{H^1} <A_\#(\ve +e^{-\ga_0 L}),
\ee
and now
\be\label{NewCondition2}
\rho^{1,\#}_1 < \rho^{2,\#}_1<\cdots < \rho^{N,\#}_1,
\ee
and \eqref{Condition} is also satisfied, in the sense that 
\be\label{NewCondition3}
\min \{ |\rho^{j,\#} -\rho^{k,\#} | \ : \ j\neq k \} >L.
\ee 
Let us define $T_\#$ as follows. Fix $L_0>0$ large and $L>L_0$ in \eqref{Condition}. We \emph{fix} $T_\#\geq 0$ such that  \eqref{NewCondition2} and \eqref{NewCondition3} are satisfied, where
\[
\rho^{j,\#}_1 := \rho^{j,0}_1 + c_j^0 \, T_\#.
\]
In other words,
\[
 \rho^{1,0}_1 + c_1^0 T_\# < \rho^{2,0}_1 + c_2^0 T_\# < \cdots < \rho^{N,0}_1 + c_N^0 T_\#.
\]
Consider the \emph{multi-soliton}
\[
R_0(x,t):= \sum_{j=1}^N Q_{c_j^0}(x_1-c_j^0 t - \rho^{j,0}_1,x_2-\rho^{j,0}_2).
\]
Then we have
\bee
S[R_0] & := & (R_0)_t +(\Delta R_0 +R_0^2)_{x_1} \\
&  =& \Big(R_0^2 -\sum_{j=1}^N Q_{c_j^0}^2(\cdot -c_j^0 t - \rho^{j,0}_1,\cdot -\rho^{j,0}_2) \Big)_{x_1} \\
& =& \Big(\sum_{i\neq j} Q_{c_i^0}^2(\cdot -c_j^0 t - \rho^{j,0}_1,\cdot -\rho^{j,0}_2)  Q_{c_j^0}^2(\cdot -c_j^0 t - \rho^{j,0}_1,\cdot -\rho^{j,0}_2) \Big)_{x_1}.
\eee
Under the assumption \eqref{Condition}, we have for all $t\geq 0$,
\[
\|S[R_0](t)\|_{H^1} \lesssim e^{-\ga_0 L},
\]
for some fixed constant $\ga_0>0$ only depending on the scalings $(c_j^0)$. The error function
\[
z_0(t) := u(t) -R_0(t) \in H^1
\]
satisfies (cf. \eqref{InitialCondition})
\[
\|z_0(0)\|_{H^1} <\ve,
\]
and the equation
\be\label{z0}
(z_0)_t +(\Delta z_0 + 2R_0 z_0 + z_0^2)_{x_1} + S[R_0]=0.
\ee
Now we establish some energy estimates. We have
\[
\frac{d}{dt}\Big( \frac 12 \int z_0^2 dx \Big) + \int (R_0)_{x_1} z_0^2 dx + \int S[R_0] z_0 dx =0,
\]
so that 
\[
\|z_0(t)\|_{L^2}^2 \lesssim \ve^2 + e^{-2\ga_0 L} + \int_0^t \|z_0(s)\|_{L^2}^2ds.
\]
We have for $t\in [0,T_\#]$,
\be\label{1Estimate}
\|z_0(t)\|_{L^2}^2 \lesssim e^{T_\#}(\ve + e^{-\ga_0 L}).
\ee
In order to obtain an estimate for the derivative of $z_0$, we have from \eqref{z0}
\[
(z_1)_t +(\Delta z_1 + 2R_0 z_1 + 2(R_0)_{x_1} z_0 + 2z_0 z_1)_{x_1} + (S[R_0])_{x_1}=0,
\]
where $z_1 := (z_0)_{x_1}$. This time we will have
\[
\frac{d}{dt}\Big( \frac 12 \int z_1^2 dx \Big) + \int (R_0)_{x_1} z_1^2 dx+ \int (z_0)_{x_1} z_1^2 dx + \int (S[R_0])_{x_1} z_1 dx =0.
\]
Using the Gronwall's inequality, we obtain once again, for all $t\in [0,T_\#]$,
\be\label{2Estimate}
\|z_1(t)\|_{L^2}^2 \lesssim e^{T_\#}(\ve + e^{-\ga_0 L}).
\ee
A similar estimate holds for $(z_0)_{x_2}$. From \eqref{1Estimate} and \eqref{2Estimate} we conclude (choose $A_\# \sim e^{T_\#}$).

\subsection{Proof in the well-prepared case}

Assume  \eqref{NewCondition0}, \eqref{NewCondition1}, \eqref{NewCondition2} and \eqref{NewCondition3}. We follow the Martel-Merle-Tsai paper \cite{MMT}, with some minor modifications. For technical reasons we need the following quantities
\be\label{ga0}
 \al_0:= A_\#(\ve +e^{-\ga_0 L}), \quad \ga_1\in(0,\ga_0).
\ee
The parameter $\ga_1$ is small but fixed, independent of $\ve$. Finally we define, for $A_0>1$ large to be fixed later and $\al>0$ small ($\al<\al_0$), the tubular neighborhood
\bee
V_{L}(\al, A_0) & :=& \Big\{ v \in H^1 \, : \,   \hbox{ there are } \ (\rho^j)_{j=1,\ldots,N} \in \R^{2N} \quad \hbox{such that } \\
 & & \qquad \| v - \sum_{j=1}^N  Q_{c_j^0}(\cdot - \rho^j) \|_{H^1} \leq A_0 (\al + e^{-\gamma_1L} )   \Big\}.
\eee
Since \eqref{ZK} is invariant by time translations, we can assume $T_\# =0$ in \eqref{NewCondition1}. Then we have $u(0)\in V_L(\al, 1) \subset V_L(\al, A_0)$. Moreover, by continuity of the $H^1$-flow map, we have that $u(t)\in V_L(\al, A_0)$ for all $t\in[0,T]$, for some $T=T(A_0)>0$.  The idea is to prove that for all $A_0$ large enough we can take $T=+\infty.$ (Recall that $A_0 (\al + e^{-\gamma_1L} )\leq \tilde A_0(\ve + e^{-\ga_1 L}) $, which leads to estimate \eqref{StabilityN}.)

\medskip

Let us assume that $T(A_0)<+\infty$. Then, by taking $\al_0$ smaller and $L_0$ larger if necessary, we have that there are parameters $(c_j(t),\rho^j(t)) \in \R_+\times \R^2$, $j=1,\ldots,N$, defined on $[0,T]$, and such that if
\[
R(x,t) := \sum_{j=1}^N \tilde Q_j(x,t), \quad \tilde Q_j(x,t):= Q_{c_j(t)} (x -\rho^j(t)),
\]
and
\[
z(x,t) := u(x,t) - R(x,t),
\]
then for all $t\in [0,T]$ and all $j=1,\ldots, N$,
\be\label{OrthoConds}
\int z \tilde Q_{j} dx= \int z \partial_{x_1}\tilde Q_{j} dx= \int z \partial_{x_2} \tilde Q_{j} dx=0.
\ee
The proof of this result is obtained by an standard application of the Implicit Function Theorem. An additional byproduct of this result is the estimate
\be\label{FirstEstimate}
\sum_{j=1}^N |c_j(t) -c_j(0)| + \|z(t)\|_{H^1} \lesssim A_0 (\al + e^{-\gamma_1L} ),
\ee
with a constant independent of time. Now we compute some energy estimates. Consider the energy $H(u)$ defined in \eqref{H}. It is not difficult to check that for some constant $\ga_1>0$ depending on $L$,
\bea\label{HH}
& & \Big| H(R+z)(t) -  H(R)(t)  - \frac12\int |\nabla z|^2(t)dx + \int R z^2(t) dx \Big|  \\
& & \nonu \qquad    \lesssim \|z(t)\|_{H^1}^3 + \|z(t)\|_{H^1} e^{-2\ga_1 t}.
\eea
Moreover, if $H_0:=\int (\frac12|\nabla Q|^2-\frac13Q^3) dx$, we have
\be\label{HH1}
\Big| H(R)(t)  - H_0\sum_{j=1}^N c_j^{2}(t) \Big|  \lesssim e^{-2\ga_1 t}.
\ee
On the other hand, consider the parameters 
\[
\sigma_j:= \frac12(c_j^0+c_{j-1}^0), \quad j=2,\ldots,N,
\]
and the perturbed mass
\be\label{masss}
M_j(t) := \frac12 \int u^2(x,t) \varphi_j(x,t)dx
\ee
where $\varphi_j(x,t):= \psi_A(x_1- \sigma_j t )$ and $\psi_A$ is defined in \eqref{psi0} for $A$ large but independent of $L$ and $\al$.  Note that thanks to \eqref{OrthoConds} the modified mass $M_j$ satisfies the identity
\be\label{Mj}
M_j(t) = M_0d_j(t)  + \frac12\int z^2 \varphi_j (t)dx +O( e^{-2\gamma_1t}), \quad M_0 :=\frac12\int Q^2,
\ee
where
\be\label{dj}
d_j(t) : =\sum_{k=j}^N c_k(t).
\ee
On the other hand, following the proof of Lemma \ref{nlL2monotonicity}, we have for $A>0$ large and $L>L_0$ large (depending on $A$) the monotonicity estimate
\be\label{MonotonicityMj}
M_j(t) -M_j(0) \lesssim  e^{-2\gamma_1 L},
\ee
with constants independent of $\al$ and $L$.  Let us define, for any $t\in [0,T]$, the quantity
\be\label{Dcj}
\hat\Delta c_j(t) := c_j(t)-c_j(0),
\ee
and more generally, for any time-dependent function $f(t),$
\[
\hat\Delta f(t) := f(t)-f(0).
\]
Then, using \eqref{Mj} and \eqref{MonotonicityMj} we have
\[
\hat\Delta d_j(t) \lesssim \|z(0)\|_{H^1}^2 +e^{-2\gamma_1t},
\]
or
\be\label{Deltadj}
|\hat\Delta d_j(t)| +\hat\Delta d_j(t)  \lesssim  \|z(0)\|_{H^1}^2 +e^{-2\gamma_1t}.
\ee
Now we estimate the difference $\hat\Delta c_j(t)$. First of all, note that for each $j$,
\[
| \hat\Delta [c_j^{2}](t) - 2c_j(0)\hat\Delta c_j(t) | = |\hat\Delta c_j(t)|^2.
\]
Therefore, using \eqref{HH1} and the previous identity,
\bee
\hat\Delta H(R)(t)  & =&   H_0\sum_{j=1}^N \Delta[ c_j^{2}](t) +O(e^{-2\ga_1 t})\\
& =&  2 H_0 \sum_{j=1}^N c_j(0)\hat\Delta c_j(t) + O\Big(\sum_{j=1}^N  |\hat\Delta c_j(t)|^2 + e^{-2\ga_1 t} \Big).
\eee
Next, we have
\bee
\sum_{j=1}^N c_j(0)\hat\Delta c_j(t) & = & \sum_{j=1}^{N-1} c_j(0)\hat\Delta ( d_j(t) -d_{j+1}(t)) +c_N(0)\hat\Delta  d_N(t) \\
& =&  \sum_{j=1}^{N-1} c_j(0)\hat\Delta d_j(t) - \sum_{j=2}^{N} c_{j-1}(0)\hat\Delta d_{j}(t)  +c_N(0)\hat\Delta  d_N(t) \\
& =&  \sum_{j=2}^{N} (c_j(0)- c_{j-1}(0))\hat\Delta d_{j}(t)     + c_1(0)\hat\Delta d_1(t)  .
\eee
Therefore, we use the identity 
\be\label{GroundState}
\frac{H_0}{M_0} =-\frac 12,
\ee
(see Appendix \ref{B} for a proof) to obtain
\bea
\hat\Delta H(R)(t)  & =&  -M_0 \sum_{j=1}^N c_j(0)\hat\Delta c_j(t) + O\Big(\sum_{j=1}^N  |\hat\Delta c_j(t)|^2 + e^{-2\ga_1 t} \Big) \nonu\\
& = &  -M_0   \sum_{j=2}^{N} (c_j(0)- c_{j-1}(0))\hat\Delta d_{j}(t)   -  M_0 c_1(0)\hat\Delta d_1(t)  \label{keyEst} \\
& & + O\Big(\sum_{j=1}^N  |\hat\Delta c_j(t)|^2 + e^{-2\ga_1 t} \Big). \nonu
\eea
Now we replace \eqref{Deltadj} and use the fact that $c_j(0)- c_{j-1}(0) >c_0>0$ for all $j$. We get
\[
\hat\Delta H(R)(t)  \geq  c_0  \sum_{j=1}^{N}|\hat\Delta d_j(t)| - C\Big( \|z(0)\|_{H^1}^2 +\sum_{j=1}^N  |\hat\Delta c_j(t)|^2 + e^{-2\ga_1 t} \Big)
\]
which, after using \eqref{HH}, implies that $|\hat\Delta d_j(t)|$ and $|\hat\Delta c_j(t)|$ have quadratic variation, for all $j=1,\ldots,N$. More precisely,
\[
|\hat\Delta c_j(t)| \leq \sum_{k=1}^{N}|\hat\Delta d_k(t)|  \lesssim  \|z(t)\|_{H^1}^2 +\sum_{k=1}^N  |\hat\Delta c_k(t)|^2 +e^{-2\ga_1 t},
\]
which implies that 
\be\label{quadraticVar}
|\hat\Delta c_j(t)| \lesssim  \|z(t)\|_{H^1}^2 +e^{-2\ga_1 t}.
\ee
Finally, using \eqref{keyEst}, \eqref{quadraticVar}, \eqref{Mj} and \eqref{MonotonicityMj},
\[
-\hat \Delta M_j(t) +\frac12\hat\Delta \int z^2 \varphi_j (t)dx +O( e^{-2\gamma_1L}) = -M_0 \hat \Delta d_j(t) , 
\]
and
\bee
\hat\Delta H(R)(t)  & = &    \sum_{j=2}^{N} (c_j(0)- c_{j-1}(0)) \Big[\frac12\hat\Delta \int z^2 \varphi_j (t)dx   -\hat \Delta M_j(t)  \Big]  \\
& &  + c_1(0) \Big[  \frac12\hat\Delta \int z^2 \varphi_1 (t)dx  -\hat \Delta M_1(t)\Big] \\
& & + O\Big(  \|z(t)\|_{H^1}^4 + e^{-2\ga_1 L} \Big). \\
& \geq &  \frac12 c_1(0)  \int z^2 \varphi_1 (t)dx +  \sum_{j=2}^{N} \frac12 (c_j(0)- c_{j-1}(0))  \int z^2 \varphi_j (t)dx \\
& & - C\Big(  \|z(0)\|_{H^1}^2 +  \|z(t)\|_{H^1}^4 + e^{-2\ga_1 L} \Big)\\
& \geq &    \sum_{j=1}^{N-1} \frac12 c_j(0)  \int z^2 (\varphi_j -\varphi_{j+1})(t)dx +\frac12 c_N(0)  \int z^2 \varphi_N(t)dx \\
& & - C\Big(  \|z(0)\|_{H^1}^2 +  \|z(t)\|_{H^1}^4 + e^{-2\ga_1 L} \Big),
\eee
which implies that 
\bee
& &  \frac12\int |\nabla z(t)|^2dx -\int R z^2(t)dx  \\
& & \qquad  + \sum_{j=1}^{N-1} \frac12 c_j(0)  \int z^2 (\varphi_j -\varphi_{j+1})(t)dx +\frac12 c_N(0)  \int z^2 \varphi_N(t)dx  \\
& & \qquad \lesssim \|z(0)\|_{H^1}^2 +  \|z(t)\|_{H^1}^3 + e^{-2\ga_1 L}.
\eee
A standard decomposition argument for $A>0$ large enough and \eqref{OrthoConds} allows to use the coercivity property associated to each soliton in the region $\sigma_j t \lesssim  x \lesssim \sigma_{j+1} t$ (see e.g. \cite[Lemma 4]{MMT}), and therefore we obtain
\[
\|z(t)\|_{H^1} \leq \frac12A_0 (\alpha + e^{-\ga_1 L} ).
\]
Finally, we use the decomposition
\bee
\|u(t) - \sum_{j=1}^N  Q_{c_j^0}(\cdot - \rho^j(t)) \|_{H^1} &  \leq &   \|z(t)\|_{H^1} + C\sum_{j=1}^N |\hat \Delta c_j(t)|\\
&  \leq & \frac34 A_0 (\alpha + e^{-\ga_1 L} ),
\eee
improving the original estimate, so that we have $u(t) \in V_L(\al,\frac34A_0)$. Therefore $T=+\infty$.

\appendix 
 
\section{Numerical Estimates for the Spectral Property}\label{A} 

Subject to Proposition \ref{SpectralProperty} on the
sign of an inner product, the gZK solitons are asymptotically stable.
Recall that the relevant quantity and its sign, \eqref{SpectralProperty1} are:
\begin{equation*}
\label{e:spec_quant}
(\mathcal{L}^{-1} \Lambda Q, \Lambda Q) < 0.
\end{equation*}
Having this condition yields coercivity of the bilinear form induced
by $\mathcal{L}$, on a subspace, which makes way for the proof of the
linear Liouville property.  

Conditions like \eqref{SpectralProperty1} have appeared in a variety of
works on soliton and blowup stability for gKdV, NLS, and other
equations.  While in dimension one, such
conditions can sometimes be proved analytically, due to our intimate
knowledge of the ${\sech}$ function, in dimensions two and higher, we
resort to computation.  This requires the computation of four
quantities, $Q$, $\Lambda Q$, $W$, and the inner product, where $W \in
H^1(\R^{d})$ is the solution of
\begin{equation}
\label{e:Wpde}
\mathcal{L} W = \Lambda Q
\end{equation}
Numerical computation of these quantities has been successfully
performed in several works on NLS, including
\cite{Asad:2011iz,Fibich:2006ea,Marzuola:2011td,Simpson:2011eh}.
Using the methods of
\cite{Asad:2011iz,Marzuola:2011td,Simpson:2011eh},  we will estimate
\eqref{e:spec_quant}, proving the desired property for certain values
of $p$ and $d$, including the quadratic case in dimension two.

\subsection{Computational Methods}

To solve \eqref{E}, \eqref{e:Wpde}, and compute \eqref{SpectralProperty1}, we
first remark that since $Q$ is radially symmetric, so is $\Lambda Q$.
Thus, $W$ is also radially symmetric, and we are reduced to solving
singular boundary value problems
\begin{gather}
\label{e:Qrad}
-Q'' - \frac{d-1}{r}Q' + Q - Q^p = 0, \quad Q'(0) = 0, \quad
\lim_{r\to\infty} Q(r) = 0,\\
\label{e:Wrad}
-W'' - \frac{d-1}{r}W' + W -p Q^{p-1} W =\frac{1}{p-1}Q + \frac12 r Q', \quad W'(0) = 0, \quad
\lim_{r\to \infty} W(r) =0,\\
\label{e:iprad}
(\mathcal{L}^{-1} \Lambda Q, \Lambda Q) = C_{d} \int_0^\infty (\Lambda
Q)(r) W(r) r^{d-1}dr.
\end{gather}
$C_d$ is the surface area of the ${d-1}$ dimensional sphere.
To make these problems computationally tractable, we truncate the
domain to $(0,\rmax)$, where $\rmax$ is taken sufficiently
large.  The asymptotic of $Q$ is well known, with 
\begin{equation}
\label{e:Qasympt}
Q\propto r^{-(d-1)/2} e^{-r}.
\end{equation}
Therefore,  for the truncated domain, we introduce the Robin boundary condition,
\begin{equation}
\label{e:Qrobin}
Q'(\rmax) + \frac{d-1 + 2\rmax}{2\rmax}Q(\rmax) = 0.
\end{equation}
For large values of $r$, a dominant balance
of  \eqref{e:Wrad} is
\begin{equation}
-W''  + W \approx r Q',
\end{equation}
from which we infer that
\begin{equation}
\label{e:Wasympt}
W\propto r^{(5-d)/2} e^{-r}.
\end{equation}
This motivate the Robin boundary condition
\begin{equation}
\label{e:Wrobin}
W'(\rmax) + \frac{d-5 + 2 \rmax}{2\rmax} W(\rmax) = 0.
\end{equation}

We thus solve the equations \eqref{e:Qrad} and \eqref{e:Wrad}, with
approximate boundary conditions \eqref{e:Qasympt} and
\eqref{e:Wasympt}.  To compute the inner product, we introduce the
function $\nu(r)$, solving the ODE
\begin{equation}
\label{e:nuODE}
\nu' = (\Lambda
Q)(r) W(r) r^{d-1}, \quad \nu(0).
\end{equation}
Then 
\begin{equation}
\nu(\rmax) = \int_0^{\rmax} (\Lambda
Q)(r) W(r) r dr \approx
C_d^{-1}(\mathcal{L}^{-1} \Lambda Q, \Lambda Q).  
\end{equation}
\eqref{e:nuODE}, though trivial, is introduce so that we can solve
this system, in concert, as a coupled first order system using {\sc
  Matlab}'s {\tt bvp4c}, a two point boundary value problem solver.
Since $C_d>0$, we omit it in our calculations.

\subsection{Numerical Results}

\subsubsection{Case of $p=2$ in Dimension $d=2$}

\begin{figure}
 \subfigure[$Q$ and $W$ Profiles]{\includegraphics[width=6.2cm]{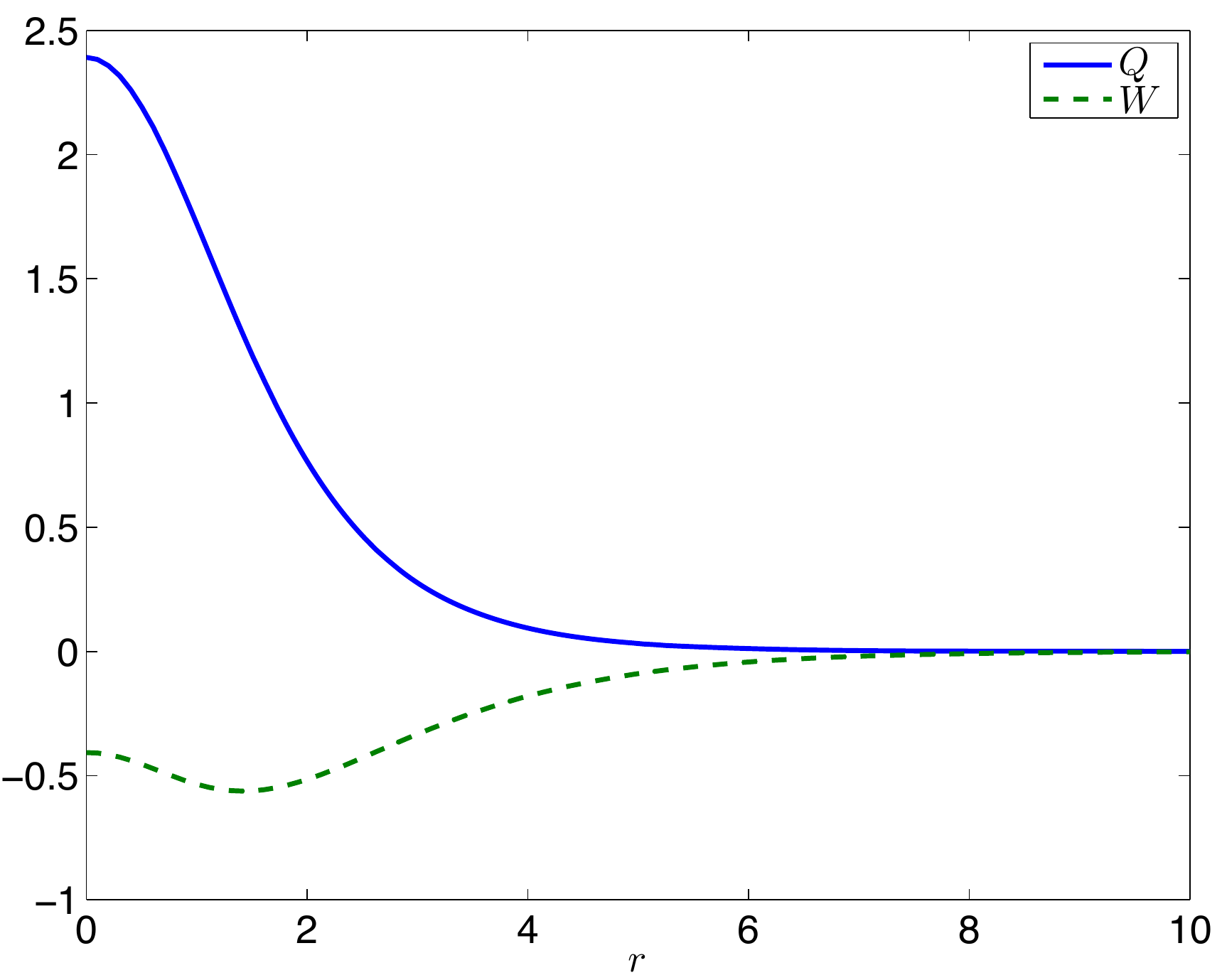}}
 \subfigure[$Q$ and $W$ Profiles ($\log$ Scale)]{\includegraphics[width=6.2cm]{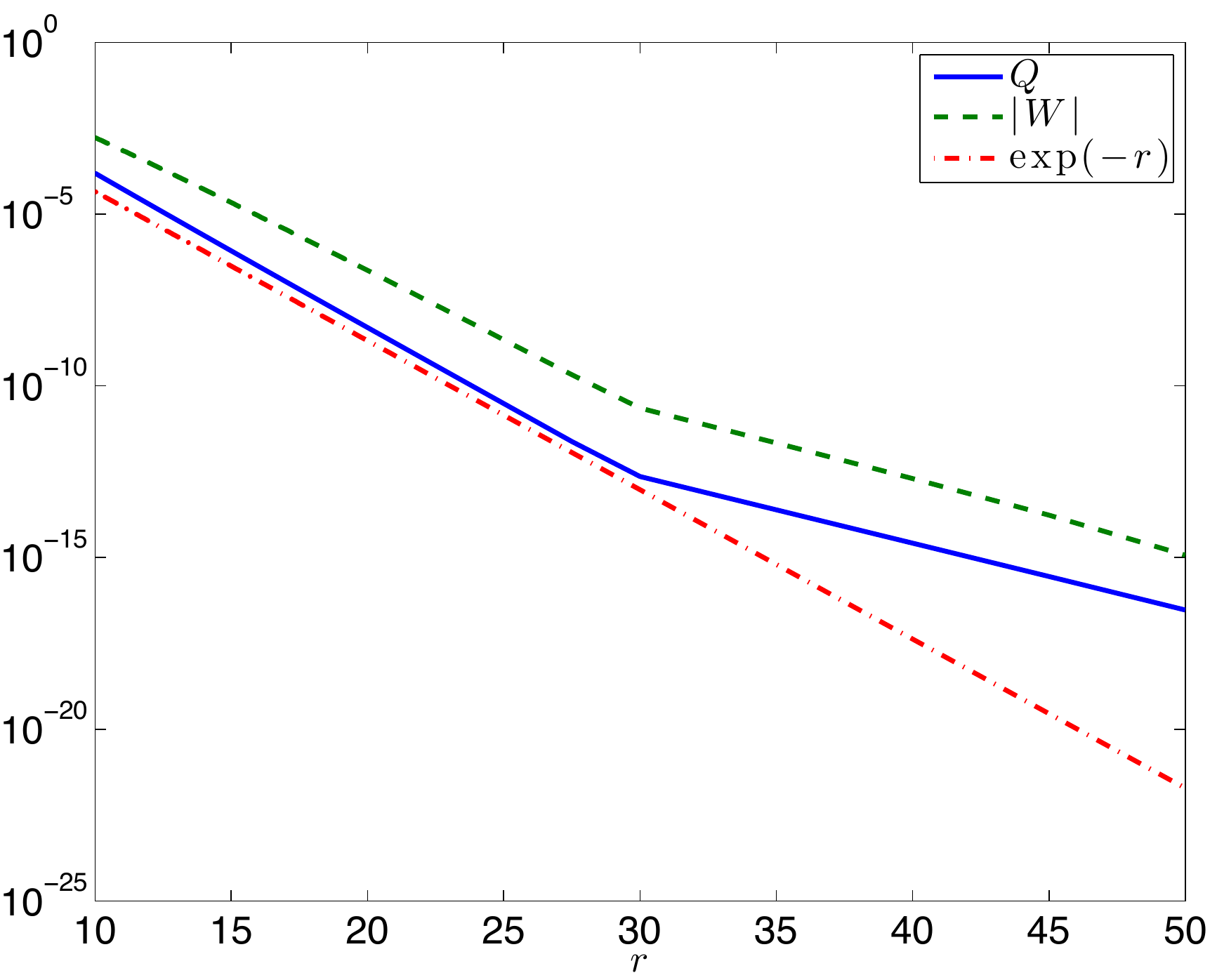}}

 \subfigure[$\nu$ Profile]{\includegraphics[width=6.2cm]{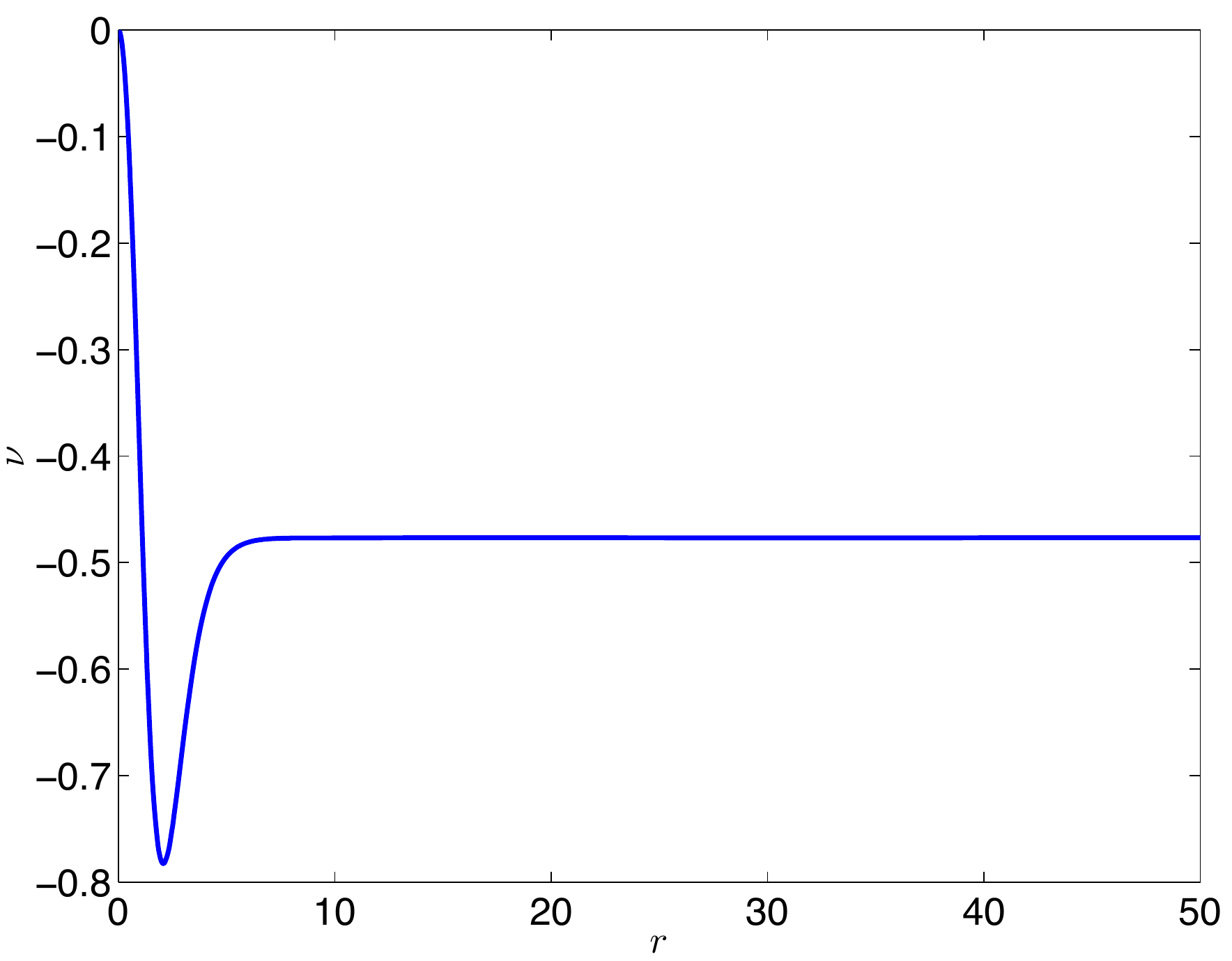}}
\caption{Computed $Q$ and $W$ profiles for $p=2$ in $d=2$, along with the $\nu(r)$
  function defined by \eqref{e:nuODE}.  $Q$ and $W$ are exponentially
  small and $\nu$ has plateaued at a fixed, negative, value.}
\label{f:QW}
\end{figure}

We apply the algorithm with $\rmax =50$ and absolute and
relative error tolerances of $10^{-8}$ and $10^{-10}$ to the $p=2$
case in $d=2$.  We find that \eqref{SpectralProperty1} is indeed negative.
In Figure \ref{f:QW}, we plot $Q$, $W$ and their asymptotics in (a) and (b), along with
the computed $\nu$ in (c).  $Q$ and $W$ are both vanishing exponentially,
and  $\nu$ has stabilized to a fixed, negative, value, $\nu(\rmax)
\approx -0.476741$.

\subsubsection{Other Cases}
\begin{figure}
  \subfigure[$d=1$]{\includegraphics[width=6.2cm]{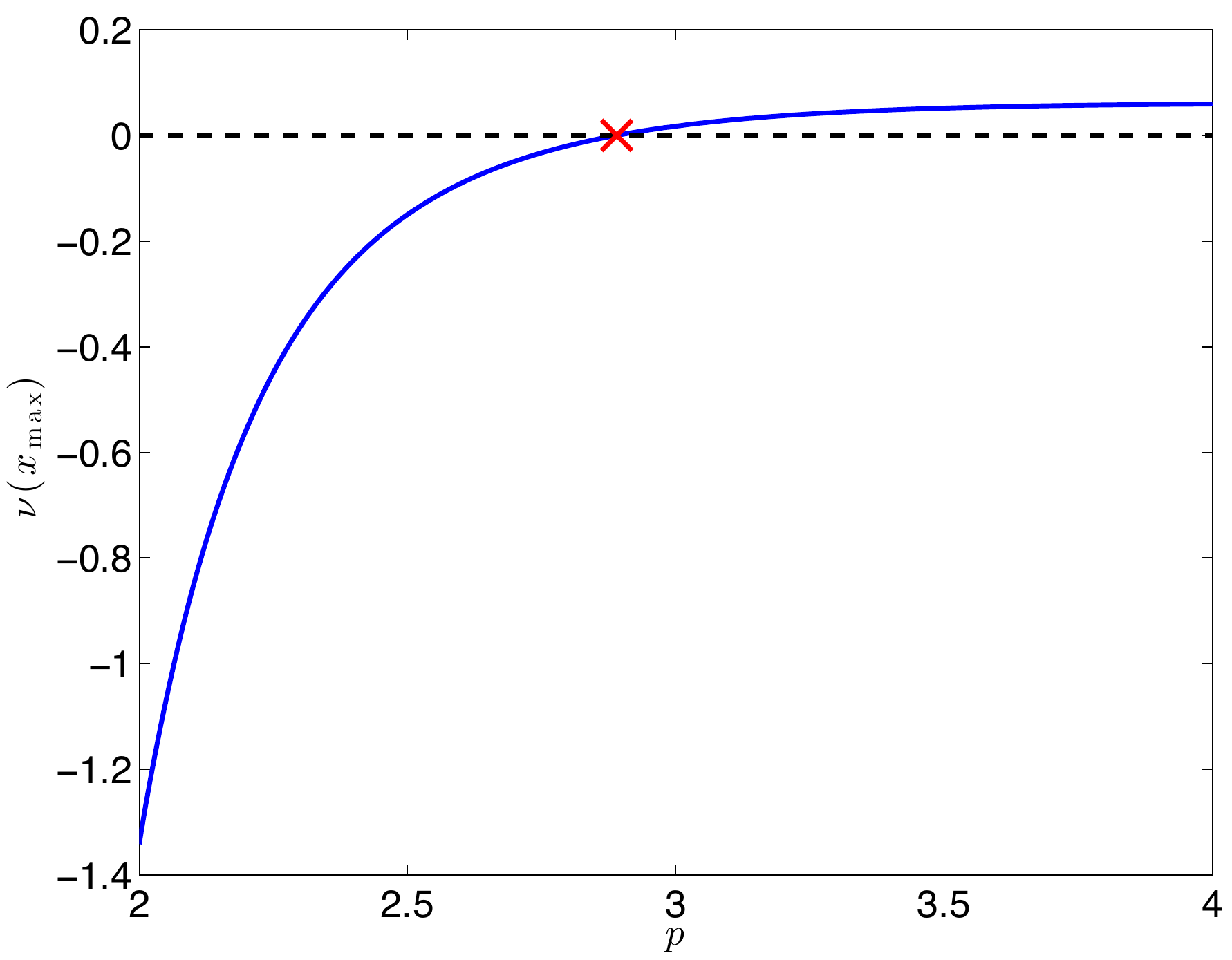}}
  \subfigure[$d=2$]{\includegraphics[width=6.2cm]{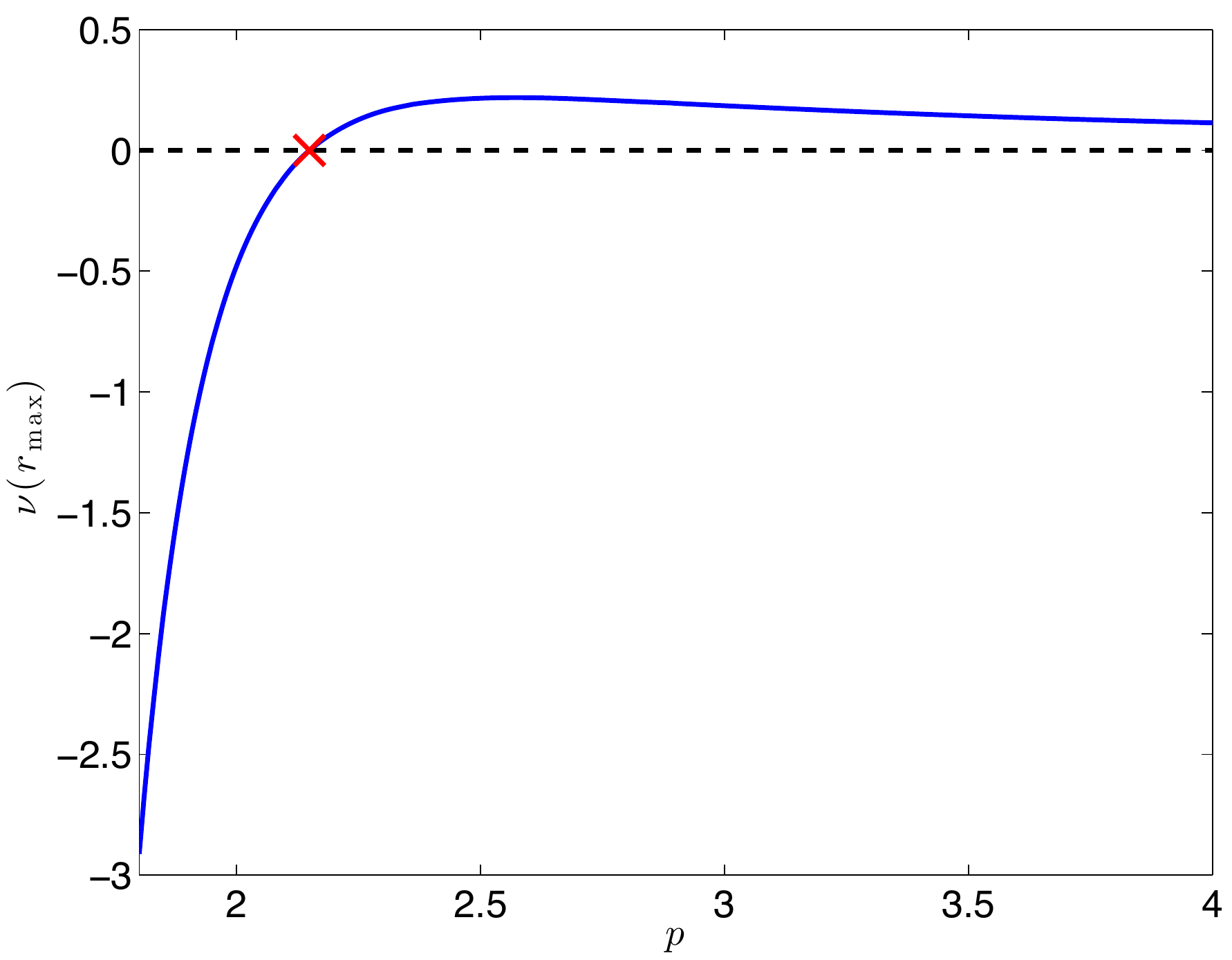}}

  \subfigure[$d=3$]{\includegraphics[width=6.2cm]{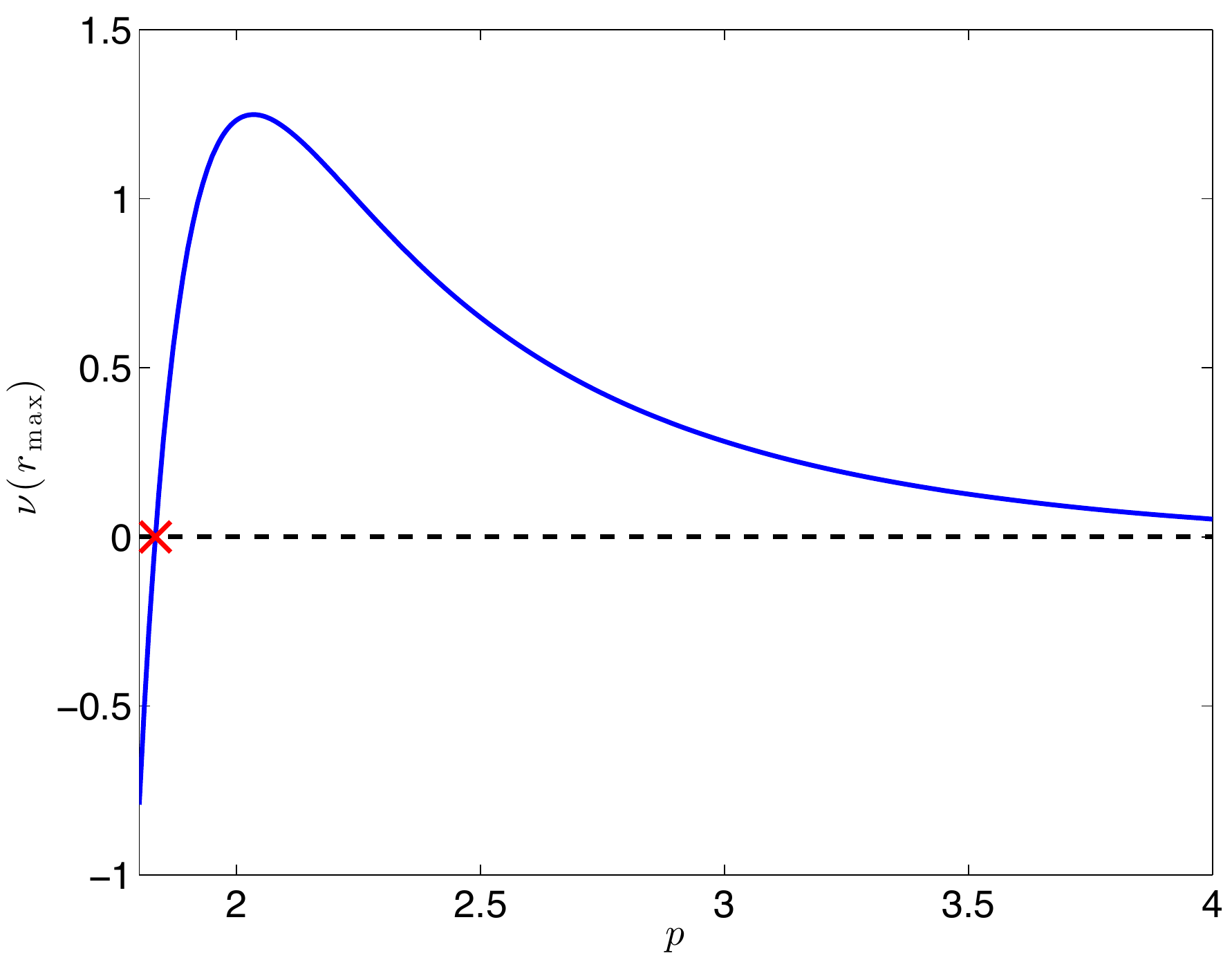}}
\caption{Estimated values of \eqref{e:spec_quant} in dimensions one, two,
  and three for a range of nonlinearities, $p$.  The zero crossings are
  located at 2.8899 ($d=1$), 2.1491 ($d=2$), and 1.8333 ($d=3$).}
\label{f:spec_vals}
\end{figure}

We can repeat this computation for other values of $p$ and in other
dimensions.  In Figure \ref{f:spec_vals}, we plot the $\nu(\rmax)$, the estimate
of \eqref{SpectralProperty1} for a range of $p$ in dimensions two and
three.  For a range of computed $p$, $p\in [1.8, 2.1491)$ in $d=2$ and
$p\in [1.8, 1.8333)$ in $d=3$, the sign is negative, as needed for stability.  For higher, more supercritical, of $p$,
\eqref{SpectralProperty1} is positive, and the result is inconclusive.
That the calculation is inconclusive is affirmed by the corresponding
computation made in dimension $d=1$, also shown in the figure.  For
this case, the zero crossing is at $p=2.8899$, excluding the cubic
nonlinearity.  But it is known $p=3$ in $d=1$ is asymptotically stable
in the sense discussed here, \cite{MM1,MM5}.

\section{Proof of \eqref{GroundState}}\label{B}

In this short appendix we prove \eqref{GroundState}. Recall that $Q$ is the unique radial solution of \eqref{E} in $H^1(\R^2)$. We multiply \eqref{E} by $Q$ and integrate to get
\be\label{E11}
-\int |\nabla Q|^2 -\int Q^2 +\int Q^3 =0.
\ee
Now we prove two different identities dealing with the gradient term. First of all, multiply \eqref{E} by $\partial_{x_1} Q$, we obtain
\[
\partial_{x_1} \Big[ \frac12 (\partial_{x_1}Q)^2  -\frac12 Q^2 +\frac13 Q^3 \Big] + \partial_{x_1} Q\partial_{x_2}^2 Q =0,
\]
which implies that the term $\partial_{x_1} Q\partial_{x_2}^2 Q$ has finite integral on each subinterval of $\R_{x_1}$. With a slight abuse of notation, we have
\[
\frac12 (\partial_{x_1}Q)^2  -\frac12 Q^2 +\frac13 Q^3 + \int_{-\infty}^{x_1}   \partial_{x_1} Q\partial_{x_2}^2 Q =0.
\]
Note that the first three terms above are integrable. Therefore $ \int_{-\infty}^{x_1}   \partial_{x_1} Q\partial_{x_2}^2 Q $ is integrable on $\R^2$ and we obtain
\[
\int_{x_1,x_2} \frac12 (\partial_{x_1}Q)^2  -\frac12 Q^2 +\frac13 Q^3 + \int_{x_1,x_2}\int_{-\infty}^{x_1}   \partial_{x_1} Q\partial_{x_2}^2 Q =0.
\]
Now we use the Fubini's Theorem to compute the last term above. We have
\bee
 \int_{x_1,x_2}\int_{-\infty}^{x_1}   \partial_{x_1} Q\partial_{x_2}^2 Q & =&  \int_{x_1}\int_{-\infty}^{x_1}  \int_{x_2} \partial_{x_1} Q\partial_{x_2}^2 Q\\
 & =& -\int_{x_1}\int_{-\infty}^{x_1}  \int_{x_2} \partial_{x_1,x_2} Q\partial_{x_2} Q \\
 & =& -\frac12   \int_{x_1,x_2}\int_{-\infty}^{x_1}  \partial_{x_1} \Big[(\partial_{x_2} Q)^2 \Big] \\
 & =& -\frac12 \int_{x_1,x_2} (\partial_{x_2} Q)^2.
\eee
We finally obtain
\[
\int  (\partial_{x_1}Q)^2  -(\partial_{x_2}Q)^2   - Q^2 +\frac23 Q^3  =0.
\]
Now we interchange the roles of  $x_1$ and $x_2$ to get a second estimate:
\[
\int  (\partial_{x_2}Q)^2  -(\partial_{x_1}Q)^2   - Q^2 +\frac23 Q^3  =0,
\]
which implies that 
\[
\int Q^2 - \frac23 Q^3 =0,
\]
and
\[
\int  (\partial_{x_1}Q)^2 =\int  (\partial_{x_2}Q)^2,
\]
as expected since $Q$ is radially symmetric. Replacing in \eqref{E11},  we get
\[
\int (\partial_{x_1}Q)^2 = \frac12 \Big[ \frac32 \int Q^2 -\int Q^2\Big] = \frac14 \int Q^2.
\]
Finally, we compute $H_0$. Using the previous identities we have
\bee
H_ 0 & =& \frac12 \int |\nabla Q|^2 -\frac13 \int  Q^3 \\
& =&  \frac12 \big(2 \cdot \frac14 \int Q^2 \big)-\frac13\cdot \frac32 \int Q^2 = -\frac14 \int Q^2 = -\frac12 M_0,
\eee
as desired.

\section{Linear waves versus Asymptotic stability in the energy space}\label{CC}

In this small section we prove that Remarks \ref{1R} and \ref{2R_} are formally sharp, by using linear waves at infinity. Indeed, consider the linear dynamics
\[
u_t + \partial_{x_1}\Delta u =0,
\]
and take $u=\exp( i(k_1 x_1 + k_2 x_2 -w t))$, the standard front wave. Then we compute $w$ in terms of $k_1$ and $k_2$. The result is
\[
w(k_1,k_2)= -(  k_1^3 + k_1 k_2^2 ).
\]
Now we compute the velocity group, which is the vector $\nabla w$. The result is
\[
\nabla w =-(3k_1^2 + k_2^2 , 2 k_1k_2 )^T,
\]
which is a vector with negative $x_1$-coordinate, but the $x_2$-coordinate depends on the sign of $k_1$ and $k_2$.  Without loss of generality, we can assume $k_1>0$, $k_2<0$. Now we compute the minimal angle $\theta$ for which
\[
-2 k_1k_2 = R \cos \theta, \quad \hbox{and } \quad 3k_1^2 + k_2^2 =R \sin \theta.
\]
It turns out that the angle is given by
\[
\min \frac{3k_1^2 + k_2^2}{ 2 |k_1| |k_2| } = \tan \theta,
\]
but
\[
\frac{3k_1^2 + k_2^2}{ 2 |k_1| |k_2| }  = \frac12 (3 r + \frac 1r),  \quad  r:=  \frac{|k_1|}{|k_2|}>0.
\]
Note that we always have
\[
3 r + \frac 1r \geq  2\sqrt{3},
\]
so
\[
\min \frac{3k_1^2 + k_2^2}{ 2 |k_1| |k_2| } \geq \sqrt{3} = \tan \frac \pi 3.
\]
as expected. A similar result holds for the three dimensional case. We thank Didier Smets for this interesting remark.

\bigskip

\section{Proof of Theorem \ref{NonExistence}}\label{D}

\medskip

Assume that $v \in H^1(\R^d)$ is a nontrivial solitary wave satisfying \eqref{SW} satisfying $\partial_{x_1}^{-1}\partial_{x_j} v\in L^2(\R^d)$ for all $j \in \mathbb Z_+ \cap [2,d]$ and $c_j \neq 0$ for some $ j \in \mathbb Z_+ \cap [2,d]$. Since \eqref{SW} is invariant by rotation in the $d-1$ variables $(x_2,\cdots,x_{d-1})$, we can always assume that $c_2\neq 0$ and $c_3=\cdots=c_d=0$.

Multiplying \eqref{SW} by $x_2 \partial_{x_1} v$ and integrating by parts (this process can be made rigorous by using a suitable cut-off approximation), we find
\[
\int \partial_{x_1}v \partial_{x_2}v = -\frac12 c_2 \int v^2.
\] 
On the other hand, we multiply \eqref{SW} by $x_1 \partial_{x_2} v$ to obtain
\[
\int \partial_{x_1}v \partial_{x_2}v = \frac12 c_2 \int (\partial_{x_1}^{-1} \partial_{x_2} v)^2,
\] 
which is a contradiction, unless $c_2 =0$ or $v\equiv 0.$

%\clearpage

\merci{The authors would like to thank the University of Chicago, the
  Ecole Polytechnique and the Instituto de Matem\'atica Pura e
  Aplicada (IMPA) for the kind hospitality during the elaboration and
  the redaction of this work. Moreover, D.P. gratefully acknowledges
  support of the European Council Advanced Grant no 291214,
  BLOWDISOL.  G.S. was supported by the United States National Science
  Foundation Grant DMS-1409018. C.M.  was partially supported by the project ERC 291214 BLOWDISOL, and by Chilean research grants FONDECYT 1150202, Fondo Basal CMM-Chile, and Millennium Nucleus Center for Analysis of PDE NC130017}

\bibliographystyle{amsplain}

\end{document}